\newtheorem{theorem}{Theorem}[section]
\newtheorem{prop}[theorem]{Proposition}
\newtheorem{lemma}[theorem]{Lemma}
\newtheorem{remark}[theorem]{Remark}
\def\xxint#1#2#3{{\setbox0=\hbox{$#1{#2#3}{\int}$}
  \vcenter{\hbox{$#2#3$}}\kern-.5\wd0}}
\def \rd {{\mathbb R}^d}
\def\td{\mathbb{T}^d}
\def\loc{\text{\rm loc}}
\def\e{\varepsilon}
\def\wn{\widetilde{n}}
\newcommand{\average}{-\!\!\!\!\!\!\int}
\begin{document}

\title
{\bf Boundary Layers  in Periodic Homogenization of Neumann Problems}

\author{
Zhongwei Shen\thanks{Supported in part by NSF grant DMS-1600520.}
\qquad Jinping Zhuge \thanks{Supported in part by NSF grant DMS-1161154.}}


\date{ }

\maketitle

\begin{abstract}
This paper is concerned with a family of second-order elliptic systems in divergence form with rapidly oscillating 
periodic coefficients.
We initiate the study of homogenization and boundary layers for
 Neumann problems with first-order oscillating boundary data.
 We identify the homogenized system and establish 
the sharp rate of convergence in $L^2$ in dimension three or higher.
Sharp regularity estimates are also obtained for the homogenized boundary data in both Dirichlet and Neumann 
problems.
The results are used to establish  a higher-order convergence rate
 for Neumann problems with non-oscillating data.

\bigskip

\noindent{\it MSC2010:} 35B27, 75Q05.

\noindent{\it Keywords:} Homogenization; Boundary Layer; Neumann Problem; Oscillating Boundary Data.

\end{abstract}



\section{Introduction}
\setcounter{equation}{0}

The main purpose of this paper is to study the homogenization and boundary layers
for  the Neumann problem
with first-order oscillating boundary data,
\begin{equation}\label{NP-0}
\left\{
\aligned
& \mathcal{L}_\e (u_\e)  =0 &\quad & \text{ in } \Omega,\\
& \frac{\partial u_\e}{\partial \nu_\e}
  = T_{ij}\cdot \nabla_x \big\{ g_{ij}(x, x/\e) \big\}  +g_0(x, x/\e)
  & \quad & \text{ on } \partial \Omega,
 \endaligned
 \right.
\end{equation}
where $T_{ij}=(n_i e_j -n_j e_i)$ is a tangential vector field on $\partial\Omega$,
\begin{equation}\label{operator}
\mathcal{L}_\e =-\text{\rm div}(A(x/\e)\nabla)
\end{equation}
a second-order elliptic system in divergence form with oscillating periodic coefficients, and
$\partial u_\e/\partial\nu_\e=n\cdot A(x/\e)\nabla u_\e$ denotes the conormal derivative associated with 
the operator $\mathcal{L}_\e$.

Throughout this paper, unless otherwise stated,
we assume that 
 $\Omega$ is a bounded smooth, strictly  convex domain in $\rd$ and that
 $g_0(x, y), \, g_{ij}(x, y)$ are smooth in $(x, y)\in \rd \times \rd$ and 
 1-periodic in $y$, 
 \begin{equation}\label{periodicity-g}
 g(x, y+z)=g(x,y) \quad \text{ for any } x, y\in \rd \text{ and } z\in \mathbb{Z}^d.
 \end{equation}
We also assume that the coefficient matrix  $A=A(y)=\big( a_{ij}^{\alpha\beta}\big)$,
with $1\le i, j\le d$ and $1\le \alpha, \beta\le m$, is smooth and satisfies the ellipticity condition,
\begin{equation}\label{ellipticity}
\mu |\xi|^2 \le a_{ij}^{\alpha\beta} (y) \xi_i^\alpha \xi_j^\beta \le \frac{1}{\mu} |\xi|^2 \quad 
\text{ for any } \xi=(\xi_i^\alpha)\in \mathbb{R}^{m\times d},
\end{equation}
where $\mu>0$, and the periodicity condition,
\begin{equation}\label{periodicity}
A(y+z)=A(y) \quad \text{ for any } y\in \rd \text{ and } z\in \mathbb{Z}^d.
\end{equation}
Under these conditions we will show that as $\e\to 0$,
the unique solution of (\ref{NP-0}) with
$\int_{\Omega} u_\e =0$ converges strongly in $L^2(\Omega)$ to $u_0$, where
$u_0$ is a solution of
\begin{equation}\label{NP-h}
\left\{
\aligned
& \mathcal{L}_0 (u_0)  =0 &\quad & \text{ in } \Omega,\\
 &\frac{\partial u_0}{\partial \nu_0}
 = T_{ij}\cdot \nabla_x \overline{g}_{ij}  +\langle g_0 \rangle & \quad & \text{ on } \partial \Omega.
 \endaligned
 \right.
\end{equation}
 The operator $\mathcal{L}_0$ is given by
$\mathcal{L}_0 =-\text{\rm div}(\widehat{A}\nabla)$, with $\widehat{A}$ being the usual homogenized 
matrix of $A$, and
\begin{equation}\label{g-bar}
\langle g_0\rangle (x) =\average_{\td} g_0 (x, y)\, dy.
\end{equation}
 The formula for the function $\{ \overline{g}_{ij}\} $ on $\partial\Omega$, which is much more involved and
 will be given explicitly in Section 6, 
shows that  its value at $x\in \partial\Omega$ depends only on $A$, $\{ g_{ij}(x, \cdot)\}$,
 and the outward normal $n$ to $\partial\Omega$
at $x$.
Moreover, we establish a convergence rate in $L^2$, which is optimal  
(up to an arbitrarily  small exponent) for $d\ge 3$.

\begin{theorem}\label{main-theorem-1}
Let $\Omega$ be a bounded smooth, strictly convex domain in $\rd$, 
$d\ge 3$.
Assume that $A(y)$, $g_0 (x, y)$ and $g_{ij}(x, y)$ are smooth and
satisfy conditions (\ref{periodicity-g}), (\ref{ellipticity}) and (\ref{periodicity}).
Let $u_\e$ and $u_0$ be the solutions of (\ref{NP-0}) and (\ref{NP-h}), respectively,
with $\int_{\Omega} u_\e=\int_{ \Omega}  u_0=0$.
Then for any $\sigma \in (0,1/2)$ and $\e\in (0,1)$,
\begin{equation}\label{rate-0}
\| u_\e -u_0 \|_{L^2(\Omega)}
\le C_\sigma\,  \e^{\frac12-\sigma},
\end{equation}
where $C_\sigma$ depends only on  $d$, $m$, $\sigma$, $A$, $\Omega$, and $g=\{g_0, g_{ij}\}$.
Furthermore, the function $\overline{g}=\{ \overline{g}_{ij}\}$ in (\ref{NP-h}) satisfies
\begin{equation}\label{Sobolev}
\| \overline{g} \|_{ W^{1, q} (\partial\Omega)} \le C_q\, \sup_{y\in \td}
\| g(\cdot, y)\|_{C^1(\partial\Omega)} \quad  \text{ for any } q< d-1,
\end{equation}
where $C_q$ depends only on $d$, $m$, $\mu$, $q$ and $\|A\|_{C^k(\td)}$ for some
$k=k(d)\ge 1$.
\end{theorem}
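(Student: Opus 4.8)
The plan is to combine the method of periodic correctors with a boundary layer construction of the kind introduced by G\'erard-Varet and Masmoudi for oscillating Dirichlet data, here adapted to the conormal (Neumann) boundary condition. A first observation is that $T_{ij}=n_ie_j-n_je_i$ is tangential to $\partial\Omega$, so that in the weak formulation of (\ref{NP-0}) an integration by parts over the closed surface $\partial\Omega$ transfers the tangential derivative from the oscillating datum $g_{ij}(x,x/\e)$ onto the test function, leaving only bounded, lower-order curvature terms. Consequently, although the Neumann datum has size $\e^{-1/2}$ in $H^{-1/2}(\partial\Omega)$, the solution $u_\e$ is bounded in $L^2(\Omega)$ uniformly in $\e$, and $\nabla u_\e$ concentrates in an $O(\e)$-neighbourhood of $\partial\Omega$; the same identity gives meaning to the limiting datum $T_{ij}\cdot\nabla_x\overline g_{ij}+\langle g_0\rangle$ in (\ref{NP-h}).

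I would then construct a boundary layer corrector $\Phi_\e$ by freezing $x$ at a point $x_0\in\partial\Omega$, flattening $\partial\Omega$ near $x_0$, and solving the half-space problem $-\operatorname{div}(A(y)\nabla v)=0$ in $\{ y\cdot n>0 \}$ with oscillating conormal data assembled from $g_{ij}(x_0,\cdot)$, $g_0(x_0,\cdot)$ and $n=n(x_0)$. Following the quantitative ergodic estimates of G\'erard-Varet and Masmoudi, for Diophantine $n$ this corrector converges exponentially fast to a constant tail; the strict convexity of $\Omega$ guarantees that $n(x)$ sweeps out all of $S^{d-1}$, so that the tails patch together into a well-defined boundary function, whose explicit expression is the formula for $\overline g$ promised in Section~6. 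In particular $\overline g_{ij}(x)=G_{ij}(n(x))$ depends on $x$ only through $A$, $g_{ij}(x,\cdot)$ and $n(x)$, and $\Phi_\e$ may be normalised so that its $L^2(\Omega)$-mass is $O(\e^{1/2})$.

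For the Sobolev estimate (\ref{Sobolev}), I would use that for a smooth strictly convex $\Omega$ the Gauss map $n:\partial\Omega\to S^{d-1}$ is a diffeomorphism, so (\ref{Sobolev}) is equivalent to $G_{ij}\in W^{1,q}(S^{d-1})$ with norm controlled by $\sup_y\|g(\cdot,y)\|_{C^1}$ (and by $\|A\|_{C^k}$, $\mu$, $q$, $d$, $m$ only). Differentiating the half-space representation of $G_{ij}$ shows $\nabla_n G_{ij}(n)$ is controlled by the inverse of a small-divisor operator whose size, near a rational direction $n_0$, is comparable to the reciprocal of the Diophantine defect of $n$ about $n_0$. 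A Vitali-covering/maximal-function estimate over the dyadic neighbourhoods of the measure-zero set of rational directions then shows that the $q$-th power of $\nabla_n G_{ij}$ is integrable over $S^{d-1}$ precisely for $q<d-1$.

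Finally, for the rate (\ref{rate-0}) I would put $w_\e=u_\e-u_0-\e\,\chi_k(x/\e)\,S_\e(\partial_k u_0)-\Phi_\e$, with $\chi$ the first-order corrector and $S_\e$ a scale-$\e$ smoothing operator inserted because $u_0$ has only the limited boundary regularity supplied by (\ref{Sobolev}); using the flux-corrector identity to write the interior remainder in divergence form, one obtains $\mathcal L_\e w_\e=\operatorname{div}(F_\e)$ in $\Omega$ with $F_\e$ small in $L^2(\Omega)$ and $\partial w_\e/\partial\nu_\e$ small in $H^{-1/2}(\partial\Omega)$. The Neumann energy estimate then bounds $\|w_\e\|_{H^1(\Omega)}$ and reduces $\|u_\e-u_0\|_{L^2(\Omega)}$ to $\|\Phi_\e\|_{L^2(\Omega)}=O(\e^{1/2})$ and to oscillatory boundary integrals $\int_{\partial\Omega}h(x,x/\e)\,\psi(x)\,d\sigma$ with $h$ smooth, $1$-periodic and of mean zero in $y$; expanding $h$ in its Fourier series and using $|\widehat{d\sigma}(\xi)|\lesssim|\xi|^{-(d-1)/2}$ on the strictly convex $\partial\Omega$ makes these $O(\e)$ when $d\ge 3$. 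The genuine obstacle throughout is the uniform control of the boundary layer: the half-space corrector does not decay at rational normal directions, and these are dense in $S^{d-1}$, so one needs the quantitative---not merely qualitative---ergodic-averaging estimates in the conormal setting; this is what forces $d\ge 3$ and produces the loss $\sigma$ (from the slow decay near rational directions), yielding the sharp bound $\e^{1/2-\sigma}$.
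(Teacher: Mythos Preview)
Your plan follows the energy/two-scale-expansion template familiar from the oscillating Dirichlet problem, but the paper takes a different route, and your sketch has a real gap at the boundary-layer step. The paper never constructs a global corrector $\Phi_\e$ nor runs an energy estimate on a remainder $w_\e$. Instead it represents $u_\e$ by the Neumann function, integrates $T_{ij}\cdot\nabla$ by parts onto $N_\e$, and then invokes the asymptotic expansion $\partial_{y_i}N_\e(x,y)\approx\partial_{y_i}\Psi_{\e,j}^{*}(y)\,\partial_{y_j}N_0(x,y)$ to reduce matters to the tangential gradient of the \emph{Neumann corrector} $\Psi_{\e,k}^{*}$ on $\partial\Omega$. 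Half-space boundary layers enter only as \emph{local pointwise approximants} to $\Psi_{\e,k}^{*}$, and the passage to a global estimate is carried out by a $\kappa$-adapted partition of unity acting inside the integral against $\nabla_y N_0(x,\cdot)$, not by gluing a corrector in $\Omega$.

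The specific gap is your claim that a patched $\Phi_\e$ can be normalised with $\|\Phi_\e\|_{L^2(\Omega)}=O(\e^{1/2})$. The half-space boundary layer at a point with Diophantine constant $\kappa$ has pointwise size $\sim\e\kappa^{-1}$ and penetrates a depth $\sim\e\kappa^{-1}$ into $\Omega$, so its squared $L^2$ mass over a unit surface patch is $\sim(\e/\kappa)^{3}$; summing over $\partial\Omega$ requires $\kappa^{-3}\in L^1(\partial\Omega)$, which fails for $d=3,4$ since $\kappa^{-1}$ lies only in $L^{d-1,\infty}$. The same $\kappa$-dependence spoils the $H^{-1/2}$ control of $\partial w_\e/\partial\nu_\e$: on the curved boundary the conormal mismatch between the frozen half-space corrector and the true datum is of order $\e^{-1}|n(x)-n(x_0)|$, forcing patches of radius $O(\e)$ and uncontrollable overlap sums. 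The paper's devices for bypassing all of this are a $\kappa$-\emph{independent} gradient decay $|\nabla\phi_{\e,k}^{*}(x)|\le C\e/\mathrm{dist}(x,\partial\mathbb{H})$ for the half-space solution (obtained by a Neumann-function/rescaling argument that substitutes for the maximum principle), together with weighted $L^2$ estimates in the half-space with weight $\delta(x)^\alpha$, $-1<\alpha<0$; these make the local approximation of $\Psi_{\e,k}^{*}$ effective at scale $\sqrt{\e}$ rather than $\e$ and drive both the $W^{1,q}$ bound on $\overline g$ and the rate $\e^{1/2-\sigma}$.
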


In recent years there has been considerable interest in the homogenization of
boundary value problems with oscillating boundary data
\cite{Masmoudi-2011, Masmoudi-2012, Prange-2013, ASS-2013, KLS4, ASS-2014, Feldman-2014, Kim-2014, Kim-2015, ASS-2015, Armstrong-Prange-2016} (also see related 
earlier work in \cite{Santosa-Vogelius, Santosa-Vogelius-erratum, Moskow-Vogelius-1997, Moskow-Vogelius-2,
Allaire-Amar-1999}.
In the case of Dirichlet problem,
\begin{equation}\label{DP-0}
\mathcal{L}_\e (u_\e) =0 \quad   \text{ in } \Omega \quad \text{ and } \quad
 u_\e =g(x, x/\e) \quad \text{ on } \partial\Omega,
\end{equation}
where $g(x, y)$ is  assumed to be periodic in $y$,
major progress was made in \cite{Masmoudi-2012} and more recently in \cite{Armstrong-Prange-2016}.
Let $u_\e$ be the solution of (\ref{DP-0}).
Under the assumption that $\Omega$ is smooth and strictly convex in $\rd$,
$d\ge 2$, it was proved in \cite{Masmoudi-2012} that
$$
\| u_\e -u_0\|_{L^2(\Omega)} \le C \, \e^{\frac{(d-1)}{3d+5}-\sigma}
$$
for any $\sigma \in  (0,1)$, where $u_0$ is the solution of the homogenized problem,
\begin{equation}\label{DP-h}
\mathcal{L}_0 (u_0) =0 \quad   \text{ in } \Omega \quad \text{ and } \quad
 u_0 =\overline{g} \quad \text{ on } \partial\Omega,
\end{equation}
and the homogenized data $\overline{g}$ at $x$ depends on $g(x, \cdot)$, $A$, and $n(x)$.
A sharper  rate of convergence  in $L^2$ was obtained recently in \cite{Armstrong-Prange-2016} for 
the Dirichlet problem (\ref{DP-0}), with $O(\e^{\frac12-})$ for $d\ge 4$, 
$O(\e^{\frac{1}{3}-})$ for $d=3$, and
$O(\e^{\frac16-})$ for $d=2$.
As demonstrated in \cite{ASS-2015} in the case of elliptic equations with constant coefficients,
the optimal rate would be $O(\e^{1/2})$ for $d\ge 3$ (up to a factor of $\ln \e$ in the case of $d=3$),
and $O(\e^{1/4})$ for $d=2$.
Thus the convergence rates obtained in \cite{Armstrong-Prange-2016}
for (\ref{DP-0}) is optimal, up to an arbitrarily small exponent, for $d\ge 4$.
With the sharp results established in this paper for the homogenized data (see Theorem \ref{main-theorem-2} below),
the method used in \cite{Armstrong-Prange-2016} also leads to the optimal rate of convergence 
for $d=2$ or $3$.

In the case of the Neumann problem with only zero-order oscillating data $g_0(x, x/\e)$, i.e., $g_{ij}(x, y)=0$,
the homogenization of (\ref{NP-0}) is well understood, mostly due to the fact that the Neumann function 
$N_\e(x, y)$ for $\mathcal{L}_\e$ in $\Omega$
converges pointwise to $N_0 (x, y)$, the Neumann function for the homogenized 
operator $\mathcal{L}_0$ in $\Omega$. In fact, it was proved in \cite{KLS4} that if $\Omega$ is a bounded
$C^{1,1}$ domain in $\rd$ and $d\ge 3$, then
\begin{equation}\label{N-function-0}
|N_\e (x, y) -N_0(x, y)|\le \frac{C\,  \e \ln \big[ \e^{-1}|x-y| +2\big]}{|x-y|^{d-2}}
\end{equation}
for any $x, y\in \Omega$.
This effectively reduces the problem to the case of operators with constant coefficients, 
which may be handled by the method of oscillatory integrals \cite{ASS-2013, ASS-2015}.
Thus the real challenge  for the Neumann problem starts with the first-order oscillating boundary
 data that includes terms 
in the form of $\e^{-1} g(x, x/\e)$.
Since the Neumann data involves  the first-order derivatives of the solution,
the problem (\ref{NP-0}) seems quite natural.
We remark that one of the main motivations for studying Dirichlet problem (\ref{DP-0})
is its applications  to the higher-order convergence in the two-scale expansions of solutions to
Dirichlet problems for $\mathcal{L}_\e$ with non-oscillating boundary data.
As we will show in Section 9, 
in the study of the higher-order convergence of solutions to the Neumann problems
for $\mathcal{L}_\e$ with non-oscillating boundary data, 
 one is forced to deal with a Neumann problem in the form of (\ref{NP-0}).

We now describe the main steps in the proof of Theorem \ref{main-theorem-1} 
as well as the organization of the paper.
Our general approach is inspired by the recent work \cite{Armstrong-Prange-2016}.
The starting point is the following 
asymptotic expansion for the Neumann function $N_\e(x, y)$,
\begin{equation}\label{N-function-a}
\Big| \frac{\partial}{\partial y_k} \big\{  N_\e (x, y) \big\} 
-\frac{\partial}{\partial y_k} \big\{ \Psi^*_{\e, \ell} (y) \big\}
\frac{\partial}{ \partial {y_\ell}} \big\{  N_0 (x, y)\big\}\Big|
\le \frac{C_\sigma\,  \e^{1-\sigma}}{|x-y|^{d-\sigma}},
\end{equation}
for any $x, y\in \Omega$ and $\sigma \in (0,1)$.
The function $\Psi_{\e, \ell}^*$ in (\ref{N-function-a})
is the so-called Neumann corrector, defined by
(\ref{N-corrector}),  for the adjoint operator $\mathcal{L}_\e^*$.
The estimate (\ref{N-function-a}) was proved in \cite{KLS4} 
for bounded $C^{2, \alpha}$ domains, using boundary Lipschitz estimates
for solutions with Neumann data \cite{KLS1} (also see \cite{AS}, which extends the estimates to
operators without the symmetry condition $A^*=A$).
Using (\ref{N-function-a}), one reduces the problem to the study of the tangential derivatives of
$\Psi_{\e, \ell}^*$ on the boundary.
This preliminary reduction is carried out in Section 2.

Next, to analyze  the Neumann corrector on the boundary,
we approximate it in a neighborhood of a point $x_0\in \partial\Omega$,
by a solution of a Neumann problem in a half-space,
\begin{equation}\label{half-space-NP-0}
\left\{
\aligned
&\mathcal{L}_\e^* (\phi^*_{\e, k})=0 &\quad & \text{ in } \mathbb{H}_n^d(a),\\
& \frac{\partial}{\partial \nu_\e^*} \big(\phi^*_{\e, k}\big)
=-n_\ell b^*_{\ell k} (x/\e) &\quad & \text{ on } \partial\mathbb{H}_n^d(a),
\endaligned
\right.
\end{equation}
where  $n=n(x_0)$,
$\partial\mathbb{H}_n^d(a)$ is the tangent plane for $\partial\Omega$ at $x_0$, and
$b^*_{\ell k} (y)$ are some 1-periodic functions depending only on $A$.
Under the assumption that $n$ satisfies the Diophantine condition (\ref{D-condition}),
a solution to (\ref{half-space-NP-0})
is constructed in Section 3  by solving a Neumann problem for a degenerated elliptic system
in $\td\times \mathbb{R}_+$,  in a similar fashion as in the case of the Dirichlet problem \cite{Masmoudi-2011, Masmoudi-2012}.
In Section 4 we prove two refined estimates with bounding constants
$C$ independent of the constant $\kappa$ in the Diophantine condition (\ref{D-condition}).
The first estimate shows that the solution constructed for (\ref{half-space-NP-0})
satisfies 
\begin{equation}\label{decay-0}
|\nabla \phi_{\e, k}^* (x)|\le \frac{C \, \e}{|x\cdot n +a|},
\end{equation}
where $C$ depends only on $d$, $m$, $\mu$ and some H\"older norm of $A$.
We remark that the estimate (\ref{decay-0}) plays the same role as the maximum principle 
in the Dirichlet problem.
The other estimates in Section 4 concern with weighted norm inequalities for $\mathcal{L}_1$
in a half-space $H^d_n(a)$.
In particular, we show that if $u$ is  a solution of
$\mathcal{L}_1 (u)=\text{\rm div}(f)$ in $\mathbb{H}_n^d(a)$,
with either Dirichlet condition $u=0$ or  the Neumann condition
$\frac{\partial u}{\partial \nu} =-n \cdot f$ on $\partial\mathbb{H}_n^d(a)$, then
\begin{equation}\label{weighted-0}
\int_{\mathbb{H}_n^d (a)} |\nabla u (x) |^2  \big[\delta (x)\big]^\alpha\, dx
\le C_\alpha \int _{\mathbb{H}_n^d (a)} |f (x)  |^2  \big[\delta (x)\big]^\alpha\, dx,
\end{equation}
where  $-1<\alpha<1$ and 
$$
\delta(x) =\text{dist}(x, \partial \mathbb{H}_n^d (a)) =|x\cdot n +a|.
$$
We point out that it is this weighted estimate with $-1<\alpha<0$
that allows us to prove the optimal 
result (\ref{Sobolev}) for the homogenized data.
We further remark that the weighted estimate (\ref{weighted-0}), which is obtained
by using a real-variable method and the theory of $A_p$ weights in harmonic analysis,
 also leads to the optimal 
result for the oscillating Dirichlet problem (\ref{DP-0}).

\begin{theorem}\label{main-theorem-2}
Let $\Omega$ be a bounded smooth, strictly convex domain in $\rd$, $d\ge 2$.
Assume that $A(y)$ and $g(x, y)$ are smooth and satisfy conditions
(\ref{periodicity-g}), (\ref{ellipticity}) and (\ref{periodicity}). 
Let $\overline{g}$ be the homogenized data in (\ref{DP-h}).
Then
\begin{equation}\label{Sobolev-D}
\| \overline{g}\|_{W^{1, q}(\partial\Omega)} \le C_q\, \sup_{y\in \td} \| g(\cdot, y)\|_{C^1(\partial\Omega)}
 \quad \text{ for any } q<d-1,
\end{equation}
where $C_q$ depends only on $d$, $m$, $\mu$, $q$ and $\|A\|_{C^k(\td)}$ for some
$k=k(d)\ge 1$.
Moreover, 
\begin{equation}\label{rate-DP-low}
\| u_\e -u_0\|_{L^2(\Omega)}
\le C_\sigma \left\{
\aligned
& \e^{\frac12-\sigma} & \quad & \text{ if } d=3,\\
& \e^{\frac14-\sigma} & \quad & \text{ if } d=2,
\endaligned
\right.
\end{equation}
where $u_\e$ and $u_0$ are solutions of Dirichlet problems (\ref{DP-0})
and (\ref{DP-h}), respectively.
\end{theorem}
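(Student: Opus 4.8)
The plan is to deduce Theorem \ref{main-theorem-2} from the machinery built in Sections 2--4 exactly as in the Neumann case, since the Dirichlet problem (\ref{DP-0}) is structurally the ``easier'' sibling of (\ref{NP-0}). First I would recall the reduction for the oscillating Dirichlet problem: the solution $u_\e$ of (\ref{DP-0}) can be written using the Poisson kernel $P_\e(x,y)$ for $\mathcal{L}_\e$, and by the analogue of (\ref{N-function-a}) for $P_\e$ (proved in \cite{KLS4} via boundary Lipschitz estimates) one has $P_\e(x,y)\approx \nabla_y\{\Phi^*_{\e}(y)\}\cdot\nabla_y P_0(x,y)$ up to an error of order $\e^{1-\sigma}$, so that $\overline{g}(x)$ at a boundary point is governed by the boundary trace of the Dirichlet corrector, which in turn is approximated near $x_0\in\partial\Omega$ by the half-space Dirichlet problem with data $-n_\ell b^*_{\ell k}(x/\e)$ (the Dirichlet variant of (\ref{half-space-NP-0})). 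This is the same half-space construction of Section 3 (\cite{Masmoudi-2011,Masmoudi-2012}), now with a Dirichlet boundary condition.

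Next, for the Sobolev bound (\ref{Sobolev-D}), I would differentiate $\overline{g}$ in the tangential directions on $\partial\Omega$; this produces an expression involving $\nabla\phi^*_{\e,k}$ of the half-space Dirichlet corrector integrated against the normal velocity. Applying the decay estimate (\ref{decay-0}) (the Dirichlet case is included in Section 4, since the weighted estimate (\ref{weighted-0}) is stated for both the Dirichlet condition $u=0$ and the Neumann condition) together with the weighted estimate (\ref{weighted-0}) with an exponent $-1<\alpha<0$ gives integrability of $|\nabla\phi^*_{\e,k}|$ against $\delta(x)^\alpha$ that is uniform in the Diophantine constant $\kappa$. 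Integrating over the normal variable and then over the base $\td$, and finally integrating the Diophantine condition over the sphere of normals (the set of $n$ failing (\ref{D-condition}) for a given $\kappa$ has measure $\lesssim \kappa^{d-1}$), one converts the $\kappa$-dependence into the exponent $q<d-1$; this is precisely the mechanism by which $\alpha<0$ yields the \emph{optimal} Sobolev exponent rather than merely $q<(d-1)/2$.

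For the convergence rate (\ref{rate-DP-low}), I would follow the scheme of \cite{Armstrong-Prange-2016}: write $u_\e-u_0=\int_{\partial\Omega}P_\e(x,y)\{g(y,y/\e)-\overline{g}(y)\}\,d\sigma(y)$ plus a term controlled by $\|P_\e-(\text{corrector})\cdot P_0\|$, then estimate the first term by splitting $\partial\Omega$ into a boundary layer of width $\sim\e$ near any fixed point and its complement, using on the complement the oscillation of $g(y,y/\e)-\overline{g}(y)$ (which has mean zero in the fast variable in an averaged sense) integrated against the smooth, rapidly decaying kernel $P_0$. The improvement over \cite{Masmoudi-2012} in dimensions $d=2,3$ comes exactly from feeding the sharp Sobolev regularity (\ref{Sobolev-D}) of $\overline{g}$ into this estimate: with $\overline{g}\in W^{1,q}(\partial\Omega)$ for all $q<d-1$, a duality/interpolation argument against the Poisson extension yields $O(\e^{1/2-\sigma})$ for $d=3$ and $O(\e^{1/4-\sigma})$ for $d=2$, matching the constant-coefficient benchmark of \cite{ASS-2015}.

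The main obstacle I anticipate is the $\kappa$-uniformity throughout: the half-space correctors a priori blow up as the normal $n$ approaches a rational direction, so every estimate feeding into (\ref{Sobolev-D}) and (\ref{rate-DP-low}) must be tracked to show the bounding constants in (\ref{decay-0}) and (\ref{weighted-0}) genuinely do not see $\kappa$, and only the \emph{measure} of the bad set of normals enters. Proving that the real-variable $A_p$-weight argument behind (\ref{weighted-0}) is scale- and $\kappa$-invariant, and that it interfaces correctly with the localization of the corrector near $x_0$ on the curved boundary (where one must control the error between the true domain and its tangent half-space), is where the bulk of the technical work lies; the rate estimate (\ref{rate-DP-low}) is then a comparatively routine consequence once the sharp regularity of $\overline{g}$ is in hand.
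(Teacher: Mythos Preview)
Your outline has the right ingredients listed (half-space correctors, the weighted estimate (\ref{weighted-0}), $\kappa$-uniformity, the Armstrong--Prange framework for the rate), but the way you assemble them for (\ref{Sobolev-D}) misses the actual mechanism by which the weighted estimate enters, and the resulting argument would not close.

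The homogenized data $\overline{g}(x)$ depends on $x$ only through $n(x)$, via an integral over $\td$ of the boundary trace of the half-space corrector $V^{*,n}(\theta,0)$. Differentiating $\overline{g}$ tangentially therefore does \emph{not} produce $\nabla\phi^*_{\e,k}$ integrated against a normal velocity; it produces the variation of $V^{*,n}$ with respect to $n$. The paper's key step (Theorem \ref{theorem-E}) is a \emph{two-point} estimate $|\overline{g}(x)-\overline{g}(y)|\le C_\sigma |x-y|\kappa^{-1-\sigma}(1+|x-y|/\kappa)$, obtained by setting $W=V^{*}_n-V^{*}_{\widetilde n}$ for $n=n(x)$, $\widetilde n=n(y)$ and observing that $W$ solves a boundary value problem of the form (\ref{E-1-2}) whose data are of size $|n-\widetilde n|$ but carry growth factors $(1+t)$. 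It is precisely to absorb those growth factors that the weighted inequality with $-1<\alpha<0$ is invoked (Lemma \ref{lemma-E1} and Remark \ref{D-remark}); this is where (\ref{weighted-0}) is used, not on a single $\nabla\phi^*_{\e,k}$. Your proposal never sets up this comparison problem, so the weighted estimate has nothing to act on.

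Second, the passage from a $\kappa$-dependent pointwise bound to $W^{1,q}$ is not done by ``integrating the Diophantine condition over the sphere of normals.'' The paper builds a Calder\'on--Zygmund partition of unity $\{\varphi_j\}$ on $\partial\Omega$ adapted to the function $\kappa(x)$ (Section 7, Lemma \ref{lemma-7.1} and Propositions \ref{prop-7.2}--\ref{prop-7.3}), approximates $\overline{g}$ by $\overline{g}_\tau=\sum_j\overline{g}(z_j)\varphi_j$, and uses the two-point estimate to bound $\nabla\overline{g}_\tau$ on each cube by $(r_\ell/\tau)^{1+\sigma}$; the crucial counting estimate $\sum_j r_j^{\alpha+d-1}\le C\tau^\alpha$ for $\alpha<d-1$ then yields $\|\nabla\overline{g}_\tau\|_{L^q}\le C$ uniformly in $\tau$. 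This stopping-time construction is what converts $\kappa^{-1}\in L^{d-1,\infty}(\partial\Omega)$ into the exponent $q<d-1$; a direct integration over $\mathbb{S}^{d-1}$ does not produce a Sobolev norm on $\partial\Omega$.

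Once the correct two-point estimate and the adapted partition are in place, the paper's actual proof of Theorem \ref{main-theorem-2} is one line: the Dirichlet version of Theorem \ref{theorem-E} holds by Remark \ref{D-remark}, and then (\ref{Sobolev-D}) follows exactly as in Theorem \ref{h-data-theorem}, while (\ref{rate-DP-low}) is obtained by feeding the improved regularity of $\overline{g}$ into the argument of \cite{Armstrong-Prange-2016}. Your description of the rate step is essentially correct; the gap is entirely in the regularity step.
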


Estimate (\ref{Sobolev-D}) improves a similar
result in \cite{Armstrong-Prange-2016}, where it was proved that 
$\nabla \overline{g}\in L^{q, \infty}(\partial\Omega)$ for $q=\frac{2(d-1)}{3}$ and $d\ge 3$,
and that $\overline{g}\in W^{1, s}(\partial\Omega)$ for $s<\frac{2}{3}$ and $d=2$.
In an earlier work \cite{Masmoudi-2012}, it was shown that $\nabla\overline{g}\in L^{q, \infty}(\partial\Omega)$
for $q=\frac{d-1}{2}$. We believe that our results (\ref{Sobolev}) and (\ref{Sobolev-D})
are optimal, since the estimate $\overline{g}\in W^{1, q}(\partial\Omega)$
for some $q>d-1$ would imply that $\overline{g}$ is continuous on $\partial\Omega$.
As indicated before,
the convergence rate (\ref{rate-DP-low}), which is optimal,
improves the main results in \cite{Armstrong-Prange-2016} for $d=2$ and $3$.

One of the main technical lemmas in this paper is given in Section 5.
Motivated by  \cite{Armstrong-Prange-2016} for Dirichlet problem, we show that  for any $\sigma\in (0,1)$
and $\e\in (0,1)$,
\begin{equation}\label{main-estimate-0-1}
\aligned
& \| \nabla \Big( \Psi_{\e, k}^* - x_k -\e \chi_k^* (x/\e) -\phi_{\e, k}^* \Big)\|_{L^\infty(B(x_0, r)\cap\Omega)}\\
&\qquad \qquad
 \le C\sqrt{\e} \big\{ 1+ |\ln \e|\big\}
+C \e^{-1-\sigma} r^{2+\sigma},
\endaligned
\end{equation}
where $\e\le r\le  \sqrt{\e}$.
This is done by using boundary Lipschitz estimates and representations by Neumann functions $N_\e(x, y)$.
In contrast to the case of Dirichlet problem (\ref{DP-0}), 
in order to fully utilize the decay estimates for derivatives of Neumann functions,
the key insight  here is to transfer,
through integration by parts on the boundary, the derivatives from the Neumann data 
to the Neumann functions.

In Section 6 we use the weighted estimates in Section 4 to prove some weighted estimates
for solutions of the degenerated elliptic systems in $\td\times \mathbb{R}_+$.
This, together with a partition of unity for
$\partial\Omega$ that adapted to the constant $\kappa$ in the Diophantine condition (\ref{D-condition}),
 leads to the proof of (\ref{Sobolev}) and (\ref{Sobolev-D}) for the homogenized boundary data.
We remark that the partition of unity  for $\partial\Omega$
 we construct in Section 7 is in the same spirit as that in \cite{Armstrong-Prange-2016}.
 Our $L^p$-based approach with a finite $p>d-1$, which is motivated 
 by \cite{Fefferman-1996, Shen-1998},
 seems to make the proof much more transparent.
 Moreover, it gives us a pointwise estimate for
 $|u_\e (x) - u_0 (x)|$ in terms of the Hardy-Littlewood maximal function of
 $\kappa^{-q}$ with $q<d-1$ on the boundary, when $x$ is not in  a boundary layer 
 $\Omega_\e$ of measure  less than $C \e^{1-\sigma}$ (see Remark \ref{remark-8.1}).
 
 Our main result on the convergence rate  in Theorem \ref{main-theorem-1}  is proved 
 in Section 8, using a similar  line of argument as in \cite{Armstrong-Prange-2016}.
 We mention that the convergence rates in $L^p$ for $2<p<\infty$, even in 
 $H^\alpha(\Omega)$ for $0<\alpha<1/2$, follow from (\ref{rate-0})
 by interpolation (see Remark \ref{remark-2.10}).
 Finally, in Section 9, we use  Theorem \ref{main-theorem-1} to establish 
 a higher-order convergence in the two-scale expansions 
 of solutions to  Neumann problems with non-oscillating data.
 Although some related work may be found in \cite{Moskow-Vogelius-2},
 to the best of the authors' knowledge, 
 our $O(\e^{3/2})$ rate in $L^2(\Omega)$ with a homogenized first-order term
  seems to be
   the first result on the higher-order convergence for $\mathcal{L}_\e$ with Neumann conditions.

The summation convention will be used throughout.
We will use $C$ and $c$ to denote constants that depend at most on $d$, $m$, $A$ and $\Omega$
as well as other parameters,
but never on $\e$ or the constant $\kappa$ in (\ref{D-condition}).
We will always assume that $d\ge 3$ (unless otherwise indicated).
Our method, with some modifications, should also apply to the two-dimensional Neumann problem,
which will be handled in a separate work.

\medskip

\noindent{\bf Acknowledgement.}
The first author would like to thank David G\'erard-Varet for bringing the problem of homogenization 
of the Neumann problem (\ref{NP-0}) to his attention.


\section{Preliminaries}
\setcounter{equation}{0}

Under the conditions (\ref{ellipticity})-(\ref{periodicity}) and $A\in C^\sigma(\td)$ for some $\sigma \in (0,1)$,
 one may construct an $m\times m$ matrix of Neumann functions
$N_\e (x, y)=(N_\e^{\alpha\beta} (x, y))$  in a bounded $C^{1, \alpha}$ domain $\Omega$,
such that
\begin{equation}\label{N-def}
\left\{ 
\aligned
&\mathcal{L}_\e \big\{ N_\e (\cdot, y)\big\} = \delta_ y(x)I 
& \quad & \text{ in } \Omega,\\
& \frac{\partial}{\partial \nu_\e} \big\{ N_\e (\cdot, y) \big\}=- |\partial\Omega|^{-1} I
&\quad & \text{ on } \partial\Omega,\\
& \int_{\partial\Omega} N_\e (x, y)\, dx=0,
\endaligned
\right.
\end{equation}
where $I=I_{m\times m} $ and
the operator $\mathcal{L}_\e$ acts on each column of $N_\e(\cdot, y)$.
Let $u_\e\in H^1(\Omega; \mathbb{R}^m)$ be a solution to $\mathcal{L}_\e (u_\e)=F$ in $\Omega$
with $\frac{\partial u_\e}{\partial \nu_\e}= h$ on $\partial\Omega$, 
then
\begin{equation}\label{N-representation}
u_\e (x) -\average_{\partial\Omega} u_\e
=\int_{\partial\Omega} N_\e (x, y) F(y)\, dy
+\int_{\partial\Omega} N_\e(x, y) h(y)\, d\sigma (y)
\end{equation}
for any $x\in \Omega$.
If $d\ge 3$,
the Neumann functions satisfy the following estimates,
\begin{equation}\label{N-estimate}
\aligned
|N_\e (x, y)| & \le C\, |x-y|^{2-d}\\
|\nabla_x N_\e (x, y)| +|\nabla_y N_\e (x, y)| & \le C |x-y|^{1-d},\\
|\nabla_x \nabla_y N_\e (x, y)| & \le C |x-y|^{-d},
\endaligned
\end{equation}
for any $x, y\in \Omega$. 
This was proved in \cite{KLS1}, using boundary Lipschitz estimates with Neumann conditions,
which require the additional assumption $A^*=A$.
This additional assumption for the boundary Lipschitz estimates was removed later in \cite{AS}.
As a result, the estimates in (\ref{N-estimate}) hold if
$A$ satisfies (\ref{ellipticity})-(\ref{periodicity}) and is H\"older continuous.
Note that if $x, y, z\in \Omega$ and $|x-z|\le (1/2) |x-y|$, it follows from (\ref{N-estimate}) that 
 \begin{equation}\label{N-estimate-1}
 \aligned
 |N_\e (x, y)-N_\e (z, y) | & \le \frac{C|x-z|}{|x-y|^{d-1}},\\
  |\nabla_y \big\{ N_\e (x, y)-N_\e (z, y)\big\} | & \le \frac{C|x-z|}{|x-y|^{d}}.
  \endaligned
  \end{equation}

Let $\chi_j (y) =(\chi_j^{\alpha\beta} (y))$ denote the matrix of correctors for $\mathcal{L}_\e$ in 
$\rd$, defined by
\begin{equation}\label{corrector}
\left\{
\aligned
& \mathcal{L}_1\big(\chi_j^\beta + P_j^\beta\big) =0 \quad \text{ in } \rd,\\
& \chi_j^\beta \text{ is 1-periodic  and }
\int_{\td} \chi_j^\beta =0,
\endaligned
\right.
\end{equation}
where  $\chi_j^\beta =(\chi_j^{1 \beta}, \dots, 
\chi_j^{m\beta})$
and $P_j^\beta (y) =y_j (0, \dots, 1, \dots, 0)$ with $1$ in the $\beta^{th}$ position.
The homogenized operator $\mathcal{L}_0$ for $\mathcal{L}_\e$ is given by
$\mathcal{L}_0=-\text{\rm div} (\widehat{A}\nabla)$, where $\widehat{A}=\big( \widehat{a}_{ij}^{\alpha\beta} \big)$
is the homogenized matrix  with
\begin{equation}\label{homo}
\widehat{a}_{ij}^{\alpha\beta}
=\average_{\td} \Big\{ a_{ij}^{\alpha\beta}
+ a_{ik}^{\alpha\gamma} \frac{\partial}{\partial y_k} \big( \chi_j^{\gamma\beta} \big)\Big\}.
\end{equation}
To study the boundary regularity for solutions of Neumann problems, 
the matrix of Neumann correctors  
$\Psi_{\e, j}^\beta =\big( \Psi_{\e, j}^{\alpha\beta}\big)$ for $\mathcal{L}_\e$ in $\Omega$, defined by
\begin{equation}\label{N-corrector}
\mathcal{L}_\e \big(\Psi_{\e, j}^\beta\big)=0 \quad \text{ in } \Omega
\quad \text{ and } \quad 
\frac{\partial}{\partial \nu_\e} \big( \Psi_{\e, j}^\beta\big)
=\frac{\partial}{\partial \nu_0} \big( P_j^\beta \big) \quad \text{ on } \partial\Omega,
\end{equation}
was introduced in \cite{KLS1}, where $\partial u/\partial\nu_0$ denotes the conormal derivative 
associated with $\mathcal{L}_0$. One of the main estimates in \cite{KLS1} is the following Lipschitz estimate for
$\Psi_{\e, j}^\beta$,
\begin{equation}\label{Lip-Psi}
\|\nabla \Psi_{\e, j}^\beta\|_{L^\infty(\Omega)} \le C.
\end{equation}

Let $N_0(x, y)$ denote the matrix of Neumann functions for $\mathcal{L}_0$ in $\Omega$.
It was proved in  \cite{KLS4} that  if $\Omega$ is $C^{1,1}$, 
\begin{equation}\label{size-diff}
|N_\e (x, y) -N_0(x, y)|\le \frac{C\e \ln \big[ \e^{-1}|x-y| +2\big]}{|x-y|^{d-1}}
\end{equation}
for any $x, y\in \Omega$, and that if $\Omega$ is $C^{2, \alpha}$ for some $\alpha\in (0, 1)$,
\begin{equation}\label{derivative-diff}
\Big|\frac{\partial}{\partial y_i} \big\{ N^{\gamma \alpha}_\e (x, y) \big\}
-\frac{\partial}{\partial y_i} \big\{ \Psi_{\e, j}^{*\alpha\beta} (y) \big\}
\cdot
\frac{\partial}{\partial y_j } \big\{ N_0^{\gamma\beta} (x, y) \big\}\Big|
\le \frac{C_\sigma\,  \e^{1-\sigma}}{|x-y|^{d-\sigma}}
\end{equation}
for any $x, y\in \Omega$ and $\sigma\in (0,1)$.
The functions $(\Psi_{\e, j}^{*\alpha\beta})$ in (\ref{derivative-diff})
are the Neumann correctors, defined as in (\ref{N-corrector}),
 for the adjoint operator $\mathcal{L}_\e^*$
in $\Omega$.
We remark that these estimates as well as (\ref{Lip-Psi})
were proved in \cite{KLS1} under the additional assumption 
$A^*=A$. As in the case of (\ref{N-estimate}), with the results in \cite{AS}, 
they continue to hold without this assumption.

The estimates (\ref{size-diff}) and (\ref{derivative-diff}) mark the starting point of our investigation of
the Neumann problem (\ref{NP-0}) with oscillating data.
Indeed, let $u_\e$ be the solution of (\ref{NP-0}) with $\int_{\partial\Omega} u_\e=0$.
It follows by (\ref{N-representation}) that
\begin{equation}\label{p-1}
u_\e (x) =\int_{\partial\Omega}
N_\e (x, y)  (T_{ij}(y)\cdot \nabla_y) \big\{ g_{ij}(y, y/\e)\big\}\, d\sigma (y)
+\int_{\partial\Omega} N_\e(x, y) g_0(y, y/\e)\, d\sigma (y),
\end{equation}
where $T_{ij}=n_i e_j -n_j e_i$, $n=(n_1, \cdots, n_d)$ is the outward normal to $\partial\Omega$, and
 $e_i =(0, \dots, 1, \dots, 0)$ with $1$ in the $i^{th}$ position.

\begin{lemma}\label{lemma-p1}
Let $\Omega$ be a bounded Lipschitz domain in $\rd$.
Then, for $u, v\in C^1({\partial\Omega})$, 
\begin{equation}\label{parts}
\int_{\partial\Omega} ( (n_i e_j -n_j e_i)\cdot \nabla u ) \, v \, d\sigma
=-\int_{\partial\Omega} u  \, ((n_ie_j -n_j e_i)\cdot \nabla v)\, d\sigma.
\end{equation}
\end{lemma}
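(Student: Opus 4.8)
The plan is to establish the integration-by-parts formula \eqref{parts} by recognizing that the differential operator $n_i e_j - n_j e_i$ is, up to sign, the tangential rotation operator whose adjoint on a closed surface is its own negative. Concretely, writing $T_{ij} = n_i e_j - n_j e_i$, one checks that $T_{ij}\cdot\nabla$ is a genuine tangential derivative: since $(n_i e_j - n_j e_i)\cdot n = n_i n_j - n_j n_i = 0$, the vector field $T_{ij}$ is tangent to $\partial\Omega$ at every point, so $T_{ij}\cdot\nabla u$ depends only on the restriction of $u$ to $\partial\Omega$. This is the structural reason the formula should hold with no boundary terms — $\partial\Omega$ has no boundary.

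First I would reduce to the smooth case by a density/approximation argument: it suffices to prove \eqref{parts} for $u,v\in C^\infty(\overline\Omega)$ (extend the $C^1(\partial\Omega)$ functions to $C^1(\overline\Omega)$, then mollify), since both sides are continuous in the $C^1(\partial\Omega)$ topology. For smooth $u,v$, I would apply the divergence theorem on $\Omega$ to the vector field $w = (e_j u v) $ paired appropriately, or more cleanly, use the standard surface integration-by-parts identity
\begin{equation*}
\int_{\partial\Omega} (\partial_{\tau_{k\ell}} f)\, d\sigma = 0,
\qquad \partial_{\tau_{k\ell}} := n_k \partial_\ell - n_\ell \partial_k,
\end{equation*}
valid on any closed $C^1$ hypersurface (this follows from the tangential Stokes theorem, or from extending $f$ to a neighborhood and using $\int_{\partial\Omega}(n_k\partial_\ell f - n_\ell\partial_k f)\,d\sigma = \int_{\partial\Omega} n\cdot(\partial_\ell f\, e_k - \partial_k f\, e_\ell)\,d\sigma = \int_\Omega \mathrm{div}(\partial_\ell f\, e_k - \partial_k f\, e_\ell)\,dx = \int_\Omega (\partial_k\partial_\ell f - \partial_\ell\partial_k f)\,dx = 0$). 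Applying this to $f = uv$ and using the Leibniz rule $\partial_{\tau_{ij}}(uv) = (\partial_{\tau_{ij}} u)v + u(\partial_{\tau_{ij}} v)$ — which is valid because $\partial_{\tau_{ij}}$ is a first-order derivation — immediately gives
\begin{equation*}
0 = \int_{\partial\Omega} \big((T_{ij}\cdot\nabla u)v + u(T_{ij}\cdot\nabla v)\big)\, d\sigma,
\end{equation*}
which is exactly \eqref{parts}.

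The one genuine obstacle is justifying $\int_{\partial\Omega}(n_k\partial_\ell f - n_\ell\partial_k f)\,d\sigma = 0$ on a merely Lipschitz domain, where the outward normal $n$ exists only $\sigma$-a.e. and the naive computation above needs $f\in C^1$ of a neighborhood with the divergence theorem on the Lipschitz domain $\Omega$ — which is fine, since the Gauss–Green theorem holds on bounded Lipschitz domains for $C^1$ vector fields. So for $u,v$ smooth near $\overline\Omega$ the argument goes through verbatim on Lipschitz $\Omega$; then the density argument extends it to $u,v\in C^1(\partial\Omega)$, completing the proof. I expect the writeup to be short, the only care needed being the statement of the Gauss–Green formula on Lipschitz domains and the approximation step.
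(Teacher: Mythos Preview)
Your proposal is correct and follows the second of the two approaches the paper itself sketches: the paper's proof says only that the identity ``may be proved by localizing the functions $u$ and $v$ to a small neighborhood of $x_0\in\partial\Omega$ and reducing the problem to the case of flat boundary $\mathbb{R}^{d-1}$,'' and that ``one may also prove \eqref{parts} by using the divergence theorem.'' Your argument is exactly the divergence-theorem route, with the details (extension to $C^1(\overline\Omega)$, mollification, Gauss--Green on Lipschitz domains for the vector field $\partial_\ell f\,e_k-\partial_k f\,e_\ell$, then the Leibniz rule on $f=uv$) spelled out more carefully than the paper bothers to.
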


\begin{proof}
This may be proved by localizing the functions $u$ and $v$ to a small neighborhood of
$x_0\in\partial\Omega$ and reducing the problem to the case of flat boundary $\mathbb{R}^{d-1}$.
One may also prove (\ref{parts}) by using the divergence theorem.
\end{proof}

It follows from (\ref{p-1}) and (\ref{parts}) that
\begin{equation}\label{p-2}
\aligned
u_\e(x) &
=-\int_{\partial\Omega} \big( T_{ij} (y)\cdot \nabla_y \big)N_\e (x, y) \cdot 
g_{ij}(y, y/\e)\, d\sigma (y)\\
&\qquad + \int_{\partial\Omega}
N_\e (x, y) g_0 (y, y/\e)\, d\sigma (y).
\endaligned
\end{equation}
In view of (\ref{N-estimate}) this implies that
\begin{equation}\label{p-2-0}
\aligned
|u_\e (x)| &\le C \, \| g\|_\infty \int_{\partial\Omega} \frac{d\sigma (y)}{|x-y|^{d-1}}
 +C \, \| g\|_\infty \int_{\partial\Omega} \frac{d\sigma (y)}{|x-y|^{d-2}}\\
 &\le C\,  \| g\|_\infty \big\{ 1 + |\ln \delta(x) | \big\},
 \endaligned
 \end{equation}
 where $g=\{ g_{ij}, g_0\}$ and $\delta (x)=\text{dist}(x, \partial\Omega)$. 
 
 \begin{remark}\label{remark-2.10}
 {\rm
 It follows from (\ref{p-2-0}) that for any $1<q<\infty$,
 $$
 \| u_\e\|_{L^q(\Omega)} \le C _q\,  \| g\|_\infty,
 $$
 where $C_q$ depends on $q$, $A$  and $\Omega$.
 By interpolation, this, together with (\ref{rate-0}), implies that
 \begin{equation}\label{rate-q}
 \| u_\e -u_0\|_{L^q(\Omega)}
 \le C_{q, \sigma} \, \e^{\frac{1}{q}-\sigma}
 \end{equation}
 for any $2<q<\infty$ and $\sigma \in (0, \frac{1}{q})$.
 Moreover, if $A^*=A$, it follows from \cite{KLS2} that
 \begin{equation}\label{square}
 \| u_\e\|_{H^{1/2}(\Omega)} +\left(\int_\Omega |\nabla u_\e (x)|^2 \, \delta(x)\, dx\right)^{1/2}
 \le C \| g\|_{L^2(\partial\Omega)}.
 \end{equation}
 Thus, by interpolation, we may deduce from (\ref{rate-0})  and (\ref{square}) that
 \begin{equation}\label{rate-h}
 \| u_\e -u_0\|_{H^{\alpha} (\Omega)}
 \le C_{\alpha, \sigma} \e^{\frac12 -\alpha -\sigma}
 \end{equation}
 for any $\alpha \in (0, 1/2)$ and $\sigma\in (0, (1/2)-\alpha)$.
 }
 \end{remark}
 
Using (\ref{size-diff}) and (\ref{derivative-diff}), we  obtain
\begin{equation}\label{p-3}
\aligned
& u^\gamma_\e (x)=
-\int_{\partial\Omega}
\big( T_{ij}(y)\cdot \nabla_y \big)\Psi_{\e, k}^{*\alpha\beta} (y)
\cdot \frac{\partial}{\partial y_k} \big\{ N_0^{\gamma \beta} (x, y)\big\} \cdot
 g_{ij}^\alpha (y, y/\e)\, d\sigma (y)\\
&\qquad\qquad +\int_{\partial\Omega}
N^{\gamma \alpha}_0(x, y) g_0^\alpha(y, y/\e)\, d\sigma (y)
+ R_\e^\gamma(x),
\endaligned
\end{equation}
where the remainder $R_\e$ satisfies
\begin{equation}\label{R-1}
|R_\e(x)|
\le C\e^{1-\sigma} \| g\|_\infty \int_{\partial\Omega} \frac{d\sigma (y)}{|x-y|^{d-\sigma}}
+ C\e \| g\|_\infty
\int_{\partial\Omega} \frac{\ln [\e^{-1}|x-y| +2]}{|x-y|^{d-1}} d\sigma (y).
\end{equation}

\begin{lemma}\label{lemma-p2}
Let $\Omega$ be a bounded $C^{2, \alpha}$ domain for some $\alpha\in (0,1)$.
Then the function $R_\e$, given by (\ref{p-3}), satisfies 
\begin{equation}\label{p2-0}
\| R_\e\|_{L^q(\Omega)} \le C \,  \e^{\frac{1}{q}} ( 1+ |\ln \e|) \| g\|_\infty,
\end{equation}
for any $1<q<\infty$,
where $C$ depends only on $q$,  $A$ and $\Omega$.
\end{lemma}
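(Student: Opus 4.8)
The plan is to estimate the $L^q(\Omega)$-norm of each of the two terms in the bound (\ref{R-1}) separately, using the fact that $\partial\Omega$ is a compact $(d-1)$-dimensional surface together with elementary integral estimates over $\Omega$. For the first term, set $I_1(x) = \int_{\partial\Omega} |x-y|^{\sigma-d}\, d\sigma(y)$. Since $\partial\Omega$ is an $(d-1)$-dimensional Lipschitz (indeed $C^{2,\alpha}$) surface, a standard calculation gives $I_1(x) \le C\,[\delta(x)]^{\sigma-1}$ when $\delta(x) = \mathrm{dist}(x,\partial\Omega) \le 1$, and $I_1(x) \le C$ otherwise. Hence $\|\e^{1-\sigma} I_1\|_{L^q(\Omega)} \le C\e^{1-\sigma}\big(\int_\Omega [\delta(x)]^{q(\sigma-1)}\,dx\big)^{1/q}$, and the last integral is finite precisely when $q(\sigma-1) > -1$, i.e. $\sigma > 1 - 1/q$. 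Choosing $\sigma = 1 - 1/q$ would make it diverge logarithmically, so instead I would pick $\sigma$ slightly larger, say $\sigma = 1 - \tfrac{1}{2q}$; then $\int_\Omega [\delta(x)]^{-1/2}\,dx \le C$ and we get a contribution of order $\e^{1-\sigma} = \e^{1/(2q)}$, which is even better than claimed. To land exactly on $\e^{1/q}$ one argues more carefully: keep $\sigma$ free, split $\Omega$ into the boundary layer $\{\delta(x) < \e\}$ and its complement; on the layer bound $I_1(x) \le C[\delta(x)]^{\sigma-1}$ and compute $\int_{\delta<\e}[\delta]^{q(\sigma-1)}\,dx \le C\e^{q(\sigma-1)+1}$ (valid for $\sigma$ close to $1$), giving $\e^{1-\sigma}\cdot \e^{(q(\sigma-1)+1)/q} = \e^{1/q}$; on the complement $\{\delta \ge \e\}$ use $\int_{\delta\ge\e}[\delta]^{q(\sigma-1)}\,dx \le C\e^{q(\sigma-1)+1}$ as well when $q(\sigma-1)+1<0$. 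Either way one extracts the factor $\e^{1/q}$ up to a possible $|\ln\e|$.

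For the second term, set $I_2(x) = \int_{\partial\Omega} \dfrac{\ln[\e^{-1}|x-y|+2]}{|x-y|^{d-1}}\,d\sigma(y)$. Again using the $(d-1)$-dimensionality of $\partial\Omega$, decompose the surface integral dyadically in $|x-y|$: the shells where $2^{k}\delta(x) \le |x-y| < 2^{k+1}\delta(x)$ contribute $\lesssim \dfrac{\ln[\e^{-1}2^k\delta(x)+2]}{(2^k\delta(x))^{d-1}}\cdot (2^k\delta(x))^{d-2}$ for $k\ge 0$, which sums to $\lesssim [\delta(x)]^{-1}\big(1 + |\ln(\e^{-1}\delta(x)+2)|\big) \lesssim [\delta(x)]^{-1}\big(1 + |\ln\e| + |\ln\delta(x)|\big)$ (using that $\mathrm{diam}(\partial\Omega)$ is bounded and that for $|x-y|\lesssim\delta(x)$ one has $|x-y|\gtrsim\delta(x)$ only on the single innermost shell, where instead one bounds by $C\ln[\e^{-1}\delta(x)+2]\cdot [\delta(x)]^{1-d}\cdot[\delta(x)]^{d-1}$). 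Thus $\e I_2(x) \le C\e\,[\delta(x)]^{-1}(1+|\ln\e|+|\ln\delta(x)|)$. Taking the $L^q$-norm, $\|\e I_2\|_{L^q(\Omega)} \le C\e(1+|\ln\e|)\big(\int_\Omega [\delta(x)]^{-q}(1+|\ln\delta(x)|)^q\,dx\big)^{1/q}$. When $q < 1$ this integral is plainly finite; for general $q$ one splits at $\delta(x) = \e$: on $\{\delta \ge \e\}$, $\int_{\delta\ge\e}[\delta]^{-q}(1+|\ln\delta|)^q\,dx \le C\e^{1-q}(1+|\ln\e|)^q$, contributing $\e\cdot\e^{(1-q)/q}(1+|\ln\e|) = \e^{1/q}(1+|\ln\e|)$; on the layer $\{\delta < \e\}$ one uses instead the cruder pointwise bound $I_2(x) \le C\e^{-1}\ln(\cdots)\int_{\partial\Omega}|x-y|^{-(d-1)}\,d\sigma \le C\e^{-1}(1+|\ln\delta(x)|)(1+|\ln\e|)$ coming from the same dyadic sum truncated at scale $\delta(x)$, so $\e I_2(x) \le C(1+|\ln\delta(x)|)(1+|\ln\e|)$ there and $\int_{\delta<\e} (\cdots) \le C\e(1+|\ln\e|)^q$, contributing $\le C\e^{1/q}(1+|\ln\e|)^{1+1/q}$ — absorbing powers of $|\ln\e|$ into $(1+|\ln\e|)$ as the statement allows.

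Collecting the two contributions yields $\|R_\e\|_{L^q(\Omega)} \le C\e^{1/q}(1+|\ln\e|)\|g\|_\infty$ with $C = C(q,A,\Omega)$, as claimed. The only genuinely delicate point is the treatment of the logarithmically growing kernel in $I_2$ near $\partial\Omega$: one must be careful that the interaction of the $\ln[\e^{-1}|x-y|+2]$ factor with the boundary-layer region $\{\delta(x)<\e\}$ does not produce an extra power of $\e^{-1}$ that would overwhelm the $\e$ prefactor — the resolution being that on that layer $\e^{-1}|x-y|$ is itself of order at most $\e^{-1}\mathrm{diam}(\Omega)$, so the logarithm is only of size $|\ln\e|$, and the region has measure $O(\e)$. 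Everything else is a routine combination of the surface-measure estimate $\sigma(\{y\in\partial\Omega: |x-y|<t\}) \le Ct^{d-1}$ and the one-dimensional estimates $\int_0^1 t^{a}\,dt < \infty$ for $a > -1$, together with the near-boundary/interior splitting used already in (\ref{p-2-0}).
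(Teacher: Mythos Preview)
Your overall strategy---split $\Omega$ at $\delta(x)=\e$ and estimate each region separately---matches the paper's, but the execution has a few slips and is more laborious than necessary.

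First, a minor point: $\e^{1/(2q)}$ is \emph{larger} than $\e^{1/q}$ for small $\e$, so the bound you obtain with $\sigma=1-\tfrac{1}{2q}$ is worse than the target, not better. You correctly abandon this and move to the splitting argument.

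Second, in your dyadic estimate of $I_2$, the measure of the shell $\{y\in\partial\Omega:2^k\delta\le|x-y|<2^{k+1}\delta\}$ on the $(d-1)$-dimensional surface $\partial\Omega$ is of order $(2^k\delta)^{d-1}$, not $(2^k\delta)^{d-2}$. With the correct exponent each shell contributes $\sim\ln[\e^{-1}2^k\delta+2]$ and the sum over $k$ gives $I_2(x)\lesssim(1+|\ln\delta(x)|)(1+|\ln\e|)$, not the much larger $\delta^{-1}(\cdots)$. As a consequence $\e I_2\in L^q(\Omega)$ with norm $\lesssim\e(1+|\ln\e|)$, which is already better than $\e^{1/q}(1+|\ln\e|)$ and needs no splitting; your overestimate forces the extra work and produces surplus powers of $|\ln\e|$ that you then try to ``absorb''---but the statement carries only one such factor, so as written this is not quite licit.

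The paper's proof sidesteps all of this by recording clean pointwise bounds on $R_\e$ rather than on $I_1,I_2$ separately: for $\delta(x)\ge\e$ one reads off $|R_\e(x)|\le C_\sigma(\e/\delta(x))^{1-\sigma}$ directly from (\ref{R-1}); for $\delta(x)<\e$ one does \emph{not} use (\ref{R-1}) at all but returns to the definition of $R_\e$ as a difference of three terms, each bounded via (\ref{N-estimate}) by $C(1+|\ln\delta(x)|)$, exactly as in (\ref{p-2-0}). Integrating these two pointwise bounds in $L^q$ over the respective regions (choosing $\sigma<1-1/q$ for the first) gives the result immediately. Your argument can be repaired---fix the shell measure, and on the boundary layer use the crude bound on $R_\e$ itself---but the paper's route is both shorter and avoids the bookkeeping with two values of $\sigma$.
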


\begin{proof}
Let $x\in \Omega$.
If $\delta (x)=\text{dist}(x, \partial\Omega)\ge \e$, we may use (\ref{R-1}) to show that
\begin{equation}\label{p2-1}
| R_\e (x) |\le C_\sigma \left(\frac{\e}{\delta (x)}\right)^{1-\sigma} \| g\|_\infty
\end{equation}
for any $\sigma \in (0,1)$.
If $\delta(x)\le \e$, the estimates in (\ref{N-estimate}), as in (\ref{p-2-0}), lead to
\begin{equation}\label{p2-2}
|R_\e (x)|
\le C\, \| g\|_\infty ( 1+|\ln \delta(x)|).
\end{equation}
It is not hard to verify  that (\ref{p2-0}) follows from (\ref{p2-1}) and (\ref{p2-2}).
\end{proof}

As $\e \to 0$,  the second term in the RHS of (\ref{p-3}) converges to
\begin{equation}\label{p-4}
w^\gamma_0(x)
=\int_{\partial\Omega}
N^{\gamma \alpha}_0(x, y) \langle g_0^\alpha\rangle (y)\, d\sigma (y),
\end{equation}
where
\begin{equation}\label{g-average}
\langle g_0 \rangle (y)  = \average_{\td} g_0(y, z)\, dz.
\end{equation}
More precisely,  the following results on the convergence rate were obtained in \cite{ASS-2015}.

\begin{lemma}\label{lemma-p3}
Let $w_\e$ denote the second term in the RHS of (\ref{p-3}).
Assume that $\Omega$ is a bounded smooth, uniformly convex domain in $\rd$.
Then, for any $1\le q <\infty$,
\begin{equation}\label{p3-0}
\| w_\e -w_0\|_{L^q(\Omega)}
\le C_q 
\left\{
\aligned
& \e^{\frac{1}{q}} & \quad  &\text{ if  d=3},\\
& \e^{\frac{3}{2q}} & \quad & \text{ if d=4}, \\
& \e^{\frac{2}{q}} (1+|\ln \e|)^{\frac{1}{q}} & \quad & \text{ if } d\ge 5,
\endaligned
\right.
\end{equation}
where $w_0$ is given by (\ref{p-4}).
\end{lemma}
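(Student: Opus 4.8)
The statement to prove is Lemma~\ref{lemma-p3}, which asserts a convergence rate for $w_\e \to w_0$, where
$$
w^\gamma_\e(x) = \int_{\partial\Omega} N_0^{\gamma\alpha}(x,y)\, g_0^\alpha(y, y/\e)\, d\sigma(y),
\qquad
w^\gamma_0(x) = \int_{\partial\Omega} N_0^{\gamma\alpha}(x,y)\, \langle g_0^\alpha\rangle(y)\, d\sigma(y).
$$
The plan is to reduce matters to an oscillatory-integral estimate on the boundary. Since $N_0$ is the Neumann function for the \emph{constant-coefficient} homogenized operator $\mathcal{L}_0$, this is precisely the setting of \cite{ASS-2015}, so the strategy is to match our objects to theirs and invoke their theorem. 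First I would write
$$
w_\e^\gamma(x) - w_0^\gamma(x) = \int_{\partial\Omega} N_0^{\gamma\alpha}(x,y)\, \big( g_0^\alpha(y, y/\e) - \langle g_0^\alpha\rangle(y)\big)\, d\sigma(y),
$$
and expand $g_0(y,z) - \langle g_0\rangle(y)$ in a Fourier series in $z$ over $\td$: $g_0(y,z)-\langle g_0\rangle(y) = \sum_{\xi\in\mathbb{Z}^d\setminus\{0\}} c_\xi(y)\, e^{2\pi i \xi\cdot z}$, with the coefficients $c_\xi(y)$ decaying faster than any polynomial in $|\xi|$ because $g_0$ is smooth. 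Thus the difference is $\sum_{\xi\ne 0} \int_{\partial\Omega} N_0^{\gamma\alpha}(x,y)\, c_\xi^\alpha(y)\, e^{2\pi i \xi\cdot y/\e}\, d\sigma(y)$, a superposition of oscillatory surface integrals of the type analyzed in \cite{ASS-2015}.

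The key input is the stationary-phase / curvature estimate: because $\Omega$ is uniformly convex, the phase $y\mapsto \xi\cdot y$ restricted to $\partial\Omega$ has nondegenerate critical points (exactly two, the points where $n(y)\parallel \xi$), and the Gaussian curvature is bounded below, so the method of stationary phase gives decay $|\xi/\e|^{-(d-1)/2}$ for each such integral against a smooth amplitude. The amplitude $N_0^{\gamma\alpha}(x,\cdot)\, c_\xi^\alpha(\cdot)$ is smooth on $\partial\Omega$ away from $y=x$ but has the singularity $|x-y|^{2-d}$ together with its derivatives blowing up like $|x-y|^{2-d-k}$; one must track how the decay rate degrades near the critical points when $x$ is close to $\partial\Omega$. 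This is exactly the bookkeeping carried out in \cite{ASS-2015}, where integrating the resulting pointwise bounds in $x$ over $\Omega$ (the worst contribution coming from a boundary layer of width comparable to the relevant scale) produces the dimension-dependent exponents: the gain is $\e^{1/q}$ for $d=3$, $\e^{3/(2q)}$ for $d=4$, and $\e^{2/q}(1+|\ln\e|)^{1/q}$ for $d\ge 5$, after summing over $\xi$ using the rapid decay of $c_\xi$. The only thing to check is that our amplitude $g_0(x,y)$, being smooth and $1$-periodic in the second variable with the $x$-dependence entering smoothly, and $N_0$ being the homogenized Neumann function on a smooth uniformly convex domain, fit the hypotheses of the cited result; since they manifestly do, the lemma follows directly.

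The main obstacle, were one to prove this from scratch rather than cite \cite{ASS-2015}, is the interplay between the surface oscillatory decay and the singularity of $N_0(x,y)$ as $y\to x$: near a point $x$ at distance $\delta(x)$ from $\partial\Omega$, the stationary-phase gain competes with the loss from differentiating the singular kernel, and one needs a careful decomposition into dyadic annuli $|x-y|\sim 2^j$ (or $|x-y|\sim 2^j \e^{1/2}$) with separate treatment of the region near the two antipodal critical points versus the region away from them. Balancing these and then integrating $|w_\e(x)-w_0(x)|^q$ over $x\in\Omega$ — where the boundary layer $\delta(x)\lesssim$ (scale) dominates — is what yields the stated $1/q$-type exponents. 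But since Lemma~\ref{lemma-p3} is explicitly attributed to \cite{ASS-2015}, the proof here should simply be the verification that the setup matches and a pointer to that reference; I would keep it to a paragraph rather than reproduce the oscillatory-integral machinery.
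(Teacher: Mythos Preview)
Your proposal is correct and matches the paper's own treatment: the paper does not give a proof of this lemma at all, stating just before it that ``the following results on the convergence rate were obtained in \cite{ASS-2015}'' and then moving on. Your reduction to oscillatory surface integrals via the Fourier expansion of $g_0(y,\cdot)-\langle g_0\rangle(y)$ and the stationary-phase analysis on the uniformly convex boundary is exactly the content of \cite{ASS-2015}, so the verification-and-cite approach you describe is precisely what is called for here.
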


Much of the rest of paper is devoted to the study of the first term in 
the RHS of (\ref{p-3}).
To this end
we first replace the  function $\Psi_{\e, k}^{*\alpha\beta}$ by
\begin{equation}\label{psi}
\psi_{\e, k}^{*\alpha\beta} (x)
=\Psi^{*\alpha\beta}_{\e, k} (x) -P_k^{\alpha\beta} (x) 
-\e \chi_{k}^{*\alpha\beta} (x/\e),
\end{equation}
where $(\chi_{k}^{*\alpha\beta} (y))$ denotes the matrix of correctors for $\mathcal{L}_\e^*$
in $\rd$. Note that
\begin{equation}\label{psi-star}
\mathcal{L}^*_\e (\psi_{\e, k}^{*\beta}) =0 \quad \text{ in } \Omega,
\end{equation}
where $\psi_{\e, k}^{*\beta}=(\psi_{\e, k}^{*1\beta}, \dots, \psi_{\e, k}^{*m\beta})$.
 
We end this section with some observations on its conormal derivatives.

\begin{lemma}\label{lemma-p5}
Let $\psi_{\e, k}^{*\alpha\beta}$ be defined by (\ref{psi}).
Then
\begin{equation}
\left(\frac{\partial}{\partial\nu_\e^*} \big\{ \psi_{\e, k}^{*\beta} \big\} \right)^\alpha (x)
=-n_i (x) b_{ik}^{*\alpha\beta} (x/\e) \quad \text{ for } x\in \partial\Omega,
\end{equation}
where 
\begin{equation}\label{b}
b_{ik}^{*\alpha\beta} (y) =a_{ik}^{*\alpha\beta} (y)+ a_{ij}^{*\alpha\gamma} (y)\frac{\partial}{\partial y_j}
\big( \chi_{k}^{*\gamma \beta}\big) -\widehat{a}_{ik}^{*\alpha\beta}
\end{equation}
and $\widehat{A^*} =(\widehat{a}_{ij}^{*\alpha\beta}) = (\widehat{A})^*$ is the homogenized 
matrix of  $A^*$.
\end{lemma}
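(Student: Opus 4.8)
The plan is to compute the conormal derivative of $\psi_{\e,k}^{*\beta}$ directly from its definition (\ref{psi}) by splitting it into three pieces: the Neumann corrector $\Psi_{\e,k}^{*\beta}$, the affine function $P_k^\beta$, and the oscillating corrector term $\e\chi_k^*(x/\e)$. For the first piece, recall from (\ref{N-corrector}) (applied to $\mathcal{L}_\e^*$) that $\frac{\partial}{\partial\nu_\e^*}(\Psi_{\e,k}^{*\beta}) = \frac{\partial}{\partial\nu_0^*}(P_k^\beta)$ on $\partial\Omega$, where the right-hand side is the conormal derivative associated with the homogenized operator $\mathcal{L}_0^* = -\text{\rm div}(\widehat{A^*}\nabla)$. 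Writing this out componentwise, $\big(\frac{\partial}{\partial\nu_0^*}(P_k^\beta)\big)^\alpha = n_i \widehat{a}_{ij}^{*\alpha\gamma}\frac{\partial}{\partial x_j}(P_k^{\gamma\beta}) = n_i \widehat{a}_{ik}^{*\alpha\beta}$, since $\frac{\partial}{\partial x_j}(P_k^{\gamma\beta}) = \delta_{jk}\delta^{\gamma\beta}$.

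Next I would handle the remaining two pieces together. The conormal derivative of $P_k^\beta + \e\chi_k^*(x/\e)$ with respect to $\mathcal{L}_\e^*$ is $n_i a_{ij}^{*\alpha\gamma}(x/\e)\frac{\partial}{\partial x_j}\big(P_k^{\gamma\beta}(x) + \e\chi_k^{*\gamma\beta}(x/\e)\big)$. Using $\frac{\partial}{\partial x_j}P_k^{\gamma\beta} = \delta_{jk}\delta^{\gamma\beta}$ and $\frac{\partial}{\partial x_j}\big(\e\chi_k^{*\gamma\beta}(x/\e)\big) = (\partial_{y_j}\chi_k^{*\gamma\beta})(x/\e)$, this becomes $n_i\big\{ a_{ik}^{*\alpha\beta}(x/\e) + a_{ij}^{*\alpha\gamma}(x/\e)\frac{\partial}{\partial y_j}(\chi_k^{*\gamma\beta})(x/\e)\big\}$. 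Subtracting, we get
\begin{equation*}
\left(\frac{\partial}{\partial\nu_\e^*}\{\psi_{\e,k}^{*\beta}\}\right)^\alpha(x)
= -n_i(x)\Big\{ a_{ik}^{*\alpha\beta}(x/\e) + a_{ij}^{*\alpha\gamma}(x/\e)\frac{\partial}{\partial y_j}(\chi_k^{*\gamma\beta})(x/\e) - \widehat{a}_{ik}^{*\alpha\beta}\Big\},
\end{equation*}
which is exactly $-n_i(x)b_{ik}^{*\alpha\beta}(x/\e)$ with $b_{ik}^{*\alpha\beta}$ as defined in (\ref{b}).

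The only point requiring a small amount of care — and the closest thing to an obstacle — is justifying that the conormal derivative is additive and that one may differentiate the composite function $\chi_k^*(x/\e)$ by the chain rule in this setting. Since $\chi_k^*$ is periodic and (under the smoothness hypothesis on $A$) is $C^1$, and since $\Psi_{\e,k}^{*\beta}$ is a weak solution whose conormal derivative is understood in the variational sense, the identity is really the statement that for smooth enough $A$ the conormal derivatives of the three summands are each well-defined pointwise on $\partial\Omega$ and add; this follows from interior/boundary regularity for $\mathcal{L}_\e^*$ exactly as in the references \cite{KLS1, AS}. No genuine difficulty arises here — the lemma is essentially a bookkeeping computation — so the proof is short once the three conormal derivatives are written out and combined.
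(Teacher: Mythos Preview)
Your proof is correct. The paper itself does not give an argument but simply cites \cite[p.~1023]{KLS2}; your direct computation---splitting $\psi_{\e,k}^{*\beta}$ into its three summands, using the defining boundary condition (\ref{N-corrector}) for $\Psi_{\e,k}^{*\beta}$, and applying the chain rule to $\e\chi_k^*(x/\e)$---is exactly the straightforward calculation that reference contains.
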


\begin{proof}
See e.g. \cite[p.1023]{KLS2}.
\end{proof}

Note that by the definitions of correctors $\chi_k^{*\alpha\beta}$ and the homogenized matrix $\widehat{A^*}$,
\begin{equation}\label{p101}
\frac{\partial}{\partial y_i} \big\{ b_{ik}^{*\alpha\beta}\big\} =0 \quad
\text{ and } \quad \int_{\td} b_{ik}^{*\alpha\beta} =0.
\end{equation}
This implies that there are 1-periodic functions $f_{\ell i k}^{\alpha\beta}$ with mean value zero
such that
\begin{equation}\label{p102}
b_{ik}^{*\alpha\beta} =\frac{\partial}{\partial y_\ell} \big\{ f_{\ell i k}^{\alpha\beta}\big\}
\quad \text{ and } \quad 
f_{\ell i k}^{\alpha\beta} =- f_{i \ell k}^{\alpha\beta}.
\end{equation}
(see  e.g. \cite{KLS2}).
 As a result, we obtain 
 \begin{equation}\label{p103}
 \aligned
 n_i (x) b_{ik}^{* \alpha\beta} (x/\e)
 =\frac12 ( n_i e_j -n_j e_i) \cdot \nabla_x \big\{ \e f_{j i k}^{\alpha\beta} (x/\e)\big\}.
 \endaligned
 \end{equation}
 This shows that $\e^{-1}\psi^{*\beta}_{\e, k} $ is a solutions of the Neumann problem (\ref{NP-0})
 with  $g_{ij} (x, y)=(1/2)f^\beta_{j i k }(y)$ and $g_0=0$.



\section{Neumann problems in a half-space}
\setcounter{equation}{0}

For $n\in \mathbb{S}^{d-1}$ and $a\in \mathbb{R}$, let
\begin{equation}\label{half}
\mathbb{H}^d_n (a)
=\big\{ x\in \rd: x\cdot n <-a \big\}
\end{equation}
denote a half-space with outward unit normal $n$.
Consider the Neumann problem
\begin{equation}\label{half-NP}
\left\{
\aligned
\text{\rm div} (A\nabla u) & = 0 & \quad & \text{ in } \mathbb{H}_n^d(a),\\
n \cdot A\nabla u &=T\cdot \nabla g  & \quad & \text{ on } \partial \mathbb{H}_n^d(a),
\endaligned
\right.
\end{equation}
where $T\in \mathbb{R}^d$, $|T|\le 1$ and $T\cdot n=0$.
We will assume that $g\in C^\infty(\td)$ with mean value zero and
$n$ satisfies the Diophantine condition 
\begin{equation}\label{D-condition}
| ( I-n \otimes n) \xi|\ge \kappa |\xi|^{-2} \quad \text{ for any } \xi\in \mathbb{Z}^d\setminus \{ 0\},
\end{equation}
where $\kappa>0$ and $n\otimes n=(n_i n_j)_{d\times d}$. 
We emphasize  again that all constants $C$ will be independent of $\kappa$.
Let $M$ be a $d\times d$ orthogonal matrix such that $Me_d=-n$.
Note that the last column of $M$ is $-n$. Let $N$ denote the $d\times (d-1)$
matrix of the first $d-1$ columns of $M$. Since $MM^T=I$, we see that
\begin{equation}\label{2.0}
N N^T + n \otimes n =I,
\end{equation}
where $M^T$ denotes the transpose of $M$.

Suppose that a solution of (\ref{half-NP}) is given by
\begin{equation}\label{V}
u(x)= V (x- (x\cdot n) n, -x\cdot n),
\end{equation}
where $V=V(\theta, t)$ is a function of $(\theta, t)\in \mathbb{T}^d\times [a, \infty)$.
Note that
\begin{equation}\label{2.1}
\nabla_x u = \Big( I - n \otimes n, -n \Big) \left(\begin{array}{c} \nabla_\theta V\\ \partial_t V \end{array}\right)
=M \left( \begin{array} {c}N^T \nabla_\theta \\ \partial_t  \end{array} \right)V,
\end{equation}
where we have used (\ref{2.0}).
It follows from (\ref{half-NP}) and (\ref{2.1}) that $V$ is a solution of
\begin{equation}\label{half-NP-V}
\left\{
\aligned
\left( \begin{array} {c}N^T \nabla_\theta \\ \partial_t  \end{array}\right)
\cdot B \left( \begin{array} {c}N^T \nabla_\theta \\ \partial_t  \end{array} \right)V
& =0  &\quad & \text{ in } \mathbb{T}^d \times (a, \infty),\\
-e_{d+1} \cdot B \left( \begin{array} {c}N^T \nabla_\theta \\ \partial_t  \end{array} \right)V
& =T\cdot \nabla_\theta \widetilde{g} & \quad & \text{ on } \mathbb{T}^d \times \{ a \},
\endaligned
\right.
\end{equation}
where 
\begin{equation}\label{B}
B=B(\theta, t)=M^T A(\theta-tn) M,
\end{equation}
$\widetilde{g}(\theta,t) =g(\theta -tn)$, and we have used the assumption  that $T\cdot n=0$ to obtain 
$T\cdot \nabla_x g =T\cdot \nabla_\theta \widetilde{g}$.
Observe that if $V^0$ is a solution of (\ref{half-NP-V}) with $a=0$ and
$$
V^a (\theta, t)= V^0(\theta-an, t-a) \quad \text{ for } a\in \mathbb{R},
$$
then $V^a$ is a solution of (\ref{half-NP-V}).
This follows from the fact  that 
$$
B(\theta-an, t-a)=B(\theta, t) \quad \text{ and } \quad \widetilde{g}(\theta-an, t-a)=\widetilde{g}(\theta, t).
$$
As a result, it suffices to study the boundary value problem (\ref{half-NP-V}) for $a=0$.
To this end, we shall consider the Neumann problem 
\begin{equation}\label{half-NP-V-G}
\left\{
\aligned
-\left( \begin{array} {c}N^T \nabla_\theta \\ \partial_t  \end{array}\right)
\cdot B \left( \begin{array} {c}N^T \nabla_\theta \\ \partial_t  \end{array} \right)V -\lambda  \Delta_\theta V
& = \left( \begin{array} {c}N^T \nabla_\theta \\ \partial_t  \end{array} \right)G
 &\quad & \text{ in } \mathbb{T}^d \times \mathbb{R}_+,\\
-e_{d+1} \cdot B \left( \begin{array} {c}N^T \nabla_\theta \\ \partial_t  \end{array} \right)V
& =T\cdot \nabla_\theta g +e_{d+1} \cdot G
& \quad & \text{ on } \mathbb{T}^d \times \{ 0 \},
\endaligned
\right.
\end{equation}
where $\lambda>0$ 
 and the term $-\lambda \Delta_\theta V $ is added to regularize the system.

Let 
\begin{equation}\label{H}
\mathcal{H}=\left\{ f\in H^1_{\loc} (\td\times \mathbb{R}_+): 
\int_0^\infty \int_{\td} \left( | \nabla_\theta f|^2 +|\partial_t f|^2 \right)<\infty \right\}.
\end{equation}
We call $V\in \mathcal{H}$ a weak solution of (\ref{half-NP-V-G}) 
with $g\in H^1(\td)$ and $G \in L^2(\td \times \mathbb{R}_+)$, if
\begin{equation}\label{weak}
\aligned
 &\int_0^\infty \int_{\td} \left\{ 
B\left(\begin{array}{c} N^T\nabla_\theta \\ \partial_t  \end{array}\right) V 
\cdot
\left(\begin{array}{c}  N^T\nabla_\theta \\ \partial_t  \end{array}\right)W
+\lambda 
\left(\begin{array}{c} \nabla_\theta \\ 0  \end{array}\right) V 
\cdot
\left(\begin{array}{c}  \nabla_\theta \\ 0  \end{array}\right)W\right\} \, d\theta dt \\
& =-\int_{\td} (T\cdot \nabla_\theta g) \cdot W(\theta, 0)\, d\theta
-\int_0^\infty \int_{\td} G \cdot \left( \begin{array} {c}N^T \nabla_\theta \\ \partial_t  \end{array}\right) W\, d\theta dt
\endaligned
\end{equation}
for any $W\in \mathcal{H}$.

\begin{prop}\label{prop-2.1}
Let $g\in H^1(\td)$ and $G\in L^2(\td\times \mathbb{R}_+)$. 
Then the boundary value problem (\ref{half-NP-V-G})
has a solution, unique up to a constant, in $\mathcal{H}$.
Moreover, the solution $V$ satisfies
\begin{equation}\label{2.1-0}
\int_0^\infty \int_{\td} \left( |N^T \nabla_\theta V|^2 +|\partial_t V|^2 \right)
\le C \Big\{ \|g\|^2_{H^1(\td)} +\| G\|^2_{L^2(\td\times \mathbb{R}_+)} \Big\},
\end{equation}
and
\begin{equation}\label{2.1-000}
\lambda \int_0^\infty \int_{\td}
|\nabla_{\theta} V |^2 \le C \Big\{ \|g\|^2_{H^1(\td)} +\| G\|^2_{L^2(\td\times \mathbb{R}_+)} \Big\},
\end{equation}
where $C$ depends only on $d$, $m$ and $\mu$.
\end{prop}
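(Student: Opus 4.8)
The plan is to obtain existence and the energy estimates (\ref{2.1-0})--(\ref{2.1-000}) simultaneously via the Lax--Milgram theorem applied to the bilinear form implicit in the weak formulation (\ref{weak}), working on the Hilbert space $\mathcal{H}$ modulo constants. First I would define, for $V, W \in \mathcal{H}$,
\begin{equation*}
a_\lambda(V, W) = \int_0^\infty \int_{\td} \left\{ B \binom{N^T \nabla_\theta}{\partial_t} V \cdot \binom{N^T \nabla_\theta}{\partial_t} W + \lambda \binom{\nabla_\theta}{0} V \cdot \binom{\nabla_\theta}{0} W \right\} d\theta\, dt,
\end{equation*}
and let $F(W)$ denote the right-hand side of (\ref{weak}). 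The first point is boundedness: $|a_\lambda(V,W)| \le C(1+\lambda)\|V\|_{\mathcal{H}}\|W\|_{\mathcal{H}}$ is immediate from Cauchy--Schwarz once the gradient seminorm
\[
\||V\||^2 := \int_0^\infty\int_{\td}\big(|N^T\nabla_\theta V|^2 + |\partial_t V|^2\big)
\]
is shown to be equivalent, on $\mathcal{H}$, to the full seminorm $\int_0^\infty\int_{\td}(|\nabla_\theta V|^2 + |\partial_t V|^2)$; this equivalence follows because the $(d+1)\times(d+1)$ matrix formed by stacking $N^T$ (of rank $d-1$) with the $t$-derivative row $e_{d+1}$ has trivial kernel only after one notes that the $n\otimes n$ direction in $\nabla_\theta$ is not controlled — so in fact $\||\cdot\||$ is \emph{not} equivalent to the full $\mathcal{H}$-seminorm, and this is precisely why the regularizing term $-\lambda\Delta_\theta V$ is needed. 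Thus I would instead work with the $\lambda$-dependent norm $\|V\|_\lambda^2 = \||V\||^2 + \lambda\int_0^\infty\int_{\td}|\nabla_\theta V|^2$ and verify that $(\mathcal{H}/\text{constants}, \|\cdot\|_\lambda)$ is complete. Coercivity $a_\lambda(V,V) \ge \mu\||V\||^2 + \lambda\int|\nabla_\theta V|^2 \ge \mu\|V\|_\lambda^2$ then follows directly from the ellipticity (\ref{ellipticity}) transported through the orthogonal change of variables $B = M^T A M$, since $M$ is orthogonal and hence $B$ satisfies the same ellipticity bounds as $A$.

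The remaining ingredient is the continuity of the linear functional $F$, where the only non-trivial term is the boundary integral $\int_{\td}(T\cdot\nabla_\theta g)\cdot W(\theta,0)\,d\theta$. Here I would integrate by parts on $\td$ to move the tangential derivative off $g$ (using $T\cdot n = 0$, so $T\cdot\nabla_\theta$ is a genuine tangential field with no boundary terms on the torus), rewriting it as $-\int_{\td} g\,(T\cdot\nabla_\theta W(\theta,0))\,d\theta$; then I would estimate $\|W(\cdot,0)\|_{H^{1/2}(\td)}$ — or more simply $\|\nabla_\theta W(\cdot,0)\|_{H^{-1/2}(\td)}$ paired against $g \in H^1$, or even cruder, use the trace/interpolation inequality $\|W(\cdot,0)\|_{L^2(\td)}^2 \le C\int_0^\infty\int_{\td}(|\partial_t W|^2 + |W|^2)$ after subtracting the mean — by the standard one-dimensional trace bound $|w(0)|^2 \le 2\int_0^\infty(|w'|^2 + \varepsilon^{-1}|w|^2)$ applied slicewise and then Fourier/Poincaré in $\theta$ to absorb the $|W|^2$ term into $|\nabla_\theta W|^2$ on the nonzero frequencies. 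The zero-frequency-in-$\theta$ part of $W$ contributes nothing to the boundary term because $T\cdot\nabla_\theta g$ has mean zero. This gives $|F(W)| \le C(\|g\|_{H^1(\td)} + \|G\|_{L^2}) \||W\|| \le C(\|g\|_{H^1} + \|G\|_{L^2})\|W\|_\lambda$, so Lax--Milgram yields a unique $V \in \mathcal{H}/\text{constants}$ with $a_\lambda(V,W) = F(W)$ for all $W$.

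Finally, testing the equation with $W = V$ and combining coercivity with the bound on $F$ gives
\[
\mu\||V\||^2 + \lambda\int_0^\infty\int_{\td}|\nabla_\theta V|^2 \le a_\lambda(V,V) = F(V) \le C(\|g\|_{H^1} + \|G\|_{L^2})\||V\||,
\]
whence $\||V\|| \le C(\|g\|_{H^1} + \|G\|_{L^2})$, which is (\ref{2.1-0}); feeding this back into the displayed inequality gives $\lambda\int_0^\infty\int_{\td}|\nabla_\theta V|^2 \le C(\|g\|_{H^1}^2 + \|G\|_{L^2}^2)$, which is (\ref{2.1-000}). The constant $C$ depends only on $d$, $m$, $\mu$ through the ellipticity constant and the dimensional trace/Poincaré constants, as claimed. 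The main obstacle I anticipate is the careful handling of the boundary functional: one must route the tangential-derivative integration by parts and the trace estimate so that the $H^1(\td)$ norm of $g$ — not something stronger — suffices, and so that the resulting bound is against $\||W\||$ alone (no $\lambda$), which is what makes the two separate estimates (\ref{2.1-0}) and (\ref{2.1-000}) both come out with $\lambda$-independent constants.
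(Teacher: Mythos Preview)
Your overall Lax--Milgram framework is exactly what the paper does, and your identification of the boundary term $\int_{\td}(T\cdot\nabla_\theta g)\cdot W(\theta,0)\,d\theta$ as the only nontrivial piece is correct. The issue is that your sketched routes for this term do not all deliver the bound against $\||W\||$ alone that you correctly flag as essential.

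Your ``even cruder'' route (1D trace plus Poincar\'e in $\theta$) fails: Poincar\'e on the torus controls $W-\bar W$ by the \emph{full} gradient $\nabla_\theta W$, not by $N^T\nabla_\theta W$, so the resulting bound on $F(W)$ involves $\big(\int|\nabla_\theta W|^2\big)^{1/2}$ and hence carries a factor $\lambda^{-1/2}$ when rewritten in terms of $\|W\|_\lambda$. That would make the constants in (\ref{2.1-0})--(\ref{2.1-000}) blow up as $\lambda\to 0$. Your $H^{-1/2}$ route can in fact be made to work, but it needs the Lions--Magenes interpolation trace theorem applied to $u=N^T\nabla_\theta W$ (using $u\in L^2_tL^2_\theta$ and $\partial_t u = N^T\nabla_\theta(\partial_t W)\in L^2_tH^{-1}_\theta$), which you do not spell out.

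The paper's device is more elementary and worth knowing: write
\[
\int_{\td}(T\cdot\nabla_\theta g)\cdot W(\theta,0)\,d\theta
=\int_0^1\!\!\int_{\td}(T\cdot\nabla_\theta g)\cdot\big(W(\theta,0)-W(\theta,t)\big)
+\int_0^1\!\!\int_{\td}(T\cdot\nabla_\theta g)\cdot W(\theta,t).
\]
The first piece is controlled by $\|\nabla_\theta g\|_{L^2}\big(\int_0^1\!\int|\partial_t W|^2\big)^{1/2}$ via the fundamental theorem of calculus in $t$. For the second, integrate by parts in $\theta$ and then use the identity $T=NN^TT$ (from $T\cdot n=0$ and $NN^T=I-n\otimes n$) to write $T\cdot\nabla_\theta W=(N^TT)\cdot(N^T\nabla_\theta W)$, giving the bound $\|g\|_{L^2}\big(\int_0^1\!\int|N^T\nabla_\theta W|^2\big)^{1/2}$. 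This yields (\ref{2.1-00}) directly, with no $\lambda$ and no interpolation theory.
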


\begin{proof}
This follows readily from the Lax-Milgram theorem.
One only needs to observe that
\begin{equation}\label{2.1-00}
\Big|\int_{\td} (T\cdot \nabla_\theta g) \cdot W(\theta, 0)\, d\theta \Big|
\le C \| g\|_{H^1(\td)}
\left(\int_0^1 \int_{\td} \left( |N^T \nabla_\theta W|^2 +|\partial_t W|^2 \right)\right)^{1/2}
\end{equation}
for any $W\in \mathcal{H}$.
Indeed, write
\begin{equation}\label{2.1-1}
\aligned
& \int_{\td} (T\cdot \nabla_\theta g) \cdot W(\theta, 0)\, d\theta= \\
& \int_0^1\int_{\td} (T\cdot \nabla_\theta g)  \cdot ( W(\theta, 0) -W(\theta, t) )\, d\theta dt
+\int_0^1\int_{\td} (T\cdot \nabla_\theta g) \cdot W(\theta, t)\, d\theta dt.
\endaligned
\end{equation}
It is easy to see that the first term in the RHS of (\ref{2.1-1}) is bounded by
$$
 C\| \nabla_\theta g \|_{L^2(\td)} \left(\int_0^1 \|\partial_t W\|_{L^2(\td)}^2\, dt\right)^{1/2}.
 $$
To handle the second term in the RHS of (\ref{2.1-1}), we use
$$
\int_0^1\int_{\td} (T\cdot \nabla_\theta g) \cdot W(\theta, t)\, d\theta dt
=-\int_0^1 g \cdot (T\cdot \nabla_\theta W) (\theta, t)\, d\theta dt
$$
and 
$$
T\cdot \nabla_{\theta} W =NN^T T \cdot \nabla_{\theta} W = T\cdot NN^T \nabla _\theta W
$$
to bound it by
$$
 C \| g\|_{L^2(\td)} \left(\int_0^1 \| N^T\nabla_\theta W\|_{L^2(\td)}^2 \, dt \right)^{1/2}.
 $$
 The estimate (\ref{2.1-00}) now follows.
\end{proof}

\begin{prop}\label{prop-2.2}
Let $g\in H^k(\td)$ and $G\in L^2(\mathbb{R}_+, H^{k-1}(\td))$
 for some $k\ge 1$. Then the solution of (\ref{half-NP-V-G}), given by Proposition 
\ref{prop-2.1}, satsfies
\begin{equation}\label{2.2-0}
\aligned
 & \int_0^\infty \left( \|  N^T \nabla_\theta V \|^2_{H^{k-1} (\td)}
 + \| \partial_t V\|_{H^{k-1}(\td)}^2  +\lambda \| V\|^2_{H^k(\td)} \right)\, dt\\
 &
\qquad\qquad
\le C_k \left\{
  \|g\|^2_{H^k(\td)}
 + \int_0^\infty \| G\|^2_{H^{k-1}(\td)} \right\}\, dt,
 \endaligned
\end{equation}
where $C_k$ depends on $d$, $m$, $k$, $\mu$ and $\| A\|_{C^{k-1}(\td)}$.
\end{prop}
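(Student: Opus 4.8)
The plan is to prove the higher-order regularity estimate (\ref{2.2-0}) by induction on $k$, differentiating the equation (\ref{half-NP-V-G}) tangentially and applying the energy estimate of Proposition~\ref{prop-2.1} at each step. The base case $k=1$ is precisely Proposition~\ref{prop-2.1}, where the estimate $\int_0^\infty\int_{\td}(|N^T\nabla_\theta V|^2+|\partial_t V|^2)+\lambda\int_0^\infty\int_{\td}|\nabla_\theta V|^2\le C\{\|g\|_{H^1}^2+\|G\|_{L^2}^2\}$ is exactly (\ref{2.2-0}) with $k=1$ (note $H^0=L^2$ and the $\lambda$-term supplies the $\lambda\|V\|_{H^1}^2$ bound; strictly one also wants $\lambda\|V\|_{L^2}^2$ controlled, which follows since only $\nabla_\theta V$ appears and constants are harmless here because $V$ is determined up to a constant — one normalizes $\int_{\td} V(\theta,0)\,d\theta = 0$).

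For the inductive step, suppose (\ref{2.2-0}) holds with $k-1$ in place of $k$, and assume $g\in H^k(\td)$, $G\in L^2(\mathbb{R}_+,H^{k-1}(\td))$. First I would apply a tangential difference quotient $\partial_\theta^\alpha$ with $|\alpha|=k-1$ (or, to be rigorous, a horizontal difference quotient $D_h^\alpha$ in the periodic variable $\theta$, passing to the limit at the end) to the weak formulation (\ref{weak}). Because the coefficient matrix $B=B(\theta,t)=M^TA(\theta-tn)M$ is smooth with $\|B\|_{C^{k-1}(\td)}\le C\|A\|_{C^{k-1}(\td)}$, differentiating the bilinear form produces the "good" term $\int_0^\infty\int_{\td}B\,(N^T\nabla_\theta,\partial_t)\partial_\theta^\alpha V\cdot(N^T\nabla_\theta,\partial_t)\partial_\theta^\alpha W + \lambda\int_0^\infty\int_{\td}|\nabla_\theta\partial_\theta^\alpha V|^2$ plus commutator terms in which at most $|\alpha|$ derivatives fall on $B$ and strictly fewer than $k-1+1$ derivatives fall on $V$, so they are controlled by $\|A\|_{C^{k-1}}$ times lower-order tangential norms of $\nabla V$, which are bounded by the inductive hypothesis. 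On the right-hand side, differentiating the $G$-term is immediate since $\partial_\theta^\alpha G\in L^2$, and differentiating the boundary term $\int_{\td}(T\cdot\nabla_\theta g)\cdot W(\theta,0)$ gives $\int_{\td}(T\cdot\nabla_\theta\partial_\theta^\alpha g)\cdot\partial_\theta^\alpha W(\theta,0)$, which by the trace-type inequality (\ref{2.1-00}) (applied to $\partial_\theta^\alpha g\in H^1$ and test function $\partial_\theta^\alpha W$, using $T=NN^TT$ so that only $N^T\nabla_\theta$ of the test function appears) is bounded by $C\|g\|_{H^k(\td)}(\int_0^1\int_{\td}(|N^T\nabla_\theta\partial_\theta^\alpha W|^2+|\partial_t\partial_\theta^\alpha W|^2))^{1/2}$. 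Testing with $W=\partial_\theta^\alpha V$ (legitimate after the difference-quotient regularization and Caccioppoli-type absorption) and using ellipticity of $B$ then yields, after summing over $|\alpha|=k-1$ and absorbing the commutators, the bound $\int_0^\infty\int_{\td}(|N^T\nabla_\theta V|_{H^{k-1}}^2+|\partial_t V|_{H^{k-1}}^2)+\lambda\int_0^\infty\int_{\td}|\nabla_\theta V|_{H^k}^2\le C_k\{\|g\|_{H^k}^2+\int_0^\infty\|G\|_{H^{k-1}}^2\,dt\}$, which is (\ref{2.2-0}).

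The main obstacle I expect is handling the tangential derivatives cleanly in the unbounded $t$-direction and at the boundary $t=0$: since the domain $\td\times\mathbb{R}_+$ is only half-infinite and we are taking $\theta$-derivatives (not $t$-derivatives), standard interior difference-quotient arguments apply in the $\theta$ variables with no boundary issue, but one must make sure the commutator terms, which involve $N^T\nabla_\theta$ and $\partial_t$ of $V$ with fewer derivatives, are genuinely controlled by the inductive hypothesis uniformly in $\lambda$ and in $\kappa$ — the point being that only $N^T\nabla_\theta V$ and $\partial_t V$, not the full $\nabla_\theta V$, appear in the degenerate bilinear form, and these are exactly the quantities the induction controls. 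A secondary subtlety is the appearance of the regularizing term $\lambda\Delta_\theta V$: it is elliptic in all of $\nabla_\theta$ and thus helps, giving the extra $\lambda\|V\|_{H^k}^2$ bound, but its constant must not be allowed to degenerate as $\lambda\to0$ in the first two terms of the left side, which is automatic since the $B$-part of the coercivity is $\lambda$-independent. Once (\ref{2.2-0}) is established at level $k$, one passes from difference quotients to genuine derivatives by the usual weak-compactness argument, completing the induction.
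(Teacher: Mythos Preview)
Your proposal is correct and follows essentially the same approach as the paper: induction on $k$, with the base case $k=1$ given by Proposition~\ref{prop-2.1}, and the inductive step carried out by applying the $k=1$ estimate to tangential difference quotients $\{V(\theta+se_j,t)-V(\theta,t)\}s^{-1}$ and passing to the limit. The paper's own proof is terse (``the proof is standard''), and your write-up simply supplies the details the paper omits---the commutator terms from differentiating $B$, the use of the trace inequality (\ref{2.1-00}) for the boundary term, and the absorption argument---all of which are exactly what is needed.
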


\begin{proof}
The proof is standard.
The case $k=1$ is given in Proposition \ref{prop-2.1}.
To prove the estimate for $k=2$, one applies the estimate for $k=1$ to the quotient of difference 
$  \left\{  V(\theta+  s e_j , t) -V (\theta, t)\right\}  s^{-1}$ and lets $s\to 0$. 
The general case follows similarly by an induction argument on $k$.
\end{proof}

\begin{prop}\label{prop-2.3}
Let $g\in H^{k+\ell-1}(\td)$ for some $k, \ell\ge 1$.
Suppose that 
$$
\partial_t^\alpha G \in L^2(\mathbb{R}_+, H^{k+\ell-2-\alpha}(\td))
\quad \text{ for } \ 0\le \alpha \le \ell-1.
$$
Then the solution of (\ref{half-NP-V-G}), given by Proposition 
\ref{prop-2.1}, satsfies
\begin{equation}\label{2.3-0}
  \int_0^\infty  \|  \partial_t^\ell  V \|^2_{H^{k-1} (\td)} \, dt
\le C \left\{
  \|g\|^2_{H^{k+\ell-1} (\td)}
 + \sum_{0\le \alpha\le \ell-1}
 \int_0^\infty \| \partial_t^\alpha  G\|^2_{H^{k+\ell -2-\alpha}(\td)} \right\}\, dt,
\end{equation}
where $C$ depends on $d$, $m$, $k$, $\ell$, $\mu$ and $\| A\|_{C^{k+\ell-2}(\td)}$.
\end{prop}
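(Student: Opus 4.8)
The plan is to reduce the estimate for the normal derivative $\partial_t^\ell V$ to the already-established tangential estimates of Propositions \ref{prop-2.1} and \ref{prop-2.2} by using the equation itself to trade each $t$-derivative beyond the first one for two tangential derivatives. First I would rewrite the interior equation in (\ref{half-NP-V-G}) by isolating the $\partial_t^2 V$ term. Expanding the second-order operator
$$
-\left( \begin{array} {c}N^T \nabla_\theta \\ \partial_t  \end{array}\right)
\cdot B \left( \begin{array} {c}N^T \nabla_\theta \\ \partial_t  \end{array} \right)V
= -B_{d+1,d+1}\,\partial_t^2 V
-\big(\text{terms with at most one } \partial_t \text{ and at least one } N^T\nabla_\theta\big) V,
$$
and using the fact that $B_{d+1,d+1}=e_{d+1}\cdot B e_{d+1}=n\cdot A n$ is bounded below by $\mu$ thanks to (\ref{ellipticity}) (together with $|Me_{d+1}|=|{-n}|=1$), one may solve for $\partial_t^2 V$:
\begin{equation*}
\partial_t^2 V = \mathcal{B}\Big\{ \lambda\Delta_\theta V
+ \big(\text{first-order-in-}t, \text{ with } N^T\nabla_\theta\big) V
+ \left( \begin{array} {c}N^T \nabla_\theta \\ \partial_t  \end{array} \right)G \Big\},
\end{equation*}
where $\mathcal{B}=(n\cdot An)^{-1}$ is a smooth, bounded, $1$-periodic coefficient (translated as in (\ref{B})). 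Schematically, $\partial_t^2 V$ is expressed through $N^T\nabla_\theta(\partial_t V)$, $(N^T\nabla_\theta)^2 V$, $\lambda\Delta_\theta V$, $\partial_t V$, and the data terms $N^T\nabla_\theta G$, $\partial_t G$, all with smooth bounded coefficients.

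Next I would run an induction on $\ell$. The case $\ell=1$ is exactly Proposition \ref{prop-2.2} with $k$ replaced by $k$ there (note $H^{k-1}$ of $\partial_t V$ is controlled, and one uses $G\in L^2(\mathbb{R}_+,H^{k-1})$, which is the $\alpha=0$ hypothesis with the index $k+\ell-2-\alpha = k-1$). For the inductive step, differentiate the solved-for identity for $\partial_t^2 V$ a total of $\ell-2$ times in $t$, obtaining $\partial_t^\ell V$ as a sum of terms of the form (smooth coefficient)$\times \partial_t^j (N^T\nabla_\theta)^i V$ with $j\le \ell-1$ and $i\ge 1$, plus $\lambda$-terms $\partial_t^j\Delta_\theta V$ with $j\le\ell-2$, plus data terms $\partial_t^j N^T\nabla_\theta G$ and $\partial_t^{j+1}G$ with $j\le \ell-2$. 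Taking the $L^2(\mathbb{R}_+, H^{k-1}(\td))$ norm of each piece: every $V$-term has the shape $\partial_t^{j} V$ hit with at least one tangential derivative, i.e. it lives in $L^2(\mathbb{R}_+, H^{k-1+(\text{extra tangential order})})$ and is bounded, by the inductive hypothesis applied with the pair $(k', \ell')=(k+1, j)$ or $(k+2,j)$ for $j\le\ell-1$ (so that $k'+\ell'-1 \le k+\ell-1$), by $\|g\|^2_{H^{k+\ell-1}}$ plus the stated sum of data norms; the $\lambda$-terms are handled the same way using the $\lambda\|V\|^2_{H^k}$ output in (\ref{2.2-0}) and absorbing the $\lambda$ (here one can afford to throw away a factor, since $\lambda>0$ is fixed, but in fact the $\mathcal{B}$-coefficient already carries no $\lambda$ and $\lambda\Delta_\theta V\cdot\mathcal{B}$ is estimated by $\lambda\|V\|_{H^{k+1}}^2$-type bounds which follow by the same bootstrap from Proposition \ref{prop-2.2}); and the $G$-terms match exactly the hypothesis $\partial_t^\alpha G\in L^2(\mathbb{R}_+,H^{k+\ell-2-\alpha})$ for $0\le\alpha\le\ell-1$, since $\partial_t^j N^T\nabla_\theta G$ costs one order of regularity beyond $\partial_t^j G$ and $j\le \ell-2$ gives $j+1\le\ell-1$. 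Summing finitely many such terms yields (\ref{2.3-0}).

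The main obstacle, and the only place requiring genuine care rather than bookkeeping, is making sure the induction is set up with the right joint ordering of the indices $(k,\ell)$ so that every term produced actually falls under a hypothesis already proved: each application of the inductive hypothesis must have strictly smaller $\ell$ while possibly larger $k$, with the total $k+\ell$ non-increasing, and one must check that Proposition \ref{prop-2.2} supplies the base regularity in $\theta$ uniformly in $t$ (it does, giving $\|N^T\nabla_\theta V\|_{H^{k-1}}$, $\|\partial_t V\|_{H^{k-1}}$ and $\lambda\|V\|_{H^k}$ in $L^2_t$, for all $k$, with the $A$-regularity requirement $\|A\|_{C^{k-1}}$ which aggregates to $\|A\|_{C^{k+\ell-2}}$ in the final constant). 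A secondary technical point is the uniform ellipticity of $B_{d+1,d+1}$, which must be invoked to invert it; this is immediate from (\ref{ellipticity}) applied to the unit vector $n$, and the resulting coefficient $\mathcal{B}$ is smooth with $C^j$ norms controlled by $C^j$ norms of $A$, so it does not spoil the Sobolev estimates. Once these are in place the argument is a routine, if slightly tedious, difference-quotient/bootstrap computation of exactly the type indicated in the proof of Proposition \ref{prop-2.2}, and I would present it by induction with the schematic expansion above rather than writing out all index combinations.
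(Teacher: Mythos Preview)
Your proposal is correct and follows essentially the same approach as the paper: solve the interior equation for $\partial_t^2 V$ in terms of lower-order $t$-derivatives with extra tangential derivatives, then induct on $\ell$ by differentiating this identity in $t$ and invoking Proposition~\ref{prop-2.2} at each step. The paper's proof is terser but identical in structure; your explicit observation that $B_{dd}=n\cdot A(\theta-tn)\,n\ge\mu$ (justifying the inversion) is a detail the paper leaves implicit, and your bookkeeping of the index pairs $(k',\ell')$ in the induction is more careful than what the paper spells out.
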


\begin{proof}
The case $\ell=1$ is contained in Proposition \ref{prop-2.2}.
To see the case $\ell=2$, we observe that the second-order equation in (\ref{half-NP-V-G})
allows us to obtain 
\begin{equation}\label{2.3-1}
\aligned
\partial_t^2 V
 &= \text{\rm a linear combination of }\\
& \qquad
\nabla_\theta (N^T\nabla_\theta)V, N^T\nabla_\theta V,
 \partial_t \nabla_\theta V, \partial_tV, \nabla_\theta G, \lambda\Delta_\theta V,
\partial_t G
\endaligned
\end{equation}
with smooth coefficients. It follows that
$$
\aligned
 \|\partial_t^2 V\|_{H^{k-1}(\td)}
\le C  & \Big\{ \| N^T \nabla_\theta V \|_{H^k(\td)}
+\|\partial_t V \|_{H^k(\td)}
+ \|  G \|_{H^{k}(\td)}\\
&\qquad\qquad
+\| \partial_t G \|_{H^{k-1}(\td)} 
+\lambda \| V\|_{H^{k+1}(\td)}
\Big\}.
\endaligned
$$
This, together with the estimate (\ref{2.2-0}), gives (\ref{2.3-0}) for $\ell=2$.
The general case follows by differentiating (\ref{2.3-1}) in $t$ and using an induction argument on $\ell$.
\end{proof}

\begin{prop}\label{prop-2.4}
Suppose that $n$ satisfies the Diophantine condition (\ref{D-condition}) with constant $\kappa>0$.
Let $V$ be the solution of (\ref{half-NP-V-G}), given by Proposition \ref{prop-2.1}. 
Let 
$$
\widetilde{V}(\theta, t) = V(\theta, t) -\average_{\td} V(\cdot, t).
$$
Then
\begin{equation}\label{2.4-0}
\int_0^\infty \kappa^2 \| \widetilde{V} \|_{H^k(\td)}^2 dt \le C \Big\{
\| g\|_{H^{k+3}(\td)}^2 +\int_0^\infty \| G\|_{H^{k+2}(\td)}^2 \, dt \Big\},
\end{equation}
where $C$ depends on $d$ and $k$ .
\end{prop}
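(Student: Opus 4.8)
The plan is to exploit the structure of the equation in (\ref{half-NP-V-G}) by Fourier-expanding $V$ in the $\theta$ variable. Writing $V(\theta,t)=\sum_{\xi\in\mathbb{Z}^d} c_\xi(t) e^{2\pi i \xi\cdot\theta}$, the zero-mode $c_0(t)=\average_{\td}V(\cdot,t)$ is precisely what is subtracted off to form $\widetilde V$, so $\widetilde V$ collects only the nonzero modes. The crucial observation is that in (\ref{half-NP-V-G}) the tangential derivatives $\nabla_\theta$ appear everywhere through the combination $N^T\nabla_\theta$, which in Fourier space multiplies the mode $\xi$ by (a constant times) $N^T\xi=(I-n\otimes n)\xi$ (using (\ref{2.0})); the added regularizing term $-\lambda\Delta_\theta V$ is the only place a bare $\nabla_\theta$ occurs. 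By the Diophantine condition (\ref{D-condition}), $|(I-n\otimes n)\xi|\ge \kappa|\xi|^{-2}$ for $\xi\neq 0$, so for every nonzero mode the "effective" ellipticity is only degenerate by a controlled polynomial factor: $|\xi|\lesssim \kappa^{-1}|N^T\xi|\cdot|\xi|^2$, hence $|\xi|^3\lesssim \kappa^{-1}|N^T\xi|$ on the support of $\widetilde V$ — wait, more carefully, $|N^T\xi|\ge\kappa|\xi|^{-2}$ gives $|\xi|\le \big(\kappa^{-1}|N^T\xi|\big)$ only after rearranging; the usable inequality is $|\xi|^{3}\le \kappa^{-1}|\xi|\,|N^T\xi|\cdot|\xi|^{?}$. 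Let me state it cleanly: from (\ref{D-condition}), for $\xi\neq0$ one has $|\xi|^2|N^T\xi|\ge\kappa$, so $\kappa\,|\xi|^{k}\le |\xi|^{k+2}|N^T\xi|$, i.e. a factor $\kappa|\xi|^k$ is dominated by $|\xi|^{k+3}|N^T\nabla_\theta$-weight$|$ up to shifting two derivatives' worth onto the non-degenerate directions and one more.

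Concretely, I would proceed as follows. First, apply Proposition \ref{prop-2.2} (with $k$ replaced by $k+3$) and Proposition \ref{prop-2.3} to control $\int_0^\infty \|N^T\nabla_\theta V\|_{H^{k+2}(\td)}^2\,dt$, $\int_0^\infty\|\partial_t V\|_{H^{k+2}(\td)}^2\,dt$, and the mixed norms, all in terms of the right-hand side of (\ref{2.4-0}); these already absorb $\|g\|_{H^{k+3}}^2+\int\|G\|_{H^{k+2}}^2$. Second, decompose into Fourier modes: $\|\widetilde V(\cdot,t)\|_{H^k(\td)}^2=\sum_{\xi\neq0}(1+|\xi|^2)^k|c_\xi(t)|^2$, and $\|N^T\nabla_\theta V(\cdot,t)\|_{H^{k+2}(\td)}^2\sim\sum_{\xi\neq0}(1+|\xi|^2)^{k+2}|N^T\xi|^2|c_\xi(t)|^2$ (here I use that the zero mode contributes nothing to $N^T\nabla_\theta V$, which is why only $\widetilde V$ appears). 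Third, for each $\xi\neq0$ invoke (\ref{D-condition}) in the form $|N^T\xi|^2\ge \kappa^2|\xi|^{-4}$, whence
\[
\kappa^2(1+|\xi|^2)^k|c_\xi(t)|^2\le \kappa^2 (1+|\xi|^2)^k|c_\xi(t)|^2\cdot\frac{|\xi|^4|N^T\xi|^2}{\kappa^2}\cdot\frac{\kappa^2}{|\xi|^4|N^T\xi|^2}
\]
— i.e. directly $\kappa^2(1+|\xi|^2)^k|c_\xi|^2\le (1+|\xi|^2)^{k+2}|N^T\xi|^2|c_\xi|^2$ since $\kappa^2\le |\xi|^4|N^T\xi|^2\le(1+|\xi|^2)^2|N^T\xi|^2$. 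Summing over $\xi\neq0$ and integrating in $t$ gives
\[
\int_0^\infty\kappa^2\|\widetilde V\|_{H^k(\td)}^2\,dt\le \int_0^\infty\|N^T\nabla_\theta V\|_{H^{k+2}(\td)}^2\,dt,
\]
and combining with the first step finishes the proof, with $C$ depending only on $d$ and $k$ (the constants from Propositions \ref{prop-2.2}–\ref{prop-2.3} also depend on $m$, $\mu$, $\|A\|_{C^\bullet}$, but the passage from $\|N^T\nabla_\theta V\|_{H^{k+2}}$ to $\kappa^2\|\widetilde V\|_{H^k}$ costs nothing beyond $d$ and $k$ — so after renaming one may state the dependence as claimed, absorbing the structural constants into the earlier estimates).

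The main obstacle is purely bookkeeping: making sure the regularized term $-\lambda\Delta_\theta V$ does not interfere. In (\ref{half-NP-V-G}) this term carries bare $\nabla_\theta$, not $N^T\nabla_\theta$, so it does see the zero mode and it does not benefit from the Diophantine gain; however, since we only need an upper bound on $\widetilde V$ in terms of $N^T\nabla_\theta V$, and the $\lambda$-term never appears on the right-hand side of (\ref{2.4-0}), it is harmless — one simply does not use it. A second, minor point is to double-check the exact number of derivatives: the Diophantine exponent $|\xi|^{-2}$ forces a loss of $|\xi|^{2}$, i.e. two extra Sobolev orders in $N^T\nabla_\theta V$ relative to $\widetilde V$, plus one more because $N^T\nabla_\theta V$ already has one derivative built in — hence $H^{k+2}$ of $N^T\nabla_\theta V$ (equivalently $H^{k+3}$ regularity of $g$), matching the statement. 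Everything else is a routine application of the already-established Propositions \ref{prop-2.2} and \ref{prop-2.3}.
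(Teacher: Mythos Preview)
Your proposal is correct and follows essentially the same approach as the paper. The paper's proof is just two lines: from (\ref{D-condition}) and (\ref{2.0}) one has $|N^T\xi|\ge\kappa|\xi|^{-2}$ for $\xi\in\mathbb{Z}^d\setminus\{0\}$, hence $\|N^T\nabla_\theta V\|_{H^{k+2}(\td)}\ge C\kappa\|\widetilde V\|_{H^k(\td)}$, and then one applies (\ref{2.2-0}) with $k$ replaced by $k+3$; your Fourier-mode computation is exactly the unpacking of that first inequality, and your appeal to Proposition~\ref{prop-2.2} is the second step (Proposition~\ref{prop-2.3} is not actually needed here).
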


\begin{proof}
It follows from (\ref{D-condition}) and (\ref{2.0}) that
$|N^T \xi|\ge \kappa |\xi|^{-2}$ for any $\xi\in \mathbb{Z}^d\setminus \{ 0\}$.
This implies that
$$
\| N^T \nabla_\theta V \|_{H^{k+2}(\td)} \ge C \kappa \| \widetilde{V}\|_{H^k(\td)},
$$
which, together with (\ref{2.2-0}), gives the estimate (\ref{2.4-0}).
\end{proof}

\begin{remark}\label{remark-2.1}
{\rm
Suppose that $g\in C^\infty(\td)$,
$G\in C^\infty(\td\times \mathbb{R}_+)$ and $\partial_t^k \partial_\theta^\alpha  G 
\in L^2(\td\times \mathbb{R}_+)$ for any $k$ and $\alpha$.
For $\lambda>0$, let
$V_\lambda$ be the solution of (\ref{half-NP-V-G}), given by Proposition \ref{prop-2.1}.
By subtracting a constant we may assume that $\int_{\td} V_\lambda (\theta, 0)d \theta=0$ and
thus
$$
V_\lambda(\theta, t) =\widetilde{V_\lambda} (\theta, t) +\int^t_0 \int_{\td} \partial_s V_\lambda(\theta, s)  d\theta ds.
$$
It follows from Propositions \ref{prop-2.3} and \ref{prop-2.4} that the $L^2(\td \times (0, L))$ norm
of
$\partial_t^k \partial^\alpha_\theta {V_\lambda}$
is uniformly bounded in $\lambda$, for any $k$, $\alpha$ and $L\ge 1$.
  Hence, by Sobolev imbedding, the $C^k(\td\times (0, L))$ norm
  of ${V_\lambda}$ is uniformly bounded in $\lambda$, for any $k\ge 0$ and $L\ge 1$.
By a simple limiting argument this allows us to show that the Neumann problem (\ref{half-NP-V-G}) 
with $\lambda=0$ has a solution $V$, unique up to a constant,
in $C^\infty(\td\times [0, \infty))$.
Furthermore, by passing to the limit, estimates (\ref{2.1-0}), (\ref{2.2-0}),
(\ref{2.3-0}) and (\ref{2.4-0}) continue to hold for this solution.
}
\end{remark}

\begin{prop}\label{prop-2.5}
Suppose that $n$ satisfies the Diophantine condition (\ref{D-condition}) with constant $\kappa>0$.
Let $V$ be the solution of (\ref{half-NP-V-G})
with $\lambda=0$, $g\in C^\infty(\td)$ and $G=0$, given by Remark \ref{remark-2.1}. Then
there exists a constant $V_\infty$ such that for any $\ell\ge 1$,
\begin{equation}\label{2.5-0}
| \partial_\theta^\alpha (V-V _\infty) (\theta, t) |\le \frac{C_{\alpha, \ell}}{ \kappa (1+\kappa t)^\ell},
\end{equation}
for any $\alpha=(\alpha_1, \dots, \alpha_d)$.
Moreover, we have
\begin{equation}\label{2.5-1}
|N^T \nabla_\theta (\partial_\theta^\alpha V) (\theta, t)|
+|\partial_t^k \partial_\theta^\alpha V(\theta, t)|  \le \frac{C_{\alpha, \ell, k}}{(1+\kappa t)^\ell},
\end{equation}
where $k\ge 1$.
\end{prop}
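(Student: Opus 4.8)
The plan is to exploit the exponential-type decay that the Diophantine condition forces on the nonconstant part of $V$, which has already been quantified in Proposition \ref{prop-2.4} (and its $\lambda=0$ version in Remark \ref{remark-2.1}), and to upgrade the $L^2$-in-$t$ bounds into pointwise-in-$t$ decay by a standard Caccioppoli/energy argument on dyadic slabs in $t$. Writing $\widetilde V(\theta,t)=V(\theta,t)-\dashint_{\td}V(\cdot,t)$, the key estimate from Proposition \ref{prop-2.4} (applied with $G=0$ and with $g$ replaced by an arbitrary $\theta$-derivative $\partial_\theta^\alpha g$, which is legitimate since $\partial_\theta^\alpha V$ solves the same system with data $\partial_\theta^\alpha g$) gives
\begin{equation*}
\int_0^\infty \kappa^2 \,\|\partial_\theta^\alpha \widetilde V(\cdot,t)\|_{H^k(\td)}^2\,dt \le C_{\alpha,k}\,\|g\|_{H^{k+|\alpha|+3}(\td)}^2 =: C_{\alpha,k}^2 <\infty,
\end{equation*}
and similarly Propositions \ref{prop-2.2}--\ref{prop-2.3} control $\int_0^\infty(\|N^T\nabla_\theta \partial_\theta^\alpha V\|_{H^{k}}^2 + \|\partial_t^j\partial_\theta^\alpha V\|_{H^{k}}^2)\,dt$. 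So I have integrability in $t$ of every $\theta$-derivative of $\widetilde V$ and of $N^T\nabla_\theta V$, $\partial_t V$.

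First I would establish the existence of the limit $V_\infty$: since $\int_0^\infty\|\partial_t V(\cdot,t)\|_{L^2(\td)}^2\,dt<\infty$ and, by the equation, $t\mapsto \dashint_{\td}V(\cdot,t)$ has an $L^2(\mathbb R_+)$ derivative (indeed the mean of $V$ satisfies a one-dimensional ODE whose coefficients are bounded; one checks $\partial_t(\text{mean})$ is integrable using $\int_0^\infty\|\partial_t V\|^2<\infty$ together with the flux form of the equation), the mean $\dashint_{\td}V(\cdot,t)$ converges to a constant $V_\infty$ as $t\to\infty$, with $|\dashint_{\td}V(\cdot,t)-V_\infty|\lesssim (\int_t^\infty\|\partial_t V\|^2)^{1/2}$. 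Combined with $\|\widetilde V(\cdot,t)\|_{L^2}\to 0$ this yields $V(\cdot,t)\to V_\infty$. Next, the decay rate: fix $t\ge 1$ and consider the slab $Q_t=\td\times(t/2,2t)$. A Caccioppoli inequality for the (degenerate) elliptic system applied to $\partial_\theta^\alpha(V-V_\infty)$ on $Q_t$ bounds $\|\partial_t\partial_\theta^\alpha V\|_{L^2(\td\times(t/2,2t))}$ and $\|N^T\nabla_\theta\partial_\theta^\alpha V\|_{L^2(\td\times(t/2,2t))}$ by $t^{-1}\|\partial_\theta^\alpha(V-V_\infty)\|_{L^2(Q_{2t})}$; iterating this with the Diophantine gain $\|N^T\nabla_\theta\partial_\theta^\alpha V\|\ge c\kappa\|\partial_\theta^{\alpha}\widetilde{(\cdot)}\|$ (in the relevant Sobolev norm, as in the proof of Proposition \ref{prop-2.4}) converts spatial control into a factor $(\kappa t)^{-1}$ per iteration. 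Running $\ell$ iterations and using the $t$-integrability above to start the bootstrap produces $\|\partial_\theta^\alpha(V-V_\infty)(\cdot,t)\|_{L^2(\td)}\lesssim_{\alpha,\ell}\kappa^{-1}(1+\kappa t)^{-\ell}$; Sobolev embedding in $\theta$ (absorbing the loss into more $\theta$-derivatives of $g$, which is smooth) upgrades $L^2(\td)$ to $L^\infty(\td)$, giving (3.x). The estimate (3.x+1) for $N^T\nabla_\theta\partial_\theta^\alpha V$ and $\partial_t^k\partial_\theta^\alpha V$ follows from the same slab Caccioppoli estimate (now one does not lose the extra $\kappa^{-1}$, since these quantities are already "derivative-like" and pick up the clean $(1+\kappa t)^{-\ell}$), together with the equation (3.x) used to express $\partial_t^2 V$ in terms of lower-order terms as in the proof of Proposition \ref{prop-2.3}.

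The main obstacle I anticipate is making the iteration genuinely $\kappa$-uniform in its constants while still extracting the full factor $(1+\kappa t)^{-\ell}$: the Caccioppoli step on a unit-width slab gives a gain that is only $O(1)$, not $O((\kappa t)^{-1})$, so the $\kappa$-improvement must come from interleaving the Diophantine inequality $\|N^T\nabla_\theta\widetilde V\|\gtrsim\kappa\|\widetilde V\|$ (in a higher Sobolev norm, hence the need for the smoothness of $g$ and the higher-regularity estimates of Propositions \ref{prop-2.2}--\ref{prop-2.3}) at each stage, and one has to check the Sobolev indices bookkeeping closes after $\ell$ steps with a constant depending only on $d,k,\alpha,\ell$ and norms of $A$ and $g$, never on $\kappa$. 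A secondary technical point is handling the boundary slice $t$ near $0$ separately (there the estimate is just the $H^k$ energy bound and the factor $(1+\kappa t)^{-\ell}\sim 1$ there anyway, so it is harmless), and ensuring the Caccioppoli inequality is valid up to $t=t/2>0$ away from the Neumann boundary, so no boundary term enters.
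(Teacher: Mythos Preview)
Your approach is correct in spirit and is essentially a reconstruction of the argument that the paper simply cites. The paper's own proof is much shorter because it offloads the main work: it observes that the energy functional
\[
F(s)=\int_s^\infty\!\!\int_{\td}\Big(|N^T\nabla_\theta(\partial_\theta^\alpha V)|^2+|\partial_t^k\partial_\theta^\alpha V|^2\Big)\,d\theta\,dt
\]
decays like $C_\ell(\kappa s)^{-\ell}$ by the argument of Proposition~2.6 in G\'erard-Varet--Masmoudi \cite{Masmoudi-2012}, noting explicitly that this iteration \emph{does not use the boundary condition at $t=0$}, so it transfers verbatim from the Dirichlet to the Neumann setting. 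Sobolev embedding then gives \eqref{2.5-1}. This is exactly the Caccioppoli-plus-Diophantine iteration you outline; you are redoing what the reference already contains.

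Where the paper is cleaner than your proposal is in deriving \eqref{2.5-0}. Rather than tracking the mean of $V$ separately via an ODE and running the bootstrap on $V-V_\infty$ itself, the paper first notes that a Neumann solution is also a Dirichlet solution with smooth trace $V(\cdot,0)$, so the existence of $V_\infty$ is already known from \cite{Masmoudi-2011,Masmoudi-2012,Prange-2013}, and then simply integrates the already-established bound on $\partial_t V$:
\[
|\partial_\theta^\alpha(V-V_\infty)(\theta,t)|\le \int_t^\infty |\partial_t\partial_\theta^\alpha V(\theta,s)|\,ds
\le C\int_t^\infty \frac{ds}{(1+\kappa s)^{\ell+2}}
\le \frac{C}{\kappa(1+\kappa t)^{\ell}}.
\]
This sidesteps entirely the issue you flagged as your ``main obstacle'' (keeping the iteration $\kappa$-uniform while separating the mean from the oscillatory part), because the factor $\kappa^{-1}$ emerges transparently from the $ds$-integration rather than from bookkeeping inside the bootstrap. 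Your route would work, but the paper's order of operations---prove \eqref{2.5-1} first, then integrate---is the simpler one.
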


\begin{proof}
It follows from Propositions \ref{prop-2.2} and \ref{prop-2.3} by Sobolev imbedding that
$$
|N^T \nabla_\theta (\partial_\theta^\alpha V) (\theta, t)|
+|\partial_t^k \partial_\theta^\alpha V(\theta, t)|  \le C_{\alpha, k}
$$
for any $\alpha=(\alpha_1, \dots, \alpha_d)$ and $k\ge 1$.
Next we note that the decay estimate in (\ref{2.5-1})
follows by the exact argument as in the case of Dirichlet boundary conditions, given in \cite{Masmoudi-2012}.
Indeed, let
$$
F(s)= \int_{s}^\infty \int_{\td} \Big( |N^T \nabla_\theta (\partial_\theta^\alpha V)|^2
+ |\partial_t^k \partial_\theta^\alpha V|^2\Big)\, d\theta dt.
$$
An inspection of the proof of Proposition 2.6 in \cite{Masmoudi-2012} shows that
$$
F(s)\le \frac{C_\ell}{(\kappa s)^\ell} \quad \text{ for any } \ell\ge 1
$$
(the proof does not use the boundary condition at $t=0$).
By Sobolev imbedding this gives
$$
|N^T \nabla_\theta (\partial_\theta^\alpha V) (\theta, t)|
+|\partial_t^k \partial_\theta^\alpha V(\theta, t)|  \le \frac{C_{\alpha, \ell, k}}{(\kappa t)^\ell}.
$$
Finally, we note that a solution of (\ref{half-NP-V-G}) is also a solution of the same system with Dirichlet condition
$V(\theta, 0)\in C^\infty(\td)$. It follows from \cite{Masmoudi-2011, Masmoudi-2012} (also see \cite{Prange-2013})
that $V(\theta,t)$ has a limit $V_\infty$, as $t\to \infty$.
Moreover,
\begin{equation}\label{estimate-V}
\aligned
|\partial_\theta^\alpha (V-V_\infty) (\theta, t)|
& \le \int_t^\infty |\partial_t \partial_\theta^\alpha V(\theta, s)|\, ds
 \le C \int_t^\infty \frac{ds}{(1+\kappa s)^{\ell+2}}\\
&\le \frac{C}{(1+\kappa t)^\ell} \int_t^\infty \frac{ds}{(1+\kappa s)^2}\\
&\le \frac{C}{\kappa (1+\kappa t)^\ell},
\endaligned
\end{equation}
where we have used (\ref{2.5-1}) for the second inequality.
\end{proof}

We now state and prove the main result  of this section.

\begin{theorem}\label{theorem-2.1}
Let $n\in \mathbb{S}^{d-1}$  and $a\in \mathbb{R}$, where $d\ge 2$.
Let $T\in \rd$ such that $|T| \le 1$ and $T\cdot n=0$.
Suppose that $n\in \mathbb{S}^{d-1}$ satisfies the Diophantine condition (\ref{D-condition})
with constant $\kappa>0$.
Then for any $g\in C^\infty(\td)$, the Neumann problem (\ref{half-NP}) has 
a smooth solution $u$ satisfying 
\begin{equation}\label{2.6-0}
\aligned
| u(x)| & \le \frac{C}{\kappa ( 1+ \kappa |x\cdot n +a|)^\ell},\\
|\partial_x^\alpha u(x)| & \le \frac{C}{( 1+ \kappa |x\cdot n +a|)^\ell},
\endaligned
\end{equation}
for any  $|\alpha|\ge 1$ and $\ell\ge 1$. The constant 
$C$ depends at most on $d$, $m$, $\mu$, $\alpha$, $\ell$  as well as the $C^k(\td)$ norms of $A$ and $g$
for some $k=k(d, \alpha, \ell)$.
\end{theorem}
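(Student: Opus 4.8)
The plan is to reduce Theorem~\ref{theorem-2.1} to the results already established for the regularized system (\ref{half-NP-V-G}) via the change of variables (\ref{V}). First I would reduce to the case $a=0$: by the translation invariance observed right after (\ref{half-NP-V}), namely $B(\theta-an,t-a)=B(\theta,t)$ and $\widetilde g(\theta-an,t-a)=\widetilde g(\theta,t)$, a solution for general $a$ is obtained from one for $a=0$ by the substitution $V^a(\theta,t)=V^0(\theta-an,t-a)$, and the weight $|x\cdot n+a|$ transforms correctly. So assume $a=0$. Next, I apply Remark~\ref{remark-2.1} with $G=0$ and $g\in C^\infty(\td)$ (mean zero) to obtain a smooth solution $V\in C^\infty(\td\times[0,\infty))$ of (\ref{half-NP-V-G}) with $\lambda=0$, unique up to a constant, and then set $u(x)=V(x-(x\cdot n)n,\,-x\cdot n)$. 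By the computation (\ref{2.1}) this $u$ solves (\ref{half-NP}) in $\mathbb{H}^d_n(0)$: the interior equation becomes the first line of (\ref{half-NP-V-G}) with $\lambda=0$, $G=0$, and the conormal condition $n\cdot A\nabla u=T\cdot\nabla g$ becomes the second line, using $T\cdot n=0$ to replace $T\cdot\nabla_x g$ by $T\cdot\nabla_\theta\widetilde g=T\cdot\nabla_\theta g$ on $t=0$ (here $\widetilde g=g$ since $G=0$ and we evaluate at $t=0$). Smoothness of $u$ on $\overline{\mathbb{H}^d_n(0)}$ follows from smoothness of $V$ up to the boundary $t=0$.

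The decay estimates (\ref{2.6-0}) are then a direct translation of Proposition~\ref{prop-2.5}. Subtract the constant $V_\infty$ from $V$ (which does not change $\nabla u$ and only shifts $u$ by a constant, and we normalize so that the additive constant is chosen so the decaying representative is used). For derivatives: by the chain rule $\partial_x^\alpha u$ is a finite linear combination, with constant coefficients depending only on $n$, of terms of the form $(\partial_\theta^\beta\partial_t^k V)(\theta,t)$ with $|\beta|+k=|\alpha|\ge 1$ and $\theta=x-(x\cdot n)n$, $t=-x\cdot n=|x\cdot n|$ (on $\mathbb{H}^d_n(0)$, $x\cdot n<0$, so $t=-x\cdot n=|x\cdot n+a|$ when $a=0$). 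Every such term with $k\ge1$ is controlled by (\ref{2.5-1}), and every term with $k=0$, $|\beta|\ge1$ is of the form $\partial_\theta^\beta V=\partial_\theta^\beta(V-V_\infty)$ (constants killed by $\partial_\theta$), controlled by (\ref{2.5-0}) with $\ell$ in place of $\ell$ there; note $\frac{1}{\kappa(1+\kappa t)^\ell}\le \frac{1}{(1+\kappa t)^\ell}$ is not quite what we want, so instead I would use (\ref{2.5-1}) applied with $\alpha$ of length $|\beta|-1$ together with the pointwise bound $|\partial_\theta^\beta V|\le |N^T\nabla_\theta(\partial_\theta^{\beta'} V)|+|(n\otimes n)\text{-part}|$—more cleanly, simply differentiate (\ref{2.5-0}) once more in $\theta$ to get $|\partial_\theta^\beta(V-V_\infty)|\le C_{\beta,\ell}(1+\kappa t)^{-\ell}$ for $|\beta|\ge1$, which is exactly the bound the proof of Proposition~\ref{prop-2.5} produces before the final integration step. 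This gives the second line of (\ref{2.6-0}). For $u$ itself, $|u(x)|=|(V-V_\infty)(\theta,t)|\le \frac{C_\ell}{\kappa(1+\kappa t)^\ell}$ directly from (\ref{2.5-0}) with $\alpha=0$, giving the first line.

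The only genuine point requiring care—and the main obstacle—is making sure the constants are independent of $\kappa$ in the way claimed, i.e. that all $\kappa$-dependence is displayed explicitly in (\ref{2.6-0}) and the remaining constant $C$ depends only on $d,m,\mu,\alpha,\ell$ and finitely many $C^k$-norms of $A$ and $g$. This is already built into Propositions~\ref{prop-2.2}--\ref{prop-2.5} and Remark~\ref{remark-2.1}: the energy estimates (\ref{2.2-0}), (\ref{2.3-0}) have constants depending only on $d,m,k,\ell,\mu,\|A\|_{C^{k+\ell-2}}$, the Diophantine input enters only through Proposition~\ref{prop-2.4} which isolates the factor $\kappa$, and the Sobolev embeddings used in Proposition~\ref{prop-2.5} lose only dimensional constants; the norm $\|g\|_{H^{k+\ell-1}(\td)}$ that appears is dominated by $\|g\|_{C^{k(d,\alpha,\ell)}(\td)}$. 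So assembling these, with the index bookkeeping from the chain rule (choosing $k=k(d,\alpha,\ell)$ large enough to feed Proposition~\ref{prop-2.5} with the required number of derivatives and decay order $\ell$), yields (\ref{2.6-0}) with the stated dependence. I would also remark that uniqueness up to a constant among solutions with the decay (\ref{2.6-0}) is immediate, since the difference of two such solutions is an $\mathcal H$-solution of the homogeneous problem with $G=0$, $g=0$, hence constant by Proposition~\ref{prop-2.1} (energy estimate), and decay forces that constant interpretation to be consistent.
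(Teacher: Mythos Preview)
Your approach is essentially the paper's: construct $V$ via Remark~\ref{remark-2.1}, set $u(x)=V(x-(x\cdot n+a)n,-(x\cdot n+a))-V_\infty$, and read off (\ref{2.6-0}) from Proposition~\ref{prop-2.5}. The paper does not reduce to $a=0$ separately but builds the shift directly into the definition of $u$.

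There is, however, a small but genuine gap in your derivative estimate. When you expand $\partial_x^\alpha u$ by the chain rule as a linear combination of $\partial_\theta^\beta\partial_t^k V$, the pure $\theta$-terms ($k=0$, $|\beta|\ge 1$) are \emph{not} directly controlled by (\ref{2.5-1}), and neither of your proposed fixes works: decomposing $\nabla_\theta=NN^T\nabla_\theta+n(n\cdot\nabla_\theta)$ leaves an uncontrolled $n\cdot\nabla_\theta$ piece, and the proof of Proposition~\ref{prop-2.5} does not yield a $\kappa$-free bound on bare $\partial_\theta^\beta V$ ``before the final integration step''---that integration is precisely what introduces the factor $\kappa^{-1}$ in (\ref{2.5-0}). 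The paper sidesteps the issue by using (\ref{2.1}) structurally rather than as a generic chain rule: since $\partial_{x_i}$ acts on functions of $(\theta,t)$ as a constant-coefficient combination of the components of $N^T\nabla_\theta$ and of $\partial_t$ (the matrix $M$ being orthogonal and constant), iterating shows that $\partial_x^\alpha u$ is a linear combination of compositions $(N^T\nabla_\theta)_{k_1}\cdots(N^T\nabla_\theta)_{k_p}\partial_t^q V$ with $p+q=|\alpha|$. If $q\ge1$ this is handled by the second half of (\ref{2.5-1}); if $q=0$ and $p\ge1$, peel off one $N^T\nabla_\theta$ and note that the remainder is a constant-coefficient combination of $\partial_\theta^{\alpha'}V$ with $|\alpha'|=p-1$, so the whole expression is controlled by the first half of (\ref{2.5-1}). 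In either case the bound is $C(1+\kappa|x\cdot n+a|)^{-\ell}$ with no stray $\kappa^{-1}$, as (\ref{2.6-0}) demands.
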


\begin{proof}
Let $V$ be the solution of (\ref{half-NP-V-G})
with $\lambda=0$, $g\in C^\infty(\td)$ and $G=0$, given by Remark \ref{remark-2.1}. 
Let
$$
u(x)=V(x-(x\cdot n +a)n, -(x\cdot n +a)) -V_\infty.
$$
Then $u$ is a solution of the Neumann problem (\ref{half-NP}).
The first inequality in (\ref{2.6-0}) follows directly from (\ref{2.5-0}).
To see the second inequality, one uses (\ref{2.1}) and (\ref{2.5-1}).
\end{proof}



\section{Some refined estimates in a half-space}
\setcounter{equation}{0}

Throughout this section we fix $n\in \mathbb{S}^{d-1}$  and $a\in \mathbb{R}$.
We assume that $n\in \mathbb{S}^{d-1}$ satisfies the Diophantine condition (\ref{D-condition})
with constant $\kappa>0$.
However, we will be only interested in estimates that are independent of $\kappa$.

Our first result plays the same role as the maximum principle in the case of Dirichlet 
problem.

\begin{theorem}\label{theorem-r}
Let $T\in \rd$ such that $|T| \le 1$ and $T\cdot n=0$.
Then for any $g\in C^\infty(\td)$, the solution $u$ of Neumann problem (\ref{half-NP}), given by 
Theorem \ref{theorem-2.1}, satisfies
\begin{equation}\label{r-1-0}
| \nabla u(x)|\le \frac{C\,  \| g\|_\infty }{|x\cdot n +a|},
\end{equation}
for any $x\in \mathbb{H}_n^d(a)$,
where $C$ depends only on $d$, $m$ and $\mu$ as well as some H\"older norm of $A$.
\end{theorem}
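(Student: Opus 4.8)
The plan is to derive (\ref{r-1-0}) from a duality representation of $u$ through the Neumann function of the half-space, which here plays the role the maximum principle plays for the Dirichlet problem. By the translation invariance of (\ref{half-NP}) established in Section 3 we may take $a=0$ and normalize $\|g\|_\infty=1$; write $\mathbb{H}=\mathbb{H}_n^d(0)$ and $\delta(x)=\text{dist}(x,\partial\mathbb{H})=|x\cdot n|$.

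The substantive step is to record that $\mathcal{L}_1=-\text{\rm div}(A\nabla)$ admits an $m\times m$ matrix of Neumann functions $\mathcal{N}(x,y)$ on $\mathbb{H}$, with $\mathcal{L}_1\{\mathcal{N}(\cdot,y)\}=\delta_y I$ in $\mathbb{H}$, $\partial_\nu\{\mathcal{N}(\cdot,y)\}=0$ on $\partial\mathbb{H}$, $\mathcal{N}(x,y)\to 0$ as $|x|\to\infty$, and
\[
|\mathcal{N}(x,y)|\le C|x-y|^{2-d},\qquad
|\nabla_x\mathcal{N}|+|\nabla_y\mathcal{N}|\le C|x-y|^{1-d},\qquad
|\nabla_x\nabla_y\mathcal{N}|\le C|x-y|^{-d},
\]
where $C$ depends only on $d$, $m$, $\mu$ and a H\"older norm of $A$, and --- crucially, since $\mathbb{H}$ is flat and scale-invariant --- is independent of $\kappa$ and of dilations. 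I would obtain this exactly as for bounded $C^{1,\alpha}$ domains in \cite{KLS1, AS}: exhaust $\mathbb{H}$ by bounded smooth domains, invoke the boundary Lipschitz estimates for Neumann problems (Schauder below unit scale, Avellaneda--Lin above it, both scale-uniform since $A$ is periodic and H\"older), and pass to the limit; this routine but essential construction is where scale-invariance and $\kappa$-independence genuinely enter.

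Now fix $x_0\in\mathbb{H}$, set $r=\delta(x_0)$, take $x_0'\in B(x_0,r/4)$, and note $|x_0-y|\ge r$ for $y\in\partial\mathbb{H}$. Let $\eta_R\in C_c^\infty(B(0,2R))$ with $\eta_R\equiv1$ on $B(0,R)$ and $|\nabla^j\eta_R|\le CR^{-j}$. Applying Green's identity on $\mathbb{H}$ with the Neumann function $\mathcal{N}(x_0,\cdot)$ to the compactly supported function $\eta_R u$, using $\mathcal{L}_1 u=0$ and $\partial_\nu u=T\cdot\nabla g$, subtracting the identities at $x_0$ and $x_0'$, and integrating by parts on the hyperplane $\partial\mathbb{H}$ (as in Lemma \ref{lemma-p1}) to move $T\cdot\nabla$ off $g$, I get
\[
u(x_0)-u(x_0')=-\int_{\partial\mathbb{H}}\big(T\cdot\nabla_y[\mathcal{N}(x_0,y)-\mathcal{N}(x_0',y)]\big)\,g(y)\,d\sigma(y)+\mathcal{E}_R,
\]
where $\mathcal{E}_R$ is a finite sum of integrals over $\{R<|y|<2R\}$, each carrying a derivative of $\eta_R$ and otherwise built from $\mathcal{N}(x_0,\cdot)-\mathcal{N}(x_0',\cdot)$ or its $y$-gradient, from $u$ or $\nabla u$, and from $g$. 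On $\{R<|y|<2R\}$ one has $|\mathcal{N}(x_0,y)-\mathcal{N}(x_0',y)|\le Cr|x_0-y|^{1-d}$ and $|\nabla_y[\mathcal{N}(x_0,y)-\mathcal{N}(x_0',y)]|\le Cr|x_0-y|^{-d}$; feeding in the decay bounds (\ref{2.6-0}) for $u,\nabla u$ --- which make $\int_{\{R<|y|<2R\}\cap\mathbb{H}}|\nabla u|$ and $\int_{\{R<|y|<2R\}\cap\partial\mathbb{H}}|u|$ at most $CR^{d-1}$ --- each term of $\mathcal{E}_R$ is $O(r/R)$, so $\mathcal{E}_R\to0$ as $R\to\infty$. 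Since $\int_{\partial\mathbb{H}}|x_0-y|^{-d}\,d\sigma(y)<\infty$, the surviving integral converges absolutely, and we reach the exact identity $u(x_0)-u(x_0')=-\int_{\partial\mathbb{H}}\big(T\cdot\nabla_y[\mathcal{N}(x_0,y)-\mathcal{N}(x_0',y)]\big)g(y)\,d\sigma(y)$. Working with the difference is essential, since $T\cdot\nabla_y\mathcal{N}(x_0,\cdot)$ is only borderline non-integrable on the hyperplane --- precisely the additive-constant ambiguity of $u$.

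Finally, with $\|g\|_\infty=1$ and the mixed-derivative bound,
\[
|u(x_0)-u(x_0')|\le\int_{\partial\mathbb{H}}\big|\nabla_y[\mathcal{N}(x_0,y)-\mathcal{N}(x_0',y)]\big|\,d\sigma(y)\le C|x_0-x_0'|\int_{\partial\mathbb{H}}\frac{d\sigma(y)}{|x_0-y|^{d}}=\frac{C|x_0-x_0'|}{\delta(x_0)},
\]
by the elementary identity $\int_{\mathbb{R}^{d-1}}(|z|^2+t^2)^{-d/2}\,dz=C_d\,t^{-1}$. Thus $u$ is Lipschitz on $B(x_0,r/4)$ with seminorm $\le C/\delta(x_0)$, and since $u$ is smooth, $|\nabla u(x_0)|\le C/\delta(x_0)$; undoing the normalization and translation gives $|\nabla u(x_0)|\le C\|g\|_\infty/|x_0\cdot n+a|$, which is (\ref{r-1-0}). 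The main obstacle is the first step --- the half-space Neumann function with $\kappa$- and dilation-independent bounds --- resting on the scale-uniform boundary Lipschitz estimates of \cite{KLS1, AS}; granted those, the remainder is a routine cutoff-and-integration-by-parts argument.
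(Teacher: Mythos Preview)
Your approach is correct and shares the paper's core idea---represent $u(x_0)-u(x_0')$ via a Neumann function, move $T\cdot\nabla$ off $g$ by integration by parts on the hyperplane, and use the mixed-derivative bound $|\nabla_x\nabla_y\mathcal{N}|\le C|x-y|^{-d}$ together with $\int_{\partial\mathbb{H}}|x_0-y|^{-d}\,d\sigma=C/\delta(x_0)$---but the implementation differs in one substantive way. You construct the Neumann function $\mathcal{N}$ directly on the half-space by exhaustion and then send a spatial cutoff radius $R\to\infty$. The paper instead avoids the half-space construction altogether: it fixes a single bounded smooth domain $D$ with $B(0,1)\cap\mathbb{H}\subset D\subset B(0,2)\cap\mathbb{H}$, rescales via $v_\e(x)=\e u(x/\e)$ so that $\mathcal{L}_\e(v_\e)=0$ in $D$, and uses the Neumann function $N_\e(x,y)$ for $\mathcal{L}_\e$ on $D$, whose difference estimates~(\ref{N-estimate-1}) are already known from~\cite{KLS1,AS} uniformly in $\e$. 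After changing variables back, the integral lives on $\partial D_{1/\e}$, and letting $\e\to 0$ plays the role of your $R\to\infty$. The tail term (your $\mathcal{E}_R$, their $I_2$) is killed in both arguments by the $\kappa$-dependent decay from~(\ref{2.6-0}), which is harmless since $\kappa$ is fixed during the limit. The paper's route is more economical because it borrows the Neumann-function estimates off the shelf for a fixed bounded domain rather than redoing them on $\mathbb{H}$; your route is more direct once the half-space $\mathcal{N}$ is in hand, and even saves the final appeal to interior Lipschitz estimates, since you track the factor $|x_0-x_0'|$ all the way through and read off $|\nabla u(x_0)|\le C/\delta(x_0)$ immediately from the Lipschitz seminorm.
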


\begin{proof}
By translation we may assume that $a=0$.
We choose a bounded smooth domain $D$ such that 
$$
\aligned
& B(0, 1)\cap \mathbb{H}_n^d(0)\subset  D \subset B(0, 2) \cap \mathbb{H}_n^d (0),\\
&\overline{B(0,1)} \cap\partial \mathbb{H}_n^d(0) = \partial D\cap \partial\mathbb{H}_n^d(0).
\endaligned
$$
Let $v_\e (x)=\e u (x/\e)$. 
Since $\mathcal{L}_\e (v_\e)=0$ in $D$,
$$
v_\e (x) -v_\e (z) 
=\int_{\partial D} \big\{ N_\e (x, y) -N_\e (z, y) \big\} \frac{\partial v_\e}{\partial \nu_\e} (y)\, d\sigma (y)
$$
for any $x, z\in D$,
where  $N_\e(x, y)$ denotes the matrix of Neumann functions for $\mathcal{L}_\e$ in $D$.
By a change of variables it follows that
$$
u(x)-u(z)=
\e^{d-2}\int_{\partial D_{1/\e}}
\big\{ N_\e (\e x, \e y)-N_\e (\e z, \e y) \big\} n(\e y) \cdot A(y)\nabla u(y)\, d\sigma (y),
$$
where $D_{1/\e} =\big\{ \e^{-1} y: y\in D\big\}$.

Fix $x, z\in \mathbb{H}_n^d(0)$ such that $|x-z|< (1/2)|x\cdot n|=(1/2)\text{dist}(x, \partial \mathbb{H}_n^d(0))$.
Choose $\eta_\e \in C_0^1(B(0, \e^{-1}))$ such that $0\le \eta_\e \le 1$,
$\eta_\e=1$ on $B(0, \e^{-1}-1)$ and $|\nabla \eta_\e|\le 1$, where $\e< 1/10$.
Let $u(x)-u(z) =I_1 +I_2$,
where
$$
\aligned
I_1 &=\e^{d-2}\int_{\partial D_{1/\e}} \eta_\e (y)
\big\{ N_\e (\e x, \e y)-N_\e (\e z, \e y) \big\} n (\e y) \cdot A(y)\nabla u(y)\, d\sigma (y)\\
&=\e^{d-2}\int_{\partial D_{1/\e}} \eta_\e (y) 
\big\{ N_\e (\e x, \e y)-N_\e (\e z, \e y) \big\}   T\cdot \nabla g (y)\, d\sigma (y)\\
&=-\e^{d-2}\int_{\partial B(0, \e^{-1})\cap \partial \mathbb{H}_n^d (0)}
T\cdot \nabla_y \Big\{ \eta_\e (y) (N_\e (\e x, \e y)-N_\e (\e z, \e y)) \Big\} g(y) \, d\sigma (y),
\endaligned
$$
where we have used the Neumann condition for $u$ as well as an integration by parts on 
the boundary. We now apply the estimates in (\ref{N-estimate-1}).
This gives
$$
\aligned
|I_1| &\le C |x-z| \| g\|_\infty\int_{\partial \mathbb{H}_n^d(0)} \frac{d\sigma (y)}{|x-y|^d}
+ C |x-z|\| g\|_\infty \int_{\frac{1}{\e}-1 \le |y|\le \frac{1}{\e}}
\frac{ d\sigma (y)}{|x-y|^{d-1}}\\
& \le C_0 \|g\|_\infty + C \e\| g\|_\infty |x-z|,
\endaligned
$$
if $\e$, which may depend on $|x|$, is sufficiently small.
 We point out that the constant $C_0$ in the estimate above
  depends only on $d$, $m$, $\mu$ and some H\"older norm of $A$.

Next,  to handle $I_2$, we use the estimate 
$$
|\nabla u(y)|\le \frac{C}{(1+\kappa |y\cdot n|)^2}
$$
from (\ref{2.6-0}). This, together with (\ref{N-estimate-1}), leads to 
$$
\aligned
| I_2|    &=\e^{d-2}\Big| \int_{\partial D_{1/\e}} (1-\eta_\e (y))
\big\{ N_\e (\e x, \e y)-N_\e (\e z, \e y) \big\} n(\e y) \cdot A(y)\nabla u(y)\, d\sigma (y)\Big|\\
&  \le C |x-z| \int_{\partial D_{1/\e}\cap \mathbb{H}_n^d(0)}
\frac{d\sigma (y)}{|x-y|^{d-1} (1+\kappa |y\cdot n|)^2}\\
&\le C_{x, z} \int_{\partial D_{1/\e}\cap \mathbb{H}_n^d(0)}
\frac{d\sigma (y)}{|y|^{d-1} (1+\kappa |y\cdot n|)^2},
\endaligned
$$
which shows that $I_2\to 0$, as $\e\to 0$.
As a result, we have proved that for any $x, z\in \mathbb{H}_n^d(0)$ with
$|x-z|\le \text{dist}(x, \partial\mathbb{H}_n^d(0))$,
$$
|u(x)-u(z)|
=\lim_{\e\to 0} | I_1 +I_2|
\le C_0 \| g\|_\infty.
$$
Since $\mathcal{L}_1 (u)=0$ in $\mathbb{H}_n^d(0)$,
by the interior Lipschitz estimates \cite{AL-1987} for $\mathcal{L}_1$,
we obtain
$$
|\nabla u(x)|\le \frac{C_0 \| g\|_\infty}{|x\cdot n|},
$$
which completes the proof.
\end{proof}

Let $\Omega=\mathbb{H}_n^d(a)$ and $\mathcal{L} =-\text{\rm div} (A(x)\nabla )$ .
In the rest of this section we consider Dirichlet problem,
\begin{equation}\label{DP-w}
\left\{
\aligned
\mathcal{L} (u) &=\text{div} (f) +h &\quad &\text{ in } \Omega,\\
u & =0 &\quad & \text{ on }\partial\Omega,
\endaligned
\right.
\end{equation}
and the Neumann problem,
\begin{equation}\label{NP-w}
\left\{
\aligned
\mathcal{L} (u) &=\text{div} (f) &\quad &\text{ in } \Omega,\\
\frac{\partial u}{\partial \nu} & =-n\cdot f &\quad & \text{ on }\partial\Omega,
\endaligned
\right.
\end{equation}
where $A$ is assumed to satisfy the ellipticity condition (\ref{ellipticity}) and
$A\in C^\sigma (\td)$ for some $\sigma\in (0,1)$.
We shall be interested in the weighted $L^2$ estimate,
\begin{equation}\label{w-estimate-1}
\int_\Omega |\nabla u(x)|^2 \big[\delta(x)\big]^\alpha\, dx
\le C \int_\Omega | f(x) |^2 \big[\delta(x)\big]^\alpha\, dx
+C \int_\Omega |h(x)|^2 \big[\delta(x)\big]^{\alpha+2}\, dx,
\end{equation}
where $-1<\alpha<0$ and
\begin{equation}\label{weight}
\delta(x)=\text{dist} (x, \partial\Omega)=|a + (x\cdot n)|.
\end{equation}

We start with some observations  on the weight $\omega (x) =\big[\delta(x)\big]^\alpha$.

\begin{lemma}\label{lemma-weight}
Let $\omega (x) =\big[\delta(x)\big]^\alpha$, where $-1<\alpha<0$ and $\delta(x)$ is defined by (\ref{weight}).
Then $\omega(x)$ is an $A_1$ weight, i.e., for any ball $B\subset \rd$,
\begin{equation}\label{A-1}
\average_B \omega  \le C \inf_{B} \omega ,
\end{equation}
where $C$ depends only on $d$ and $\alpha$. Moreover, $w$ satisfies the reverse H\"older's inequality, 
\begin{equation}\label{reverse-H}
\left(\average_B \omega ^p\, dx \right)^{1/p}
\le C \average_B \omega\, dx,
\end{equation}
where $1<p< \infty$  and $\alpha p>-1$.
\end{lemma}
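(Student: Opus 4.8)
The plan is to reduce the statement to a one-variable computation. Both the $A_1$ bound (\ref{A-1}) and the reverse Hölder inequality (\ref{reverse-H}) are invariant under translations and rotations of $\rd$, and such transformations turn $\delta(x)=|a+(x\cdot n)|$ into the distance to a coordinate hyperplane. Thus I would first assume, without loss of generality, that $a=0$ and $n=e_d$, so that $\omega(x)=|x_d|^\alpha$; it then suffices to prove (\ref{A-1}) and (\ref{reverse-H}) for this model weight, with $-1<\alpha<0$.

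The main tool is a slicing identity. Fix a ball $B=B(x_0,r)$ and set $d_0=|x_{0,d}|$, the distance from the center to $\{x_d=0\}$. Slicing $B$ by the hyperplanes $\{x_d=s\}$, each slice being a $(d-1)$-ball of radius $\sqrt{r^2-(s-x_{0,d})^2}\le r$, one gets, for every exponent $\gamma>-1$,
\[
\int_B |x_d|^\gamma\,dx \ \approx\ r^{d-1}\int_{x_{0,d}-r}^{x_{0,d}+r}|s|^\gamma\,ds ,
\]
with comparability constants depending only on $d$ and $\gamma$; the lower bound comes from restricting the slices to $|s-x_{0,d}|\le r/2$. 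I would then split into two cases. If $d_0\ge 2r$, then $|x_d|$ is comparable to $d_0$ throughout $B$, so $\average_B|x_d|^\gamma\approx d_0^\gamma$ for every $\gamma$; applying this with $\gamma=\alpha$ and $\gamma=\alpha p$ shows that $\average_B\omega$, $(\average_B\omega^p)^{1/p}$ and $\inf_B\omega$ are all comparable to $d_0^\alpha$, and (\ref{A-1}), (\ref{reverse-H}) follow at once. If $d_0<2r$, then the interval $[x_{0,d}-r,x_{0,d}+r]$ lies in $[-3r,3r]$, and for any $\gamma$ with $-1<\gamma<0$ one has $\int_{x_{0,d}-r}^{x_{0,d}+r}|s|^\gamma\,ds\approx r^{1+\gamma}$ — this is precisely where $\gamma>-1$ enters, keeping $\int_I|s|^\gamma\,ds$ finite and of size $r^{1+\gamma}$. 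Taking $\gamma=\alpha$ and $\gamma=\alpha p$ (both admissible since $\alpha>-1$ and $\alpha p>-1$) and using $|B|\approx r^d$, we obtain $\average_B\omega\approx r^\alpha$ and $\average_B\omega^p\approx r^{\alpha p}$, whence $(\average_B\omega^p)^{1/p}\approx r^\alpha\approx\average_B\omega$, i.e.\ (\ref{reverse-H}); and, since $\alpha<0$, $\inf_B\omega=(\sup_B\delta)^\alpha=(d_0+r)^\alpha\approx r^\alpha\approx\average_B\omega$, i.e.\ (\ref{A-1}).

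I do not anticipate a genuine obstacle; the only real content is the case split and the bookkeeping of the two hypotheses. The condition $\alpha>-1$ (respectively $\alpha p>-1$) is exactly what makes $|s|^\alpha$ (respectively $|s|^{\alpha p}$) locally integrable, so that averages of $\omega$ (respectively $\omega^p$) over balls meeting the hyperplane scale like a power of $r$ rather than diverging; the condition $\alpha<0$ guarantees that on such a ball the infimum of $\omega$ is still comparable to $r^\alpha$ (it is attained at the farthest point of $B$ from the hyperplane). Alternatively, one could simply invoke the classical fact that the power $\text{dist}(\cdot,H)^\alpha$ of the distance to a hyperplane $H$ is an $A_1$ weight exactly when $-1<\alpha\le0$, together with the fact that $A_1$ weights satisfy a reverse Hölder inequality for some exponent; but the slicing argument above is short, self-contained, and yields the sharp range $\alpha p>-1$ in (\ref{reverse-H}) directly.
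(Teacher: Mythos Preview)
Your proposal is correct and follows the same approach as the paper: reduce by translation and rotation to the model weight $\omega(x)=|x_d|^\alpha$ and verify directly. The paper's proof is in fact much terser than yours---it simply notes that the result is well known and that the reduction to $\Omega=\rd_+$, $\delta(x)=|x_d|$ suffices---so your slicing computation supplies exactly the direct verification the paper alludes to but omits.
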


\begin{proof}
This is more or less well known and may be verified directly by reducing
the problem to the case $\Omega=\rd_+$ and $\delta (x)=|x_d|$.
\end{proof}

It follows from (\ref{reverse-H}) by H\"older's inequality that if $E\subset B$, then
\begin{equation}\label{measure}
\frac{\omega(E)}{\omega(B)}
\le C \left( \frac{|E|}{|B|}\right)^{1-\frac{1}{p}},
\end{equation}
where $\omega(E)=\int_E \omega\, dx$. 
Since (\ref{measure}) implies that $\omega$ satisfies the doubling condition,
$\omega(2B)\le C \omega(B)$,
it is easy to see that (\ref{measure})
also holds if one replaces ball $B$ by cube $Q$.
In fact, if $\omega$ is an $A_p$ weight in $\rd$,
i.e.,
\begin{equation}\label{A-p}
\average_B \omega \cdot \left(\average_B \omega^{-\frac{1}{p-1}}\right)^{p-1}
\le C,
\end{equation}
 then there exist some $\sigma>0$  and $C>0$
such that
\begin{equation}\label{A-infty}
\frac{\omega(E)}{\omega(Q)}
\le C \left( \frac{|E|}{|Q|}\right)^{\sigma} \quad \text{ for any } E\subset Q.
\end{equation}
Functions satisfying (\ref{A-infty}) are called $A_\infty$ weights.
In the following we will also need the well known fact that if $\omega$ is an $A_p$ weight for some $1<p<\infty$, then
\begin{equation}\label{weighted-max}
\int_{\rd} |\mathcal{M}(f)|^p\, \omega\, dx \le C \int_{\rd} | f|^p\, \omega \, dx,
\end{equation}
where $\mathcal{M}(f)$ denotes the Hardy-Littlewood maximal function of $f$.
This is the defining property of the $A_p$ weights.
Note that if $\omega$ is $A_1$, then $\omega$ is $A_p$ for any $p>1$.
We refer the reader to \cite{Stein-1993} for the theory of weights in harmonic analysis.

The next lemma gives the Lipschitz estimates for $\mathcal{L}$ in $\Omega$ with 
either Dirichet or Neumann boundary conditions.

\begin{lemma}\label{lemma-w-1}
Let $x_0\in \overline{\Omega}$ and $r>0$.
Let $u\in H^1(B(x_0, 2r)\cap\Omega)$ be a weak solution of
$\mathcal{L} (u)=0$ in  $B(x_0, 2r)\cap\Omega$
with either $u=0$ or 
$\frac{\partial u}{\partial \nu}=0$ on $B(x_0, 2r)\cap \partial\Omega$.
Then
\begin{equation}\label{w-1-0}
\| \nabla u\|_{L^\infty(B(x_0, r)\cap \Omega)}
\le C \average_{B(x_0, 2r)\cap\Omega} |\nabla u|,
\end{equation}
where  $C$ depends only on $d$, $m$, $\mu$ and $\| A\|_{C^\sigma(\td)}$.
\end{lemma}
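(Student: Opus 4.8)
The statement is the standard boundary Lipschitz estimate for the operator $\mathcal{L}=-\text{\rm div}(A(x)\nabla)$ with merely Hölder coefficients on a half-space, under either Dirichlet or Neumann conditions on the flat part of the boundary. Since $A\in C^\sigma(\td)$, this is not a homogenization statement at all — there is no $\e$ present — so the plan is to deduce it directly from classical Schauder-type theory for divergence-form systems with Hölder coefficients, adapted to the flat boundary. The key point is that a flat boundary with a constant conormal vector $n$ makes the reflected/extended problem manageable, unlike the oscillating-coefficient situation that requires the machinery of \cite{AL-1987, KLS1}.

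First I would reduce to a normalized scale: by translating $x_0$ and dilating $x\mapsto x_0 + rx$, and rescaling $A(x)\mapsto A(x_0+rx)$ (which preserves the ellipticity constant $\mu$ and the $C^\sigma$ seminorm up to the factor $r^\sigma\le$ diam$(\td)^\sigma$), it suffices to prove the estimate with $r=1$, i.e. bound $\|\nabla u\|_{L^\infty(B(x_0,1)\cap\Omega)}$ by $C\average_{B(x_0,2)\cap\Omega}|\nabla u|$ for $\mathcal{L}(u)=0$ in $B(x_0,2)\cap\Omega$ with the homogeneous boundary condition. Since $\nabla u$ is a solution-like quantity that is invariant under adding constants to $u$, I may also assume $\average u=0$ over the relevant region and invoke Caccioppoli/Poincaré freely. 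Then I would split into two cases. If $\text{\rm dist}(x_0,\partial\Omega)\ge 1/4$ say, the ball $B(x_0,1/4)$ stays in the interior and the bound is the interior Lipschitz (Schauder) estimate for divergence-form systems with Hölder coefficients; combined with a covering argument and Caccioppoli this gives the $L^\infty$-to-$L^1$-average form. If $\text{\rm dist}(x_0,\partial\Omega)<1/4$, the essential step is the genuine boundary estimate.

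For the boundary case (flat boundary, say after rotating so $\partial\Omega\subset\{x_d=\text{const}\}$ and $n=e_d$), I would use the standard perturbation/freezing-coefficients argument. Write $A(x)=A(x_0)+(A(x)-A(x_0))$; the constant-coefficient problem $-\text{\rm div}(A(x_0)\nabla v)=0$ with homogeneous Dirichlet or Neumann data on the flat piece admits, after an even (Neumann) or odd (Dirichlet) reflection across the hyperplane, a smooth solution satisfying $\|v\|_{C^{1,\sigma}}$ estimates and the required Lipschitz bound with constant depending only on $d,m,\mu$. Then one compares $u$ to the constant-coefficient solution $v$ matching the same boundary data on a ball, controls the error by $\|A-A(x_0)\|_{L^\infty}\lesssim r^\sigma$ times a Caccioppoli bound for $\nabla u$, and runs the usual Campanato-type iteration over dyadic balls $B(x_0,2^{-k})$ to obtain decay of $\fint_{B(x_0,2^{-k})\cap\Omega}|\nabla u-(\nabla u)_{B}|^2$, hence $\nabla u\in C^\sigma$ up to the boundary, in particular $\nabla u\in L^\infty$, with the stated average bound. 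I would then patch the interior and boundary cases together with a finite covering of $B(x_0,r)\cap\Omega$.

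The main obstacle — really the only nontrivial point — is the correct reflection for the Neumann condition in the \emph{system} case with a non-symmetric, variable matrix $A$: one has to check that the even reflection of $u$ across $\{x_d=0\}$ together with the reflection $\widetilde A(x',x_d):=R A(x',-x_d) R$ (with $R=\text{diag}(I_{d-1},-1)$ acting in the slot conjugate to $n$) produces a genuine weak solution of a divergence-form system with the same ellipticity constant and Hölder norm on the full ball, so that the interior Schauder estimate applies across the boundary. Once this reflection bookkeeping is in place — and for the frozen constant-coefficient model it is elementary since $A(x_0)$ is constant — the rest is the textbook Schauder iteration, and the dependence of $C$ on $d,m,\mu,\|A\|_{C^\sigma(\td)}$ comes out automatically from the scaling in the first reduction. (In fact the authors may well prefer simply to cite \cite{AL-1987} or the boundary estimates in \cite{KLS1, AS}, which contain this as the special case of constant-in-$\e$, Hölder coefficients; I would do the same and give the one-paragraph reflection remark for completeness.)
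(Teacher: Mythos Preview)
There is a genuine gap, and it lies exactly where you declared the lemma ``not a homogenization statement at all.'' The constant $C$ is required to be \emph{uniform in $r>0$}, and in the applications (the Calder\'on--Zygmund argument of Theorem~\ref{theorem-w-2}) the cubes have side lengths up to order $R$ with $R\to\infty$, so $r$ is unbounded. Under your rescaling $x\mapsto x_0+rx$, the rescaled matrix $\widetilde A(x)=A(x_0+rx)$ has H\"older seminorm
\[
[\widetilde A]_{C^\sigma(B(0,2))}=r^\sigma\,[A]_{C^\sigma},
\]
which blows up as $r\to\infty$; your claim that this factor is bounded by $\mathrm{diam}(\td)^\sigma$ is simply false. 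Consequently the classical Schauder/freezing-coefficients iteration you outline produces a constant that grows with $r$, and the estimate is lost precisely in the regime where it is needed.

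The paper's proof resolves this by using, not avoiding, the periodicity of $A$: after translation and dilation the operator becomes $\mathcal{L}_\e=-\mathrm{div}(A(x/\e)\nabla)$ with $\e=r^{-1}$ on a fixed domain $B(0,2)\cap\Omega$, and the required bound is exactly the \emph{uniform-in-$\e$} boundary Lipschitz estimate of Avellaneda--Lin \cite{AL-1987} (Dirichlet) and \cite{KLS1,AS} (Neumann). Those results are not ``Schauder for H\"older coefficients'' and are not implied by it; they rely on periodicity through a compactness/large-scale regularity mechanism. Your parenthetical suggestion to ``simply cite'' these references is in fact the entire proof --- but for the right reason, not as an alternative to a classical argument that does not work here. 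The paper also notes that the flatness of $\partial\Omega$ is essential for uniformity in large $r$, another point your reflection argument does not address.
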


\begin{proof}
By translation and dilation we may reduce (\ref{w-1-0}) to the uniform Lipschitz estimate
 for operator $\mathcal{L}_\e$ with $\e=r^{-1}$
in $B(0, 2)\cap\Omega$.
In the case with Dirichlet 
condition $u=0$ on $B(0, 2)\cap\partial\Omega$, the Lipschitz estimate
was proved in \cite{AL-1987}.
The Lipschitz estimate for solutions  with Neumann condition $\frac{\partial u}{\partial \nu}=0$
on $B(0, 2)\cap\partial\Omega$ was obtained in \cite{KLS1, AS}.
The fact that 
$\Omega$ has a flat boundary is crucial here. 
Otherwise, the constant $C$ will depend on $r$, if $r$ is large.
This is true even for harmonic functions in smooth domains.
\end{proof}

\begin{theorem}\label{theorem-w-2}
Let $\omega$ be an $A_1$ weight in $\rd$.
Let $u\in H^1_{\loc}(\Omega)$ be a weak solution of Dirichlet problem(\ref{DP-w}) with $h=0$.
Assume that 
\begin{equation}\label{w-2-00}
\omega(B(x_0, R)\cap \Omega)
\average_{B(x_0, R)\cap\Omega} |\nabla u|^2 \to 0 \quad \text{ as } R\to \infty,
\end{equation}
for some $x_0\in \partial\Omega$.
Then
\begin{equation}\label{w-2-0}
\int_\Omega |\nabla u|^2 \, \omega \, dx
\le C \int_\Omega |f|^2  \, \omega \, dx,
\end{equation}
where $C$ depends only on $d$, $m$, $\mu$, $\|A\|_{C^\sigma(\td)}$, and the constant in (\ref{A-1}).
\end{theorem}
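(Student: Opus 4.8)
The plan is to derive the weighted inequality (\ref{w-2-0}) from the unweighted energy estimate $\|\nabla u\|_{L^2(\Omega)}\le C\|f\|_{L^2(\Omega)}$ by a real-variable argument, using the interior and boundary Lipschitz estimates of Lemma \ref{lemma-w-1} as a substitute for a reverse H\"older inequality and exploiting the $A_\infty$ properties of the $A_1$ weight $\omega$ recorded after Lemma \ref{lemma-weight}. Since $\Omega=\mathbb{H}^d_n(a)$ is unbounded, while the real-variable machinery is naturally formulated over a bounded cube, I would first prove the estimate on the truncated domains $\Omega_R=B(x_0,R)\cap\Omega$ with a constant independent of $R$, and then let $R\to\infty$. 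Writing $f=f\mathbf 1_{\Omega_{2R}}+f\mathbf 1_{\Omega\setminus\Omega_{2R}}$ and splitting $u$ accordingly, the hypothesis (\ref{w-2-00}) is exactly what guarantees that the far-field contribution and the truncation errors it generates tend to zero in the weighted norm; this is the only place the growth condition (\ref{w-2-00}) enters.

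The core of the proof is a local decomposition at every scale. Fix a ball $B=B(y,\rho)$ with $y\in\overline\Omega$ and $4B$ contained in a fixed dilate of $B(x_0,R)$. Write $f=f_1+f_2$ with $f_1=f\mathbf 1_{2B\cap\Omega}$, let $u_1\in H^1_0(\Omega)$ solve $\mathcal L(u_1)=\text{\rm div}(f_1)$ in $\Omega$ with $u_1=0$ on $\partial\Omega$, and set $u_2=u-u_1$. The energy estimate applied to $u_1$ gives
$$
\Big(\average_{2B\cap\Omega}|\nabla u_1|^2\Big)^{1/2}\le C\Big(\average_{2B\cap\Omega}|f|^2\Big)^{1/2},
$$
while $u_2$ satisfies $\mathcal L(u_2)=0$ in $2B\cap\Omega$ together with $u_2=0$ on $2B\cap\partial\Omega$; because $\partial\Omega$ is flat, Lemma \ref{lemma-w-1} applies uniformly on $2B\cap\Omega$ (with a constant independent of $\rho$) and yields
$$
\|\nabla u_2\|_{L^\infty(B\cap\Omega)}\le C\average_{2B\cap\Omega}|\nabla u|+C\Big(\average_{2B\cap\Omega}|f|^2\Big)^{1/2}.
$$
Hence $|\nabla u|\,\mathbf 1_\Omega$ splits, at each ball, into a part bounded in $L^2$ by the local data $f$ and a part bounded in $L^\infty$ by its own local average --- an $L^\infty$ (hence ``$q=\infty$'') reverse-H\"older substitute.

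With this at hand, one feeds the decomposition into the weighted real-variable method (in the spirit of Caffarelli--Peral and Shen): it produces a good-$\lambda$ inequality for a Hardy--Littlewood maximal function of $|\nabla u|^2\mathbf 1_\Omega$ in terms of the corresponding maximal function of $|f|^2\mathbf 1_\Omega$ on $\Omega_R$, at the level of Lebesgue-measure level sets; the $A_\infty$ estimate (\ref{A-infty}) then converts $|\cdot|$ into $\omega(\cdot)$, and, after integration in $\lambda$ and absorption, the weighted maximal inequality (\ref{weighted-max}) for the $A_1$ (hence $A_2$) weight $\omega$ yields $\int_{\Omega_R}|\nabla u|^2\,\omega\le C\int_\Omega|f|^2\,\omega$ with $C$ depending only on $d$, $m$, $\mu$, $\|A\|_{C^\sigma(\td)}$ and the constant in (\ref{A-1}). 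Letting $R\to\infty$, and using (\ref{w-2-00}) to kill the residual terms, gives (\ref{w-2-0}).

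The step I expect to be the main obstacle is the bookkeeping forced by the unboundedness of $\mathbb{H}^d_n(a)$: localizing the real-variable lemma to $\Omega_R$, verifying that every truncation error is genuinely absorbed in the limit through (\ref{w-2-00}), and keeping all constants independent of $R$ (and of the Diophantine constant $\kappa$, which does not appear). As in Lemma \ref{lemma-w-1}, the flatness of $\partial\Omega$ --- which makes every set $2B\cap\Omega$ uniformly nondegenerate and the Lipschitz constant scale-invariant --- is essential; once that is in place, the $A_p$-weight machinery and the good-$\lambda$ iteration are by now standard.
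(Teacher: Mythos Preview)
Your plan is essentially the paper's own proof: both use the Caffarelli--Peral/Shen real-variable good-$\lambda$ method, decomposing $u$ locally into a piece $v$ (your $u_1$) solving $\mathcal L(v)=\text{div}(f\mathbf 1_{\text{local}})$ with a $W^{1,p}$ bound, and a homogeneous piece $w$ (your $u_2$) to which the boundary Lipschitz estimate Lemma~\ref{lemma-w-1} applies, then converting the resulting Lebesgue-measure good-$\lambda$ inequality to a weighted one via the $A_\infty$ property and the maximal inequality (\ref{weighted-max}). The paper handles the unboundedness slightly more directly than your $f$-splitting: it simply runs the argument on the cube $\Omega_R$ and obtains the localized estimate (\ref{w-20}) with an explicit residual term $C\,\omega(\Omega_R)|\Omega_{2R}|^{-1}\int_{\Omega_{2R}}|\nabla u|^2$, which is precisely the quantity in (\ref{w-2-00}) and vanishes as $R\to\infty$ --- so no decomposition of $u$ into near/far parts is needed.
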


\begin{proof}
This is essentially proved  in \cite{Shen-2005} by a real-variable method, originated in \cite{CP-1998}.
We provide a proof here for the reader's convenience.
By translation we may assume that $a=0$.
We will also assume that $n=-e_d$ and thus $\Omega=\rd_+$ for simplicity of exposition.
We point out that the periodicity of the coefficient matrix
 is not used particularly  in the proof, only the estimates in Lemma \ref{lemma-w-1}.

Fix $1<p<2$.
Let $\rho\in (0,1)$ be a small constant to be determined and $A=\rho^{-\sigma/2}$, where $\sigma$ is given in
(\ref{A-infty}). Let
$$
\Omega_{R}= (-R, R)\times \cdots \times (-R, R)\times (0, 2R)\subset \rd_+.
$$
W fix $R>1$ and consider the set
\begin{equation}\label{E-lambda}
E(\lambda )= \big\{ x\in \Omega_R: \mathcal{M}_{{R}} (|\nabla u|^p)(x) > \lambda \big\},
\end{equation}
where $\mathcal{M}_R(F)$ is a localized  Hardy-Littlewood maximal function of $F$, defined by
$$
\mathcal{M}_{{R} }(F) (x) =\sup _{x\in Q\subset \Omega_{2R}} \average_Q |F|.
$$
Let 
\begin{equation}\label{lambda-0}
\lambda_0 =\frac{C_0}{|\Omega_{2R}|}\int _{\Omega_{2R}}
 |\nabla u|^p,
\end{equation}
where $C_0$ is a large constant depending on $d$.
For each $\lambda>\lambda_0$,
we perform a Calder\'on-Zygmund decomposition to $E(A\lambda)\subset \Omega_R$.
This produces a sequence of disjoint dyadic subcubes 
$\{ Q_k\}$ of $\Omega_R$ such that
\begin{equation}\label{w-10}
\aligned
& |E(A\lambda)\setminus \cup_k Q_k|=0,\\
& |E(A\lambda)\cap Q_k|>\rho |Q_k|,\ \ \
 |E(A\lambda) \cap \overline{Q}_k|\le \rho |\overline{Q}_k|,
\endaligned
\end{equation}
where $\overline{Q}_k$ denotes the dyadic parent of $Q_k$, i.e., $Q_k$ is obtained by
bisecting $\overline{Q}_k$ once.
We claim  that it is possible to choose $\rho, \gamma\in (0,1)$ so that
\begin{equation}\label{w-11}
\text{ if } \ 
 \big\{ x\in \overline{Q}_k: \mathcal{M}_{R} (|f|^p)(x)\le \gamma \lambda \big\} \neq \emptyset,
\ \text{ then } \ \overline{Q}_k\subset E(\lambda).
\end{equation}

The claim is proved by contraction.
Suppose that there exists some $x_0$ such that
$x_0\in \overline{Q}_k\setminus E(\lambda)$.
Then, if a cube $Q$ contains $\overline{Q}_k$ and $Q\subset \Omega_{2R}$, 
\begin{equation}\label{w-12}
\average_Q |f|^p \le \gamma \lambda \quad \text{ and } \quad \average_{Q} |\nabla u|^p \le \lambda.
\end{equation}
This implies that for any $x\in Q_k$,
\begin{equation}\label{w-13}
\mathcal{M}_{R} (|\nabla u|^p) (x)
\le \max ( \mathcal{M}_{2\overline{Q}_k}( |\nabla u|^p) (x), 5^d \lambda),
\end{equation}
where
$$
\mathcal{M}_{2\overline{Q}_k} (|F|)(x) =\sup_{x\in Q \subset 2\overline{Q}_k\cap\Omega}\average_Q  |F|.
$$
We now write $u=v+w$, where $v$ is a function such that
\begin{equation}\label{w-14}
\mathcal{L} (v)=\text{\rm div} (f)\quad \text{ in } \Omega\cap 5\overline{Q}_k
\quad \text{ and } \quad v=0 \quad  \text{ on } \partial\Omega\cap 5\overline{Q}_k ,
\end{equation}
\begin{equation}\label{w-15}
\int_{\Omega\cap 4\overline{Q}_k} |\nabla v|^p \le C \int_{\Omega \cap 5 \overline{Q}_k} |f|^p,
\end{equation}
and $C$ depends only on $d$, $m$, $p$, $\mu$ and $\|A\|_{C^\sigma(\td)}$.
The existence of such $v$ follows from the boundary $W^{1, p}$ estimates for $\mathcal{L}_\e$ \cite{AL-1987, AL-1991}.
 Note that if $A>5^d$,
\begin{equation}\label{w-16}
\aligned
|Q_k\cap E(A\lambda)|
&\le  |\big\{ x\in Q_k: \mathcal{M}_{2\overline{Q}_k} (|\nabla u|^p)(x)> A\lambda \big\}|\\
&\le |\big\{ x\in Q_k: \mathcal{M}_{2\overline{Q}_k} (|\nabla v|^p)(x)> (1/4)A\lambda \big\}|\\
&\qquad
 + |\big\{ x\in Q_k: \mathcal{M}_{2\overline{Q}_k} (|\nabla w|^p)(x)> (1/4)A\lambda \big\}|.\\
\endaligned
\end{equation}

For the first term in the RHS of (\ref{w-16}), we use the fact that the operator $\mathcal{M}$
is bounded from $L^1$ to weak-$L^1$. This, together with (\ref{w-15}) and (\ref{w-12}), shows that
the term is dominated by
$$
\frac{C}{A\lambda}\int_{\Omega\cap 5 \overline{Q}_k} |f|^p
\le  C\gamma A^{-1} |Q_k|,
$$
where $C$ depends only on $d$, $m$, $\mu$ and $\|A\|_{C^\sigma(\td)}$.
Since $\mathcal{L} (w)=0$ in $\Omega\cap 5 \overline{Q}_k$ and $w=0$ on $\partial\Omega\cap 5\overline{Q}_k$,
in view of Lemma \ref{lemma-w-1},
we obtain 
$$
\aligned
\|\nabla w\|_{L^\infty(\Omega\cap2\overline{Q}_k)}
&\le  C \average_{\Omega\cap 4\overline{Q}_k} |\nabla w| \\
& \le C \left(\average_{\Omega\cap 4\overline{Q}_k} |\nabla u|^p\right)^{1/p} 
+C \left(\average_{\Omega\cap 4\overline{Q}_k} |\nabla v|^p\right)^{1/p} \\
&  \le C\lambda^{1/p}  +
C \left(\average_{\Omega\cap 5\overline{Q}_k} | f|^p\right)^{1/p}\\
& \le C \lambda^{1/p},
\endaligned
$$
where we have also used estimates (\ref{w-12}) and (\ref{w-15}).
It follows  that the second term in the RHS of (\ref{w-16}) is zero, if
$A$ is large. As a result, we have proved that 
$$
|Q_k\cap E(A\lambda)|\le C \gamma A^{-1} |Q_k|
=C\gamma \rho^\sigma |Q_k|,
$$
if $\rho\in (0,1)$ is sufficiently small.
By choosing $\gamma\in (0,1)$  so small  that $C\gamma \rho^\sigma<\rho$,
we obtain $|Q_k\cap E(A\lambda)|< \rho |Q_k|$,
which is in contradiction with (\ref{w-10}).
This proves the claim (\ref{w-11}).
We should point out that the choices of $\rho$ and $\gamma$ are uniform for all $\lambda>\lambda_0$.

To proceed, we use (\ref{A-infty}) and (\ref{w-10}) to obtain 
\begin{equation}\label{w-17}
\omega(E(A\lambda)\cap Q_k) \le C \rho^\sigma \omega(Q_k).
\end{equation}
This, together with  (\ref{w-11}), leads to 
\begin{equation}\label{w-18}
\aligned
\omega(E(A\lambda)) 
&\le \omega \big( E(A\lambda)\cap \big\{x\in \Omega_R: \mathcal{M}_R (|f|^p)(x)\le \gamma \lambda\big\} \big)\\
&\qquad\qquad
+\omega\big\{ x\in \Omega_R: \mathcal{M}_R (|f|^p)(x)>\gamma \lambda \big\}\\
 & \le \sum_{k} \omega \big\{x\in E(A\lambda)\
 \cap Q_k: \  \mathcal{M}_R (|f|^p)(x)\le \gamma \lambda\big\} \\
&\qquad\qquad
+ \omega\big\{ x\in \Omega_R: \mathcal{M}_R (|f|^p)(x)>\gamma \lambda \big\}\\
& \le C\rho^\sigma\sum_{k} \omega (Q_k) 
+ \omega\big\{ x\in \Omega_R: \mathcal{M}_R (|f|^p)(x)>\gamma \lambda \big\},
\endaligned
\end{equation}
for any $\lambda >\lambda_0$,
where the last sum is taken only over those $Q_k$'s such that
$\{ x\in Q_k: \mathcal{M}_R (|f|^p)(x)\le \gamma \lambda \} \neq \emptyset$.
By the claim (\ref{w-11}) this gives
\begin{equation}\label{w-19}
\omega(E(A\lambda)) \le C \rho^\sigma \omega (E(\lambda))
+ \omega\big\{ x\in \Omega_R: \mathcal{M}_R (|f|^p)(x)>\gamma \lambda \big\}
\end{equation}
for any $\lambda >\lambda_0$.

Finally, we multiply both sides of (\ref{w-19}) by $\lambda^t$ with $t=\frac{2}{p}-1\in (0,1)$,
and integrate the resulting inequality  in $\lambda$ over the interval $(\lambda_0, \Lambda)$ to obtain 
$$
A^{-1-t}\int_{A\lambda_0}^{A\Lambda} \lambda^t 
\omega(E(\lambda))\, d\lambda
\le C\rho^\sigma \int_{\lambda_0}^\Lambda \lambda^t  \omega(E(\lambda))\, d\lambda
+ C_\gamma \int_{\Omega_R} \Big\{ \mathcal{M}_{2R} (|f|^p) \Big\}^{\frac{2}{p}}
\omega\, dx.
$$
Since $CA^{1+t}\rho^\sigma =C\rho^{-\frac{\sigma}{2} (1+t)}\rho^{\sigma}<(1/2)$ if $\rho>0$ is small, this gives
$$
\int_0^\Lambda \lambda^t  \omega(E(\lambda))\, d\lambda
\le C \int_0^{A\lambda_0} \lambda^t  \omega(E(\lambda))\, d\lambda
+ C \int_{\Omega_{2R}} |f|^2\omega\, dx,
$$
where we have used the weighted norm inequality (\ref{weighted-max}) as well
as the fact that $2/p>1$.
By letting $\Lambda\to \infty$ we obtain 
\begin{equation}\label{w-20}
\aligned
\int_{\Omega_R} |\nabla u|^2 \omega \, dx
& \le C \lambda_0^{\frac{2}{p}} \omega(\Omega_R) + C \int_{\Omega_{2R}} |f|^2\omega\, dx\\
& = \frac{C\, \omega(\Omega_{R})}{|\Omega_{2R}|}
\int_{\Omega_{2R}} |\nabla u|^2\, dx
+ C \int_{\Omega_{2R}} |f|^2\omega\, dx,
\endaligned
\end{equation}
where we have used the fact $|F|\le \mathcal{M}(F)$.
We complete the proof by letting $R\to \infty$ in (\ref{w-20}) and
using the assumption (\ref{w-2-00}).
\end{proof}

The next theorem treats the Neumann problem (\ref{NP-w}).

\begin{theorem}\label{theorem-w-3}
Let $\omega$ be an $A_1$ weight in $\rd$.
Let $u\in H^1_{\loc}(\Omega)$ be a weak solution of the Neumann problem(\ref{NP-w}).
Assume that $u$ satisfies the condition (\ref{w-2-00}).
Then the estimate (\ref{w-2-0}) holds with constant $C$
depending only on $d$, $m$, $\mu$, $\|A\|_{C^\sigma(\td)}$, and the constant in (\ref{A-1}).
\end{theorem}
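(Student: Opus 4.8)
The plan is to mirror the proof of Theorem \ref{theorem-w-2} for the Dirichlet case, replacing the Dirichlet decomposition $u=v+w$ with its Neumann analogue and invoking the Neumann version of the boundary Lipschitz estimate in Lemma \ref{lemma-w-1}. As before, by translation we may assume $a=0$, and for concreteness take $n=-e_d$, so $\Omega=\rd_+$ and $\delta(x)=|x_d|$. Fix $1<p<2$, set $A=\rho^{-\sigma/2}$ with $\sigma$ as in (\ref{A-infty}), and on the reference cube $\Omega_R$ form the level sets $E(\lambda)=\{x\in\Omega_R:\mathcal{M}_R(|\nabla u|^p)(x)>\lambda\}$ with $\lambda_0$ as in (\ref{lambda-0}). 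Perform the Calder\'on--Zygmund decomposition of $E(A\lambda)$ exactly as in (\ref{w-10}), obtaining dyadic cubes $\{Q_k\}$.

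The heart of the argument is again the implication (\ref{w-11}): if $\overline{Q}_k$ meets $\{\mathcal{M}_R(|f|^p)\le\gamma\lambda\}$ and $\overline{Q}_k\not\subset E(\lambda)$, we derive a contradiction. Picking $x_0\in\overline{Q}_k\setminus E(\lambda)$ gives, for every cube $Q\supset\overline{Q}_k$ with $Q\subset\Omega_{2R}$, the bounds $\average_Q|f|^p\le\gamma\lambda$ and $\average_Q|\nabla u|^p\le\lambda$, hence (\ref{w-13}). Now decompose $u=v+w$ on $\Omega\cap 5\overline{Q}_k$, where $v$ solves the \emph{Neumann} problem $\mathcal{L}(v)=\text{div}(f)$ in $\Omega\cap 5\overline{Q}_k$ with $\frac{\partial v}{\partial\nu}=-n\cdot f$ on $\partial\Omega\cap 5\overline{Q}_k$, together with the $W^{1,p}$ bound $\int_{\Omega\cap 4\overline{Q}_k}|\nabla v|^p\le C\int_{\Omega\cap 5\overline{Q}_k}|f|^p$; the existence of such $v$ with this estimate follows from the boundary $W^{1,p}$ estimates for Neumann problems for $\mathcal{L}_\e$ (after rescaling, with $\e$ comparable to the inverse sidelength), which hold by the same interior/flat-boundary theory cited for (\ref{w-15}). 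Then $\mathcal{L}(w)=0$ in $\Omega\cap 5\overline{Q}_k$ with $\frac{\partial w}{\partial\nu}=0$ on $\partial\Omega\cap 5\overline{Q}_k$, so the Neumann half of Lemma \ref{lemma-w-1} gives $\|\nabla w\|_{L^\infty(\Omega\cap 2\overline{Q}_k)}\le C\lambda^{1/p}$ exactly as in the Dirichlet computation. Splitting $|Q_k\cap E(A\lambda)|$ as in (\ref{w-16}) using (\ref{w-13}), the $\nabla w$ term vanishes for $A$ large, and the $\nabla v$ term is controlled by the weak-$(1,1)$ bound for $\mathcal{M}$ and the $W^{1,p}$ estimate, yielding $|Q_k\cap E(A\lambda)|\le C\gamma\rho^\sigma|Q_k|<\rho|Q_k|$ for suitable $\rho,\gamma$, contradicting (\ref{w-10}).

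From this point the proof is identical to that of Theorem \ref{theorem-w-2}: using (\ref{A-infty}) and the $A_1$ (hence $A_{2/p}$) property of $\omega$, estimate (\ref{w-17}) gives (\ref{w-19}), namely
\begin{equation}\label{eq-neum-w}
\omega(E(A\lambda))\le C\rho^\sigma\omega(E(\lambda))+\omega\big\{x\in\Omega_R:\mathcal{M}_R(|f|^p)(x)>\gamma\lambda\big\}
\end{equation}
for all $\lambda>\lambda_0$. Multiplying by $\lambda^t$ with $t=\frac{2}{p}-1\in(0,1)$, integrating over $(\lambda_0,\Lambda)$, absorbing the first term on the right using $CA^{1+t}\rho^\sigma<1/2$, applying the weighted maximal inequality (\ref{weighted-max}) (valid since $\omega\in A_{2/p}$ and $2/p>1$), and letting $\Lambda\to\infty$ then $R\to\infty$ exactly reproduces (\ref{w-20}) and, via hypothesis (\ref{w-2-00}), the desired bound (\ref{w-2-0}).

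The main obstacle, and essentially the only place the Neumann case genuinely differs, is establishing the local $W^{1,p}$ solvability with estimate (\ref{w-15}) for the Neumann problem on $\Omega\cap 5\overline{Q}_k$ uniformly in the cube, and invoking the correct flat-boundary Neumann Lipschitz estimate; once these two ingredients are in place the real-variable machinery runs verbatim. The periodicity of $A$ plays no special role here, only the estimates of Lemma \ref{lemma-w-1} and the boundary $W^{1,p}$ theory.
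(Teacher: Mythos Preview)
Your proposal is correct and follows essentially the same route as the paper: rerun the Calder\'on--Zygmund/good-$\lambda$ argument of Theorem \ref{theorem-w-2}, replacing the local Dirichlet comparison solution by a local Neumann one and invoking the Neumann half of Lemma \ref{lemma-w-1}. The only refinement in the paper's proof is that, to make the local Neumann $W^{1,p}$ problem cleanly solvable, it replaces $f$ by $\varphi f$ for a cutoff $\varphi\in C_0^\infty(5\overline{Q}_k)$ with $\varphi\equiv 1$ on $4\overline{Q}_k$; since $\varphi f=f$ on $4\overline{Q}_k$, the difference $w=u-v$ still has zero conormal derivative there and the rest of your argument goes through unchanged.
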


\begin{proof}
The proof is similar to that of Theorem \ref{theorem-w-2}.
The only difference is that in the place of (\ref{w-14}), we need to find a function $v$ such that,
\begin{equation}\label{w-3-1}
\mathcal{L}(v) =\text{\rm div}(f\varphi) \quad \text{ in } \Omega\cap 5\overline{Q}_k
\quad \text{ and } \quad \frac{\partial v}{\partial \nu} =n\cdot (\varphi f) \quad \text{ on } \partial\Omega\cap 
5\overline{Q}_k,
\end{equation}
where $\varphi\in C_0^\infty(5\overline{Q}_k)$,
$0\le \varphi\le 1$, and $\varphi=1$ on $4\overline{Q}_k$.
The existence of functions satisfying (\ref{w-3-1}) and
the estimate (\ref{w-15})  follows from the boundary $W^{1, p}$ estimates for $\mathcal{L}_\e$
with Neumann conditions \cite{KLS1, AS}.
We omit the details and refer the reader to \cite{Geng-2012} for 
related $W^{1, p}$ estimates for Neumann problems.
\end{proof}

Finally, we go back to the case where $\omega(x)=\big[\delta(x)\big]^\alpha$.

\begin{theorem}\label{theorem-w-4}
Let  $-1<\alpha<0$, $\Omega=\mathbb{H}^d_n(a)$ and $\delta(x)$ be given by (\ref{weight}).
Let $u\in H^1_{\loc}(\Omega)$ be a weak solution of (\ref{DP-w}) in $\Omega$.
Assume that
\begin{equation}\label{w-4-0}
 R^\alpha \int_{B(x_0, R)\cap\Omega} |\nabla u|^2 \to 0 \quad \text{ as } R \to \infty,
\end{equation}
for some $x_0\in \partial\Omega$.
Then estimate (\ref{w-estimate-1}) holds with a constant $C$ depending only on $d$, $m$, $\mu$,
$\alpha$ and $\|A\|_{C^\sigma(\td)}$.
\end{theorem}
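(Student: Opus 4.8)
The plan is to absorb the lower-order term $h$ into the divergence term so that Theorem~\ref{theorem-w-2} applies directly. By a rotation and translation (which do not affect the hypotheses, since periodicity of $A$ is never used here, cf.\ the proof of Theorem~\ref{theorem-w-2}) we may assume $\Omega=\mathbb{R}^d_+=\{x=(x',x_d):x_d>0\}$ and $\delta(x)=x_d$, so that $\omega(x)=x_d^\alpha$, which is an $A_1$ weight for $-1<\alpha<0$ by Lemma~\ref{lemma-weight}. We may also assume the right-hand side of (\ref{w-estimate-1}) is finite, in particular $\int_\Omega |h|^2\delta^{\alpha+2}\,dx<\infty$, otherwise there is nothing to prove. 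First I would observe that (\ref{w-4-0}) is precisely the hypothesis (\ref{w-2-00}) of Theorem~\ref{theorem-w-2} for this particular $\omega$: for $x_0\in\partial\Omega$ one has $|B(x_0,R)\cap\Omega|\approx R^d$, and since $\alpha>-1$, $\omega(B(x_0,R)\cap\Omega)=\int_{B(x_0,R)\cap\Omega}x_d^\alpha\,dx\approx R^{d-1}\int_0^{cR}x_d^\alpha\,dx_d\approx R^{d+\alpha}$, so that $\omega(B(x_0,R)\cap\Omega)\,\average_{B(x_0,R)\cap\Omega}|\nabla u|^2\approx R^\alpha\int_{B(x_0,R)\cap\Omega}|\nabla u|^2$.

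Next, set $F(x',x_d)=-e_d\int_{x_d}^{\infty}h(x',s)\,ds$. The key point is that the antiderivative is taken \emph{from infinity rather than from the boundary}: since $\alpha>-1$, $\int_{t}^\infty s^{-\alpha-2}\,ds<\infty$, so by Cauchy--Schwarz and Fubini $F$ is well defined a.e., and the weighted one-dimensional Hardy inequality
\[
\int_0^\infty\Big(\int_t^\infty h(x',s)\,ds\Big)^2 t^\alpha\,dt\le \frac{4}{(\alpha+1)^2}\int_0^\infty |h(x',t)|^2\,t^{\alpha+2}\,dt
\]
holds for each fixed $x'$; this is obtained by writing $G(t)=\int_t^\infty h(x',s)\,ds$, integrating $\int_0^\infty G^2 t^\alpha\,dt=\tfrac{1}{\alpha+1}\int_0^\infty G^2(t^{\alpha+1})'\,dt$ by parts (after a routine truncation; the boundary terms vanish because $\alpha+1>0$ at $t=0$ and by Cauchy--Schwarz at $t=\infty$), and using $G'=-h$ together with Cauchy--Schwarz. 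Integrating in $x'$ gives $\int_\Omega |F|^2\omega\,dx\le \tfrac{4}{(\alpha+1)^2}\int_\Omega|h|^2\delta^{\alpha+2}\,dx<\infty$; in particular $F\in L^2_{\loc}(\Omega)$.

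By construction $\operatorname{div}F=\partial_{x_d}(F\cdot e_d)=h$ in $\Omega$, so $u$ is a weak solution of the Dirichlet problem (\ref{DP-w}) with $h$ replaced by $0$ and $f$ replaced by $\widetilde f:=f+F$ (the boundary behavior of $F$ is irrelevant, since Dirichlet test functions vanish on $\partial\Omega$). Since $\nabla u$ is unchanged, (\ref{w-2-00}) holds, so Theorem~\ref{theorem-w-2} yields
\[
\int_\Omega |\nabla u|^2\omega\,dx\le C\int_\Omega |\widetilde f|^2\omega\,dx\le 2C\int_\Omega |f|^2\omega\,dx+2C\int_\Omega |F|^2\omega\,dx.
\]
Combining this with the Hardy estimate above and recalling $\delta^{\alpha+2}=\delta^2\omega$ gives (\ref{w-estimate-1}).

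I expect the main obstacle to be exactly the choice made in the second paragraph: the naive identity $h=\operatorname{div}\big(0,\dots,0,\int_0^{x_d}h\big)$ would require a Hardy inequality whose weight is singular at $t=0$ in a way that forces $\alpha<-1$, outside the admissible range; integrating from infinity instead is what makes the whole range $-1<\alpha<0$ work — and it is precisely this range, with $\alpha<0$, that is needed for the sharp regularity of the homogenized data.
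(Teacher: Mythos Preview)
Your proof is correct and follows essentially the same approach as the paper: reduce to the case $h=0$ by writing $h=\operatorname{div}F$ with $F$ an antiderivative of $h$ in the normal direction taken from infinity, invoke Theorem~\ref{theorem-w-2}, and control $\int_\Omega |F|^2\omega$ by the one-dimensional Hardy inequality with constant $\tfrac{4}{(\alpha+1)^2}$. Your explicit verification that (\ref{w-4-0}) is equivalent to (\ref{w-2-00}) for $\omega=\delta^\alpha$, and your closing remark on why integrating from infinity (rather than from the boundary) is what makes the range $-1<\alpha<0$ work, are both to the point.
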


\begin{proof}
By translation we may assume that $a=0$.
Recall that $\omega(x)=\big[\delta(x)\big]^\alpha$ is an $A_1$ weight for $-1<\alpha<0$.
Also observe that in this case the assumption (\ref{w-2-00}) is reduced to (\ref{w-4-0}).
We may assume  that 
$$
\int_\Omega |h(x)|^2 \big[\delta(x)\big]^{\alpha+2}\, dx
=\int_{\partial\mathbb{H}_n^d(0)}
\left\{ \int_0^\infty | h(x^\prime-tn)|^2 t^{\alpha +2} dt \right\} d\sigma (x^\prime)
<\infty.
$$
For otherwise there is nothing to prove. It follows that for a.e. $x^\prime \in \partial\mathbb{H}_n^d(0)$,
$$
\int_s^\infty |h(x^\prime-tn)|dt 
\le \left(\int_s^\infty |h(x^\prime-tn)|^2 t^{\alpha+2}dt\right)^{1/2}
\left(\int_s^\infty t^{-\alpha-2} dt\right)^{1/2}<\infty,
$$
if $s>0$. This allows us to write 
$$
h=\text{\rm div}(H),
$$
where $H=(H_1, \dots, H_d)$ and 
$$
H_i (x)= n_i\int_0^\infty h(x-tn)\, dt.
$$
As a result, we obtain $\mathcal{L}(u) =\text{\rm div} ( f+H)$ in $\Omega$.
It follows by Theorem \ref{theorem-w-2} that
\begin{equation}\label{w-4-10}
\int_\Omega |\nabla  u|^2 \omega\, dx
\le C \int_\Omega |f|^2\omega\, dx +C \int_\Omega |H|^2\omega\, dx.
\end{equation}

Finally, we observe that for $x^\prime\in \partial\mathbb{H}_n^d(0)$,
$$
\aligned
\int_0^\infty |H(x^\prime-tn)|^2 t^\alpha\, dt
&\le \int_0^\infty \Big| \int_0^\infty |h(x^\prime-tn -sn)|\, ds \Big|^2 t^\alpha\, dt \\
 &\le  \int_0^\infty \Big| \int_t^\infty |h(x^\prime-sn)|\, ds \Big|^2 t^\alpha\, dt\\
 & \le \frac{4}{(\alpha+1)^2} \int_0^\infty |h(x^\prime-tn)|^2 t^{\alpha +2}\, dt,
 \endaligned
 $$
 where $\alpha>-1$ and a Hardy's inequality was used for the last step.
 By integrating above inequalities in $x^\prime$ over $\partial\mathbb{H}_n^d(0)$,
 we obtain 
 $$
 \int_\Omega |H(x) |^2 \big[\delta(x)\big]^\alpha \, dx
 \le C \int_\Omega |h (x)|^2 \big[\delta(x)\big]^{\alpha+2} \, dx.
 $$
 This, together with (\ref{w-4-10}), gives the weighted estimate (\ref{w-estimate-1}).
\end{proof}

The next theorem establishes  (\ref{w-estimate-1}) for the Neumann problem (\ref{NP-w}).

\begin{theorem}\label{theorem-w-5}
Let  $-1<\alpha<0$, $\Omega=\mathbb{H}^d_n(a)$ and $\delta(x)$ be given by (\ref{weight}).
Let $u\in H^1_{\loc}(\Omega)$ be a weak solution of Neumann problem (\ref{NP-w}) in $\Omega$.
Suppose that $u$ satisfies  the condition (\ref{w-4-0})
for some $x_0\in \partial\Omega$.
Then 
\begin{equation}\label{w-5-0}
\int_{\Omega} |\nabla u(x)|^2 \, \big[\delta(x)\big]^\alpha\, dx
\le C  \int_{\Omega} |f(x)|^2 \, \big[\delta(x)\big]^\alpha\, dx,
\end{equation}
where  $C$ depending only on $d$, $m$, $\mu$,
$\alpha$ and $\|A\|_{C^\sigma(\td)}$.
\end{theorem}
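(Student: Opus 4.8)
The plan is to obtain this as an immediate specialization of Theorem \ref{theorem-w-3}, taking the $A_1$ weight there to be $\omega(x)=\big[\delta(x)\big]^\alpha$. By translation we may assume $a=0$. The first step is to invoke Lemma \ref{lemma-weight}, which guarantees that for $-1<\alpha<0$ the function $\big[\delta(x)\big]^\alpha$ is an $A_1$ weight on $\rd$, with the constant in (\ref{A-1}) depending only on $d$ and $\alpha$; this is what fixes the admissible dependence of the final constant $C$.

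The second step is to verify that the decay hypothesis (\ref{w-2-00}) required in Theorem \ref{theorem-w-3} is, for this choice of $\omega$, exactly the stated hypothesis (\ref{w-4-0}). Indeed $\omega\big(B(x_0,R)\cap\Omega\big)$ is comparable to $R^{d+\alpha}$ while $|B(x_0,R)\cap\Omega|$ is comparable to $R^d$, so
$$
\omega\big(B(x_0,R)\cap\Omega\big)\,\average_{B(x_0,R)\cap\Omega} |\nabla u|^2
\ \approx\ R^{\alpha}\int_{B(x_0,R)\cap\Omega} |\nabla u|^2,
$$
and the two conditions agree. Hence Theorem \ref{theorem-w-3} applies and yields (\ref{w-5-0}) with $C$ depending only on $d$, $m$, $\mu$, $\alpha$ and $\|A\|_{C^\sigma(\td)}$.

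I would then add one remark for context: unlike the Dirichlet statement in Theorem \ref{theorem-w-4}, the Neumann problem (\ref{NP-w}) carries no inhomogeneous term $h$, so the Hardy-inequality device that rewrites $h=\text{\rm div}(H)$ and estimates $\|H\|$ in the weighted norm is not needed here; the present theorem is simply the Neumann counterpart of the $h=0$ case. Consequently there is no genuine obstacle in this deduction: all the real work — the Calder\'on--Zygmund/good-$\lambda$ real-variable argument, together with the boundary $W^{1,p}$ estimates and the boundary Lipschitz estimates for solutions of the Neumann problem with flat boundary (Lemma \ref{lemma-w-1} and \cite{KLS1, AS})— is already carried out in Theorem \ref{theorem-w-3}.
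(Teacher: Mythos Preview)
Your proposal is correct and follows exactly the paper's approach: the paper's proof is the single sentence ``This follows directly from Theorem \ref{theorem-w-3},'' and you have simply spelled out the two verifications (that $[\delta(x)]^\alpha$ is $A_1$ by Lemma \ref{lemma-weight}, and that (\ref{w-2-00}) reduces to (\ref{w-4-0})) which the paper already recorded in the proof of the Dirichlet analogue, Theorem \ref{theorem-w-4}.
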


\begin{proof}
This follows directly from Theorem \ref{theorem-w-3}.
\end{proof}

\begin{remark}\label{remark-w-100}
{\rm 
Let $\Omega=\mathbb{H}^d_n(a)$.
Let $u\in H^1_{\loc}(\Omega)$ be a solution of $-\text{\rm div}(A(x)\nabla u)=\text{\rm div}(f)$
in $\Omega$, with either Dirichlet condition $u=0$ or Neumann condition 
$\frac{\partial u}{\partial\nu}=-n\cdot f$ on $\partial\Omega$.
Let 
\begin{equation}\label{Omega-R}
\Omega_R = \big\{ x\in \Omega: |x-(a+x\cdot n)n|\le R \text{ and } |a+x\cdot n|\le 2R \big\}.
\end{equation}
It follows from (\ref{w-20}) that for $R\ge 1$
\begin{equation}\label{w-101}
\int_{\Omega_R} |\nabla u|^2 \, \big[\delta(x)\big]^\alpha \, dx
\le  C R^\alpha \int_{\Omega_{2R}}|\nabla u|^2\, dx
+C \int_{\Omega_{2R}} |f|^2\, \big[\delta(x)\big]^\alpha \, dx,
\end{equation}
where $-1<\alpha<0$ and
$C$ depends only on $d$, $m$, $\mu$, $\alpha$, and some H\"older norm of $A$.
This will be used in Section 6.
}
\end{remark}

\begin{remark}\label{remark-4.100}
{\rm
The weighted estimates in Theorems \ref{theorem-w-4} and \ref{theorem-w-5}
also hold for the range $0<\alpha<1$, which is not used in the paper.
This may be proved by a duality argument.
}
\end{remark}



\section{Approximation of Neumann correctors}
\setcounter{equation}{0}

Throughout this section we assume that $\Omega$ is a bounded smooth, strictly convex domain
in $\rd$, $d\ge3$, and that $A$ is smooth and satisfies (\ref{ellipticity})-(\ref{periodicity}).
For $g\in C^\infty(\td; \mathbb{R}^m)$, consider the Neumann problem
\begin{equation}\label{NP-M}
\left\{
\aligned
& \mathcal{L}_\e (u_\e) =0 & \quad &\text{ in } \Omega,\\
 &\frac{\partial u_\e}{\partial\nu_\e}  = T(x)  \cdot \nabla g (x/\e) &\quad & \text{ on } \partial\Omega,
\endaligned
\right.
\end{equation}
where $T(x)=T_{ij}(x)=n_i (x) e_j -n_j(x) e_i$ for some $1\le i, j\le d$ is a tangential vector field
 on $\partial\Omega$,
 and $n(x)=(n_1(x), \dots, n_d(x))$ denotes the outward normal to $\partial\Omega$ 
at $x\in \partial\Omega$.
Fix $x_0 \in \partial\Omega$.
Assume that $n=n(x_0)$ satisfies the Diophantine condition (\ref{D-condition}) with constant
$\kappa>0$ (all constants $C$ will be independent of $\kappa$).
To approximate $u_\e$ in a neighborhood of $x_0$, we solve the
Neumann problem in a half-space 
\begin{equation}\label{NP-A-1}
\left\{
\aligned
& \mathcal{L}_\e (v_\e) =0 & \quad &\text{ in } \mathbb{H}_{n}^d (a),\\
 & \frac{\partial v_\e}{\partial\nu_\e}  = T(x_0)  \cdot \nabla g (x/\e) &\quad & \text{ on } \partial \mathbb{H}_{n}^d (a),
\endaligned
\right.
\end{equation}
where $a=-x_0\cdot n$ and $\partial \mathbb{H}_{n}^d (a)$ is the tangent plane of $\partial\Omega$
at $x_0$.
Note that if $v_\e (x)=\e w(x/\e)$, then $w$ is a solution of
\begin{equation}\label{NP-m}
\left\{
\aligned
 &\mathcal{L}_1 (w)  =0 & \quad &\text{ in } \mathbb{H}_{n}^d (a\e^{-1}),\\
 &\frac{\partial w}{\partial\nu_1}  = T(x_0)  \cdot \nabla g(x)  &\quad & \text{ on } \partial \mathbb{H}_{n}^d (a\e^{-1}).
\endaligned
\right.
\end{equation}
It then follows by Theorem \ref{theorem-2.1} that (\ref{NP-A-1}) has a bounded smooth solution $v_\e$ satisfying
\begin{equation}\label{4.10}
\|\partial_x^\alpha v_\e\|_\infty \le C_\alpha\,  \e^{1-|\alpha|}
\quad \text{ for any } |\alpha|\ge 1.
\end{equation}
In particular, $\|\nabla v_\e\|_\infty\le C$.
In view of Theorem \ref{theorem-r}, we also obtain the estimate
\begin{equation}\label{refined-main-1}
|\nabla v_\e (x)|\le \frac{C\, \e}{|x\cdot n +a|}.
\end{equation}

The goal of this section is prove the following.

\begin{theorem}\label{theorem-4.1}
Let $u_\e$ be a solution of (\ref{NP-M}) and $v_\e$ a solution of (\ref{NP-A-1}),
constructed above.
Let $\e \le r\le \sqrt{\e}$.
Then, for any $\sigma\in (0,1)$,
\begin{equation}\label{4.1-0}
\| \nabla (u_\e -v_\e)\|_{L^\infty(B(x_0, r)\cap\Omega)}
\le C \sqrt{\e} \big\{ 1+|\ln \e |\big\}
+C \e^{-1 -\sigma } r^{2+\sigma},
\end{equation}
where $C$ depends on $d$, $m$, $\mu$, $\sigma$, $\Omega$, $\|A\|_{C^k(\td)}$ and
$\| g\|_{C^k(\td)}$ for some $k=k(d)>1$.
\end{theorem}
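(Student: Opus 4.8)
The plan is to estimate the difference $w_\e:=u_\e-v_\e$. Since $\Omega$ is strictly convex and $\partial\mathbb{H}_n^d(a)$ is its tangent plane at $x_0$, we have $\overline{\Omega}\subset\overline{\mathbb{H}_n^d(a)}$ with $\overline{\Omega}\cap\partial\mathbb{H}_n^d(a)=\{x_0\}$, so $v_\e$ is a smooth solution of $\mathcal{L}_\e v_\e=0$ on a neighborhood of $\overline{\Omega}$; hence $\mathcal{L}_\e w_\e=0$ in $\Omega$, and on $\partial\Omega$
$$
\frac{\partial w_\e}{\partial\nu_\e}=h_\e:=T(x)\cdot(\nabla g)(x/\e)-n(x)\cdot A(x/\e)\nabla v_\e(x).
$$
Using the boundary condition that $v_\e$ satisfies on the tangent plane (with $n=n(x_0)$), I would split $h_\e=h_\e^{(1)}+h_\e^{(2)}+h_\e^{(3)}$, where $h_\e^{(1)}=[T(x)-T(x_0)]\cdot(\nabla g)(x/\e)$, $h_\e^{(2)}=-\big(n\cdot A(x/\e)\nabla v_\e-T(x_0)\cdot(\nabla g)(x/\e)\big)$, and $h_\e^{(3)}=[n-n(x)]\cdot A(x/\e)\nabla v_\e$, the point being that $h_\e^{(2)}$ vanishes identically on $\partial\mathbb{H}_n^d(a)$. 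Writing $\delta_*(y):=|y\cdot n+a|=\operatorname{dist}(y,\partial\mathbb{H}_n^d(a))$ and using that $\partial\Omega$ is smooth and uniformly convex, one has $\delta_*(y)\sim|y-x_0|^2$ for $y\in\partial\Omega$ near $x_0$; combining this with $\|\nabla v_\e\|_\infty\le C$, $\|\nabla^2v_\e\|_\infty\le C\e^{-1}$ from \eqref{4.10}, and the decay $|\nabla v_\e(y)|\le C\e/\delta_*(y)$ from \eqref{refined-main-1}, I get on $\partial\Omega$ near $x_0$
$$
|h_\e^{(1)}(y)|\le C|y-x_0|,\qquad |h_\e^{(2)}(y)|\le C\min\!\big(1,\e^{-1}|y-x_0|^2\big),\qquad |h_\e^{(3)}(y)|\le C\min\!\big(|y-x_0|,\ \e|y-x_0|^{-1}\big),
$$
together with the crude global bound $|h_\e|\le C$ on $\partial\Omega$.

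Next I would use the Neumann–function representation $w_\e(x)-\average_{\partial\Omega}w_\e=\int_{\partial\Omega}N_\e(x,y)\,h_\e(y)\,d\sigma(y)$ and exploit the oscillation in the data. Since $(\nabla g)(y/\e)=\e\,\nabla_y\{g(y/\e)\}$ and $T(y)=n_ie_j-n_je_i$, Lemma \ref{lemma-p1} allows an integration by parts on $\partial\Omega$ in the part of the integral coming from $T(y)\cdot(\nabla g)(y/\e)$, transferring a derivative onto $N_\e(x,\cdot)$ and producing a factor $\e$; the same device applies after decomposing the constant vector $T(x_0)$ into its components tangential and normal to $\partial\Omega$ at $y$, the normal component being $O(|y-x_0|)$ and hence absorbable. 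This reduces all contributions to kernels of the form $\e\,\nabla_yN_\e(x,y)$, $\e\,\nabla_y\nabla_xN_\e(x,y)$, $N_\e(x,y)$, $\nabla_xN_\e(x,y)$ applied to the pointwise-controlled factors $g(y/\e)$, $h_\e^{(2)}$, $h_\e^{(3)}$, on which I use the estimates \eqref{N-estimate} and \eqref{N-estimate-1}.

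I would then bound $\nabla_xw_\e(x)$ for $x\in B(x_0,r)\cap\Omega$ by distinguishing two regimes in $x$ and two regions in $y$. If $\operatorname{dist}(x,\partial\Omega)\ge\e$, differentiate the representation directly and split $\partial\Omega$ into the cap $|y-x_0|\lesssim\max(\sqrt\e,r)=\sqrt\e$ and its complement: on the cap, $\delta_*(y)\lesssim\e$, all the factors $h_\e^{(i)}$ (or their integrated-by-parts surrogates) are $O(\sqrt\e)$ there, and integrating the at-worst logarithmically singular kernels over a surface cap of radius $\sqrt\e$ yields the term $C\sqrt\e\,(1+|\ln\e|)$; on the complement the factors are genuinely small in $\e$, and after summing over the dyadic annuli $2^j\e\lesssim|y-x_0|\lesssim r$ — accounting for the $O(r^2)$ deviation of $\partial\Omega$ from its tangent plane and the competition with derivatives of the $\e$-periodic factors — one obtains the term $C\e^{-1-\sigma}r^{2+\sigma}$. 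If $\operatorname{dist}(x,\partial\Omega)<\e$, one cannot differentiate the representation (the kernel $|\nabla_x\nabla_yN_\e|\sim|x-y|^{-d}$ fails to be integrable on $\partial\Omega$); instead invoke the uniform boundary Lipschitz estimate for $\mathcal{L}_\e$ with Neumann data (as in \cite{KLS1, AS}) at scale $\e$ about the nearest boundary point, bounding $|\nabla w_\e(x)|$ by an $L^2$-average of $\nabla w_\e$ on a slightly larger set (handled by a Caccioppoli/energy estimate that feeds back into the representation at distance $\sim\e$) plus the $L^\infty$-size of $h_\e$ there, which again produces only the two claimed terms. Assembling these gives \eqref{4.1-0}.

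I expect the main obstacle to be the term $h_\e^{(2)}$. Unlike $h_\e^{(1)}$ and $h_\e^{(3)}$ it is not literally a tangential derivative of an oscillating function, nor does it carry an explicit extra factor $|y-x_0|$, so it cannot be integrated by parts to create the saving factor $\e$ in the obvious way; one must instead use simultaneously (i) its linear vanishing rate $|h_\e^{(2)}(y)|\lesssim\e^{-1}\delta_*(y)$, which is largest precisely where $y$ is closest to $x_0$, together with the slice-by-slice flux cancellation of $n\cdot A(\cdot/\e)\nabla v_\e-T(x_0)\cdot(\nabla g)(\cdot/\e)$, (ii) the decay $|\nabla v_\e|\le C\e/\delta_*$ to keep the far contribution small, and (iii) the boundary Lipschitz estimate to compensate the non-integrability of $|\nabla_x\nabla_yN_\e|$ near $\partial\Omega$. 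The arbitrarily small loss $\sigma$ in the second term of \eqref{4.1-0} is exactly the price of passing from pointwise to averaged (Hölder- or weighted-$L^2$) control of the oscillating $\e$-periodic data at this step.
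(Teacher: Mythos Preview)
Your overall architecture---Neumann-function representation, integration by parts on $\partial\Omega$, and a boundary Lipschitz estimate to close---matches the paper's, and your treatment of $h_\e^{(1)}$ and $h_\e^{(3)}$ is essentially correct. The genuine gap is exactly where you flag it: the term $h_\e^{(2)}$. Your pointwise bound $|h_\e^{(2)}(y)|\le C\min(1,\e^{-1}|y-x_0|^2)$ is correct, but feeding it directly into $\int_{\partial\Omega}|\nabla_x N_\e(x,y)|\,|h_\e^{(2)}(y)|\,d\sigma(y)$ on the cap $|y-x_0|\le\sqrt{\e}$ yields, for $x$ at distance $\delta\in[\e,\sqrt{\e}]$ from $\partial\Omega$ near $x_0$,
\[
\e^{-1}\int_0^{\sqrt{\e}}\max(\delta,s)^{1-d}\,s^2\,s^{d-2}\,ds
\;\sim\;\e^{-1}\Big(\tfrac{\delta^2}{d+1}+\tfrac{\e-\delta^2}{2}\Big)\;\sim\;1,
\]
not $O(\sqrt{\e})$. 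The phrase ``slice-by-slice flux cancellation'' does not name a mechanism that recovers the missing $\sqrt{\e}$.

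What the paper does here is the content of its Lemma~\ref{lemma-4.2}: writing $n\cdot A^\e(y)\nabla v_\e(y)-n\cdot A^\e(\widehat{y})\nabla v_\e(\widehat{y})$ as a line integral in the normal direction from $\widehat{y}$ to $y$ and then invoking the equation $\mathcal{L}_\e v_\e=0$ to trade the normal derivative of $A^\e\nabla v_\e$ for tangential ones, one obtains
\[
h_\e^{(2)}(y)=T_{i\ell}(y)\cdot\nabla_y\{f_{i\ell}(y)\}+O\big(|y-x_0|+\e^{-1}|y-x_0|^3\big),
\qquad |f_{i\ell}(y)|\le C|y-x_0|^2 .
\]
This tangential-divergence structure is what permits the integration by parts you want; after moving $T_{i\ell}\cdot\nabla_y$ onto the Neumann function one is integrating a kernel of size $|x-y|^{-d}$ against a factor $|y-x_0|^2$, and \emph{that} combination does produce the $\sqrt{\e}$ gain on the cap (the paper phrases it via the oscillation $N_\e(x,y)-N_\e(z,y)$ rather than $\nabla_xN_\e$, then converts to a gradient bound by interior Lipschitz and closes with the boundary Lipschitz estimate and the $C^\sigma$ bound on the Neumann data---this last step is where the $\e^{-1-\sigma}r^{2+\sigma}$ term actually appears). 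Without an identity of this type, the $h_\e^{(2)}$ contribution stays $O(1)$ and the argument does not close.
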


Let $N_\e (x, y)$ denote the matrix of Neumann functions for the operator $\mathcal{L}_\e$ in $\Omega$.
Since $\Omega\subset \mathbb{H}_{n}^d(a)$ and $\mathcal{L}_\e (u_\e -v_\e)=0$ in $\Omega$,
 we obtain the representation,
\begin{equation}
\aligned
 & u_\e  (x)-v_\e (x) -\big\{ u_\e (z) -v_\e (z)\big\}\\
 &=\int_{\partial\Omega}
 \Big\{ N_\e (x, y)- N_\e (z, y) \Big\}
 \left\{ \frac{\partial u_\e}{\partial\nu_\e}
 -\frac{\partial v_\e}{\partial\nu_\e}  \right\} \, d\sigma (y)
 \endaligned
 \end{equation}
 for any $x, z\in \Omega$.
 Fix a cut-off function $\eta=\eta_\e \in C_0^\infty(B(x_0, 5\sqrt{\e}))$ such that $0\le \eta\le 1$,
 $\eta=1$ in $B(x_0, 4 \sqrt{\e})$ and $|\nabla \eta|\le C \e^{-1/2}$.
 Let
 \begin{equation}\label{I}
 I(x, z)=\int_{\partial\Omega}
 \eta (y) \Big\{ N_\e (x, y)- N_\e (z, y) \Big\}
 \left\{ \frac{\partial u_\e}{\partial\nu_\e}
 -\frac{\partial v_\e}{\partial\nu_\e} \right\} \, d\sigma (y),
 \end{equation}
 \begin{equation}\label{J}
 J(x, z)=\int_{\partial\Omega}
 (1-\eta (y)) \Big\{ N_\e (x, y)- N_\e (z, y) \Big\}
 \left\{  \frac{\partial u_\e}{\partial\nu_\e}
 -\frac{\partial v_\e}{\partial\nu_\e}  \right\} \, d\sigma (y).
 \end{equation}

We begin with the estimate of $I(x, z)$ in (\ref{I}).
Here it is essential to take advantage of the fact that the Neumann data of $v_\e$
agrees with the Neumann data of $u_\e$ at $x_0$.
Furthermore, to fully utilize the decay estimates for the derivatives of Neumann functions,
we need to transfer  the derivative from $\partial (u_\e -v_\e)/\partial \nu_\e$
to the Neumann functions.

For $x\in \mathbb{H}_{n}^d (a)$, we use
\begin{equation}\label{hat}
\widehat{x}=x- ((x-x_0)\cdot n)) n \in \partial \mathbb{H}_n^d(a)
\end{equation}
to denote its projection onto the tangent plane of $\partial\Omega$ at $x_0$.
Observe that if $x\in \partial\Omega$, then $|x-\widehat{x}|\le C|x-x_0|^2$.

\begin{lemma}\label{lemma-4.2}
Suppose that $x\in\partial\Omega$ and  $|x-x_0|\le c_0$. Then
\begin{equation}\label{4.2-0}
\aligned
 & n(x)\cdot A^\e(x)\nabla v_\e (x)
-n(x_0) \cdot A^\e (\widehat{x})\nabla v_\e (\widehat{x})=\\
&
T_{i\ell} (x)\cdot
\nabla_x
\left\{ \int_0^1 a^\e_{ij}(x) \frac{\partial v_\e}{\partial x_j}
\big( sx +(1-s)\widehat{x}\big) \, ds \, ((x-x_0)\cdot n) n_\ell  \right\}
+ R(x),
\endaligned
\end{equation}
where $T_{i\ell}(x)=n_i (x) e_\ell -n_\ell (x) e_i$, $n=n(x_0)=(n_1, \dots, n_d)$,  and 
\begin{equation}\label{4.2-1}
|R(x)|\le C \big\{  |x-x_0| +\e^{-1} |x-x_0|^3 \big\}.
\end{equation}
\end{lemma}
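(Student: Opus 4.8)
The identity (\ref{4.2-0}) is essentially a bookkeeping device: it rewrites the difference between the conormal derivative of $v_\e$ at a boundary point $x\in\partial\Omega$ and the (known) conormal derivative of $v_\e$ at the projected point $\widehat{x}\in\partial\mathbb{H}_n^d(a)$ as a tangential divergence (the $T_{i\ell}(x)\cdot\nabla_x\{\cdots\}$ term) plus a genuinely small remainder. The point of putting it in tangential-divergence form is that, once inserted into the integral $I(x,z)$ in (\ref{I}), one can integrate by parts on $\partial\Omega$ (via Lemma \ref{lemma-p1}) and move the derivative onto the Neumann function, exploiting its decay estimates (\ref{N-estimate})--(\ref{N-estimate-1}). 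So my plan is to produce (\ref{4.2-0}) by a direct manipulation of the left-hand side, isolating which pieces are ``nice'' and collecting everything else into $R(x)$.

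First I would write the left-hand side as the sum of two differences:
$$
\big[n(x)\cdot A^\e(x)\nabla v_\e(x) - n(x)\cdot A^\e(\widehat{x})\nabla v_\e(\widehat{x})\big]
+\big[n(x)\cdot A^\e(\widehat{x})\nabla v_\e(\widehat{x}) - n(x_0)\cdot A^\e(\widehat{x})\nabla v_\e(\widehat{x})\big].
$$
The second bracket is $\big(n(x)-n(x_0)\big)\cdot A^\e(\widehat{x})\nabla v_\e(\widehat{x})$; since $|n(x)-n(x_0)|\le C|x-x_0|$ on a smooth domain and $\|\nabla v_\e\|_\infty\le C$ by (\ref{4.10}), this is bounded by $C|x-x_0|$ and goes straight into $R(x)$. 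For the first bracket, I would write $A^\e(x)\nabla v_\e(x) - A^\e(\widehat{x})\nabla v_\e(\widehat{x})$ using the fundamental theorem of calculus along the segment from $\widehat{x}$ to $x$, whose direction is $n=n(x_0)$ and whose length is $|(x-x_0)\cdot n|\le C|x-x_0|^2$. This produces a factor $((x-x_0)\cdot n)$ times $\partial_t$ of $a_{ij}^\e(\,\cdot\,)\partial_j v_\e(\,\cdot\,)$ integrated over the segment; here $\partial_t$ in the normal direction $n$ hits both $a^\e$ (contributing $\e^{-1}(\partial_k a_{ij})(\,\cdot\,/\e)\,n_k$, of size $\e^{-1}$) and $\nabla v_\e$ (contributing $n_k\partial_k\partial_j v_\e$, of size $\e^{-1}$ by (\ref{4.10})). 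The term where $\partial_t$ lands on $a^\e$ times $\partial_j v_\e$, after multiplying by $((x-x_0)\cdot n)\approx|x-x_0|^2$, is $O(\e^{-1}|x-x_0|^3)$ — but I must first extract the ``main'' piece before dumping the rest. The cleaner route, which matches the stated right-hand side, is: contract with $n(x)$ and use $n_i(x)a_{ij}^\e = n_i(x)a_{ij}^\e$, then insert the tangential-operator identity $n_i(x)\,\xi_j = \tfrac12\big(n_i(x)e_j-n_j(x)e_i\big)\cdot\xi + \tfrac12 n_j(x)(n(x)\cdot\xi)$ — no, rather, the precise algebraic rearrangement that turns $n_i(x)\partial_j\{\cdots\}$ into $T_{i\ell}(x)\cdot\nabla_x\{\cdots n_\ell\}$ plus lower order. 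Concretely, I would define
$$
w_j(x) := \int_0^1 a_{ij}^\e(x)\,\frac{\partial v_\e}{\partial x_j}\big(sx+(1-s)\widehat{x}\big)\,ds\cdot\big((x-x_0)\cdot n\big),
$$
recognize that $A^\e(x)\nabla v_\e(x) - A^\e(\widehat{x})\nabla v_\e(\widehat{x})$ equals the $x$-gradient of (the appropriate combination of) $w_j$ up to terms where $\nabla_x$ falls on the slowly-varying factors $((x-x_0)\cdot n)$ or $n$, and then contract with $n(x)$ and convert $n_i(x)\partial_{x_\ell}$ into $T_{i\ell}(x)\cdot\nabla_x$ modulo the normal component, which again is absorbed since $n\cdot\nabla v_\e$ on the segment is controlled and multiplied by the quadratically small factor.

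The main obstacle — and the only place care is genuinely needed — is the bookkeeping of the two competing small/large scales: every extra $x$-derivative costs a factor $\e^{-1}$ (because $v_\e$ oscillates at scale $\e$, by (\ref{4.10})), while every factor of $((x-x_0)\cdot n)$ or $(n(x)-n(x_0))$ gains a power of $|x-x_0|$ (by strict convexity / smoothness of $\partial\Omega$). I need to check that, after the integration-by-parts rearrangement that creates the tangential-divergence term, the leftover terms have either (i) no net $\e^{-1}$ and at least one $|x-x_0|$, giving the $C|x-x_0|$ in (\ref{4.2-1}), or (ii) at most one $\e^{-1}$ and at least three powers of $|x-x_0|$ (one from the segment length being $O(|x-x_0|^2)$ and one more from a derivative of $n$ or of $(x-x_0)\cdot n$, or from the difference $\nabla v_\e(x)-\nabla v_\e(\widehat x)$ again along the short segment), giving the $C\e^{-1}|x-x_0|^3$ term. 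Terms of the form $\e^{-1}|x-x_0|^2$ must not appear as remainder — they are precisely what the tangential-divergence term is designed to capture — so the key check is that such terms all end up inside the $T_{i\ell}(x)\cdot\nabla_x\{\cdots\}$ expression and not in $R(x)$. I would carry this out by expanding $\nabla_x$ of $w_j$ by the product rule, identifying the single ``bad'' term $\big((x-x_0)\cdot n\big)\cdot\nabla_x\big(a_{ij}^\e(x)\partial_j v_\e(\cdots)\big)$ as the one kept in the displayed tangential term, and verifying that $\nabla_x\big((x-x_0)\cdot n\big) = n + O(|x-x_0|)$ and $|\nabla_x \widehat x - I|\le C|x-x_0|$ so that all product-rule ``spillover'' carries the extra $|x-x_0|$. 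The restriction $|x-x_0|\le c_0$ is just to keep the projection $\widehat x$ well-defined and the segment inside a fixed neighborhood where the smooth estimates on $\partial\Omega$ hold.
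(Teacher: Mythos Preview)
Your overall plan is sound --- split off the $n(x)-n(x_0)$ piece as $O(|x-x_0|)$, apply the fundamental theorem of calculus along the normal segment from $\widehat{x}$ to $x$, and rearrange into a tangential divergence --- and you correctly identify that the danger is a leftover term of size $\e^{-1}|x-x_0|^2$. But you miss the one step that actually kills that term: the equation $\mathcal{L}_\e(v_\e)=0$.

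After FTC and contraction with $n_i=n_i(x_0)$ (or $n_i(x)$, up to an $O(|x-x_0|)$ error) you are looking at
\[
n_i\, n_\ell\, \partial_\ell\!\big(a^\e_{ij}\partial_j v_\e\big)(sx+(1-s)\widehat{x})\cdot\big((x-x_0)\cdot n\big).
\]
To convert $n_i\partial_\ell$ to $T_{i\ell}\cdot\nabla=n_i\partial_\ell-n_\ell\partial_i$ you must add $n_\ell\partial_i$; summed against the outer $n_\ell$ this contributes $\partial_i\big(a^\e_{ij}\partial_j v_\e\big)\cdot((x-x_0)\cdot n)$. This is not ``absorbed'' by any size estimate --- $|\partial_i(a^\e_{ij}\partial_j v_\e)|\le C\e^{-1}$ and $|(x-x_0)\cdot n|\le C|x-x_0|^2$ gives exactly the forbidden $\e^{-1}|x-x_0|^2$. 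The point is that this term is \emph{identically zero} because $v_\e$ solves $\mathcal{L}_\e(v_\e)=0$ in the half-space. Your proposed justification (``$n\cdot\nabla v_\e$ on the segment is controlled and multiplied by the quadratically small factor'') names the wrong quantity and, even if it named the right one, would only produce $\e^{-1}|x-x_0|^2$.

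A second, smaller gap: once you have $(n_i\partial_\ell-n_\ell\partial_i)\big(a^\e_{ij}\partial_j v_\e\big)$ evaluated at $sx+(1-s)\widehat{x}$, you need to pull the tangential operator outside the evaluation. This works cleanly because $sx+(1-s)\widehat{x}=x-(1-s)((x-x_0)\cdot n)n$ and $(n_i\partial_\ell-n_\ell\partial_i)$ annihilates the $n\otimes n$ part of the Jacobian, so
\[
(n_i\partial_\ell-n_\ell\partial_i)\big[F(sx+(1-s)\widehat{x})\big]
=\big[(n_i\partial_\ell-n_\ell\partial_i)F\big](sx+(1-s)\widehat{x}).
\]
After that, replacing $T_{i\ell}(x_0)$ by $T_{i\ell}(x)$ costs $|n(x)-n(x_0)|\cdot|\nabla(A^\e\nabla v_\e)|\cdot|(x-x_0)\cdot n|\le C\e^{-1}|x-x_0|^3$, which is the second term in (\ref{4.2-1}).
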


\begin{proof}
In view of (\ref{4.10}) we have $\| \nabla v_\e\|_\infty \le C$ and
$\|\nabla^2 v_\e\|_\infty \le C \e^{-1}$.
It follows that
$$
\aligned
& n(x)\cdot A^\e(x)\nabla v_\e (x) -n(x_0 )\cdot A^\e(\widehat{x})\nabla v_\e (\widehat{x})\\
&=n(x_0)\cdot A^\e(x)\nabla v_\e (x) -n(x_0)\cdot A^\e(\widehat{x})\nabla v_\e (\widehat{x}) +
O(|x-x_0|)\\
&=n_i(x_0) \int_0^1 \frac{\partial}{\partial s}
\left\{ a_{ij}^\e \frac{\partial v_\e}{\partial x_j}
\big(sx +(1-s)\widehat{x}\big) \right\} ds + O(|x-x_0|)\\
& =n_i(x_0) \int_0^1 \frac{\partial}{\partial x_\ell}
\left( a_{ij}^\e \frac{\partial v_\e}{\partial x_j} \right)
\big( sx +(1-s)\widehat{x}\big) ((x-x_0)\cdot n) n_\ell (x_0)\, ds 
+ O(|x-x_0|)\\
&= \int_0^1  \left(n_i(x_0)\frac{\partial}{\partial x_\ell}
-n_\ell (x_0) \frac{\partial}{\partial x_i} \right)
\left( a_{ij}^\e \frac{\partial v_\e}{\partial x_j} \right)
\big( sx +(1-s)\widehat{x}\big) ((x-x_0)\cdot n) n_\ell (x_0)\, ds \\
& \qquad\qquad
+ O(|x-x_0|),
\endaligned
$$
where we have used the equation $\mathcal{L}_\e (v_\e)=0$ in the last step.
Using the observation that
$$
\aligned
 & \left(n_i(x_0)\frac{\partial}{\partial x_\ell}
-n_\ell (x_0) \frac{\partial}{\partial x_i} \right)
\Big( F(sx +(1-s)\widehat{x})\Big)\\
&\qquad
=\left(n_i(x_0)\frac{\partial}{\partial x_\ell}
-n_\ell (x_0) \frac{\partial}{\partial x_i} \right)
 F(sx +(1-s)\widehat{x}),
 \endaligned
 $$
 we then obtain 
 $$
 \aligned
& n(x)\cdot A^\e(x)\nabla v_\e (x) -n(x_0 )\cdot A^\e(\widehat{x})\nabla v_\e (\widehat{x})\\
&=  \left(n_i(x_0)\frac{\partial}{\partial x_\ell}
-n_\ell (x_0) \frac{\partial}{\partial x_i} \right)\left( \int_0^1
\left( a_{ij}^\e \frac{\partial v_\e}{\partial x_j} \right)
\big( sx +(1-s)\widehat{x}\big) ((x-x_0)\cdot n) n_\ell (x_0)\, ds \right)\\
& \qquad\qquad
+ O(|x-x_0|)\\
&=  \left(n_i(x)\frac{\partial}{\partial x_\ell}
-n_\ell (x) \frac{\partial}{\partial x_i} \right)\left( \int_0^1
\left( a_{ij}^\e \frac{\partial v_\e}{\partial x_j} \right)
\big( sx +(1-s)\widehat{x}\big) ((x-x_0)\cdot n) n_\ell (x_0)\, ds \right)\\
& \qquad\qquad
+ O(|x-x_0|) +O(\e^{-1} |x-x_0|^3),
\endaligned
$$
where we have used the fact that $|(x-x_0)\cdot n| \le C |x-x_0|^2$ 
as well as the estimate $|\nabla (A^\e \nabla v_\e)|\le C \e^{-1}$
for the last step.
\end{proof}

Lemma \ref{lemma-4.2} allows us to carry out an integration by parts on the boundary
for $I(x, z)$.

\begin{lemma}\label{lemma-4.3}
Let $I(x, z)$ be given by (\ref{I}). Suppose that $x, z\in B(x_0, 3r)\cap \Omega$
for some ${\e}\le r\le \sqrt{\e}$ and $|x-z|\le (1/2)\delta (x)$, where $\delta (x)=dist(x, \partial\Omega)$.
Then
\begin{equation}\label{4.3-0}
|I(x, z)|\le C\,  r\sqrt{\e}.
\end{equation}
\end{lemma}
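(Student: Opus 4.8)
The plan is to estimate $I(x,z)$ by first rewriting the difference of Neumann data,
$\frac{\partial u_\e}{\partial\nu_\e}-\frac{\partial v_\e}{\partial\nu_\e}$, on the support of
$\eta$. By the definitions of $u_\e$ and $v_\e$ in (\ref{NP-M}) and (\ref{NP-A-1}), on $\partial\Omega$
we have $\frac{\partial u_\e}{\partial\nu_\e}(y)=T(y)\cdot\nabla g(y/\e)
= n(y)\cdot A^\e(y)\nabla v_\e^{\mathrm{flat}}$ where $v_\e^{\mathrm{flat}}$ extends the data
to all of $\partial\mathbb{H}_n^d(a)$; more precisely, $\frac{\partial v_\e}{\partial\nu_\e}(y)
= n(x_0)\cdot A^\e(\widehat y)\nabla v_\e(\widehat y)$ when $y$ is replaced by its projection
$\widehat y$ onto the tangent plane, while $\frac{\partial u_\e}{\partial\nu_\e}(y)$ is the conormal
derivative at $y\in\partial\Omega$ with the true normal $n(y)$. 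Thus the integrand of $I$ is exactly
the quantity controlled by Lemma \ref{lemma-4.2}: on $B(x_0,5\sqrt\e)\cap\partial\Omega$,
$$
\frac{\partial u_\e}{\partial\nu_\e}(y)-\frac{\partial v_\e}{\partial\nu_\e}(y)
= T_{i\ell}(y)\cdot\nabla_y\Big\{ G_\e(y)\,((y-x_0)\cdot n)\,n_\ell\Big\} + R(y),
$$
where $G_\e(y)=\int_0^1 a_{ij}^\e(y)\frac{\partial v_\e}{\partial y_j}(sy+(1-s)\widehat y)\,ds$ and
$|R(y)|\le C(|y-x_0|+\e^{-1}|y-x_0|^3)$. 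Note $|G_\e|\le C$ by (\ref{4.10}).

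Next I would split $I(x,z)=I_1+I_2$, where $I_1$ carries the main (divergence-form) term and $I_2$
carries the remainder $R$. For $I_1$, integrate by parts on $\partial\Omega$ using Lemma
\ref{lemma-p1}, moving $T_{i\ell}(y)\cdot\nabla_y$ off the data and onto the kernel
$\eta(y)\{N_\e(x,y)-N_\e(z,y)\}$. This produces two families of terms: those where the tangential
derivative hits $\eta$, and those where it hits $N_\e(x,y)-N_\e(z,y)$. In the first family
$|\nabla\eta|\le C\e^{-1/2}$ is supported in the annulus $4\sqrt\e\le|y-x_0|\le 5\sqrt\e$, where
$|y-x_0|\le C\sqrt\e$, so the factor $((y-x_0)\cdot n)$ contributes $O(\e)$; combined with
$|N_\e(x,y)-N_\e(z,y)|\le C|x-z|/|x-y|^{d-1}\le C|x-z|\,\e^{-(d-1)/2}$ on that annulus (since
$|x-y|\gtrsim\sqrt\e$ for $x\in B(x_0,3r)$, $r\le\sqrt\e$) and the surface measure
$\sigma\sim\e^{(d-1)/2}$ of the annulus, one gets a bound of order $\e\cdot|x-z|
\le C\e\,r$ (using $|x-z|\le\tfrac12\delta(x)\le Cr$), which is $\le Cr\sqrt\e$. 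In the second family
the tangential derivative hits the kernel; here one uses the second estimate in
(\ref{N-estimate-1}), $|\nabla_y\{N_\e(x,y)-N_\e(z,y)\}|\le C|x-z|/|x-y|^d$, together with
$|((y-x_0)\cdot n)|\le C|y-x_0|^2\le C|x-y|^2 + C|x-x_0|^2$ and $|G_\e|\le C$, to bound the integrand
by $C|x-z|\,|x-y|^{2-d}$ plus lower-order pieces; integrating $|x-y|^{2-d}$ over
$B(x_0,5\sqrt\e)\cap\partial\Omega$ gives $O(\sqrt\e)$, hence this contribution is
$\le C|x-z|\sqrt\e\le Cr\sqrt\e$. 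The term where the derivative hits $G_\e$ itself costs a factor
$\e^{-1}$ but is accompanied by $((y-x_0)\cdot n)=O(\e)$, so it is also controlled.

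For $I_2$, the remainder term, I would estimate directly without integration by parts:
$|I_2|\le C\int_{B(x_0,5\sqrt\e)\cap\partial\Omega}|N_\e(x,y)-N_\e(z,y)|\,
(|y-x_0|+\e^{-1}|y-x_0|^3)\,d\sigma(y)$. Using $|N_\e(x,y)-N_\e(z,y)|\le C|x-z|/|x-y|^{d-1}$ and
$|y-x_0|\le C|x-y|+C|x-x_0|$ with $|x-x_0|\le 3r\le 3\sqrt\e$, one checks that
$\int |x-y|^{1-d}\cdot|y-x_0|\,d\sigma\le C\sqrt\e$ and $\e^{-1}\int|x-y|^{1-d}|y-x_0|^3\,d\sigma
\le C\e^{-1}\cdot\e^{3/2}=C\sqrt\e$, so $|I_2|\le C|x-z|\sqrt\e\le Cr\sqrt\e$. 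Collecting $I_1$ and
$I_2$ yields $|I(x,z)|\le Cr\sqrt\e$, as claimed. The main obstacle is the bookkeeping in $I_1$
after integration by parts: one must verify that each of the several terms produced — derivative on
the cutoff, on the difference of Neumann functions, or on the averaged coefficient $G_\e$ — is
individually $O(r\sqrt\e)$, which hinges on pairing every dangerous factor of $\e^{-1/2}$ or
$\e^{-1}$ against the smallness $((y-x_0)\cdot n)=O(\e)$ coming from the fact that the Neumann data
of $v_\e$ matches that of $u_\e$ to first order at $x_0$, and on carefully tracking the restriction
$\e\le r\le\sqrt\e$ so that $|x-y|\gtrsim\sqrt\e$ is unavailable but $|x-y|\le C\sqrt\e$ holds on the
support of $\eta$.
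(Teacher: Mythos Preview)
Your overall strategy---integrate by parts on $\partial\Omega$ to move the tangential derivative onto $\eta(y)\{N_\e(x,y)-N_\e(z,y)\}$, then use (\ref{N-estimate-1}) together with $|(y-x_0)\cdot n|\le C|y-x_0|^2$---is exactly the paper's approach. However, your identification of the Neumann data difference is wrong, and this leaves a genuine gap.

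On $\partial\Omega$ one has $\frac{\partial u_\e}{\partial\nu_\e}(y)=T(y)\cdot\nabla g(y/\e)$ by the boundary condition in (\ref{NP-M}), while $\frac{\partial v_\e}{\partial\nu_\e}(y)=n(y)\cdot A^\e(y)\nabla v_\e(y)$ is the conormal derivative of $v_\e$ with the \emph{true} normal $n(y)$ (the function $v_\e$ is defined in all of $\mathbb{H}_n^d(a)\supset\Omega$, and its prescribed boundary condition lives on $\partial\mathbb{H}_n^d(a)$, not on $\partial\Omega$). The boundary condition (\ref{NP-A-1}) tells you only that $n(x_0)\cdot A^\e(\widehat y)\nabla v_\e(\widehat y)=T(x_0)\cdot\nabla g(\widehat y/\e)$. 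Lemma~\ref{lemma-4.2} therefore controls only the piece
\[
n(y)\cdot A^\e(y)\nabla v_\e(y)-n(x_0)\cdot A^\e(\widehat y)\nabla v_\e(\widehat y);
\]
it does not produce the full difference $\frac{\partial u_\e}{\partial\nu_\e}-\frac{\partial v_\e}{\partial\nu_\e}$. Your displayed formula is missing the contribution
\[
T(y)\cdot\nabla g(y/\e)-T(x_0)\cdot\nabla g(\widehat y/\e).
\]

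The repair is short and uses the same mechanism. Since $T(x_0)\cdot n=0$ one checks that $T(x_0)\cdot\nabla_y\{\e g(\widehat y/\e)\}=T(x_0)\cdot\nabla g(\widehat y/\e)$ and $|\nabla_y\{\e g(\widehat y/\e)\}|\le C$, so
\[
T(y)\cdot\nabla g(y/\e)-T(x_0)\cdot\nabla g(\widehat y/\e)
=T(y)\cdot\nabla_y\big\{\e g(y/\e)-\e g(\widehat y/\e)\big\}+O(|y-x_0|).
\]
Because $|\e g(y/\e)-\e g(\widehat y/\e)|\le C|y-\widehat y|\le C|y-x_0|^2$, this extra tangential-divergence term is bounded exactly as your $f_{i\ell}$ term after integration by parts. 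The paper splits $I$ into three pieces $I_1,I_2,I_3$ accordingly (your missing term, the Lemma~\ref{lemma-4.2} term, and the $O(|y-x_0|)$ remainder), rather than two.

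A minor remark: after the integration by parts the tangential derivative sits entirely on $\eta(y)\{N_\e(x,y)-N_\e(z,y)\}$, so there is no ``term where the derivative hits $G_\e$''; that aside in your $I_1$ analysis is superfluous.
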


\begin{proof}
Let $y\in \partial\Omega$ and $|y-x_0|\le 5  \sqrt{\e}$.
Using the Neumann conditions for $u_\e$, $v_\e$ and Lemma \ref{lemma-4.2},
we see that
$$
\aligned
& \frac{\partial u_\e}{\partial \nu_\e} (y) -\frac{\partial v_\e}{\partial \nu_\e} (y)\\
&= T(y)\cdot \nabla g (y/\e)-T(x_0)\cdot \nabla g (\widehat{y}/\e)
+n(x_0)\cdot A^\e (\widehat{y}) \nabla v_\e (\widehat{y}) 
- n(y)\cdot A^\e (y) \nabla v_\e (y)\\
&=T(y)\cdot \nabla_y \Big\{ \e g(y/\e) -\e g(\widehat{y}/\e) \Big\}
-T_{i\ell} (y) \cdot \nabla_y \big\{ f_{i \ell} (y) \big\}  
+ O(|y-x_0|),
\endaligned
$$
where
$$
f_{i\ell} (y)
=\int_0^1 a^\e_{ij}(y) \frac{\partial v_\e}{\partial y_j}
\big( sy +(1-s)\widehat{y}\big) \, ds \, ((y-x_0)\cdot n) n_\ell 
$$
is given by Lemma \ref{lemma-4.2}. We have also used the observation,
$$
T(x_0)\cdot \nabla_y \big\{ \e g(\widehat{y}/\e)\big\}
=T(x_0) \cdot \nabla g (\widehat{y}/\e),
$$
in the computation above.
This, together with (\ref{parts}), gives
$$
I(x, z)=I_1 +I_2 +I_3,
$$
where
$$
\aligned
 &I_1=-\int_{\partial\Omega}
T(y) \cdot\nabla_y \Big\{ \eta(y) \Big(N_\e x, y)-N_\e (z,y)\Big) \Big\}
\Big\{ \e g(y/\e) - \e g(\widehat{y}/\e) \Big\}\, d\sigma (y),\\
& I_2 = \int_{\partial\Omega}
T_{i\ell} (y) \cdot\nabla_y \Big\{ \eta(y) \Big(N_\e x, y)-N_\e (z,y)\Big) \Big\}
 f_{i\ell} (y)\, d\sigma (y),\\
 & |I_3|
 \le C \int_{\partial\Omega}
 |\eta(y)| |N_\e (x, y)-N_\e (z, y)|  |y-x_0|\, d\sigma (y).
 \endaligned
 $$
 Since $|x-z|\le (1/2)\delta (x)\le (1/2) |y-x|$ for any $y\in \partial\Omega$,
  by (\ref{N-estimate-1}), we obtain 
 \begin{equation}\label{4.3-10}
 \aligned
 |I_1|   &\le C \int_{\partial\Omega}
 |\nabla_y \big\{ \eta(y) \big(N_\e (x, y)-N_\e (z,y)\big) \big\}| |y-x_0|^2\,d\sigma (y)\\ 
 &\le C \sqrt{\e} \delta(x) \int_{4\sqrt{\e}\le |y-x_0|\le 5\sqrt{\e}} \frac{d\sigma(y)}{|y-x|^{d-1}}
  + C \, \delta (x)  \int_{|y-x_0|\le C \sqrt{\e}}
 \frac{ |y-x_0|^2}{|y-x|^d} \, d\sigma (y),
 \endaligned
\end{equation}
where we have used the fact $|y-\widehat{y}|\le C |y-x_0|^2$ and $|\nabla\eta(y)|\le C \e^{-1/2}$.
For the first term in the RHS of (\ref{4.3-10}), we note that
if $|y-x_0|\ge 4\sqrt{\e}$, then
$$
|y-x|\ge |y-x_0|-|x-x_0|\ge 4\sqrt{\e} -3r\ge \sqrt{\e}.
$$
For the second term, we use  $|y-x_0|\le |y-x| +r$.
This leads to
$$
\aligned
|I_1| & \le  C \sqrt{\e} \, \delta(x) +C \delta (x) \int_{|y-x_0|\le C\sqrt{\e}} \frac{d\sigma (y)}{|y-x|^{d-2}}
+Cr^2  \delta (x)\int_{\partial\Omega}  \frac{d\sigma (y)}{|y-x|^{d}}\\
&\le C \sqrt{\e} \, \delta(x) + C r^2\\
&\le C r\sqrt{\e}.
\endaligned
$$
Since $|f_{i\ell}|\le C |y-x_0|^2$, the estimate of $I_2$ is the exactly same as that of $I_1$.

Finally, to handle $I_3$,   we use (\ref{N-estimate-1}) as well as $|y-x_0|\le |y-x| +r$
again to obtain 
$$
\aligned
|I_3|
&\le C \int_{|y-x_0|\le C \sqrt{\e}}
\frac{|x-z|}{|y-x|^{d-1}}
\big\{  |y-x| + r \big\} \, d\sigma (y)\\
&\le C\, r \sqrt{\e}.
\endaligned
$$
This completes the proof.
\end{proof}

To estimate $J(x,z)$ in (\ref{J}), we split it as $J(x,z)=J_1 -J_2$,
where
\begin{equation}\label{J-1-2}
\aligned
J_1 (x, z) & =\int_{\partial\Omega}
 (1-\eta (y)) \Big\{ N_\e (x, y)- N_\e (z, y) \Big\}
  \frac{\partial u_\e}{\partial\nu_\e} \, d\sigma (y),\\
  J_2 (x, z) & =\int_{\partial\Omega}
 (1-\eta (y)) \Big\{ N_\e (x, y)- N_\e (z, y) \Big\}
  \frac{\partial v_\e}{\partial\nu_\e} \, d\sigma (y).
  \endaligned
 \end{equation}

\begin{lemma}\label{lemma-4.4}
Let $J_1 (x, z)$ be given by (\ref{J-1-2}). Suppose that $x, z\in B(x_0,3 r)\cap \Omega$
for some ${\e}\le r\le \sqrt{\e}$ and $|x-z|\le (1/2)\delta (x)$, where $\delta (x)=dist(x, \partial\Omega)$.
Then
\begin{equation}\label{4.4-0}
|J_1 (x, z)|\le C\,  r\sqrt{\e}.
\end{equation}
\end{lemma}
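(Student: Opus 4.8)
The plan is to push the tangential derivative appearing in the Neumann data of $u_\e$ onto the Neumann functions through an integration by parts on $\partial\Omega$, and then to exploit the decay of $N_\e(x,y)-N_\e(z,y)$ and of its $y$-gradient for $y$ away from $x_0$; the cutoff $1-\eta$ confines the integral precisely to the region $\{|y-x_0|\ge 4\sqrt{\e}\,\}$, which is exactly where this decay is effective. In contrast with Lemma \ref{lemma-4.3}, no cancellation of Neumann data at $x_0$ is needed here, so the estimate is genuinely softer.

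First I would rewrite the conormal derivative on $\partial\Omega$: since $\nabla_y\{\e g(y/\e)\}=(\nabla g)(y/\e)$ and $T=T_{ij}$ is tangential, the Neumann condition in (\ref{NP-M}) reads $\partial u_\e/\partial\nu_\e(y)=T(y)\cdot\nabla_y\{\e g(y/\e)\}$ on $\partial\Omega$. Substituting this into the formula (\ref{J-1-2}) for $J_1(x,z)$ and applying (\ref{parts}) of Lemma \ref{lemma-p1} component-wise — which is legitimate because $x,z\in\Omega$ are interior points, so $y\mapsto(1-\eta(y))\{N_\e(x,y)-N_\e(z,y)\}$ belongs to $C^1(\partial\Omega)$ — gives
\[
J_1(x,z)=-\int_{\partial\Omega}\big(T(y)\cdot\nabla_y\big)\Big[(1-\eta(y))\{N_\e(x,y)-N_\e(z,y)\}\Big]\,\e\,g(y/\e)\,d\sigma(y).
\]
I would then split the gradient into the term that hits $\nabla\eta$, supported in the shell $\{4\sqrt{\e}\le|y-x_0|\le 5\sqrt{\e}\,\}$ where $|\nabla\eta|\le C\e^{-1/2}$, and the term $(1-\eta(y))\nabla_y\{N_\e(x,y)-N_\e(z,y)\}$, supported in $\{|y-x_0|\ge 4\sqrt{\e}\,\}$. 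Since $|x-z|\le\frac12\delta(x)\le\frac12|x-y|$ for every $y\in\partial\Omega$, the difference estimates (\ref{N-estimate-1}) apply and bound these two factors by $C|x-z|\,|x-y|^{1-d}$ and $C|x-z|\,|x-y|^{-d}$, respectively.

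To conclude I would use the elementary geometric facts that, for $y\in\partial\Omega$ with $|y-x_0|\ge 4\sqrt{\e}$ and $x\in B(x_0,3r)\cap\Omega$ with $r\le\sqrt{\e}$, one has $|x-y|\ge\sqrt{\e}$ and $|x-y|\ge\frac14|y-x_0|$; together with the bound $\int_{\partial\Omega\cap\{|y-x_0|\ge 4\sqrt{\e}\}}|y-x_0|^{-d}\,d\sigma(y)\le C\e^{-1/2}$ and the fact that the shell has surface measure at most $C\e^{(d-1)/2}$, these yield that the $\nabla\eta$-piece is $\le C\e\,\|g\|_\infty\,\e^{-1/2}|x-z|$ and the remaining piece is $\le C\e\,\|g\|_\infty\,|x-z|\,\e^{-1/2}$. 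Using $|x-z|\le\tfrac32 r$ and $\|g\|_\infty\le C$, both are $\le Cr\sqrt{\e}$, which is (\ref{4.4-0}). I do not anticipate a substantive obstacle here; the only care required is the bookkeeping of the powers of $\e$ and $r$ in these two integrals.
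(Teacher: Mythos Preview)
Your proof is correct and follows essentially the same approach as the paper's: integrate by parts using (\ref{parts}) to move the tangential derivative onto $(1-\eta)\{N_\e(x,\cdot)-N_\e(z,\cdot)\}$, then split into the $\nabla\eta$-piece on the shell and the $(1-\eta)\nabla_y N_\e$-piece on $\{|y-x_0|\ge 4\sqrt{\e}\}$, and bound each via (\ref{N-estimate-1}) together with the geometric observation $|x-y|\gtrsim\sqrt{\e}$ and $|x-y|\gtrsim|y-x_0|$ in that region. Your write-up is in fact slightly more explicit about the geometric inequalities than the paper's, but the argument is the same.
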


\begin{proof}
Using the Neumann condition for $u_\e$ and (\ref{parts}), we see that
$$
J_1(x, z)=-\int_{\partial\Omega}
T\cdot \nabla_y \Big\{ (1-\eta(y)) \big( N_\e (x, y)-N_\e (z, y) \big) \Big\} \e g(y/\e)\, d\sigma (y).
$$
It follows that
$$
\aligned
|J_1(x, z)| & \le 
C  \e \| g\|_\infty \int_{\partial\Omega}
|\nabla_y \Big\{ (1-\eta(y)) \big( N_\e (x, y)-N_\e (z, y) \big) \Big\}|\, d\sigma (y)\\
&\le C \sqrt{\e} |x-z|
+ C\e |x-z| \int_{ |y-x_0|\ge  5\sqrt{\e}}
\frac{d\sigma (y)}{|x-y|^d}\\
& \le C\, r\sqrt{\e},
\endaligned
$$
where we have used (\ref{N-estimate-1}) for the second inequality.
\end{proof}

It remains to estimate $J_2 (x, z)$.

\begin{lemma}\label{lemma-4.6}
Let $J_2 (x, z)$ be given by (\ref{J-1-2}). Suppose that $x, z\in B(x_0,3 r)\cap \Omega$
for some ${\e}\le r\le \sqrt{\e}$ and $|x-z|\le (1/2)\delta (x)$, where $\delta (x)=dist(x, \partial\Omega)$.
Then
\begin{equation}\label{4.6-0}
|J_2 (x, z)|\le C\,  r\sqrt{\e}\big\{ 1+|\ln \e|\big\}.
\end{equation}
\end{lemma}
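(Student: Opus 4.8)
The plan is to convert the boundary integral $J_2$ into a solid integral over $\Omega$ by Green's formula, exploiting that $\mathcal L_\e v_\e=0$ in $\Omega$. Set $\Phi(y)=\big(1-\eta(y)\big)\big\{N_\e(x,y)-N_\e(z,y)\big\}$. The crucial point is that $\Phi\in C^\infty(\overline\Omega)$: since $x,z\in B(x_0,3r)\subset B(x_0,4\sqrt\e)$ and $\eta\equiv 1$ on $B(x_0,4\sqrt\e)$, the factor $1-\eta$ vanishes near $x$ and near $z$, killing the poles of $N_\e(x,\cdot)$ and $N_\e(z,\cdot)$. As $v_\e\in C^\infty(\overline\Omega)$ by Theorem \ref{theorem-2.1} (note $\overline\Omega\subset\overline{\mathbb{H}_n^d(a)}$) and $\text{\rm div}\big(A(y/\e)\nabla v_\e\big)=0$ in $\Omega$, the divergence theorem gives
\[
J_2(x,z)=\int_{\partial\Omega}\Phi(y)\,\frac{\partial v_\e}{\partial\nu_\e}(y)\,d\sigma(y)=\int_\Omega \nabla_y\Phi(y)\cdot A(y/\e)\nabla v_\e(y)\,dy ,
\]
whence $|J_2(x,z)|\le C\int_\Omega|\nabla_y\Phi(y)|\,|\nabla v_\e(y)|\,dy$. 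This replaces the ``transfer of derivatives onto the Neumann functions'' by a single application of Green's formula and reduces the claim to two pointwise bounds on $\Omega_*:=\{y\in\Omega:|y-x_0|\ge 4\sqrt\e\}$, which contains $\text{supp}\,\nabla_y\Phi$.

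For $y\in\Omega_*$ the segment $[z,x]\subset B(x_0,3\sqrt\e)$ stays at distance $\ge\tfrac14|y-x_0|$ from $y$; applying the mean value theorem along $[z,x]$ together with the estimates (\ref{N-estimate}) and $|x-z|\le\tfrac12\delta(x)<\tfrac32 r$, one gets
\[
|\nabla_y\Phi(y)|\le C|\nabla\eta(y)|\,|x-z|\,|y-x_0|^{1-d}+C|x-z|\,|y-x_0|^{-d}\le \frac{Cr}{|y-x_0|^{d}},
\]
where on $\text{supp}\,\nabla\eta$ one has $|y-x_0|\approx\sqrt\e$ and $|\nabla\eta|\le C\e^{-1/2}$, so the first term is also $\le Cr|y-x_0|^{-d}$. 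On the other hand, writing $\varrho(y):=|y\cdot n+a|=\text{dist}(y,\partial\mathbb{H}_n^d(a))$, the bound $\|\nabla v_\e\|_\infty\le C$ from (\ref{4.10}) and the decay estimate (\ref{refined-main-1}) yield
\[
|\nabla v_\e(y)|\le\min\!\Big(C,\ \frac{C\e}{\varrho(y)}\Big)\qquad(y\in\Omega).
\]

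Now decompose $\Omega_*=\bigcup_{k\ge 0}\Omega_k$ with $\Omega_k=\{y\in\Omega:\rho_k\le|y-x_0|<2\rho_k\}$, $\rho_k=2^k\cdot 4\sqrt\e$ (only finitely many $\Omega_k$ nonempty, up to $\rho_k\approx\text{diam}\,\Omega$). In coordinates $y\leftrightarrow(y',u)$ adapted to $n$, with $u=y\cdot n+a<0$ on $\Omega$ (by convexity) and $\varrho(y)=|u|$, Fubini gives, for each fixed $y'$ with $|y'|\lesssim\rho_k$ on $\Omega_k$,
\[
\int_{\{u:(y',u)\in\Omega_k\}}\!\!\min\!\Big(C,\frac{C\e}{|u|}\Big)\,du\ \le\ \int_0^{C\rho_k}\min\!\Big(C,\frac{C\e}{t}\Big)\,dt\ \le\ C\e\big(1+|\ln\e|\big),
\]
since $\rho_k\le\text{diam}\,\Omega$; hence $\int_{\Omega_k}|\nabla v_\e|\,dy\le C\rho_k^{d-1}\e(1+|\ln\e|)$. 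Combining with $|\nabla_y\Phi|\le Cr\rho_k^{-d}$ on $\Omega_k$ and summing $\sum_{k\ge0}\rho_k^{-1}\le(2\sqrt\e)^{-1}$,
\[
|J_2(x,z)|\le C\sum_{k\ge 0}\frac{r}{\rho_k^{d}}\,\rho_k^{d-1}\e(1+|\ln\e|)=Cr\e(1+|\ln\e|)\sum_{k\ge0}\frac1{\rho_k}\le Cr\sqrt\e\big(1+|\ln\e|\big),
\]
which is (\ref{4.6-0}).

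I expect the last step to be the main obstacle: a crude bound on $\int_{\Omega_k}|\nabla v_\e|$ — using only $|\nabla v_\e(y)|\le C\e|y-x_0|^{-2}$ (valid on $\Omega_k$ by strict convexity) over the full annulus of volume $\approx\rho_k^d$ — gives merely $O(r)$, which is too weak by a factor $\sqrt\e$. One has to exploit that $|\nabla v_\e|$ concentrates in the layer $\{|u|\lesssim\e\}$ and that $\int_0^{\text{diam}\,\Omega}\min(1,\e/t)\,dt=O(\e(1+|\ln\e|))$, so the effective integration is over a $(d-1)$-dimensional slab base; the logarithm is intrinsic, reflecting the boundary layer of thickness $\approx\e$ of $v_\e$ near the tangent plane $\partial\mathbb{H}_n^d(a)$. (One may alternatively stay with integration by parts on $\partial\Omega$ by inserting Lemma \ref{lemma-4.2} to expose the structure of $\partial v_\e/\partial\nu_\e$; in that route the logarithm arises from the refined bound $|f_{i\ell}(y)|\le C\e(1+|\ln\e|)$ on $\Omega_*\cap\partial\Omega$, which (\ref{refined-main-1}) provides in place of $|f_{i\ell}(y)|\le C|y-x_0|^2$.)
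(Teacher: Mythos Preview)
Your proof is correct and takes a genuinely different route from the paper's. Both arguments begin the same way, converting $J_2$ to the solid integral $\int_\Omega \nabla_y\Phi\cdot A(y/\e)\nabla v_\e\,dy$ via the divergence theorem. From here the paper integrates by parts \emph{a second time}, using that $N_\e(x,\cdot)-N_\e(z,\cdot)$ solves $\mathcal L_\e^*=0$ with zero conormal data on $\partial\Omega$ away from $x,z$. This kills the term carrying $(1-\eta)$ and leaves only integrals supported on $\text{supp}\,\nabla\eta\subset B(x_0,5\sqrt\e)$; a Poincar\'e inequality then reduces everything to $\frac{Cr}{(\sqrt\e)^d}\int_{B(x_0,5\sqrt\e)\cap\mathbb H_n^d(a)}|\nabla v_\e|$, and the splitting $\{\varrho\le\e\}\cup\{\varrho>\e\}$ finishes. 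You instead keep the full integral and control it by the pointwise bound $|\nabla_y\Phi(y)|\le Cr|y-x_0|^{-d}$ combined with a dyadic decomposition in $|y-x_0|$ and Fubini in the $n$-direction on each shell. The paper's second integration by parts buys localization to a single scale $\sqrt\e$, so the boundary-layer estimate for $\nabla v_\e$ is applied only once; your approach trades that structural step for a geometric sum $\sum_k\rho_k^{-1}\le C\e^{-1/2}$, which is more elementary but requires tracking the contribution from every scale. Both reach the same bound, and the logarithm has the same origin (the one-dimensional integral $\int_0^{O(1)}\min(1,\e/t)\,dt$).

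Your closing parenthetical about an alternative via Lemma~\ref{lemma-4.2} is a bit off: that lemma compares $\partial v_\e/\partial\nu_\e$ at $y\in\partial\Omega$ to its value at $\widehat y\in\partial\mathbb H_n^d(a)$, which is useful for $I(x,z)$ where one needs cancellation against $\partial u_\e/\partial\nu_\e$, but it does not by itself produce the factor $\e(1+|\ln\e|)$ on the far region; the logarithm really comes from integrating $|\nabla v_\e|$ across the boundary layer, as in your main argument and the paper's.
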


\begin{proof}
It follows by the divergence theorem that
$$
\aligned
J_2(x,z)
&=-\int_{\Omega} (1-\eta (y))\nabla_y
\big\{ N_\e (x, y) -N_\e (z, y) \big\}  \cdot A (y/\e) \nabla v_\e (y)\, dy\\
& \qquad -\int_{\Omega} \big\{ N_\e (x, y)-N_\e (z, y) \big\} \nabla_y (1-\eta (y)) \cdot A (y/\e) 
 \nabla v_\e (y)\, dy\\
 &= \int_{\Omega}
 A^{*} (y/\e) \nabla_y \big\{ N_\e (x, y) -N_\e (z, y) \big\} \cdot \nabla_y (1-\eta (y))  \, (v_\e (y)-E)\, dy\\
 & \qquad -\int_{\Omega} \big\{ N_\e (x, y)-N_\e (z, y) \big\} \nabla_y (1-\eta (y)) \cdot A (y/\e) 
 \nabla v_\e (y)\, dy,
\endaligned
$$
where $E\in \mathbb{R}^m$ is a constant to be chosen. Here
we have used  $\mathcal{L}_\e (v_\e)=0$ in $\Omega$ for the first equality and
$$
\aligned
\mathcal{L}_\e^* \big\{ N_\e (x, \cdot) -N_\e (z, \cdot) \big\}  &=0  \quad \text{ in } \Omega \setminus B(x_0, 3\sqrt{\e}),\\
\frac{\partial}{\partial \nu_\e^*}  \big\{ N_\e (x, \cdot) -N_\e (z, \cdot) \big\}  &=0  \quad \text{ on } \partial\Omega
\endaligned
$$
for the second.
As before, we apply the estimates in (\ref{N-estimate-1}) to obtain 
\begin{equation}\label{4.6-10}
\aligned
|J_2 (x, z)|
&\le  \frac{C |x-z|}{(\sqrt{\e})^{d+1}} \int_{B(x_0, 5\sqrt{\e})\cap \mathbb{H}_n^d(a)}
|v_\e -E|
+ \frac{C |x-z|}{(\sqrt{\e})^{d}} \int_{B(x_0, 5\sqrt{\e})\cap \mathbb{H}_n^d(a)}
| \nabla v_\e|\\
& \le \frac{C r}{(\sqrt{\e})^{d}} \int_{B(x_0, 5\sqrt{\e})\cap \mathbb{H}_n^d(a)}
| \nabla v_\e|,
\endaligned
\end{equation}
where we have chosen $E$ to be the average of $v_\e$ over $B(x_0, 5\sqrt{\e})\cap \mathbb{H}_n^d(a)$
and used a Poincar\'e type inequality for the last step.

Finally, to estimate the integral in the RHS of (\ref{4.6-10}), we split the region 
$B(x_0, 5\sqrt{\e})\cap \mathbb{H}_n^d (a)$ into two parts.
If $|x\cdot n+a|\le \e$, we use the estimate $\|\nabla v_\e\|_\infty \le C$.
If $|x\cdot n+a|\ge \e$, we apply the refined estimate (\ref{refined-main-1}). 
This yields that
$$
|J_2 (x, z)|\le C r \sqrt{\e} \big\{ 1+|\ln \e|\big\},
$$
which completes the proof.
\end{proof}

We are now ready to give the proof of Theorem \ref{theorem-4.1}.

\begin{proof}[\bf Proof of Theorem \ref{theorem-4.1}]
Let $\e\le r\le \sqrt{\e}$.
In view of Lemmas \ref{lemma-4.3}, \ref{lemma-4.4} and \ref{lemma-4.6},
we have proved that if $x, z\in \Omega\cap B(x_0, 3r)$
and $|x-z|<(1/2)\delta(x)$, where $\delta(x)= \text{dist}(x, \partial\Omega)$, then
\begin{equation}\label{4.7-1}
|u_\e (x)-v_\e(x) - \{ u_\e (z)-v_\e (z)\}|
\le C r \sqrt{\e} \big\{ 1+|\ln \e |\big\}.
\end{equation}
Since $\mathcal{L}_\e (u_\e -v_\e)=0$ in $\Omega$,
by the interior Lipschitz estimate for $\mathcal{L}_\e$ \cite{AL-1987},
it follows that for any $x\in B(x_0, 2r)$,
\begin{equation}\label{4.7-2}
|\nabla u_\e (x) -\nabla v_\e (x)|\le 
C r\sqrt{\e} \big\{ 1+|\ln \e |\big\} \big[\delta(x)\big]^{-1}.
\end{equation}
Thus, if $0<p<1$, 
\begin{equation}\label{4.7-3}
\left(\average_{B(x_0, 2r)\cap\Omega} |\nabla (u_\e -v_\e)|^p \right)^{1/p}
\le  C \sqrt{\e} \big\{ 1+|\ln \e |\big\}.
\end{equation}

Next, we estimate the $C^\sigma(B(x_0, 2r)\cap\partial \Omega)$ norm of
$$
F(y)=\frac{\partial u_\e}{\partial \nu_\e} -\frac{\partial v_\e}{\partial \nu_\e}.
$$
As in the proof of Lemma \ref{lemma-4.3}, we write
$$
\aligned
F(y) &=\Big\{ T(y)\cdot \nabla g (y/\e)-T(x_0)\cdot \nabla g (\widehat{y}/\e)\Big\}\\
& \qquad+\Big\{ n(x_0)\cdot A (\widehat{y}/\e) \nabla w (\widehat{y}/\e) 
- n(y)\cdot A (y/\e) \nabla w (y/\e)\Big\}\\
&=F_1 (y) + F_2(y),
\endaligned
$$
where we have used the fact $v_\e(x)=\e w(x/\e)$ and $w$ is a solution of (\ref{NP-m}).
Using $|y-\widehat{y}|\le C |y-x_0|^2$ and $\| \nabla w\|_\infty +\|\nabla^2 w\|_\infty
\le C$,
it is easy to see that if $y\in B(x_0, 2r)\cap\partial\Omega$,
\begin{equation}\label{4.7-10}
|F_1(y)| +|F_2(y)|
\le C |y-x_0| + C \e^{-1} |y-x_0|^2\le C\,  \e^{-1} r^2,
\end{equation}
where we also used the assumption $\e\le r$ for the last step.
By extending $n(y)$ smoothly to a neighborhood of $\partial\Omega$,
we may assume that $F(y)$ is defined in $B(x_0, c_0)\cap \mathbb{H}_n^d$.
A computation shows that
\begin{equation}\label{4.7-11}
|\nabla_y F(y)|\le C \big\{ 1 +\e^{-1} |y-x_0| +\e^{-2} |y -x_0|^2 \big\}
\le C \e^{-2} r^2,
\end{equation}
where we have used the estimate $\|\nabla^3 w\|_\infty\le C$.
By interpolation it follows from (\ref{4.7-10}) and (\ref{4.7-11}) that
\begin{equation}\label{4.7-12}
\| F\|_{C^{0, \sigma}(B(x_0, 2r)\cap\partial\Omega)}
\le C\,  (\e^{-1} r^2)^{1-\sigma} (\e^{-2} r^2)^{\sigma}
=C \, \e^{-1-\sigma} r^2
\end{equation}
for any $\sigma \in (0, 1)$.

Finally, since $\mathcal{L_\e}(u_\e -v_\e)=0$
in $\Omega \cap B(x_0, 2r)$,
we apply the boundary Lipschitz estimate for solutions with Neumann data  \cite{KLS1, AS}
to obtain
$$
\aligned
& \| \nabla (u_\e -v_\e)\|_{L^\infty(\Omega\cap B(x_0, r))}\\
 &\le  C \left(\average_{B(x_0, 2r)\cap\Omega} |\nabla (u_\e -v_\e)|^p \right)^{1/p}
+ C \|F \|_{L^\infty(B(x_0, 2r)\cap\partial\Omega)}
+ C r^\sigma \| F\|_{C^{0, \sigma} (B(x_0, 2r)\cap \partial\Omega)}\\
&\le C \sqrt{\e} \big\{ 1+|\ln \e |\big\}
+C \e^{-1} r^2
+C \e^{-1 -\sigma } r^{2+\sigma}\\
&\le  C \sqrt{\e} \big\{ 1+|\ln \e |\big\}
+C \e^{-1 -\sigma } r^{2+\sigma}.\\
\endaligned
$$
This completes the proof.
\end{proof}

Recall that the function $\psi_{\e, k}^{*\beta}
=(\psi_{\e, k}^{* 1\beta} (y), \dots, \psi_{\e, k}^{* m \beta}(y)$ in (\ref{psi}) is a solution of the Neumann 
problem 
\begin{equation}\label{NP-E}
\left\{
\aligned
&\mathcal{L}_\e^* (\psi_{\e, k}^{*\beta})=0 &\quad & \text{ in }\Omega,\\
& \frac{\partial}{\partial \nu_\e^*} 
\big\{ \psi_{\e, k}^{*\beta} \big\} = - \frac12 (n_i(x) e_\ell -n_\ell (x) e_i)\cdot 
\nabla  f_{\ell i k}^\beta (x/\e)  &\quad &\text{ on }\partial\Omega,
\endaligned
\right.
\end{equation}
where the 1-periodic functions  $f_{\ell i k}^\beta\in C^\infty(\td; \mathbb{R}^m)$ are  given by (\ref{p102}).
For each $x_0\in \partial\Omega$ fixed and satisfying (\ref{D-condition}),
in view of Theorem \ref{theorem-4.1},
 we may approximate this function in a small neighborhood of $x_0$
by  a solution of
\begin{equation}\label{NP-A}
\left\{
\aligned
&\mathcal{L}_\e^* (\phi_{\e, k }^{*\beta, x_0 })=0 &\quad & \text{ in } \mathbb{H}_n^d(a),\\
& \frac{\partial}{\partial \nu_\e^*} 
\big\{ \phi_{\e, k}^{*\beta, x_0} \big\} = - \frac12 (n_i e_\ell -n_\ell e_i)\cdot 
\nabla  f_{\ell i k}^\beta (x/\e)  &\quad &\text{ on }\partial\mathbb{H}_n^d(a),
\endaligned
\right.
\end{equation}
where $n=n(x_0)$ and $\mathbb{H}_n^d(a)$ is the tangent plane of $\partial\Omega$ at $x_0$.
Recall that by a change of variables, a solution of (\ref{NP-A}) is given by
\begin{equation}\label{E-1}
\phi_{\e, k}^{\alpha\beta, x_0}
(x)=\e V_k^{*\alpha\beta, n}\left(\frac{x-(x\cdot n+a)n}{\e}, -\frac{x\cdot n +a}{\e} \right),
\end{equation}
where
$V^*=V_{k}^{*\beta, n} (\theta, t)=(V_k^{* 1\beta, n} (\theta, t), \dots, V_k^{*m\beta, n}(\theta, t))$ is the smooth solution of
\begin{equation}\label{E-2}
\left\{
\aligned
& \left( \begin{array} {c}N^T \nabla_\theta \\ \partial_t  \end{array}\right)
\cdot M^TA^*(\theta -tn)M \left( \begin{array} {c}N^T \nabla_\theta \\ \partial_t  \end{array} \right)V^*
 =0  &\quad & \text{in } \mathbb{T}^d \times \mathbb{R}_+,\\
&-e_{d+1} \cdot M^T A^*(\theta)M \left( \begin{array} {c}N^T \nabla_\theta \\ \partial_t  \end{array} \right)V^*
 =-\frac12 (n_i e_\ell -n_\ell e_i)\cdot \nabla_\theta f_{lik}^\beta & \quad & \text{on } \mathbb{T}^d \times \{ 0\},
\endaligned
\right.
\end{equation}
given by Remark \ref{remark-2.1}. As a result, we may deduce the following from Theorem \ref{theorem-4.1}

\begin{theorem}\label{main-lemma}
Let $\e\le r\le \sqrt{\e}$ and   $\sigma\in (0,1/2)$.
Then for any $x\in B(x_0, r)\cap\Omega$,
\begin{equation}\label{main-estimate-4}
\aligned
& \Big| \nabla \left(\Psi_{\e, k}^{*\alpha\beta} (x) - P_k^{\alpha\beta} (x)
-\e \chi_k^{*\alpha\beta} \left(\frac{x}{\e}\right)
-\e V_k^{*\alpha\beta, n}\left(\frac{x-(x\cdot n+a)n}{\e}, -\frac{x\cdot n +a}{\e} \right)\right) \Big|\\
& \le C\sqrt{\e} \big\{ 1+|\ln \e|\big\}
+C \e^{-1-\sigma} r^{2+\sigma},
\endaligned
\end{equation}
where $C$ depends only on $d$, $m$, $\sigma$, $\mu$, $\Omega$ and
$\|A\|_{C^k(\td)}$  for some $k=k(d)>1$.
\end{theorem}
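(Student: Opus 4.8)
The plan is to obtain Theorem~\ref{main-lemma} as an immediate corollary of Theorem~\ref{theorem-4.1}, applied to the adjoint system. Recall from (\ref{psi}) that $\psi_{\e,k}^{*\alpha\beta}=\Psi_{\e,k}^{*\alpha\beta}-P_k^{\alpha\beta}-\e\chi_k^{*\alpha\beta}(x/\e)$, and from (\ref{NP-E}) that $\psi_{\e,k}^{*\beta}$ solves $\mathcal{L}_\e^*(\psi_{\e,k}^{*\beta})=0$ in $\Omega$ with conormal data $-\tfrac12(n_i(x)e_\ell-n_\ell(x)e_i)\cdot\nabla f_{\ell ik}^\beta(x/\e)$ on $\partial\Omega$. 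The first step is to record that this data is a finite sum, over index pairs $(i,\ell)$, of terms of exactly the template form $T_{i\ell}(x)\cdot\nabla g_{i\ell}(x/\e)$ appearing in (\ref{NP-M}), with $g_{i\ell}=-\tfrac12 f_{\ell ik}^\beta\in C^\infty(\td;\mathbb{R}^m)$ a function depending only on $A$; note that $T_{i\ell}(x)=n_i(x)e_\ell-n_\ell(x)e_i$ satisfies $|T_{i\ell}(x)|\le1$ (since $|n(x)|=1$) and $T_{i\ell}(x_0)\cdot n(x_0)=0$, so the hypotheses underlying (\ref{NP-M})--(\ref{NP-A-1}) are met. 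Likewise, by (\ref{NP-A}) the half-space approximant $\phi_{\e,k}^{*\beta,x_0}$ solves the same adjoint system in $\mathbb{H}_n^d(a)$ with exactly the same conormal data on the tangent plane $\partial\mathbb{H}_n^d(a)$, so it plays the role of $v_\e$ in (\ref{NP-A-1}); and by (\ref{E-1})--(\ref{E-2}) together with Remark~\ref{remark-2.1}, the explicit function $\e V_k^{*\alpha\beta,n}(\tfrac{x-(x\cdot n+a)n}{\e},-\tfrac{x\cdot n+a}{\e})$ is precisely the solution produced by Theorem~\ref{theorem-2.1} for $\mathcal{L}_\e^*$, i.e.\ the one for which Theorem~\ref{theorem-4.1} is stated.

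Since the adjoint matrix $A^*$ again satisfies (\ref{ellipticity})--(\ref{periodicity}) and is smooth, and since $n=n(x_0)$ satisfies the Diophantine condition (\ref{D-condition}), Theorem~\ref{theorem-4.1} applies with $\mathcal{L}_\e$ replaced by $\mathcal{L}_\e^*$ and $g$ replaced by each $g_{i\ell}$. Applying it term by term and summing over the finitely many index pairs gives, for $\e\le r\le\sqrt{\e}$ and $\sigma\in(0,1)$,
\begin{equation*}
\big\|\nabla\big(\psi_{\e,k}^{*\beta}-\phi_{\e,k}^{*\beta,x_0}\big)\big\|_{L^\infty(B(x_0,r)\cap\Omega)}\le C\sqrt{\e}\,\big\{ 1+|\ln\e| \big\}+C\e^{-1-\sigma}r^{2+\sigma},
\end{equation*}
where the number of index pairs has been absorbed into $C$; since each $g_{i\ell}$ depends only on $A$, the constant $C$ depends only on $d$, $m$, $\mu$, $\sigma$, $\Omega$ and $\|A\|_{C^k(\td)}$ for some $k=k(d)>1$. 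Unravelling the definitions $\psi_{\e,k}^{*\alpha\beta}=\Psi_{\e,k}^{*\alpha\beta}-P_k^{\alpha\beta}-\e\chi_k^{*\alpha\beta}(x/\e)$ and $\phi_{\e,k}^{\alpha\beta,x_0}(x)=\e V_k^{*\alpha\beta,n}(\tfrac{x-(x\cdot n+a)n}{\e},-\tfrac{x\cdot n+a}{\e})$ then yields (\ref{main-estimate-4}); one restricts to $\sigma\in(0,1/2)$ only to match the range used in later sections.

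The only points needing care are bookkeeping rather than analytic: first, that the antisymmetric conormal data in (\ref{NP-E}) genuinely splits into a sum of tangential-derivative terms of the form used in (\ref{NP-M}), which is immediate from the definition of $T_{i\ell}$; and second, that the reference half-space solution built from $V_k^{*\beta,n}$ in (\ref{E-1})--(\ref{E-2}) coincides with the solution supplied by Theorem~\ref{theorem-2.1} and used in Theorem~\ref{theorem-4.1}, which is exactly the content of Remark~\ref{remark-2.1}. No new estimate is required: all the analytic input---the representation by the Neumann functions $N_\e(x,y)$, the boundary integration by parts transferring derivatives from the Neumann data onto $N_\e$, the refined decay estimate (\ref{refined-main-1}) for $\nabla v_\e$, and the boundary Lipschitz estimate with Neumann data---has already been carried out in the proof of Theorem~\ref{theorem-4.1}, so the present statement follows by direct specialization.
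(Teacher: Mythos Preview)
Your proposal is correct and follows exactly the paper's own approach: the paper simply states that Theorem~\ref{main-lemma} is deduced from Theorem~\ref{theorem-4.1}, and you have supplied the bookkeeping (passing to the adjoint operator, decomposing the Neumann data of $\psi_{\e,k}^{*\beta}$ as a finite sum of terms $T_{i\ell}\cdot\nabla g_{i\ell}(x/\e)$, identifying $\phi_{\e,k}^{*\beta,x_0}$ with the half-space solution built from $V_k^{*\beta,n}$, and summing). No additional analytic ingredient is needed.
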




\section{Estimates of the homogenized data}
\setcounter{equation}{0}

Observe that by (\ref{E-1}),
\begin{equation}\label{E-3}
\aligned
T_{ij}(x_0) \cdot \nabla_x \phi_{\e, k}^{*\alpha\beta, x_0} (x)
&=T(x_0) \cdot (I-n\otimes n, -n) \left( \begin{array}{c} \nabla_\theta \\ \partial_t\end{array}\right)
V_k^{*\alpha\beta, n} \left(\frac{x}{\e}, 0\right)\\
& =T_{ij}(x_0) \cdot  \nabla_\theta V_k^{*\alpha\beta, n} \left(\frac{x}{\e}, 0\right),
\endaligned
\end{equation}
where $x\in \partial\mathbb{H}_n^d(a)$ and
we have used the fact $T_{ij}(x_0)\cdot n(x_0)=0$.
For $x\in \partial\Omega$, define
\begin{equation}\label{mean}
\aligned
\widetilde{g}_{k}^\beta (x)
 &= T_{ij}(x)\cdot\left\langle \nabla_\theta (V_k^{*\alpha\beta,n} +\chi_k^{*\alpha\beta})
 (\cdot, 0) g_{ij}^\alpha(x, \cdot)\right\rangle
 +(T_{ij}\cdot \nabla_x) x_k \langle g_{ij}^\beta (x, \cdot) \rangle\\
& = T_{ij}(x)\cdot \int_{\td}  (I-n\otimes n)\nabla_\theta (V_k^{*\alpha\beta, n}+\chi_k^{*\alpha\beta})
 (\theta, 0) g_{ij}^\alpha(x, \theta)\, d\theta\\
&\qquad\qquad
 +(n_i \delta_{jk}-n_j \delta_{ik}) \int_{\td} g_{ij}^\beta (x, \theta) d\theta\\
 &=T_{ij}(x)\cdot
 \int_{\td}
 \Big( e_k \delta^{\alpha\beta}
 +\nabla_\theta \chi_k^{*\alpha\beta} (\theta)
 +\nabla_\theta V_k^{*\alpha\beta} (\theta, 0) \Big) g_{ij}^\alpha (x, \theta)\, d\theta,
\endaligned
\end{equation}
where  $n=n(x)$.
Using the estimate $\|(I-n\otimes n)\nabla_\theta V^*\|\le C$ in Proposition \ref{prop-2.5}, we obtain 
$\|\widetilde{g}\|_\infty \le C \| g\|_\infty$.

Let  
\begin{equation}\label{v}
v^\gamma_\e (x)
=-\int_{\partial\Omega}
\big( T_{ij}(y)\cdot \nabla_y \big)\Psi_{\e, k}^{*\alpha\beta} (y)
\cdot \frac{\partial}{\partial y_k} \big\{ N_0^{\gamma \beta} (x, y)\big\} \cdot
 g_{ij}^\alpha (y, y/\e)\, d\sigma (y)
 \end{equation}
 be the first term in the RHS of (\ref{p-3}).
In the next section we will show that as $\e \to 0$, 
\begin{equation}\label{p-200}
v_\e^\gamma (x)
\to v_0^\gamma (x)=-\int_{\partial\Omega}
\frac{\partial}{\partial y_k} \big\{ N_0 ^{\gamma \beta} (x, y)\big\}
 \widetilde{g}_{k}^\beta (y)\, d\sigma (y).
\end{equation}
Fix $1\le \gamma\le m$ and $1\le k\le d$.
Using
\begin{equation}\label{p-201}
\aligned
& n_in_j\widehat{a}_{ij}^{*\alpha\beta} \frac{\partial}{\partial y_k} \big\{ N_0^{\gamma\beta} (x, y) \big\}\\
&=n_i\widehat{a}_{ij}^{*\alpha\beta}
\Big( (n_j e_k-n_k e_j)\cdot \nabla_y \Big) N_0^{\gamma \beta} (x, y)
+n_k \left(\frac{\partial}{\partial \nu_0^* (y)} \big\{ N^\gamma_0(x, y) \big\}\right)^\alpha,
\endaligned
\end{equation}
where $N_0^\gamma (x, y)=(N_0^{\gamma 1} (x, y), \cdots, N_0^{\gamma m}(x,y))$,
we may write
\begin{equation}\label{p-202}
\frac{\partial}{\partial y_k} \big\{ N_0^{\gamma\beta} (x, y)\big\}
=h^{*\beta\alpha} n_i \widehat{a}^{*\alpha t}_{ij} 
\Big( (n_j e_k-n_k e_j)\cdot \nabla_y \Big) N_0^{ \gamma t} (x, y)
- h^{*\beta \gamma } n_k |\partial\Omega|^{-1},
\end{equation}
where $h^*(y)=(h^{*\alpha\beta}(y))$ is the inverse of  the $m\times m$ matrix
$(\widehat{a}_{ij}^{*\alpha\beta} n_i n_j)$ and we have used the fact that
the conormal derivative of the matrix of Neumann functions is $-|\partial\Omega|^{-1} I_{m\times m}$.
It follows that 
\begin{equation}\label{v-0}
\aligned
v_0^\gamma (x)
= &-\int_{\partial\Omega}
\big[ (n_j e_k-n_k e_j)\cdot \nabla_y\big] N_0^{\gamma t} (x, y)
\cdot h^{*\beta \alpha} n_i \widehat{a}^{*\alpha t}_{ij} \widetilde{g}_k^\beta (y)\, d\sigma (y)\\
&\qquad\qquad
+\average_{\partial\Omega} h^{* \beta\gamma} n_k (y) \widetilde{g}^\beta_k (y)\, d\sigma (y)\\
=&\int_{\partial\Omega} N_0^{\gamma t} (x,y)
\Big[ (n_j e_k-n_k e_j)\cdot \nabla_y\Big] 
\Big( n_i \widehat{a}^{t\alpha}_{ij} h^{\alpha\beta} \widetilde{g}^\beta_k (y)\Big) \, d\sigma (y)
+ \text{constant},
\endaligned
\end{equation}
where $h=(h^*)^*=(h^{\alpha\beta})$ is the inverse of the matrix $(\widehat{a}_{ij}^{\alpha\beta}n_in_j)$.
This shows that $v_0$ is a solution of the following Neumann problem,
\begin{equation}\label{NP-v}
\left\{
\aligned
&\mathcal{L}_0 (v_0)=0 &\quad & \text{ in } \Omega,\\
&\left(\frac{\partial v_0}{\partial \nu_0}\right)^\gamma
=\big( T_{jk}\cdot \nabla\big)
\Big( n_i  \widehat{a}_{ij}^{\gamma \alpha}  h^{\alpha\beta}  \widetilde{g}^\beta_k (y)\Big)
&\quad  & \text{ on }  \partial\Omega.
\endaligned
\right.
\end{equation}
Thus the homogenized data  $\overline{g}_{jk}^\gamma$ in (\ref{NP-h}) is
given by 
\begin{equation}\label{h-data-ij}
\overline{g}_{jk}^\gamma  
=n_i\widehat{a}_{ij}^{\gamma\alpha} h^{\alpha\beta} \widetilde{g}_k^\beta,
\end{equation}
where $\widetilde{g}_k^\beta$ is given by (\ref{mean})
and $(h^{\alpha\beta})$ is the inverse of the matrix $(\widehat{a}_{ij}^{\alpha\beta}n_in_j)$.

The rest of this section is devoted to the proof of the following.

\begin{theorem}\label{theorem-E}
Let $x, y\in \partial\Omega$ and $|x-y|\le c_0$.
Suppose that $n(x)$ and $n(y)$ satisfy the Diophantine condition (\ref{D-condition})
with constants $\kappa (x)$ and $\kappa (y)$, respectively. 
Let $\overline{g}= (\overline{g}_k^\beta)$ be defined by (\ref{h-data-ij}). 
Then, for any $\sigma\in (0,1)$,
\begin{equation}\label{E-1-0}
|\overline{g}(x)-\overline{g}(y)|\le \frac{C_\sigma |x-y|}{ \kappa^{1+\sigma}}
\left(\frac{|x-y|}{\kappa} + 1\right) \sup_{z\in \td} \| g(\cdot, z)\|_{C^1(\partial\Omega)},
\end{equation}
where $\kappa=\max(\kappa(x), \kappa(y))$ and
$C_\sigma$ depends only on $d$, $m$, $\sigma$, $\mu$,
and $\|A\|_{C^k(\td)}$ for some $k=k(d)>1$.
\end{theorem}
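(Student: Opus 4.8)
The plan is to fix $x,y\in\partial\Omega$ with $|x-y|\le c_0$, set $n_1=n(x)$, $n_2=n(y)$ (so $|n_1-n_2|\le C|x-y|$ since $\partial\Omega$ is smooth), and track the $x$-dependence of $\overline g$ through the explicit formulas (\ref{h-data-ij}) and (\ref{mean}). In (\ref{h-data-ij}) the matrix $\widehat a_{ij}^{\gamma\alpha}$ is a fixed constant, $h^{\alpha\beta}(n)$ is the inverse of the uniformly positive definite matrix $(\widehat a_{ij}^{\alpha\beta}n_in_j)$ and is therefore smooth in $n$ with bounds depending only on $d,m,\mu,\|A\|_{C^k(\td)}$, and $\chi_k^{*\alpha\beta}$ does not depend on $x$; in (\ref{mean}) the tangential field $T_{ij}(x)$ and the data $g^\alpha_{ij}(x,\cdot)$ are Lipschitz in $x$ with constants $C$ and $\sup_{z}\|g(\cdot,z)\|_{C^1(\partial\Omega)}$, respectively. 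The only genuinely $n$-dependent ingredient is the boundary trace $\nabla_\theta V_k^{*\alpha\beta,n}(\theta,0)$ of the half-space profile from (\ref{E-2}). Using the uniform-in-$\kappa$ bound $\int_{\td}|(I-n\otimes n)\nabla_\theta V_k^{*\alpha\beta,n}(\theta,0)|\,d\theta\le C$ — which holds because $(I-n\otimes n)\nabla_\theta V^*=NN^T\nabla_\theta V^*$ and $\|N^T\nabla_\theta V^*(\cdot,0)\|_\infty\le C$ by Proposition \ref{prop-2.5} — together with the bound $\|\nabla_\theta V_k^{*\beta,n}(\cdot,0)\|_{L^\infty(\td)}\le C\kappa^{-1}$ from (\ref{2.5-0}) (needed only where the two projections $I-n_1\otimes n_1$ and $I-n_2\otimes n_2$ disagree, which they do by $O(|x-y|)$), every piece of $\overline g(x)-\overline g(y)$ except the one carrying $\nabla_\theta(V_k^{*\alpha\beta,n_1}-V_k^{*\alpha\beta,n_2})(\cdot,0)$ is bounded by $C|x-y|\kappa^{-1}\sup_z\|g(\cdot,z)\|_{C^1(\partial\Omega)}$; since the Diophantine constant always satisfies $\kappa\le1$, this is $\le C|x-y|\kappa^{-1-\sigma}\sup_z\|g(\cdot,z)\|_{C^1(\partial\Omega)}$ and is absorbed into the right-hand side of (\ref{E-1-0}). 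It thus remains to prove the core bound
\begin{equation*}
\Big\|(I-n_2\otimes n_2)\nabla_\theta\big(V_k^{*\beta,n_1}-V_k^{*\beta,n_2}\big)(\cdot,0)\Big\|_{L^2(\td)}
\le \frac{C_\sigma\,|n_1-n_2|}{\kappa^{1+\sigma}}\Big(1+\frac{|n_1-n_2|}{\kappa}\Big),
\end{equation*}
after which Cauchy--Schwarz against $g^\alpha_{ij}(x,\cdot)$ and $|n_1-n_2|\le C|x-y|$ conclude the proof.

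For the core bound I would set $W=V_k^{*\beta,n_1}-V_k^{*\beta,n_2}$ on $\td\times\mathbb{R}_+$ and derive the problem it solves by subtracting the two copies of (\ref{E-2}). With the abbreviations $D_j=\left(\begin{array}{c}N_j^T\nabla_\theta\\ \partial_t\end{array}\right)$ and $B_j=M_j^TA^*(\theta-tn_j)M_j$, we have $D_1\cdot B_1D_1V_k^{*,n_1}=0=D_2\cdot B_2D_2V_k^{*,n_2}$, and expanding $D_1\cdot B_1D_1-D_2\cdot B_2D_2=D_1\cdot(B_1-B_2)D_1+D_1\cdot B_2(D_1-D_2)+(D_1-D_2)\cdot B_2D_2$ shows that $W$ is a weak solution of a Neumann problem of the form (\ref{half-NP-V-G}) with data $G=(B_1-B_2)D_1V_k^{*,n_2}+B_2(D_1-D_2)V_k^{*,n_2}$ and boundary term $-\tfrac12(n_{1,i}e_\ell-n_{1,\ell}e_i-n_{2,i}e_\ell+n_{2,\ell}e_i)\cdot\nabla_\theta f^\beta_{\ell ik}$. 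The boundary term is $O(|n_1-n_2|)$ in every $H^s(\td)$ norm, and $G$ carries a factor $O(|n_1-n_2|)$ through $M_1-M_2$ and $N_1-N_2$; the delicate point is that $B_1-B_2$ contains $A^*(\theta-tn_1)-A^*(\theta-tn_2)=O(t\,|n_1-n_2|\,\|\nabla A^*\|_\infty)$, which grows linearly in $t$. This is absorbed by the decay of the profile: Proposition \ref{prop-2.5} gives $|N_2^T\nabla_\theta\partial_\theta^\alpha V_k^{*,n_2}|+|\partial_t\partial_\theta^\alpha V_k^{*,n_2}|\le C_{\alpha,\ell}(1+\kappa t)^{-\ell}$ for all $\ell$, while the purely tangential-mismatch part $(N_1-N_2)^T\nabla_\theta V_k^{*,n_2}$ (which involves the full $\theta$-gradient) is dominated by $C|n_1-n_2|\,\kappa^{-1}(1+\kappa t)^{-\ell}$ via (\ref{2.5-0})--(\ref{2.5-1}). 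Choosing $\ell$ large, the weighted integrals $\int_0^\infty(1+t)^2(1+\kappa t)^{-2\ell}\,dt$ and their $\theta$-differentiated analogues are finite and produce exactly the advertised negative powers of $\kappa$; the extra factor $1+|n_1-n_2|/\kappa$ comes from the shift $\theta-tn_2=(\theta-tn_1)+t(n_1-n_2)$ on the region $t\lesssim\kappa^{-1}$ where the profile is not yet negligible.

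Having estimated $G$ and the boundary term in $L^2(\mathbb{R}_+,H^s(\td))$ and $H^s(\td)$ for every $s$, I would feed them into Propositions \ref{prop-2.1}--\ref{prop-2.3} (available with $\lambda=0$ by Remark \ref{remark-2.1}) to obtain control of $\int_0^\infty(\|N_1^T\nabla_\theta W\|^2_{H^s(\td)}+\|\partial_tW\|^2_{H^s(\td)})\,dt$ together with the corresponding $t$-regularity, all bounded by the square of the right-hand side of the core bound; by the trace inequality in the $t$-variable this controls $\|N_1^T\nabla_\theta W(\cdot,0)\|_{L^2(\td)}$, and then $(I-n_2\otimes n_2)\nabla_\theta W(\cdot,0)=N_2N_2^T\nabla_\theta W(\cdot,0)$, with $N_2^T\nabla_\theta W=N_1^T\nabla_\theta W+(N_2-N_1)^T\nabla_\theta W$ and the mismatch term $O(|n_1-n_2|)$ times the full $\theta$-gradient of $W$, which a further Proposition \ref{prop-2.4}-type estimate for $W$ driven by $G$ bounds by a quantity of the same order as $O(|n_1-n_2|^2\kappa^{-2-\sigma})$. (To reach $\kappa=\max(\kappa(x),\kappa(y))$ rather than $\min$, one runs the same argument with $n_1$ and $n_2$ interchanged and keeps the smaller of the two resulting bounds; since the decay rate that enters $G$ is then governed by the normal with the larger Diophantine constant, the sharper bound carries $\kappa=\max$.) Assembling these pieces yields the core bound and hence (\ref{E-1-0}).

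The main obstacle will be this quantitative perturbation estimate for $G$: proving that the linear-in-$t$ growth of $A^*(\theta-tn_1)-A^*(\theta-tn_2)$ is defeated by the $(1+\kappa t)^{-\ell}$-decay of the half-space profile with only the stated powers $\kappa^{-1-\sigma}$, $\kappa^{-2-\sigma}$ (the arbitrarily small loss $\sigma$ being a convenient cushion), and then transferring the resulting global energy bound for $W$ to a bound on the boundary trace of its tangential $\theta$-gradient, which forces one to use the higher-order estimates of Propositions \ref{prop-2.2}--\ref{prop-2.4} uniformly in the regularizing parameter $\lambda$.
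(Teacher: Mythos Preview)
Your overall architecture is correct and matches the paper: reduce to the ``core bound'' on the trace of $W=V_k^{*\beta,n_1}-V_k^{*\beta,n_2}$, derive the Neumann problem (\ref{E-1-2}) that $W$ solves with data $G$, $H$, $h$, and estimate each piece using the decay of $V^{*,n_2}$ from Proposition~\ref{prop-2.5}. The gap is in the last step, where you plan to ``feed them into Propositions~\ref{prop-2.1}--\ref{prop-2.3}.'' Those are \emph{unweighted} energy estimates, and they do not give the advertised powers of $\kappa$.

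Concretely, the dangerous term is $G_2=(B_{n_1}-B_{n_2})\bigl(\begin{smallmatrix}N_2^T\nabla_\theta\\\partial_t\end{smallmatrix}\bigr)V^{*,n_2}$, for which $|B_{n_1}-B_{n_2}|\le C(1+t)|n_1-n_2|$ while the profile decays like $(1+\kappa t)^{-\ell}$. The unweighted $L^2$ norm that Proposition~\ref{prop-2.1} requires is
\[
\int_0^\infty \|G_2(\cdot,t)\|_{L^2(\td)}^2\,dt\;\le\; C|n_1-n_2|^2\int_0^\infty (1+t)^2(1+\kappa t)^{-2\ell}\,dt\;\sim\; C|n_1-n_2|^2\,\kappa^{-3},
\]
since the integrand is essentially $(1+t)^2$ on $[0,\kappa^{-1}]$. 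After the trace step this yields $\|N_1^T\nabla_\theta W(\cdot,0)\|_{L^2(\td)}\le C|n_1-n_2|\,\kappa^{-3/2}$, i.e.\ (\ref{E-1-0}) only with $\sigma=\tfrac12$, not with arbitrary $\sigma\in(0,1)$. (The $G_1$ term likewise gives $\kappa^{-5/2}$ rather than $\kappa^{-2-\sigma}$.) This weaker exponent is exactly the earlier Armstrong--Prange bound and would \emph{not} suffice for the optimal $W^{1,q}$ estimate (\ref{Sobolev}) with $q$ up to $d-1$; it only reaches $q<\tfrac{2(d-1)}{3}$.

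What the paper does instead is invoke the weighted estimate of Lemma~\ref{lemma-E1} (built on the half-space $A_1$-weight theory of Section~4) with weight $t^{\sigma-1}$, $\sigma\in(0,1)$. Then
\[
\int_0^\infty (1+t)^2(1+\kappa t)^{-4}\,t^{\sigma-1}\,dt\;\le\; C\,\kappa^{-2-\sigma},
\]
and since $t^{\sigma-1}\ge 1$ on $(0,1)$ the weighted global bound restricts for free to the unweighted bound on $(0,1)$ needed for the trace. This is exactly the mechanism producing the ``arbitrarily small loss $\sigma$''; it is not a cushion that appears automatically. A smaller point: your decomposition $D_1\cdot(B_1-B_2)D_1+D_1\cdot B_2(D_1-D_2)+(D_1-D_2)\cdot B_2D_2$ has a third term not in divergence form (the paper's $H$), which you drop from your stated $G$; the paper handles it by writing $H=\partial_t\widetilde H$ and reducing to the previous two cases.
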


Assume that  $n, \wn\in \mathbb{S}^{n-1}$ satisfy the condition (\ref{D-condition}).
Choose two orthogonal matrices $M_n$, $M_{\wn}$ such that $M_n (e_d)=-n$,
$M_{\wn} e_d=-\wn$ and $|M_n -M_{\wn}|\le C |n-\wn|$.
Let $N_n$ and $N_{\wn}$ denote the $d\times (d-1)$ matrices of the first $d-1$ columns of
$M_n$ and $M_{\wn}$, respectively.
Let $V_n^* (\theta, t) $ and $V_{\wn}^*(\theta, t)$ be the corresponding solutions of (\ref{E-2}).
We will show that
\begin{equation}\label{E-1-1}
\int_{\td} |N_n^T \nabla_\theta \big(V_n^*(\theta, 0) -V_{\wn}^*( \theta, 0)\big)|\, d\theta
\le  \frac{C_\sigma | n-\wn|}{ \kappa^{1+\sigma}}
\left(\frac{|n-\wn|}{\kappa} + 1\right),
\end{equation}
where $\kappa>0$ is the constant in the Diophantine condition (\ref{D-condition}) for $\wn$.
Using $N_nN_n^T=I -n\otimes n$,
$N_{\wn} N_{\wn}^T=I -\wn\otimes \wn$, and the estimate
$|\nabla_\theta V^*_{\wn} |\le C\kappa^{-1}$ from Proposition  \ref{prop-2.5},
it is not hard to see that (\ref{E-1-0}) follows from (\ref{E-1-1}).
Furthermore, let 
\begin{equation}\label{W}
W(\theta, t)= V_n^*(\theta, t)-V_{\wn}^*(\theta, t).
\end{equation}
By Sobolev imbedding it suffices to show that
\begin{equation}\label{E-1-1-0}
\int_0^1 \int_{\td}
\Big\{ |N_n^T \nabla_\theta W|^2 
+| \nabla_\theta \partial_t W|^2 \Big\} d\theta dt
\le C_\sigma \left\{ \frac{|n-\wn|^2}{\kappa^{2+\sigma}}
+\frac{|n-\wn|^4}{\kappa^{4+\sigma}}\right\},
\end{equation}
for any $\sigma \in (0,1)$.

Let
$$
B^*_n(\theta, t)=M_n^T A^*(\theta-tn) M_n\quad \text{ and } \quad
B^*_{\wn}(\theta, t)=M_{\wn}^T A^*(\theta-t\wn) M_{\wn}.
$$
To prove (\ref{E-1-1-0}), as in the case of Dirichlet condition \cite{Masmoudi-2012, Armstrong-Prange-2016}, we first note that
$W$ is a solution of the Neumann problem, 
\begin{equation}\label{E-1-2}
\left\{
\aligned
& -\left( \begin{array} {c}N_{n}^T \nabla_\theta \\ \partial_t  \end{array}\right)
\cdot B^*_n \left( \begin{array} {c}N_{n}^T \nabla_\theta \\ \partial_t  \end{array} \right) W
 =  \left( \begin{array} {c}N_{n}^T \nabla_\theta \\ \partial_t  \end{array} \right) G
 +H
 &\quad & \text{ in } \mathbb{T}^d \times \mathbb{R}_+,\\
&-e_{d+1} \cdot B_n^*\left( \begin{array} {c}N_{n}^T \nabla_\theta \\ \partial_t  \end{array} \right) W
 =h  + e_{d+1} \cdot G
 & \quad & \text{ on } \mathbb{T}^d \times \{ 0\},
\endaligned
\right.
\end{equation}
where $G=G_1 +G_2 +G_3$ , $H$, and $h$ are given by
\begin{equation}
\aligned
G_1 &= \big(B_n -B_{\wn} \big)
\left( \begin{array} {c}(N_{n}-N_{\wn})^T \nabla_\theta \\ 0 \end{array} \right)V_{\wn},\\
G_2&=
\big(B_n-B_{\wn} \big)
\left( \begin{array} {c}N_{\wn}^T \nabla_\theta \\ \partial_t \end{array} \right)V_{\wn},\\
G_3&=B_{\wn} 
\left( \begin{array} {c}(N_{n}-N_{\wn})^T \nabla_\theta \\ 0 \end{array} \right)V_{\wn},\\
H&=\left( \begin{array} {c}(N_{n}-N_{\wn})^T \nabla_\theta \\ 0  \end{array} \right)
B_{\wn} 
\left( \begin{array} {c}N_{\wn}^T \nabla_\theta \\ \partial_t \end{array} \right)V_{\wn},\\
h & =-\frac12 \big[ (n_i -\wn_i)e_\ell -(n_\ell -\wn_\ell)e_i\big] \cdot \nabla_\theta f_{i\ell}.
\endaligned
\end{equation}
Note that $|\partial_t^k \partial_\theta^\alpha (B_n -B_{\wn} )|\le
C (1+t)|n-\wn|$. This, together with Proposition  \ref{prop-2.5}, gives
\begin{equation}\label{G-1}
|\partial_t^k \partial_\theta^\alpha G_1(\theta, t)|
\le \frac{C( t +1) |n-\wn|^2}{\kappa (1+\kappa t)^\ell},
\end{equation}
\begin{equation}\label{G-2}
|\partial_t^k \partial_\theta^\alpha G_2(\theta, t)|
\le \frac{C(t+1) |n-\wn|}{(1+\kappa t)^\ell},
\end{equation}
\begin{equation}\label{G-3}
|\partial_t^k \partial_\theta^\alpha G_3(\theta, t)|
\le \frac{C |n-\wn|}{ (1+\kappa t)^\ell},
\end{equation}
\begin{equation}\label{H-1}
|\partial_t^k \partial_\theta^\alpha H (\theta, t)|
\le \frac{C |n-\wn|}{ (1+\kappa t)^\ell},
\end{equation}
for any $\alpha$, $k$, $\ell$,
where $C$ depends on $d$, $m$, $\alpha$, $k$, $\ell$ and $A$.

To deal with the growth factor $t+1$ in (\ref{G-1}), (\ref{G-2}) as well as the term $H$,
we rely  on the following weighted estimates.

\begin{lemma}\label{lemma-E1}
Suppose that $n\in \mathbb{S}^{n-1}$ satisfies the Diophantine condition (\ref{D-condition}).
Let $U$ be a smooth solution of
\begin{equation}\label{E-1-100}
\left\{
\aligned
& -\left( \begin{array} {c}N_{n}^T \nabla_\theta \\ \partial_t  \end{array}\right)
\cdot B^*_n \left( \begin{array} {c}N_{n}^T \nabla_\theta \\ \partial_t  \end{array} \right) U
 =  \left( \begin{array} {c}N_{n}^T \nabla_\theta \\ \partial_t  \end{array} \right) F
 &\quad & \text{ in } \mathbb{T}^d \times \mathbb{R}_+,\\
&-e_{d+1} \cdot B_n^*\left( \begin{array} {c}N_{n}^T \nabla_\theta \\ \partial_t  \end{array} \right) U
 =e_{d+1} \cdot F
 & \quad & \text{ on } \mathbb{T}^d \times \{ 0\}.
\endaligned
\right.
\end{equation}
Assume that
\begin{equation}\label{E-1-A}
(1+t)\| \nabla_{\theta, t} U(\cdot, t)\|_{L^\infty(\td)}
  +(1+t)\| F(\cdot, t)\|_{L^\infty(\td)} <\infty.
\end{equation}
Then, for any $-1<\alpha<0$,
\begin{equation}\label{E-1-101}
\int_0^\infty \int_{\td}
\Big\{ |N^T_n \nabla_\theta U|^2 +|\partial_t U|^2 \Big\}\,  t^\alpha \, d\theta dt
\le C_\alpha \int_0^\infty \int_{\td} |F|^2\,  t^\alpha\, d\theta dt,
\end{equation}
where $C_\alpha$ depends only on $d$, $m$, $\mu$, $\alpha$ as well as some H\"older norm of $A$.
\end{lemma}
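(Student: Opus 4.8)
The plan is to transfer the degenerate problem on $\td\times\mathbb{R}_+$ into a genuine Neumann problem for $\mathcal{L}_1^*=-\text{\rm div}(A^*\nabla)$ in the half-space $\mathbb{H}_n^d(0)$ and then to apply the weighted inequality recorded in Remark \ref{remark-w-100}. Concretely, I would lift $U$ by setting $u(x)=U\big(x-(x\cdot n)n,\,-x\cdot n\big)$ and $f(x)=M_n\,F\big(x-(x\cdot n)n,\,-x\cdot n\big)$, with $x-(x\cdot n)n$ read modulo $\mathbb{Z}^d$. Exactly as in the passage from (\ref{half-NP-V}) back to (\ref{half-NP}) --- using $\nabla_x u=M_n\bigl(N_n^T\nabla_\theta U,\ \partial_t U\bigr)$ from (\ref{2.1}), the identity $A^*(x)=A^*(\theta-tn)$, and $M_n^TM_n=I$ --- one checks that $u$ solves $-\text{\rm div}(A^*(x)\nabla u)=\text{\rm div}(f)$ in $\mathbb{H}_n^d(0)$ with $\partial u/\partial\nu_1^*=-n\cdot f$ on $\partial\mathbb{H}_n^d(0)$; this is precisely the Neumann problem (\ref{NP-w}) for the H\"older, elliptic matrix $A^*$. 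Since $M_n$ is orthogonal, at every $x\in\mathbb{H}_n^d(0)$ one has $|f(x)|=|F(\theta,t)|$ and $|\nabla_x u(x)|^2=|N_n^T\nabla_\theta U(\theta,t)|^2+|\partial_t U(\theta,t)|^2$, where $(\theta,t)=\big(x-(x\cdot n)n,\,-x\cdot n\big)$; in particular, by (\ref{E-1-A}), $|\nabla u(x)|+|f(x)|\le C(1+\delta(x))^{-1}$ with $\delta(x)=|x\cdot n|$.

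Next I would invoke estimate (\ref{w-101}) of Remark \ref{remark-w-100}: with $\Omega_R$ as in (\ref{Omega-R}) and $a=0$, for every $R\ge 1$
\[
\int_{\Omega_R}|\nabla u|^2\,[\delta(x)]^\alpha\,dx\ \le\ CR^\alpha\int_{\Omega_{2R}}|\nabla u|^2\,dx\ +\ C\int_{\Omega_{2R}}|f|^2\,[\delta(x)]^\alpha\,dx .
\]
(Integrating over all of $\mathbb{H}_n^d(0)$ is not allowed, since the integrand is quasiperiodic in the tangential variable with positive mean and so has infinite integral.) I would then divide through by the $(d-1)$-volume $\big|\{\theta'\in n^\perp:|\theta'|\le R\}\big|\simeq R^{d-1}$ and let $R\to\infty$. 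From $|\nabla u|\le C(1+\delta)^{-1}$ one gets $\int_{\Omega_{2R}}|\nabla u|^2\,dx\le CR^{d-1}$, so the middle term is $O(R^\alpha)\to0$ because $\alpha<0$. For the two remaining terms I would use that the Diophantine condition (\ref{D-condition}) forces $\mathbb{Z}^d\cap\mathbb{R}n=\{0\}$, hence Weyl equidistribution of the image of the hyperplane $n^\perp$ in $\td$ (applied, for each fixed $t$, to the continuous functions $\theta\mapsto|N_n^T\nabla_\theta U(\theta,t)|^2+|\partial_t U(\theta,t)|^2$ and $\theta\mapsto|F(\theta,t)|^2$, continuity being guaranteed by smoothness of $U$), together with dominated convergence in $t$ with majorant $C(1+t)^{-2}t^\alpha$, which is integrable on $(0,\infty)$ precisely because $-1<\alpha<0$. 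This gives
\[
\frac{1}{\big|\{|\theta'|\le R\}\big|}\int_{\Omega_R}|\nabla u|^2\,[\delta(x)]^\alpha\,dx\ \longrightarrow\ \int_0^\infty\!\!\int_{\td}\big(|N_n^T\nabla_\theta U|^2+|\partial_t U|^2\big)\,t^\alpha\,d\theta\,dt ,
\]
and the analogous limit for the $f$-term, up to a harmless factor coming from $\Omega_{2R}$ versus $\Omega_R$. Combining these with the displayed inequality yields (\ref{E-1-101}), with a constant inherited from Remark \ref{remark-w-100}, in particular independent of $\kappa$.

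The main obstacle is precisely this passage to the limit in the normalized inequality: one must justify that the box-averages over $\Omega_R$ converge to the torus averages \emph{in the presence of the singular weight $t^\alpha$}. This is where the two hypotheses do their work. The Diophantine condition on $n$ delivers the equidistribution of $n^\perp$ in $\td$, so that the $t$-slices of $|\nabla u|^2[\delta]^\alpha$ and of $|f|^2[\delta]^\alpha$ average to the correct torus integrals; the decay (\ref{E-1-A}) delivers an $R$-uniform, $t^\alpha$-weighted integrable majorant that legitimizes dominated convergence. The restriction $-1<\alpha<0$ is used both for integrability of that majorant (near $t=0$ via $\alpha>-1$, near $t=\infty$ via $\alpha-2<-1$) and to kill the error term $R^\alpha\int_{\Omega_{2R}}|\nabla u|^2$.
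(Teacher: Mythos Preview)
Your proposal is correct and follows essentially the same route as the paper's own proof: lift $U$ to $u(x)=U(x-(x\cdot n)n,-x\cdot n)$ on the half-space, recognize the resulting problem as (\ref{NP-w}), apply the localized weighted inequality (\ref{w-101}) from Remark~\ref{remark-w-100}, divide by $R^{d-1}$, and pass to the limit using equidistribution of $n^\perp$ in $\td$ together with dominated convergence with majorant $C(1+t)^{-2}t^\alpha$. Your inclusion of the orthogonal factor $M_n$ in the definition of $f$ is the more careful bookkeeping (the paper writes $f=F$ directly); since $|M_nF|=|F|$ this does not affect the estimate, and your observation about the harmless constant from $\Omega_{2R}$ versus $\Omega_R$ is exactly right.
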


\begin{proof}
We will reduce the weighted estimate (\ref{E-1-101})
to the analogous estimates we proved in Section 4 in a half-space.
Let
$$
u(x)=U(x-(x\cdot n)n , -x\cdot n) \quad \text{ and } \quad f(x)=F(x-(x\cdot n)n, -x\cdot n).
$$
Then $u$ is a solution of the Neumann problem,
$$
-\text{\rm div}(A(x)\nabla u)=\text{div}(f) \quad \text{ in } \mathbb{H}_n^d(0)
\quad \text{ and } \quad n\cdot A(x)\nabla u=-n\cdot f \quad \text{ on } \partial \mathbb{H}_n^d(0).
$$
It follows from the estimate (\ref{w-101}) that
\begin{equation}\label{E-1-102}
\aligned
& \frac{1}{R^{d-1}}\int_{\Omega_R} |N_n^T \nabla_\theta U (x-(x\cdot n) n, -x\cdot n) |^2 \, |x\cdot n|^\alpha \, dx\\
&\qquad +\frac{1}{R^{d-1}}\int_{\Omega_R} |\partial_t U (x-(x\cdot n) n, -x\cdot n) |^2 \, |x\cdot n|^\alpha \, dx\\
& \le  C R^{\alpha+1-d} \int_{\Omega_{2R}}|\nabla u|^2\, dx
+\frac{C}{R^{d-1}} \int_{\Omega_{2R} }|F (x-(x\cdot n) n, -x\cdot n) |^2 \, |x\cdot n|^\alpha \, dx,
\endaligned
\end{equation}
where 
$$
\Omega_R=\big\{ x\in \mathbb{H}_n^d (0): |x-(x\cdot n)n|\le R \text{ and } |x\cdot n|\le 2R \big\}.
$$

Next, we compute the limit for each term in (\ref{E-1-102}), as $R\to \infty$.
In view of (\ref{E-1-A}) it is clear that the first term in the RHS of (\ref{E-1-102})
goes to zero.
For the second term in the RHS of (\ref{E-1-102}), we write it as
\begin{equation}\label{E-1-1020}
 C\int_0^{2R} t^\alpha \left\{ \frac{1}{R^{d-1}}\int_{\substack{ x\cdot n=t\\ |x+tn| <R}}
|F(x+tn, t)|^2 \, d\sigma (x) \right\} dt.
\end{equation}
Since $F(\theta, t)$ is 1-periodic in $\theta$ and $n$ satisfies the Diophantine condition,
\begin{equation}\label{E-1-103}
\frac{1}{R^{d-1}}\int_{\substack{ x\cdot n=t\\ |x+tn| <R}}
|F(x+tn, t)|^2 \, d\sigma (x) 
\to C_d \int_{\td} |F(\theta, t)|^2\, d\theta
\end{equation}
for each $t>0$, as $R\to \infty$.
With the assumption (\ref{E-1-A}) at our disposal, we apply the Dominated Convergence Theorem
to deduce that  the last integral in (\ref{E-1-102}) converges to 
$$
C_d \int_0^\infty \int_{\td} |F(\theta, t)|^2\, t^\alpha\, d\theta dt.
$$
A similar argument also shows that the LHS of (\ref{E-1-102}) converges to
$$
C_d \int_0^\infty \int_{\td} 
\Big\{ |N_n^T \nabla_\theta U|^2 +|\partial_t U|^2 \Big\}\, t^\alpha\, d\theta dt.
$$
As a result, we have proved the estimate (\ref{E-1-101}).
\end{proof}

\begin{remark}\label{D-remark}
{\rm 
The same argument as in the proof of Lemma \ref{lemma-E1} also gives a weighted estimate for 
Dirichlet problem.
More precisely, 
suppose that $n\in \mathbb{S}^{n-1}$ satisfies the Diophantine condition (\ref{D-condition}).
Let $U$ be a smooth solution of
\begin{equation}\label{E-1-100-D}
\left\{
\aligned
 -\left( \begin{array} {c}N_{n}^T \nabla_\theta \\ \partial_t  \end{array}\right)
\cdot B^*_n \left( \begin{array} {c}N_{n}^T \nabla_\theta \\ \partial_t  \end{array} \right) U
 & =  \left( \begin{array} {c}N_{n}^T \nabla_\theta \\ \partial_t  \end{array} \right) F +H
 &\quad & \text{ in } \mathbb{T}^d \times \mathbb{R}_+,\\
U
& =0
 & \quad & \text{ on } \mathbb{T}^d \times \{ 0\}.
\endaligned
\right.
\end{equation}
Assume that
$$
(1+t)\| \nabla_{\theta, t} U(\cdot, t)\|_{L^\infty(td)}
  + (1+t)\|  F(\cdot, t)\|_{L^\infty(\td)}
   + (1+t)^2 \| H(\cdot, t)\|_{L^\infty(td)} <\infty.
$$
Then, for any $-1<\alpha<0$,
\begin{equation}\label{E-1-1010}
\int_0^\infty \int_{\td}
\Big\{ |N^T_n \nabla_\theta U|^2 +|\partial_t U|^2 \Big\}\,  t^\alpha \, d\theta dt
\le C_\alpha \int_0^\infty \int_{\td} \Big\{ |F|^2 +|H|^2 t^2 \Big\}\,  t^\alpha\, d\theta dt,
\end{equation}
where $C_\alpha$ depends only on $d$, $m$, $\mu$, $\alpha$ as well as some H\"older norm of $A$.
}
\end{remark}

We are now in a position to give the proof of Theorem \ref{theorem-E}.

\begin{proof}[\bf Proof of Theorem \ref{theorem-E}]
Recall that it suffices to prove (\ref{E-1-1-0}) with $W$ given by (\ref{W}).
To do this we split $W$ as
$W=W_1 +W_2 +W_3$,
where $W_1$ is the solution of (\ref{E-1-2}) with $G=0$ and $H=0$,
$W_2$ the solution with $H=0$ and $h=0$, and
$W_3$ the solution with $G=0$ and $h=0$.
To estimate $W_1$, we use the energy estimate in Proposition \ref{prop-2.1}.
This gives
\begin{equation}\label{E-10-1}
 \int_0^1 \int_{\td} \Big\{ |N_n^T \nabla_\theta W_1|^2 +|\partial_t W_1|^2 \Big\} d\theta dt\\
\le C\, \| h\|^2_{H^1(\td)}
 \le C |n-\wn|^2.
\end{equation}

To handle the function $W_2$, we use the weighted estimate in Lemma \ref{lemma-E1}
with $\alpha =\sigma -1$.
This, together with estimates (\ref{G-1})-(\ref{H-1}), leads to
\begin{equation}\label{E-10-2}
\aligned
& \int_0^1 \int_{\td} \Big\{ |N_n^T \nabla_\theta W_1|^2 +|\partial_t W_1|^2 \Big\} d\theta dt\\
&\le C \int_0^\infty\int_{\td}  |G|^2 t^{\sigma-1}\, d\theta dt\\
&\le C \int_0^\infty 
\left\{ \frac{(t+1)^2 |n-\wn|^4}{\kappa^2 (1+\kappa t)^4}
+\frac{(t+1)^2 |n-\wn|^2}{(1+\kappa t)^4}
+\frac{|n-\wn|^2}{(1+\kappa t)^4} \right\} t^{\sigma-1}\, dt\\
&\le  C\left\{ \frac{ |n-\wn|^4}{\kappa^{4+\sigma}}
+\frac{|n-\wn|^2}{\kappa^{2+\sigma}} \right\}.
\endaligned
\end{equation}

Finally, we note that by writing $H(\theta, t)=\partial_t \widetilde{H}(\theta, t)$, where
$$
\widetilde{H}(\theta, t)=- \int_t^\infty H(\theta, s)ds,
$$
we may reduce the estimate of $W_3$ to the previous two cases.
Indeed, we split $W_3$ as $W_{31} +W_{32}$, where
$W_{31}$ is a solution of (\ref{E-1-2}) with $G=0$, $H=0$,  $h=e_d \cdot \widetilde{H}(\theta, 0)$,
and $W_{32}$ a solution of (\ref{E-1-2}) with $G=(0, \widetilde{H})$, $H=0$, and $h=0$.
Observe that by (\ref{H-1}),
$$
\| \widetilde{H}(\cdot, 0)\|_{H^1(\td)}^2 \le \frac{C |n-\wn|^2}{\kappa^2}
$$
and
$$ 
|\partial_t^k \partial_\theta^\alpha \widetilde{H}(\theta, t)|\le \frac{C |n -\wn|}{\kappa (1+\kappa t)^\ell},
$$
for any $\alpha$, $k$ and $\ell$.
As in the cases of $W_1$ and $W_2$, by Proposition \ref{prop-2.1} and Lemma \ref{lemma-E1}, we obtain
$$ 
 \int_0^1 \int_{\td} \Big\{ |N_n^T \nabla_\theta W_2|^2 +|\partial_t W_2|^2 \Big\} \, d\theta dt
 \le \frac{C |n-\wn|^2}{\kappa^{2+\sigma}}.
 $$
Consequently,  we have proved that
\begin{equation}\label{E-10-10}
 \int_0^1 \int_{\td} \Big\{ |N_n^T \nabla_\theta W|^2 +|\partial_t W|^2 \Big\}\,  d\theta dt
 \le  C\left\{ \frac{ |n-\wn|^4}{\kappa^{4+\sigma}}
+\frac{|n-\wn|^2}{\kappa^{2+\sigma}} \right\}.
\end{equation}

Finally, we note that by differentiating the system (\ref{E-1-1}),
the function $\nabla_\theta W$ is a smooth solution to a boundary value problem of same type
as $W$.
As a result, we may replace $W$ in (\ref{E-10-10}) by $\nabla_\theta W$.
This gives the desired estimate for $\nabla_\theta \partial_t W$
and completes the proof of (\ref{E-1-1-0}).
\end{proof}



\section{A partition of unity}
\setcounter{equation}{0}

For $x\in \partial\Omega$, let
\begin{equation}\label{k-x}
\aligned
\kappa (x)=\sup
\big\{ \kappa\in [0,1]: \text{the Diophantine condition (\ref{D-condition}) holds}\\
\text{ for $n(x)$ with constant $\kappa$} \big\}.
\endaligned
\end{equation}
It is known that if $\Omega$ is a bounded smooth, strictly convex domain in $\rd$,
then $\kappa(x)>0$ for a.e. $x\in \partial\Omega$. Moreover,
$1/\kappa(x)$ belongs to the space $ L^{d-1, \infty}(\partial\Omega)$ \cite{Masmoudi-2012}.
This means that there exists $C>0$ such that
\begin{equation}\label{weak-L}
|\big\{ x\in \partial\Omega: \big[ \kappa (x)\big]^{-1} >\lambda \big\}| \le C\, \lambda^{1-d} 
\quad \text{ for any } \lambda>0.
\end{equation}

\begin{prop}\label{prop-7.0}
Let $0<q<d-1$. Then for any $x\in \partial\Omega$ and $0<r<\text{\rm diam}(\Omega)$,
\begin{equation}\label{kappa-7}
\left(\average_{B(x, r)\cap\partial\Omega} \big( \kappa (y)\big)^{-q}\, d\sigma (y) \right)^{1/q}
\le \frac{C}{r},
\end{equation}
where $C$ depends only on $d$, $q$ and $\Omega$.
\end{prop}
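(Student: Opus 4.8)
The plan is to deduce (\ref{kappa-7}) directly from the global weak-type bound (\ref{weak-L}) via the layer-cake formula, using only the Ahlfors regularity of the smooth boundary $\partial\Omega$. First I would record two elementary facts about $\Omega$: since $\partial\Omega$ is a compact $C^\infty$ hypersurface, there are constants $c_1, c_2 > 0$, depending only on $d$ and $\Omega$, such that
\[
c_1\, r^{d-1} \le |B(x,r) \cap \partial\Omega| \le c_2\, r^{d-1} \qquad \text{for all } x \in \partial\Omega \text{ and } 0 < r < \operatorname{diam}(\Omega).
\]
The upper bound is just the local finiteness of surface measure; for the lower bound one represents $\partial\Omega$ near $x$ as a Lipschitz graph with uniform constants, obtaining $|B(x,r) \cap \partial\Omega| \gtrsim r^{d-1}$ for $r$ below a fixed threshold, and then removes the restriction by monotonicity in $r$ together with $r \le \operatorname{diam}(\Omega)$.

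Now fix $x \in \partial\Omega$ and $0 < r < \operatorname{diam}(\Omega)$, and set $S = B(x,r) \cap \partial\Omega$. By the layer-cake formula,
\[
\int_S \big(\kappa(y)\big)^{-q}\, d\sigma(y) = q \int_0^\infty \lambda^{q-1}\, \big|\{ y \in S : \kappa(y)^{-1} > \lambda \}\big|\, d\lambda .
\]
I would bound the distribution function in two ways simultaneously: trivially $|\{ y \in S : \kappa(y)^{-1} > \lambda \}| \le |S| \le c_2 r^{d-1}$, and, by (\ref{weak-L}), $|\{ y \in S : \kappa(y)^{-1} > \lambda \}| \le |\{ y \in \partial\Omega : \kappa(y)^{-1} > \lambda \}| \le C_0 \lambda^{1-d}$, where $C_0$ is the constant in (\ref{weak-L}). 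Hence the integrand is at most $\lambda^{q-1} \min\{ c_2 r^{d-1},\, C_0 \lambda^{1-d} \}$; the two bounds cross at $\lambda_* \simeq r^{-1}$, and splitting the integral at $\lambda_*$ gives
\[
\int_S \big(\kappa(y)\big)^{-q}\, d\sigma(y) \le q c_2 r^{d-1} \int_0^{\lambda_*} \lambda^{q-1}\, d\lambda + q C_0 \int_{\lambda_*}^\infty \lambda^{q-d}\, d\lambda \le C_q\, r^{\,d-1-q},
\]
where both integrals converge precisely because $0 < q < d-1$, and $C_q$ depends only on $d$, $q$, $c_2$ and $C_0$. Dividing by $|S| \ge c_1 r^{d-1}$ and taking $q$-th roots yields (\ref{kappa-7}) with $C$ depending only on $d$, $q$ and $\Omega$.

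There is no real obstacle here once (\ref{weak-L}) is granted: the content is just that a function lying in $L^{d-1,\infty}(\partial\Omega)$ automatically satisfies the scale-invariant local bound (\ref{kappa-7}) for every exponent $q < d-1$, on $\partial\Omega$ viewed as a space of homogeneous type. The two points that deserve a line of care are the uniform lower Ahlfors bound $|B(x,r) \cap \partial\Omega| \gtrsim r^{d-1}$ over the full range $0 < r < \operatorname{diam}(\Omega)$, and the convergence of the two $\lambda$-integrals, which is exactly what forces the strict inequality $q < d-1$ and excludes the endpoint $q = d-1$.
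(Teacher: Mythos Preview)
Your proof is correct and follows essentially the same approach as the paper: both use the layer-cake formula, bound the distribution function by the minimum of the trivial bound $Cr^{d-1}$ and the weak-$L^{d-1}$ bound $C\lambda^{1-d}$ from (\ref{weak-L}), and split the integral at $\lambda\sim r^{-1}$. Your write-up is slightly more explicit about the Ahlfors regularity of $\partial\Omega$ and the role of the constraint $0<q<d-1$, but the argument is the same.
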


\begin{proof}
Note that
$$
\aligned
\int_{B(x, r)\cap\partial\Omega} 
\big( \kappa (y)\big)^{-q}\, d\sigma (y)
&=q \int_0^\infty \lambda^{q-1} |\big\{ x\in B(x, r)\cap\partial\Omega: \big[ \kappa (x)\big]^{-1} >\lambda \big\}|\, d\lambda\\
& \le C \int_0^\Lambda \lambda^{q-1} \cdot r^{d-1}\, d\lambda
+C \int_\Lambda^\infty
\lambda^{q-d}\, d\sigma\\
&\le C r^{d-1}\cdot \Lambda^q + C \Lambda^{q-d +1},
\endaligned
$$
where we have used (\ref{weak-L}) for the first inequality.
The proof is finished by optimizing $\Lambda$ with $\Lambda=r^{-1}$.
\end{proof}

In this section we construct a partition of unity for $\partial\Omega$, which is adapted to the function
$\kappa(x)$. We mention that a similar partition of unity, which plays an important role in the analysis of 
the oscillating Dirichlet problem,
was given in \cite{Armstrong-Prange-2016}. Here we provide a more direct $L^p$-based approach.

We first describe such construction in the flat space.

\begin{lemma}\label{lemma-7.1}
Let $Q_0$ be a cube in $\mathbb{R}^{d-1}$
and $F\in L^p(12Q_0)$ for some $p>d-1$. Let
 $\tau>0$ be a number such that
\begin{equation}\label{7.1-0}
\left(\int_{12Q_0}|F|^p\right)^{1/p}> \frac{\tau}{\big[\ell(Q_0)\big]^{1-\frac{d-1}{p}}},
\end{equation}
where $\ell(Q_0)$ denotes the side length of $Q_0$.
Then there exists a finite sequence $\{ Q_j\}$ of dyadic sub-cubes of $Q_0$ such that
the interiors of $Q_j$'s are mutually disjoint,
\begin{equation}\label{7.1-1}
Q_0=\cup Q_j,
\end{equation}
\begin{equation}\label{7.1-2}
\left(\int_{12Q_j}|F|^p\right)^{1/p} \le \frac{\tau}{\big[\ell(Q_j)\big]^{1-\frac{d-1}{p}}},
\end{equation}
\begin{equation}\label{7.1-3}
\left(\int_{12Q^+_j}|F|^p\right)^{1/p} > \frac{\tau}{\big[\ell(Q^+_j)\big]^{1-\frac{d-1}{p}}},
\end{equation}
where $Q_j^+$ denotes the dyadic parent of $Q_j$, i.e., $Q_j$ is obtained by bisecting $Q_j^+$ once. Moreover,
if $4Q_j\cap 4Q_k \neq \emptyset$, then
\begin{equation}\label{7.1-4}
(1/2) \ell (Q_k)\le \ell (Q_j) \le 2 \ell (Q_k).
\end{equation}
\end{lemma}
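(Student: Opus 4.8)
\emph{Proof plan.} Set $\beta=1-\tfrac{d-1}{p}$, which is positive because $p>d-1$; then $p\beta=p-(d-1)>0$, and for a dyadic subcube $Q$ of $Q_0$ the inequality in (\ref{7.1-2}) is equivalent to $[\ell(Q)]^{p\beta}\int_{12Q}|F|^p\le\tau^p$. Call such a $Q$ \emph{admissible}. The first thing to note is that any proper dyadic subcube $Q\subsetneq Q_0$ has $\ell(Q)\le\tfrac12\ell(Q_0)$, hence $12Q\subseteq 12Q_0$ (a point of $12Q$ lies within $\ell^\infty$-distance $6\ell(Q)+\tfrac12\ell(Q_0)\le\tfrac72\ell(Q_0)<6\ell(Q_0)$ of the center of $Q_0$); in particular $\int_{12Q}|F|^p<\infty$ for every such $Q$, and (\ref{7.1-0}) says precisely that $Q_0$ is \emph{not} admissible.

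The construction is the usual stopping-time (Calder\'on--Zygmund) decomposition. Let $\{Q_j\}$ be the collection of dyadic subcubes $Q$ of $Q_0$ that are admissible but such that no dyadic cube $Q'$ with $Q\subsetneq Q'\subseteq Q_0$ is admissible. If $Q_j\subsetneq Q_k$ were nested members of this family, then $Q_k$ would be an admissible dyadic ancestor of $Q_j$, which is excluded; hence distinct $Q_j$'s are interior-disjoint. For (\ref{7.1-1}), given $x\in Q_0$ I would descend its dyadic chain $Q_0=Q^{(0)}\supset Q^{(1)}\supset\cdots$ contracting to $x$: since $\int_{12Q^{(m)}}|F|^p\le\int_{12Q_0}|F|^p<\infty$ while $[\ell(Q^{(m)})]^{p\beta}\to0$, the cube $Q^{(m)}$ becomes admissible for all large $m$; the first admissible one (its index is $\ge1$ because $Q_0$ is not admissible) has no admissible ancestor and so belongs to $\{Q_j\}$ and contains $x$. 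Its parent $Q_j^+$ is either $Q_0$ (not admissible, by (\ref{7.1-0})) or a proper dyadic ancestor inside $Q_0$ (not admissible, by the defining property of $Q_j$); in either case $Q_j^+$ is not admissible, which is exactly (\ref{7.1-3}), while (\ref{7.1-2}) is just the admissibility of $Q_j$. Finiteness of $\{Q_j\}$ I would argue by contradiction: an infinite family has a subsequence $Q_{j_m}$ with $\ell(Q_{j_m})\to0$ clustering at some $x_\infty\in\overline{Q_0}$, so that $12Q_{j_m}^+$ is contained in a ball about $x_\infty$ of radius tending to $0$; then $\int_{12Q_{j_m}^+}|F|^p\to0$ by absolute continuity of the integral (these sets sit inside $12Q_0$, where $F\in L^p$), contradicting $\int_{12Q_{j_m}^+}|F|^p>\tau^p[\ell(Q_{j_m}^+)]^{-p\beta}\to\infty$.

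The one substantial point — and the step I expect to be the main obstacle — is the size-comparability (\ref{7.1-4}). Suppose $4Q_j\cap 4Q_k\neq\emptyset$ and, without loss of generality, $\ell(Q_j)\le\ell(Q_k)$; since the two side lengths are dyadic multiples of a common number, it suffices to rule out $\ell(Q_j)\le\tfrac14\ell(Q_k)$. Assuming this, the plan is to show the geometric containment $12Q_j^+\subseteq 12Q_k$: a common point of $4Q_j$ and $4Q_k$ gives $|c_j-c_k|_\infty\le 2\ell(Q_j)+2\ell(Q_k)$ for the centers, the center of $Q_j^+$ is within $\tfrac12\ell(Q_j)$ of $c_j$, and $12Q_j^+$ has half-side $12\ell(Q_j)$, so every point of $12Q_j^+$ is within $\ell^\infty$-distance $\tfrac{29}{2}\ell(Q_j)+2\ell(Q_k)\le 6\ell(Q_k)$ of $c_k$ once $\ell(Q_j)\le\tfrac14\ell(Q_k)$. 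Then, using that $Q_k$ is admissible and $Q_j^+$ is not,
\[
\frac{\tau^p}{[\ell(Q_j^+)]^{p\beta}}<\int_{12Q_j^+}|F|^p\le\int_{12Q_k}|F|^p\le\frac{\tau^p}{[\ell(Q_k)]^{p\beta}},
\]
so $\ell(Q_k)<\ell(Q_j^+)=2\ell(Q_j)$, contradicting $\ell(Q_j)\le\tfrac14\ell(Q_k)$. Hence $\ell(Q_j)\ge\tfrac12\ell(Q_k)$, and swapping $j$ and $k$ gives $\ell(Q_j)\le 2\ell(Q_k)$, i.e.\ (\ref{7.1-4}). What remains is only routine bookkeeping — pinning down the geometric constants in the containment and checking that all the cubes $12Q_j^+$, $12Q_k$ appearing above lie in $12Q_0$ so that the integrals make sense — and the dyadic bisection structure is what makes both the covering and the comparability (\ref{7.1-4}) go through.
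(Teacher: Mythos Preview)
Your argument is correct and follows the same stopping-time (Calder\'on--Zygmund) approach as the paper. The paper's own proof is in fact much terser: it sketches the bisection procedure, observes that the right-hand side of the stopping condition blows up as $\ell(Q')\to 0$ so the process terminates, and for the comparability (\ref{7.1-4}) simply cites earlier work (Shen 1998, adapting Fefferman 1996) without writing out the details. Your proof supplies exactly those details---in particular the containment $12Q_j^+\subseteq 12Q_k$ under the hypothesis $\ell(Q_j)\le\tfrac14\ell(Q_k)$ and the resulting contradiction---so your write-up is actually more self-contained than the paper's. One minor remark: your finiteness argument via a clustering subsequence is correct, but a slightly quicker route (implicit in the paper) is to note that $\int_{12Q}|F|^p\le\int_{12Q_0}|F|^p=:M$ for every proper dyadic $Q\subset Q_0$, so the stopping condition is automatically met once $[\ell(Q)]^{p\beta}\le\tau^p/M$, giving a uniform lower bound on the side lengths of the $Q_j$.
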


\begin{proof}
The lemma is proved by using  a stoping time argument (Calder\'on-Zygmund decomposition).
We begin by bisecting the sides of $Q_0$ and obtaining $2^{d-1}$ dyadic sub-cubes $\{ Q^\prime\}$.
If a cube $Q^\prime$ satisfies 
\begin{equation}\label{7.1-5}
\left(\int_{Q^\prime} |F|^p\right)^{1/p} 
\le \frac{\tau}{\big[\ell(Q^\prime)\big]^{1-\frac{d-1}{p}}}
\end{equation}
we stop and collect the cube. Otherwise, we repeat the same procedure on $Q^\prime$.
 Since the RHS of (\ref{7.1-5}) goes to $\infty$ as $\ell(Q^\prime)\to 0$, the procedure is stopped 
 in a finite time. As a result, we obtain a finite number of sub-cubes with mutually disjoint interiors 
 satisfying (\ref{7.1-1})-(\ref{7.1-3}).
 We point out this decomposition was performed in the whole space in \cite{Shen-1998}
 in the study of negative eigenvalues for the Pauli operator.
The inequalities in (\ref{7.1-4}) were proved in \cite{Shen-1998}
by adapting an argument found in \cite{Fefferman-1996}.
The same argument works equally well in the setting of a finite cube $Q_0$.
We omit the details.
\end{proof}

Fix $x_0\in \partial\Omega$.
Let $c_0>0$ be sufficiently small so that
$B(x_0, 10c_0\sqrt{d})\cap\partial\Omega$ is given by the graph of a smooth function in a coordinate system,
obtained from the standard system through rotation and translation.
Let $\mathbb{H}_n^d(a)$ denote the tangent plane for $\partial\Omega$ at $x_0$, where $n=n(x_0)$ and
$a=x_0\cdot n$.
For $x\in B(x_0, 5c_0\sqrt{d})\cap \partial\Omega$, let
\begin{equation}\label{proj}
P(x; x_0)= x-x_0 -((x-x_0)\cdot n)n
\end{equation}
denote its projection to $\mathbb{H}_n^d(a)$.
The projection $P$ is one-to-one in $B(x_0, 10c_0\sqrt{d})\cap\partial\Omega$.
To construct a partition of unity for $B(x_0, c_0)\cap\partial\Omega$, adapted to the function $\kappa(x)$,
we use the inverse map $P^{-1}$ to lift a partition on the tangent plane, given in Lemma \ref{lemma-7.1},
to $\partial\Omega$.
 More precisely,
fix a cube $Q_0$ on the tangent plane $H_n^d(a)$ such that 
$$
B(x_0, 5c_0\sqrt{d})\cap \partial\Omega \subset P^{-1}(Q_0) \subset B(x_, 10c_0\sqrt{d})\cap \partial\Omega.
$$
We apply Lemma \ref{lemma-7.1} to $Q_0$ with the bounded function $F(x)=\kappa(P^{-1}(x))$ and
some $p>d-1$.
For each $0<\tau<c_1$,
this generates a finite sequence of sub-cubes $\{Q_j\}$ with the 
properties (\ref{7.1-1})-(\ref{7.1-4}).

Let $x_j$ denote the center of $Q_j$ and $r_j$ the side length.
Let $\widetilde{x}_j=P^{-1}(x_j)$ and $\widetilde{Q}_j =P^{-1}(Q_j)$. 
We will use the notation $t\widetilde{Q}_j =P^{-1} (t Q_j)$ for $t>0$
and call $\widetilde{Q}_j$ a cube on $\partial\Omega$.
Then
$$
\widetilde{Q}_0=P^{-1}(Q_0)=\cup_{j= 1}^N  \widetilde{Q}_j.
$$
For each $\widetilde{Q}_j$ with $j\ge 1$, we choose $\eta_j\in C_0^\infty(\rd)$ such that
$0\le \eta_j \le 1$, $\eta_j=1$ on $\widetilde{Q}_j$,
$\eta_j =0$ on $\partial\Omega \setminus 2\widetilde{Q}_j$,
and $|\nabla\eta_j|\le C r_j^{-1}$. Note that by the property (\ref{7.1-4}),
$1\le \sum_j \eta_j \le C_0$ on $\widetilde{Q}_0$, where $C_0$ is a constant depending only
on $d$ and $\Omega$.
Let
$$
\varphi_j (x) =\frac{\eta_j(x)}{\sum_{k=1}^N \eta_k (x)}.
$$
Then 
$$
\sum_{j=1}^N\varphi_j (x)=1\quad  \text{ for any } x\in \widetilde{Q}_0.
$$
Observe that $0\le \varphi_j\le 1$, $\varphi_j \ge C_0^{-1}$ on $\widetilde{Q}_j$,
$\varphi_j=0$ on $\partial\Omega\setminus 2\widetilde{Q}_j$, and
$|\nabla \varphi_j|\le Cr_j^{-1}$.
Furthermore, by the property (\ref{7.1-5}),
there are positive constants $c_1$ and $c_2$,
depending only on $d$, $p$ and $\Omega$, such that
\begin{equation}\label{7.2-10}
\left(\average_{12\widetilde{Q}_j}(\kappa(x))^p d\sigma (x)\right)^{1/p} \le 
\frac{c_1\tau}{r_j},
\end{equation}
\begin{equation}\label{7.2-11}
\left(\average_{36\widetilde{Q}_j} (\kappa (x))^p\, d\sigma(x)  \right)^{1/p} \ge
\frac{c_2\tau}{r_j}.
\end{equation}
In particular, since $\kappa\le 1$,
it follows from (\ref{7.2-11}) that 
\begin{equation}
r_j \ge c_2\, \tau.
\end{equation}
Also,  by (\ref{7.2-11}), there exists some $z_j\in 36\widetilde{Q}_j$
such that
\begin{equation}\label{z-j}
\kappa (z_j) \ge \frac{c_2 \tau}{r_j}.
\end{equation}

\begin{prop}\label{prop-7.2}
There exists  a constant $C$, depending only on $d$, $p$ and $\Omega$, such that
\begin{equation}\label{7.2-0}
r_j\le C \sqrt{\tau}.
\end{equation}
\end{prop}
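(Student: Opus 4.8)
The plan is to deduce \eqref{7.2-0} from two facts already available: the averaged smallness of $\kappa$ on $12\widetilde{Q}_j$ recorded in \eqref{7.2-10}, and the weak-$L^{d-1}$ bound \eqref{weak-L} for $1/\kappa$. The mechanism is a size balance: \eqref{7.2-10} says $\kappa$ is, on $L^p$-average over a set of surface measure $\asymp r_j^{d-1}$, no larger than $c_1\tau/r_j$; on the other hand \eqref{weak-L} says the set where $\kappa$ falls below a threshold $\mu$ has measure $\lesssim \mu^{d-1}$. Choosing the threshold $\mu$ comparable to $r_j$ shows that $\kappa$ must still be $\gtrsim r_j$ on a definite fraction of $12\widetilde{Q}_j$, and comparing the two estimates forces $r_j^2 \lesssim \tau$. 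Note this uses nothing about $\kappa$ beyond \eqref{7.2-10} and \eqref{weak-L}; in particular neither \eqref{7.2-11} nor \eqref{z-j} enters here.

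In detail, first I would record two elementary measure comparisons. Since $P^{-1}$ is a smooth parametrization of a neighborhood of $x_0$ in $\partial\Omega$ with Jacobian bounded above and below by constants depending only on $\Omega$ (and the fixed $c_0$), and since $12\widetilde{Q}_j = P^{-1}(12Q_j) \subset 12\widetilde{Q}_0$ lies in the coordinate patch, one has $c\,r_j^{d-1}\le \sigma(12\widetilde{Q}_j)\le C\,r_j^{d-1}$. Next, \eqref{weak-L} applied with $\lambda=1/\mu$ gives $\sigma\big(\{x\in\partial\Omega:\kappa(x)<\mu\}\big)\le C\mu^{d-1}$ for every $\mu>0$, so that for any $\mu\in(0,1)$,
\[
\sigma\big(\{x\in 12\widetilde{Q}_j:\kappa(x)\ge \mu\}\big)\;\ge\;\sigma(12\widetilde{Q}_j)-C\mu^{d-1}\;\ge\;c\,r_j^{d-1}-C\mu^{d-1}.
\]
Now pick $\mu=\varepsilon r_j$ with $\varepsilon=\varepsilon(d,\Omega)\in(0,1)$ so small that $C(\varepsilon r_j)^{d-1}\le\tfrac12 c\,r_j^{d-1}$; this is a condition on $\varepsilon$ alone, and $\mu<1$ since $r_j\le\ell(Q_0)$ is bounded by a fixed multiple of $c_0$. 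With this choice the set $\{x\in 12\widetilde{Q}_j:\kappa(x)\ge\varepsilon r_j\}$ has surface measure at least $\tfrac{c}{2}r_j^{d-1}$.

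Finally I would feed this lower bound into \eqref{7.2-10}. Raising \eqref{7.2-10} to the power $p$ and estimating the average from below on the set just produced,
\[
\Big(\frac{c_1\tau}{r_j}\Big)^{p}\;\ge\;\average_{12\widetilde{Q}_j}(\kappa(x))^p\,d\sigma(x)\;\ge\;(\varepsilon r_j)^p\cdot\frac{\sigma(\{x\in 12\widetilde{Q}_j:\kappa(x)\ge\varepsilon r_j\})}{\sigma(12\widetilde{Q}_j)}\;\ge\;c_3\,(\varepsilon r_j)^p,
\]
where $c_3=c_3(d,\Omega)>0$. Cancelling $r_j^{-p}$ and rearranging gives $\tau^p\ge c_4\,r_j^{2p}$ with $c_4=c_4(d,p,\Omega)>0$, i.e. $r_j\le C\sqrt{\tau}$, which is \eqref{7.2-0}. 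The only points needing a little care are the comparison $\sigma(12\widetilde{Q}_j)\asymp r_j^{d-1}$ (which relies on $12\widetilde{Q}_j$ lying inside the coordinate patch, guaranteed by the set-up of $Q_0$ via $12Q_j\subset 12Q_0$) and the calibration $\mu\asymp r_j$ of the threshold so that the bad set $\{\kappa<\mu\}$ cannot swallow a definite fraction of $12\widetilde{Q}_j$; everything else is bookkeeping.
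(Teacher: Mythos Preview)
Your proof is correct but takes a genuinely different route from the paper's. The paper reaches the key lower bound $\big(\average_{12\widetilde{Q}_j}\kappa^p\big)^{1/p}\ge c\,r_j$ by combining H\"older's inequality
\[
1\le\Big(\average_{12\widetilde{Q}_j}\kappa^{-p'}\Big)^{1/p'}\Big(\average_{12\widetilde{Q}_j}\kappa^{p}\Big)^{1/p}
\]
with Proposition~\ref{prop-7.0}, which gives $\big(\average_{12\widetilde{Q}_j}\kappa^{-p'}\big)^{1/p'}\le C/r_j$; this last step requires $p'<d-1$, i.e.\ $d\ge 3$. You obtain the same lower bound by a direct level-set argument from the weak-$L^{d-1}$ estimate \eqref{weak-L}: the set $\{\kappa<\varepsilon r_j\}$ has surface measure at most $C(\varepsilon r_j)^{d-1}$, so for $\varepsilon$ small it cannot cover a fixed fraction of $12\widetilde{Q}_j$, and on the remaining portion $\kappa^p\ge(\varepsilon r_j)^p$. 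Your argument is more elementary in that it bypasses both H\"older and Proposition~\ref{prop-7.0}, and it does not invoke the exponent restriction tied to $d\ge 3$. The paper's route, on the other hand, packages the weak-$L^{d-1}$ information once into Proposition~\ref{prop-7.0}, which is then reused (e.g.\ in Proposition~\ref{prop-7.3}). Your remark that $12Q_j\subset 12Q_0$ is correct (since dyadic subcubes satisfy $r_j\le \ell(Q_0)/2$), so the measure comparison $\sigma(12\widetilde{Q}_j)\asymp r_j^{d-1}$ is justified on the same footing as the paper's own use of $12\widetilde{Q}_j$ and $36\widetilde{Q}_j$.
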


\begin{proof}
 By H\"older's inequality, 
\begin{equation}\label{7.2-20}
\aligned
1 &\le \left(\average_{12 \widetilde{Q}_j } \kappa^{-p^\prime}\, d\sigma \right)^{1/p^\prime}
\left(\average_{12 \widetilde{Q}_j} \kappa^{p}\, d\sigma \right)^{1/p}\\
&\le C\,  r_j^{-1}
\left(\average_{12 \widetilde{Q}_j } \kappa^{p}\, d\sigma \right)^{1/p},
\endaligned
\end{equation}
where we have used Proposition \ref{prop-7.0} for the last step.
Note that  the condition $p>d-1$ is equivalent to $p^\prime<\frac{d-1}{d-2}$, 
which is less or equal to ${d-1}$ if $d\ge 3$.
In view of (\ref{7.2-10}) and (\ref{7.2-20}) we obtain (\ref{7.2-0}).
\end{proof}

\begin{prop}\label{prop-7.3}
Let $0\le \alpha<d-1$.
Then
\begin{equation}\label{7.3-0}
\sum_j r_j^{\alpha +d-1}
\le C_\alpha\,  \tau^\alpha,
\end{equation}
where  $C_\alpha$ depends only on $d$, $p$, $\alpha$ and $\Omega$.
\end{prop}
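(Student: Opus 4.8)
The plan is to estimate the sum $\sum_j r_j^{\alpha+d-1}$ by converting it into a weighted count of the cubes $Q_j$ and then exploiting the lower bound (\ref{7.2-11}) together with the weak-type bound (\ref{weak-L}) for $\kappa^{-1}$. First I would use (\ref{7.2-11}), which says that on the enlarged cube $36\widetilde{Q}_j$ the average of $\kappa^p$ is at least $(c_2\tau/r_j)^p$. Since the $Q_j$ have the bounded-overlap property (\ref{7.1-4}), the dilates $36\widetilde{Q}_j$ have bounded overlap as well, so summing $\sigma(\widetilde{Q}_j)\asymp r_j^{d-1}$ against this lower bound will be controlled by $\int_{\widetilde Q_0}\kappa^p\,d\sigma$, which is finite because $\kappa\le 1$. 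This already bounds $\sum_j r_j^{d-1}(\tau/r_j)^p$, i.e. $\sum_j r_j^{d-1-p}$, but that is the wrong direction since $d-1-p<0$; so I need to be cleverer and interpolate between this and the upper bound $r_j\le C\sqrt{\tau}$ from Proposition \ref{prop-7.2}.

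The cleaner route, which I would actually carry out, is to split the cubes according to dyadic scales. Write $\mathcal{J}_m=\{j: 2^{-m-1}<r_j\le 2^{-m}\}$ for integers $m\ge 0$ with $2^{-m}\lesssim \sqrt\tau$ (by Proposition \ref{prop-7.2}). For $j\in\mathcal{J}_m$, inequality (\ref{z-j}) gives a point $z_j\in 36\widetilde{Q}_j$ with $\kappa(z_j)\ge c_2\tau/r_j\ge c_2\tau 2^{m}$, so all such $z_j$ lie in the level set $\{x\in\partial\Omega: \kappa(x)^{-1}\le c_2^{-1}\tau^{-1}2^{-m}\}$; more importantly, the cubes $\{36\widetilde{Q}_j: j\in\mathcal{J}_m\}$ all have comparable side length $\asymp 2^{-m}$ and bounded overlap, and each contains a point where $\kappa^{-1}$ is large. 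Using (\ref{7.2-10}) instead, one sees $\average_{12\widetilde Q_j}\kappa^p\le (c_1\tau/r_j)^p$; combined with Hölder's inequality and Proposition \ref{prop-7.0} (as in the proof of Proposition \ref{prop-7.2}) this forces, for each $j\in\mathcal J_m$, that $12\widetilde{Q}_j$ must contain a definite fraction of "bad" points, i.e. $|\{x\in 12\widetilde{Q}_j: \kappa(x)^{-1}> c\,r_j/\tau\}|\ge c\, r_j^{d-1}$. Summing over $j\in\mathcal{J}_m$ and using bounded overlap, $\#\mathcal{J}_m\cdot (2^{-m})^{d-1}\lesssim |\{x: \kappa(x)^{-1}> c\,2^{-m}/\tau\}|\lesssim (2^{-m}/\tau)^{1-d}$ by (\ref{weak-L}), hence $\#\mathcal{J}_m\lesssim \tau^{1-d}2^{-m(1-d)}\cdot 2^{m(d-1)}=\tau^{1-d}\cdot 2^{2m(d-1)}$. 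Wait—this needs care; let me instead record it as $\#\mathcal J_m\,(2^{-m})^{d-1}\lesssim (2^m\tau)^{d-1}$, i.e. $\#\mathcal{J}_m\lesssim 2^{2m(d-1)}\tau^{d-1}$.

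Given that scale count, the final summation is routine:
\begin{equation*}
\sum_j r_j^{\alpha+d-1}
= \sum_{m: 2^{-m}\lesssim\sqrt\tau} \sum_{j\in\mathcal{J}_m} r_j^{\alpha+d-1}
\lesssim \sum_{m} \#\mathcal{J}_m\, 2^{-m(\alpha+d-1)}
\lesssim \tau^{d-1}\sum_{m: 2^{-m}\lesssim\sqrt\tau} 2^{2m(d-1)-m(\alpha+d-1)}.
\end{equation*}
The exponent in the geometric sum is $m(d-1-\alpha)$, which is positive precisely because $\alpha<d-1$; so the sum is dominated by its largest term, at the scale $2^{-m}\asymp\sqrt\tau$, giving $2^{m(d-1-\alpha)}\asymp \tau^{-(d-1-\alpha)/2}$. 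Therefore $\sum_j r_j^{\alpha+d-1}\lesssim \tau^{d-1}\cdot\tau^{-(d-1-\alpha)/2}=\tau^{(d-1+\alpha)/2}$. Hmm, that gives $\tau^{(d-1+\alpha)/2}$ rather than $\tau^\alpha$; since $r_j\gtrsim\tau$ (by (\ref{z-j}) and $\kappa\le 1$) the smallest scale is $2^{-m}\gtrsim\tau$, and one should instead sum from $2^{-m}\asymp\tau$ up to $2^{-m}\asymp\sqrt\tau$—the dominant term is then at $2^{-m}\asymp\tau$ when $d-1-\alpha$... no, the geometric series with positive ratio is dominated by its top term $2^{-m}\asymp\sqrt\tau$. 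The honest bound that comes out is $\tau^{(d-1+\alpha)/2}$, which for $\tau$ small is $\ge \tau^\alpha$ exactly when $(d-1+\alpha)/2\le \alpha$, i.e. $\alpha\ge d-1$; that is the wrong range, so the decisive step—and the main obstacle—is getting the scale-by-scale count $\#\mathcal{J}_m$ sharp enough. The correct count should be $\#\mathcal{J}_m\lesssim \tau^{1-d}2^{-m(d-1)}$ (so that $\#\mathcal J_m(2^{-m})^{d-1}\lesssim (2^m/\tau)^{d-1}$ fails but $\#\mathcal J_m (2^{-m})^{d-1+\alpha}$ telescopes to $\tau^\alpha$ via $\sum_m \tau^{1-d}2^{-m\alpha}$ truncated at $2^{-m}\gtrsim\tau$, yielding $\tau^{1-d}\tau^{-(\ldots)}$...). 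The real technical heart, then, is to extract from (\ref{7.2-11}) and (\ref{weak-L}) the correct bound $\#\{j: r_j\le 2^{-m}\}\lesssim 2^{-m(1-d)}\tau^{d-1}\cdot(\text{something})$; I would pin this down by observing that the $z_j$ with $r_j\approx 2^{-m}$ are $\gtrsim 2^{-m}$-separated (bounded overlap) and lie in $\{\kappa^{-1}\gtrsim 2^m\tau\}$, so a volume-packing argument against (\ref{weak-L}) gives $\#\{j\in\mathcal J_m\}\cdot 2^{-m(d-1)}\lesssim |\{\kappa^{-1}\gtrsim 2^m\tau\}|\lesssim (2^m\tau)^{1-d}$, hence $\#\mathcal J_m\lesssim \tau^{1-d}2^{-m(2-2d)}\cdot$... and then $\sum_{m\ge 0} \#\mathcal J_m 2^{-m(\alpha+d-1)}\lesssim\tau^{1-d}\sum_m 2^{m(d-1-\alpha)}\cdot 2^{-m(d-1)}\cdot 2^{-m(d-1)}$—the bookkeeping must be done carefully, but the upshot is that convergence of the geometric series uses $\alpha<d-1$ and the top scale $2^{-m}\asymp\sqrt\tau$ or $\asymp\tau$ produces the power $\tau^\alpha$. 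I would present this cleanly by bounding $\sum_j r_j^{d-1+\alpha}=\sum_j r_j^\alpha\,\sigma(\widetilde Q_j)\lesssim \int_{\widetilde Q_0}\big(\sup_{j:x\in\widetilde Q_j} r_j\big)^\alpha d\sigma$ and then using $r_j\lesssim \tau\kappa(x)^{-1}$ on $12\widetilde Q_j$ (from (\ref{7.2-10}) and Hölder, exactly as in Proposition \ref{prop-7.2}) to get $\lesssim \tau^\alpha\int_{\partial\Omega}\kappa^{-\alpha}\,d\sigma$, which is finite since $\alpha<d-1$ and $\kappa^{-1}\in L^{d-1,\infty}$. That last line is the cleanest argument and the one I would actually write; the obstacle is justifying the pointwise bound $r_j\lesssim\tau\,\kappa(x)^{-1}$ for $x\in 12\widetilde Q_j$ uniformly, which follows from (\ref{7.2-10}) combined with the reverse-Hölder/Hölder estimate of Proposition \ref{prop-7.0} as in Proposition \ref{prop-7.2}.
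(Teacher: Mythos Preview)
Your final ``clean'' argument is essentially the paper's approach, but the step you flag as the obstacle is a genuine gap: the pointwise bound $r_j\lesssim \tau\,\kappa(x)^{-1}$ for $x\in 12\widetilde Q_j$ does \emph{not} follow from (\ref{7.2-10}) and H\"older. What you actually get from those (exactly as in (\ref{7.2-20})) is the average bound
\[
r_j\le C\tau\Big(\average_{12\widetilde Q_j}\kappa^{-p'}\,d\sigma\Big)^{1/p'} ,
\]
which is weaker than a pointwise estimate since $\kappa$ need not be small everywhere on $12\widetilde Q_j$. The paper handles this by replacing the pointwise $\kappa(x)^{-1}$ with the Hardy--Littlewood maximal function: for any $x\in\widetilde Q_j$ one has $\average_{12\widetilde Q_j}\kappa^{-p'}\le C\,\mathcal M_{\partial\Omega}(\kappa^{-p'})(x)$, so $r_j^\alpha\le C\tau^\alpha[\mathcal M_{\partial\Omega}(\kappa^{-p'})(x)]^{\alpha/p'}$, and then $\sum_j r_j^{\alpha+d-1}\le C\tau^\alpha\int_{\partial\Omega}[\mathcal M_{\partial\Omega}(\kappa^{-p'})]^{\alpha/p'}$. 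The last integral is finite by the $L^t$ boundedness of the maximal operator (choosing $t>1$ with $p'<t\alpha<d-1$) together with $\kappa^{-1}\in L^{d-1,\infty}$. This is precisely the missing ingredient in your outline.

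Your dyadic-scale approach is a legitimate alternative route and would work, but you mixed up the endpoints. With $\#\mathcal J_m\lesssim \tau^{d-1}2^{2m(d-1)}$ (which your Chebyshev/packing argument does give), the series $\sum_m 2^{m(d-1-\alpha)}$ has \emph{positive} exponent since $\alpha<d-1$, hence is dominated by the term at the \emph{largest} $m$, i.e.\ the smallest scale $2^{-m}\approx\tau$ (not $\sqrt\tau$). Evaluating there yields $\tau^{d-1}\cdot\tau^{-(d-1-\alpha)}=\tau^\alpha$, exactly the claim. Your $\tau^{(d-1+\alpha)/2}$ came from plugging in the smallest $m$, which gives the \emph{smallest} term of the geometric series, not an upper bound; ironically, $(d-1+\alpha)/2>\alpha$ in the regime $\alpha<d-1$, so had it been valid it would have been a \emph{stronger} inequality, not a weaker one.
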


\begin{proof}
It follows from the first inequality in (\ref{7.2-20}) and (\ref{7.2-10}) that
\begin{equation}\label{7.3-1}
r_j \le C \tau \left(\average_{12\widetilde{Q}_j} 
\kappa^{-p^\prime} d\sigma \right)^{1/p^\prime}.
\end{equation}
Let $\mathcal{M}_{\partial\Omega}(f)$ denote the Hardy-Littlewood maximal function of $f$ on $\partial\Omega$,
 defined by
\begin{equation}\label{HL}
\mathcal{M}(f)(x)
=\sup\left\{ \average_{B(x, r)\cap\partial\Omega} |f|\, d\sigma:  0<r<\text{diam}(\Omega) \right\}
\end{equation}
for $x\in \partial\Omega$.
By (\ref{7.3-1}) we obtain 
\begin{equation}\label{7.3-2}
r_j^\alpha \le C \tau^\alpha \left(\inf_{x\in 12\widetilde{Q}_j}
\mathcal{M}_{\partial\Omega} ( \kappa^{-p^\prime})(x) \right)^{\alpha/p^\prime}
\end{equation}
Hence,
$$
\aligned
\sum_j r_j^{\alpha +d-1}
&\le C \tau^\alpha
\sum_j \int_{\widetilde{Q}_j} 
\left[ \mathcal{M}_{\partial\Omega} ( \kappa^{-p^\prime}) \right]^{\alpha/p^\prime}\\
&\le C \tau^\alpha\int_{\partial\Omega}
\left[ \mathcal{M}_{\partial\Omega} ( \kappa^{-p^\prime}) \right]^{\alpha/p^\prime}.
\endaligned
$$
Finally,
recall that $p^\prime<\frac{d-1}{d-2}$ and $0\le \alpha<d-1$.
Choose $t>1$ so that $p^\prime<t\alpha<d-1$.
Then
$$
\aligned
\int_{\partial\Omega}
\left[ \mathcal{M}_{\partial\Omega} ( \kappa^{-p^\prime}) \right]^{\alpha/p^\prime} d\sigma
 &\le C \left(\int_{\partial\Omega} 
\left[ \mathcal{M}_{\partial\Omega} ( \kappa^{-p^\prime}) \right]^{\alpha t/p^\prime} d\sigma \right)^{1/t}\\
& \le C\left(\int_{\partial\Omega}
(\kappa^{-1})^{\alpha t} d\sigma \right)^{1/t}
<\infty,
\endaligned
$$
where we have used the fact that the operator $\mathcal{M}_{\partial\Omega}$
is bounded on $L^q(\partial\Omega)$ for $q>1$.
This completes the proof.
\end{proof}

\begin{theorem}\label{h-data-theorem}
Let $ \overline{g}=(\overline{g}_k^\beta)$, where $\overline{g}_k^\beta$ is defined by 
(\ref{h-data-ij}).
Then  $\overline{g} \in W^{1, q}(\partial\Omega)$ for any $1<q< d-1$ and
\begin{equation}\label{h-data-1}
\| \overline{g}\|_{W^{1, q}(\partial\Omega)}
\le C_q\, \sup_{y\in \td} \| g(\cdot, y)\|_{C^1(\partial\Omega)},
\end{equation}
where $C_q$ depends only on $d$, $m$, $\mu$, $q$, and
$\|A\|_{C^k(\td)}$ for some $k=k(d)\ge 1$.
\end{theorem}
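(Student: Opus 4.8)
The plan is to prove the gradient bound; the $L^q$ bound on $\overline g$ itself is immediate, since formula (\ref{h-data-ij}), the estimate $\|\widetilde g\|_\infty\le C\|g\|_\infty$ recorded after (\ref{mean}), and the uniform invertibility of $(\widehat a_{ij}^{\alpha\beta}n_in_j)$ coming from (\ref{ellipticity}) give $\|\overline g\|_{L^\infty(\partial\Omega)}\le C\Lambda$, where throughout $\Lambda:=\sup_{y\in\td}\|g(\cdot,y)\|_{C^1(\partial\Omega)}$. By Theorem~\ref{theorem-E} the function $\overline g$ is continuous at every $x$ with $\kappa(x)>0$, hence a.e. on $\partial\Omega$; and since $\partial\Omega$ is covered by finitely many coordinate patches on each of which the partition of unity constructed in this section is available, it suffices to bound $\|\nabla\overline g\|_{L^q(B(x_0,c_0/2)\cap\partial\Omega)}$ for a single fixed $x_0\in\partial\Omega$.

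Fix $x_0$ and, for each sufficiently small $\tau>0$, take the partition of unity $\{\varphi_j\}_{j=1}^N$ and the cubes $\{\widetilde Q_j\}$ (side lengths $r_j$, auxiliary points $z_j\in 36\widetilde Q_j$ with $\kappa(z_j)\ge c_2\tau/r_j$) from the construction above, and set $\overline g^{\,\tau}=\sum_{j=1}^N\varphi_j\,\overline g(z_j)$, a smooth function near $\widetilde Q_0$. The heart of the argument is the pair of bounds, valid for a.e. $x$ in the interior of a given $\widetilde Q_j$,
\[
|\nabla\overline g^{\,\tau}(x)|\le C\Lambda\,\frac{r_j^{\,1+\sigma}}{\tau^{1+\sigma}},
\qquad
|\overline g^{\,\tau}(x)-\overline g(x)|\le C\Lambda\,\frac{r_j^{\,2+\sigma}}{\tau^{1+\sigma}}.
\]
To obtain these, use $\sum_k\varphi_k\equiv1$ near $x$ to write $\nabla\overline g^{\,\tau}(x)=\sum_k(\overline g(z_k)-\overline g(z_j))\nabla\varphi_k(x)$ and $\overline g^{\,\tau}(x)-\overline g(x)=\sum_k\varphi_k(x)(\overline g(z_k)-\overline g(x))$, the sums running over the boundedly many $k$ with $x\in 2\widetilde Q_k$; for those, property (\ref{7.1-4}) gives $r_k\approx r_j$, $|z_k-z_j|\le Cr_j$, $|z_k-x|\le Cr_j$, and $|\nabla\varphi_k|\le Cr_j^{-1}$. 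Now apply Theorem~\ref{theorem-E} with $\kappa=\max(\kappa(z_k),\kappa(z_j))\ge c_2\tau/r_j$ (resp. $\max(\kappa(z_k),\kappa(x))\ge c_2\tau/r_j$): since $r_j\le C\sqrt\tau$ by Proposition~\ref{prop-7.2}, the factor $(|z_k-z_j|/\kappa+1)$ is at most $C(r_j^2/\tau+1)\le C$, so Theorem~\ref{theorem-E} leaves $|\overline g(z_k)-\overline g(z_j)|\le C\Lambda\,r_j(r_j/\tau)^{1+\sigma}$ and likewise for $\overline g(z_k)-\overline g(x)$; the two displayed bounds follow at once.

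Given $1<q<d-1$, choose $\sigma\in(0,1)$ so small that $q(1+\sigma)<d-1$. Summing the first bound over $j$, using $\sigma(\widetilde Q_j)\le Cr_j^{\,d-1}$ and Proposition~\ref{prop-7.3} with $\alpha=q(1+\sigma)$,
\[
\int_{\widetilde Q_0}|\nabla\overline g^{\,\tau}|^q\,d\sigma
\le C\Lambda^q\,\tau^{-q(1+\sigma)}\sum_j r_j^{\,d-1+q(1+\sigma)}
\le C\Lambda^q\,\tau^{-q(1+\sigma)}\cdot\tau^{q(1+\sigma)}=C\Lambda^q,
\]
uniformly in $\tau$; while from the second bound, using in addition $r_j\le C\sqrt\tau$ to absorb one extra factor $r_j^{\,q}\le C\tau^{q/2}$,
\[
\int_{\widetilde Q_0}|\overline g^{\,\tau}-\overline g|^q\,d\sigma
\le C\Lambda^q\,\tau^{-q(1+\sigma)}\sum_j r_j^{\,d-1+q(2+\sigma)}
\le C\Lambda^q\,\tau^{q/2}\longrightarrow 0\quad(\tau\to0).
\]
Hence $\{\overline g^{\,\tau}\}$ is bounded in $W^{1,q}(B(x_0,c_0/2)\cap\partial\Omega)$ and converges to $\overline g$ in $L^q$; by reflexivity of $W^{1,q}$ ($q>1$) a subsequence converges weakly in $W^{1,q}$, necessarily to $\overline g$, so $\overline g\in W^{1,q}$ with $\|\nabla\overline g\|_{L^q}\le\liminf_{\tau\to0}\|\nabla\overline g^{\,\tau}\|_{L^q}\le C\Lambda$. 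Summing over a finite cover of $\partial\Omega$ yields (\ref{h-data-1}).

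I expect the main obstacle to be the superlinear factor $(1+|x-y|/\kappa)$ in Theorem~\ref{theorem-E}: without it $\overline g$ would be Lipschitz on the good part of each $\widetilde Q_j$, and it is precisely the scale bound $r_j\le C\sqrt\tau$ of Proposition~\ref{prop-7.2} (which in turn comes from Proposition~\ref{prop-7.0} together with $p^\prime<(d-1)/(d-2)$) that forces $r_j^2/\tau\le C$ and so neutralizes it. The other decisive input, the summability $\sum_j r_j^{\,d-1+\alpha}\lesssim\tau^\alpha$ of Proposition~\ref{prop-7.3} — ultimately a consequence of $\kappa^{-1}\in L^{d-1,\infty}(\partial\Omega)$ — is what restricts the range to $q<d-1$, and incidentally explains why one cannot expect $\overline g$ to be continuous.
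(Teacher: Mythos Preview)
Your argument is correct and follows essentially the same route as the paper's proof: define the mollified function $\overline g^{\,\tau}=\sum_j\varphi_j\,\overline g(z_j)$, use Theorem~\ref{theorem-E} together with $\kappa(z_j)\ge c\tau/r_j$ and $r_j\le C\sqrt\tau$ to bound $|\nabla\overline g^{\,\tau}|$ and $|\overline g^{\,\tau}-\overline g|$ on each $\widetilde Q_j$, then invoke Proposition~\ref{prop-7.3} to sum. The only cosmetic difference is that the paper subtracts $\overline g(y_\ell)$ (an arbitrary Diophantine point in $\widetilde Q_\ell$) rather than $\overline g(z_j)$ when rewriting $\nabla\overline g^{\,\tau}$, but since $\kappa(z_j)\ge c\tau/r_j$ already and $r_j\approx r_\ell$ for the relevant indices, your choice is equally valid and arguably cleaner.
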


\begin{proof}
With Theorem \ref{theorem-E} at our disposal, the line of argument is similar to that in \cite{Armstrong-Prange-2016}.
Without loss of generality we assume that 
$\sup_{y\in \td}\| g(\cdot, y)\|_{C^1(\partial\Omega)}=1$ and that for any $y\in \rd$,
supp$(g(\cdot, y))\subset B(x_0, c_0)\cap \partial\Omega$
for some $x_0\in \partial\Omega$ and $c_0>0$. Fix $\tau\in (0,1/2)$.
Let 
$$
\overline{g}_\tau (x)=\sum_j \overline{g}(z_j) \varphi_j (x),
$$
where $\{ \varphi_j\}$ is a finite sequence of smooth functions constructed in the partition of unity for 
$B(x_0, c_0)\cap\partial\Omega$, associated with $\kappa (x)$ at the level $\tau$,
and $z_j$ is given by (\ref{z-j}).
Note that for $x\in \widetilde{Q}_\ell$,
$$
\nabla \overline{g}_\tau (x)=\sum_j \overline{g}(z_j) \nabla \varphi_j (x)
 =\sum_j \big( \overline{g} (z_j) -\overline{g}(y_\ell)\big) \nabla \varphi_j (x),
 $$
 where $y_\ell $ is a point in $\widetilde{Q}_\ell$ that satisfying the condition (\ref{D-condition}).
In view of  Theorem \ref{theorem-E} it  follows that if $x\in \widetilde{Q}_\ell$,
$$
\aligned
|\nabla \overline{g}_\tau (x)|
& \le \sum_j |\overline{g} (z_j) -\overline{g}(y_\ell)|\, | \nabla \varphi_j (x)|\\
& \le C \left(\frac{r_\ell}{\tau}\right)^{1+\sigma} \left\{ \frac{r_\ell^2}{\tau} +1\right\}\\
&\le C \left(\frac{r_\ell}{\tau}\right)^{1+\sigma},
\endaligned
$$
where we have also used the estimates $|\nabla \varphi_j|\le Cr_j^{-1}$,
$\kappa(z_j)\ge c\, \tau r_j^{-1}$, and $r_\ell \le C \sqrt{\tau}$.
Hence, if we choose $\sigma\in (0,1)$ so small that $(1+\sigma)q<d-1$,
\begin{equation}
\int_{B(x_0, c_0)\cap\partial\Omega} |\nabla \overline{g}_\tau|^q \, d\sigma
\le C \sum_\ell \left(\frac{r_\ell}{\tau}\right)^{(1+\sigma)q} r_\ell ^{d-1}
\le C,
\end{equation}
where we have used (\ref{7.3-0}) for the last step.
This shows that the set
$\{ \overline{g}_\tau: 0<\tau<1\} $ is uniformly bounded in $W^{1, q}(B(x_0, c_0)\cap\partial\Omega)$
for any $q<d-1$.

Finally, we observe that a similar argument also gives
$$
\aligned
\int_{B(x_0, c_0)\partial\Omega}
|\overline{g}_\tau (x) -\overline{g}(x)|^q\, d\sigma 
& \le 
\sum_\ell \left(\frac{r_\ell^{2+\sigma}}{\tau^{1+\sigma}}  \right)^q r_\ell^{d-1}\\
&\le C\tau^{\frac{q}{2}} \sum_\ell \left(\frac{r_\ell}{\tau}\right)^{(1+\sigma)q} r_\ell^{d-1}\\
&\le C\tau^{\frac{q}{2}}.
\endaligned
$$
Hence, $\| \overline{g}_\tau -\overline{g}\|_{L^q(B(x_0, c_0)\cap\partial\Omega)} \to 0$ as $\tau\to 0$,
for any  $1<q<d-1$.
As a result, we have proved that $\overline{g}\in W^{1, q}(\partial\Omega)$ for any $1<q<d-1$.
\end{proof}

\begin{proof}[\bf Proof of Theorem \ref{main-theorem-2}]
With the weighted norm inequality (\ref{E-1-1010}) for Dirichlet problem (\ref{E-1-100-D}) at our disposal,
the estimate in Theorem \ref{theorem-E} continues to hold 
for the homogenized Dirichlet boundary data for $d\ge 2$.
The rest of argument for (\ref{Sobolev-D}) and (\ref{rate-DP-low})
is exactly the same as in \cite{Armstrong-Prange-2016}.
We omit the details.
\end{proof}



\section{Proof of Theorem \ref{main-theorem-1}}
\setcounter{equation}{0}

With the estimates in Sections 5, 6 and 7,
the line of argument is similar to that in \cite{Armstrong-Prange-2016}
for the oscillating Dirichlet problem.
Recall that
$$
 v^\gamma_\e (x)=  -\int_{\partial\Omega}
 \frac{\partial}{\partial y_k} \big\{ N_0^{\gamma \beta} (x, y)\big\}\cdot
\big(T_{ij}(y)\cdot \nabla_y \big)\Psi_{\e, k}^{*\alpha\beta} (y)
\cdot  
 g_{ij}^\alpha (y, y/\e)\, d\sigma (y)
$$
and
\begin{equation}
 v_0^\gamma (x)=-\int_{\partial\Omega}
\frac{\partial}{\partial y_k} \big\{ N_0 ^{\gamma \beta} (x, y)\big\}
 \widetilde{g}_{k}^\beta (y)\, d\sigma (y), 
\end{equation}
where the function $\widetilde{g}_k^\beta$ is given by (\ref{mean}).
We will show that for any $\sigma \in (0,1)$,
\begin{equation}\label{final-goal}
\int_\Omega |v_\e -v_0|^2\, dx \le C_\sigma \, \e^{1-\sigma}.
\end{equation}
This would imply that if $u_\e$ and $u_\e$ are solutions of (\ref{NP-0})
and (\ref{NP-h}) respectively, then there exists some constant $E$ such that
$$
\| u_\e -u_0-E\|_{L^2(\Omega)} \le C_\sigma \, \e^{\frac12 -\sigma}.
$$
It then follows that
$$
\| u_\e - u_0 -\average_\Omega (u_\e-u_0)\|_{L^2(\Omega)}
\le C_\sigma \, \e^{\frac12 -\sigma},
$$
which gives (\ref{rate-0}) in the case $\int_\Omega u_\e  =\int_\Omega u_0=0$.
Recall that the estimate (\ref{Sobolev}) is already proved in Section 7 (see Theorem \ref{h-data-theorem}).

To prove (\ref{final-goal})
 we first note that by using a partition of unity for $\partial\Omega$,
without loss of generality, we may assume that there exists some $x_0\in \partial\Omega$
such that  for any $y\in \td$,
supp$(g(\cdot, y))\subset B(x_0, c_0)$, where $c_0>0$ is sufficiently small so that
$B(x_0, 10c_0\sqrt{d})\cap\partial\Omega$ is given by the graph of a smooth function in a coordinate system,
obtained from the standard system by rotation and translation.
We construct another partition of unity for $B(x_0, 5c_0\sqrt{d})\cap \partial\Omega$, as described in Section 7,
with 
\begin{equation}\label{tau}
\tau = \e^{1-\sigma},
\end{equation}
adapted to the function $\kappa(x)$.
Thus there exist a finite sequence $\{ \varphi_j\}$ of $C_0^\infty$ functions in $\rd$
and a finite sequence $\{\widetilde{Q}_j \}$ of "cubes" on $\partial\Omega$,
such that $\sum_j \varphi_j =1$ on $B(x_0, 5c_0\sqrt{d})\cap\partial\Omega$.

Next, observe that by the estimate $|\nabla_y N_0(x, y)|+|\nabla_y N_\e (x, y)| \le C |x-y|^{1-d}$,
$$
|v_\e(x)| +|v_0(x)| \le C \big\{ 1+ |\ln \delta(x) |\big\},
$$
where $\delta(x)=\text{dist}(x, \partial\Omega)$.
This implies that
\begin{equation}\label{D-out}
\aligned
\sum_j \int_{B(\widetilde{x}_j, C r_j)\cap \Omega}
|v_\e -v_0|^2\, dx
&\le C \sum_j \int_{B(\widetilde{x}_j, C r_j)\cap\Omega} 
\big( 1+ |\ln \delta(x) |\big)^2\, dx\\
& \le C \sum_j r_j^d (1+|\ln r_j|)^2\\
&\le C \e^{1-\sigma} (1+|\ln \e|)^2,
\endaligned
\end{equation}
where we have used Propositions \ref{prop-7.2} and \ref{prop-7.3}
(see Section 7 for the definitions of $\widetilde{x}_j$ and $r_j$).
Also note that
\begin{equation}\label{volume}
| \cup_j B(\widetilde{x}_j, Cr_j)|
  \le C \sum_j r_j^{d}
\le C \tau,
\end{equation}
where we have used Proposition \ref{prop-7.3}.
To estimate the $L^2$ norm of $v_\e -v_0$ on the set
\begin{equation}\label{D-8}
D= D_\e=\Omega \setminus \cup_j B(\widetilde{x}_j , C r_j),
\end{equation}
 we introduce a function 
 \begin{equation}\label{Theta}
 \Theta_t (x)
 =\sum_j \frac{ r_j^{d-1+t}}{|x-\widetilde{x}_j |^{d-1}},
 \end{equation}
  where $0\le t<d-1$.
  
 \begin{lemma}\label{lemma-8.1}
 Let $\Theta_t (x)$ be defined by (\ref{Theta}).
 Then, if $ q\ge 1$ and $0\le q t<d-1$,
 \begin{equation}\label{8.1-00}
\int_D  \left( \Theta_t (x)\right)^q \, dx \le C \, \tau^{q t}.
\end{equation}
 \end{lemma}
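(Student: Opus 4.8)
The plan is to deduce the $L^q$ estimate from Proposition~\ref{prop-7.3} by a single application of Jensen's inequality, which trades the exponent $t$ for $tq$ while producing only a logarithmically controlled extra factor. Write $\Theta_s(x)=\sum_j c_j(x)\,r_j^{\,s}$ with the nonnegative weights $c_j(x):=r_j^{\,d-1}\,|x-\widetilde{x}_j|^{1-d}$, so that $\sum_j c_j(x)=\Theta_0(x)$. Applying Jensen's inequality to the probability vector $\big(c_j(x)/\Theta_0(x)\big)_j$ and to the convex map $u\mapsto u^q$, $q\ge1$, yields the pointwise inequality
\[
\Theta_t(x)^q=\Big(\sum_j c_j(x)\,r_j^{\,t}\Big)^{q}\ \le\ \Theta_0(x)^{\,q-1}\sum_j c_j(x)\,r_j^{\,tq}\ =\ \Theta_0(x)^{\,q-1}\,\Theta_{tq}(x),\qquad x\in\Omega.
\]

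First I would record that $\Theta_0(x)\lesssim 1+|\ln\delta(x)|$ on $D$. Since $\widetilde{x}_j\in\partial\Omega$, $\sigma(\widetilde{Q}_j)\sim r_j^{\,d-1}$, the cubes $\widetilde{Q}_j$ are pairwise disjoint with union contained in $\partial\Omega$, and on $D$ one has $|x-\widetilde{x}_j|\ge C r_j$ with $C$ the (large, dimensional) constant defining $D$, it follows that $|x-y|\sim|x-\widetilde{x}_j|$ for $y\in\widetilde{Q}_j$, hence $r_j^{\,d-1}|x-\widetilde{x}_j|^{1-d}\lesssim\int_{\widetilde{Q}_j}|x-y|^{1-d}\,d\sigma(y)$; summing over $j$ and recalling that $\partial\Omega$ is a smooth compact hypersurface with $\delta(x)=\mathrm{dist}(x,\partial\Omega)$,
\[
\Theta_0(x)\ \lesssim\ \int_{\partial\Omega}\frac{d\sigma(y)}{|x-y|^{d-1}}\ \lesssim\ 1+|\ln\delta(x)|.
\]
Plugging this into the Jensen inequality, integrating over $D$, and interchanging the nonnegative sum and integral,
\[
\int_D\Theta_t^q\,dx\ \lesssim\ \int_D\big(1+|\ln\delta(x)|\big)^{q-1}\Theta_{tq}(x)\,dx\ =\ \sum_j r_j^{\,d-1+tq}\int_D\frac{\big(1+|\ln\delta(x)|\big)^{q-1}}{|x-\widetilde{x}_j|^{d-1}}\,dx.
\]
Using $(1+|\ln\delta(x)|)^{q-1}\le C_{q,\varepsilon}\,\delta(x)^{-\varepsilon}$ for any $\varepsilon>0$, and noting that $\delta^{-\varepsilon}\in L^{p_1}(\Omega)$ whenever $\varepsilon p_1<1$ while $|x-\widetilde{x}_j|^{1-d}\in L^{p_1'}(\Omega)$ whenever $p_1'<\tfrac{d}{d-1}$ (with $L^{p_1'}$ norm bounded uniformly in $\widetilde{x}_j$ since $\Omega$ is bounded), H\"older's inequality with a fixed pair $p_1>d$, $\varepsilon<1/p_1$ bounds the inner integral by a constant independent of $j$. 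Hence $\int_D\Theta_t^q\lesssim\sum_j r_j^{\,d-1+tq}$, and Proposition~\ref{prop-7.3} with $\alpha=tq$ — legitimate precisely because $0\le qt<d-1$ — gives $\sum_j r_j^{\,d-1+tq}\le C\,\tau^{\,tq}$, as claimed. For $q=1$ the same computation applies with the logarithmic factor absent.

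The computation is short, and the only delicate point is the logarithmic unboundedness of $\Theta_0$ on $D$ (equivalently, of $\int_{\partial\Omega}|x-y|^{1-d}\,d\sigma(y)$ as $x\to\partial\Omega$): one cannot crudely bound $\Theta_0^{q-1}$ by a constant, but the divergence is only logarithmic, so it can be absorbed into an arbitrarily small power $\delta^{-\varepsilon}$, which still pairs, via H\"older, against $|x-\widetilde{x}_j|^{1-d}\in L^p(\Omega)$ for every $p<\tfrac{d}{d-1}>1$. The restriction to $D$ enters only through $|x-\widetilde{x}_j|\gtrsim r_j$, which is what reduces $\Theta_0$ to a surface Riesz potential with merely logarithmic singularity; and the hypothesis $qt<d-1$ is exactly the admissibility condition for Proposition~\ref{prop-7.3} at the exponent $d-1+qt$.
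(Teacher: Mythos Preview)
Your proof is correct and follows essentially the same approach as the paper's. The paper introduces $f_t(y)=\sum_j r_j^t\varphi_j(y)$, bounds $\Theta_t(x)\lesssim\int_{\partial\Omega}f_t(y)|x-y|^{1-d}\,d\sigma(y)$, and applies H\"older's inequality against the measure $|x-y|^{1-d}\,d\sigma(y)$ to obtain $\Theta_t(x)^q\lesssim(1+|\ln\delta(x)|)^{q-1}\int_{\partial\Omega}f_t(y)^q|x-y|^{1-d}\,d\sigma(y)$ --- which is exactly your Jensen inequality $\Theta_t^q\le\Theta_0^{q-1}\Theta_{tq}$ written in continuous form --- and then integrates in $x$ and invokes Proposition~\ref{prop-7.3} with $\alpha=qt$, just as you do; your treatment of the logarithmic factor via $\delta^{-\varepsilon}$ and H\"older is in fact more explicit than the paper's, which simply asserts the resulting bound.
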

 
 \begin{proof}
 Observe that if
  $x\notin B(\widetilde{x}_j , C r_j)$, then
 $$
 \frac{r_j^{d-1}}{|x-\widetilde{x}_j|^{d-1}}
 \le C \int_{\widetilde{Q}_j} \frac{d\sigma (y)}{|x-y|^{d-1}}.
 $$
 Hence, for $x\in D$,
 $$
 \aligned
 \Theta_t (x)
 &\le C \int_{\partial\Omega} \frac{ f_t (y)}{|x-y|^{d-1}} \, d\sigma (y)\\
 &\le C \left(\int_{\partial\Omega} \frac{|f_t(y)|^q}{|x-y|^{d-1}}\, d\sigma (y) \right)^{1/q}
 ( 1+|\ln \delta(x)|)^{1/q^\prime},
 \endaligned
 $$
 where $f_t(y)=\sum_j r_j^t \varphi_j (y)$, $\delta (x)=\text{dist}(x, \partial\Omega)$, and
 we have used H\"older's inequality for the last step.
 It follows that
 $$
 \aligned
 \int_D |\Theta_t (x)|^q \, dx
 &\le C \int_{\partial\Omega} |f_t (y)|^q \, d\sigma\\
 & \le C \sum_j r_j^{qt} r_j^{d-1}\\
 &\le C \tau^{q t},
 \endaligned
 $$
 where have used Proposition \ref{prop-7.3}.
  \end{proof}
 
As in the case of  Dirichlet problem in \cite{Armstrong-Prange-2016}, we split $v_\e -v_0$ into several 
parts,
\begin{equation}
\aligned
& -\big(v^\gamma_\e (x) -v^\gamma_0(x)\big) \\
& =
\sum_j \int_{\partial\Omega}
\partial_{y_k} N_0^{\gamma \beta} (x, y) 
\left\{ \big(T_{i\ell}(y)\cdot \nabla_y \big)\Psi_{\e, k}^{*\alpha\beta} (y)
\cdot   g_{i\ell}^\alpha (y, y/\e)
-\widetilde{g}^\beta_k (y) \right\}  \varphi_j (y) \, d\sigma  (y)\\
& =
I_1+I_2+I_3+I_4+I_5,
\endaligned
\end{equation}
where $I_1, I_2, \dots, I_5$ are defined below and  handled separately.
We will show that for $k=1,2, \dots , 5$,
\begin{equation}\label{estimate-I}
\int_D | I_k(x)|^2\, dx \le C_\sigma\,  \e^{1-4\sigma},
\end{equation}
which, together with (\ref{D-out}), gives (\ref{final-goal}), as $\sigma\in (0,1/4)$ is arbitrary.
\bigskip

\noindent{\bf Estimate of $I_1$}, where

\begin{equation}\label{I-1}
\aligned
 &I_1=\sum_j  \int_{\partial\Omega}
\partial_{y_k} N_0^{\gamma \beta} (x, y)
 \big(T_{i\ell}(y)\cdot \nabla_y \big)\big( \Psi_{\e, k}^{*\alpha\beta} 
-\Phi_{\e, k}^{* \alpha\beta, z_j} \big) \cdot g_{i\ell}^\alpha (y, y/\e)
  \varphi_j (y) \, d\sigma  (y),\\
  &\Phi_{\e, k}^{*\alpha\beta, z_j} (y)
  =y_k \delta^{\alpha\beta}+\e \chi_{k}^{*\alpha\beta}(y/\e) +\phi_{\e, k}^{*\alpha\beta, z_j}(y),
  \endaligned
 \end{equation}
and $z_j$ is given in (\ref{z-j}).
Here we use Theorem \ref{main-lemma} to obtain that for any $\rho\in (0,1/2)$,
\begin{equation}\label{I-10}
\Big|\nabla \Big(\Psi_{\e, k}^{*\alpha\beta} - \Phi_{\e, k}^{*\alpha\beta, z_j}\Big)\Big|
\le C \sqrt{\e} \big\{ 1+ |\ln \e|\big\}
+C \e^{-1-\rho} r_j^{2+\rho},
\end{equation}
for $y\in 2\widetilde{Q}_j$.
It follows from (\ref{I-10}) that for $x\in D$,
\begin{equation}\label{I-11}
|I_1 (x)|
\le C \sqrt{\e} (1+|\ln \e|)\sum_j \frac{r_j^{d-1}}{|x-\widetilde{x}_j|^{d-1}}
+ C\e^{-1-\rho}
\sum_j \frac{r_j^{2+\rho +d-1}}{|x-\widetilde{x}_j|^{d-1}}.
\end{equation}
We now use Lemma \ref{lemma-8.1} to estimate the $L^2$ norm of $I_1$ on $D$.
The first term in the RHS of (\ref{I-11})
is harmless. 
For the second term we use the fact $r_j \le C \sqrt{\tau}$ to bound it by
$$
C \e^{-1-\rho} \tau^{\frac12 +\rho} \sum_j \frac{r_j^{1-\rho +d-1}}{|x-\widetilde{x}_j|^{d-1}}.
$$
Since $2(1-\rho)<d-1$ for $d\ge 3$, we obtain 
$$
\aligned
\int_D |I_1(x)|^2\, dx  & \le C \e (1+|\ln \e|)^2 +C \e^{-2-2\rho} \tau^{1+2\rho}
\tau^{2(1-\rho)}\\
& \le C\, \e^{1-4\sigma},
\endaligned
$$
if $\rho$ is sufficiently small.

\bigskip

\noindent{\bf Estimate of $I_2$}, where

\begin{equation}\label{I-2}
\aligned
& I_2  =  \sum_j  \int_{\partial\Omega}
\partial_{y_k} N_0^{\gamma \beta} (x, y) 
 \big(T_{ij}(y)\cdot \nabla_y \big)\big( \Phi_{\e, k}^{* \alpha\beta, z_j} \big) \cdot g_{ij}^\alpha (y, y/\e)
  \varphi_j (y) \, d\sigma  (y)\\
  & 
 - \sum_j  \int_{\partial\mathbb{H}_j^d}
\partial_{y_k} N_0^{\gamma \beta} (x, P^{-1}_j (y)) 
 \big(T_{i\ell}(P_j^{-1}(y))\cdot \nabla_y \big)\big( \Phi_{\e, k}^{* \alpha\beta, z_j} (y)\big) \cdot g_{i\ell}^\alpha (y, y/\e)
  \varphi_j (y) \, d\sigma  (y),
  \endaligned
 \end{equation}
where $\partial\mathbb{H}^d_j$ denotes the tangent plane for $\partial\Omega$ at $z_j$ and
$P_j^{-1}$ is the inverse of the projection map from $B(z_j, C r_j)\cap\partial\Omega$ to $\partial \mathbb{H}^d_j$.
Here we rely on the estimates
\begin{equation}\label{I-21}
\aligned
|\nabla_y^2 N_0 (x, y)| & \le C |x-y|^{-d},\\
|\nabla^2 \Phi_{\e, k}^{*\alpha\beta, z_j} | &\le C \e^{-1},
\endaligned
\end{equation}
as well as the observation that $|y- P^{-1}_j (y)|\le C r_j^2$ for $y\in B(\widetilde{x}_j, Cr_j)\cap\partial\Omega$.
 It is not hard to see that for $x\in D$,
\begin{equation}\label{I-22}
|I_2 (x)|
 \le  C\, \e^{-1} \sum_j \frac{r_j^{2+d-1}}{|x- \widetilde{x}_j|^{d-1}}
 \le C \e^{-1} \tau^{\frac{1+\rho}{2}} \sum_j \frac{r_j^{1-\rho+d-1}}{|x- \widetilde{x}_j|^{d-1}},
\end{equation}
which, by Lemma \ref{lemma-8.1},
 leads to (\ref{estimate-I}) for $k=2$.

\bigskip

\noindent{\bf Estimate of $I_3$}, where

\begin{equation}\label{I-3}
\aligned
 & I_3  =  \sum_j  \int_{\partial\mathbb{H}_j^d}
\partial_{y_k} N_0^{\gamma \beta} (x, P^{-1}_j (y)) 
 \big(T_{i\ell} (y)\cdot \nabla_y \big)\big( \Phi_{\e, k}^{* \alpha\beta, z_j} \big) \cdot g_{i\ell}^\alpha (y, y/\e)
  \varphi_j (y) \, d\sigma  (y)\\
  & - \sum_j  \int_{\partial\mathbb{H}_j^d}
\partial_{y_k} N_0^{\gamma \beta} (x, P^{-1}_j (y)) 
 \big(T_{i\ell} (z_j )\cdot \nabla_y \big)\big( \Phi_{\e, k}^{* \alpha\beta, z_j} \big) \cdot g_{i\ell}^\alpha (z_j, y/\e)
  \varphi_j (y) \, d\sigma  (y).
  \endaligned
 \end{equation}

It is easy to see that for $x\in D$,
$$
|I_3(x)|\le C \sum_j \frac{r_j^{1+d-1}}{|x-\widetilde{x}_j|^{d-1}}
\le C\,  \tau^{\frac{\rho}{2}} \sum_j \frac{r_j^{1-\rho +d-1}}{|x-\widetilde{x}_j|^{d-1}},
$$
which may be handled by Lemma \ref{lemma-8.1}.

\bigskip

\noindent{\bf Estimate of $I_4$}, where

\begin{equation}\label{I-4}
\aligned
I_4  = & \sum_j  \int_{\partial\mathbb{H}_j^d}
\partial_{y_k} N_0^{\gamma \beta} (x, P^{-1}_j (y)) 
 \big(T_{i\ell} (z_j)\cdot \nabla_y \big)\big( \Phi_{\e, k}^{* \alpha\beta, z_j} \big) \cdot g_{i\ell}^\alpha (z_j, y/\e)
  \varphi_j (y) \, d\sigma  (y)\\
  & -\sum_j  \int_{\partial\mathbb{H}_j^d}
\partial_{y_k} N_0^{\gamma \beta} (x, P^{-1}_j (y)) 
 \widetilde{g}_k^\beta (z_j)
  \varphi_j (y) \, d\sigma  (y).
  \endaligned
 \end{equation}

The estimate of $I_4$ uses the fact that  for each $j$, the function
$$
\big(T_{i\ell} (z_j)\cdot \nabla_y \big)\big( \Phi_{\e, k}^{* \alpha\beta, z_j} \big) \cdot g_{i\ell}^\alpha (z_j, y/\e)
$$
is of form $U(y/\e)$, where $U(x)$ is a smooth 1-periodic function whose mean value is
given by $\widetilde{g}_k^\beta (z_j)$.
Furthermore, the normal to the hyperplane $\partial \mathbb{H}_j^d$ is $n(z_j)$, which satisfies the
Diophantine condition (\ref{D-condition}) with constant 
$$
\kappa (z_j)\ge \frac{c\, \tau}{r_j}.
$$
It then follows from Proposition 2.1 in \cite{Armstrong-Prange-2016}  that if $x\in D$,
$$
|I_4(x)|\le C_N (\tau^{-1} \e)^{-N} \sum_j \frac{r_j^{d-1}}{|x-\widetilde{x}_j|^{d-1}},
$$
 for any $N\ge 1$.
 Since $\tau =\e^{1-\sigma}$, this implies that 
$$
\int_D |I_ 4(x)|^2\, dx \le C_N \e^{ N\sigma}.
$$

\bigskip

\noindent{\bf Estimate of $I_5$}, where

\begin{equation}\label{I-5}
\aligned
I_5=   & \sum_j  \int_{\partial\mathbb{H}_j^d}
\partial_{y_k} N_0^{\gamma \beta} (x, P^{-1}_j (y)) 
 \overline{g}_k^\beta (z_j)
  \varphi_j (y) \, d\sigma  (y)\\
  &-
  \sum_j  \int_{\partial\Omega}
\partial_{y_k} N_0^{\gamma \beta} (x, y) 
 \overline{g}_k^\beta (y)
  \varphi_j (y) \, d\sigma  (y).
  \endaligned
 \end{equation}

Finally, to estimate $I_5$, we use the regularity estimate for $\overline{g}$ in 
Theorem \ref{theorem-E} to obtain 
$$
\aligned
|\overline{g}(y) -\overline{g}(z_j)|
&\le C \left\{ \frac{r_j^2}{\big[\kappa(z_j)\big]^{2+\rho}} +\frac{r_j}{\big[\kappa(z_j)\big]^{1+\rho} }\right\}\\
&\le C \left\{ \frac{r_j^{4+\rho}}{\tau^{2+\rho}} 
+\frac{r_j^{2+\rho}}{\tau^{1+\rho}} \right\}\\
&\le \frac{C r_j^{2+\rho}}{\tau^{1+\rho}},
\endaligned
$$
for any $y\in B(\widetilde{x}_j, Cr_j)\cap\partial\Omega$, where $\rho\in (0, 1/2)$
and we also used the fact $r_j\le C\sqrt{\tau}$.
It follows that for any $x\in D$,
$$
\aligned
|I_5(x)|
 &\le C \sum_j \frac{r_j^d}{|x-\widetilde{x}_j|^{d-1}}
+C\tau^{-1-\rho} \sum_j \frac{r_j^{2+\rho +d-1}}{|x-\widetilde{x}_j|^{d-1}}\\
& \le C \sum_j \frac{r_j^d}{|x-\widetilde{x}_j|^{d-1}}
+C
\tau^{-\frac12} 
\sum_j \frac{r_j^{1-\rho +d-1}}{|x-\widetilde{x}_j|^{d-1}}.
\endaligned 
$$
As before, by applying  Lemma \ref{lemma-8.1} and choosing  $\rho>0$ sufficiently small,
we obtain the desired estimate  for $I_5$.
This completes the proof of (\ref{final-goal}) and thus of Theorem \ref{main-theorem-1}.

\begin{remark}\label{remark-8.1}
{\rm
Let $\Theta_t (x)$ be defined by (\ref{Theta}).
It follows from  the proof of Proposition \ref{prop-7.3} that for $x\in D$,
$$
\Theta_t (x)\le 
C \tau^t \int_{\partial\Omega} 
\frac{\big[ \mathcal{M}_{\partial\Omega} (\kappa^{-q} )\big]^{t/q} (y)}
{|x-y|^{d-1}} d\sigma (y),
$$
where $q=p^\prime<\frac{d-1}{d-2}$ and $t\ge 0$.
Let $u_\e$ and $u_0$ be solutions of (\ref{NP-0}) and (\ref{NP-h}), respectively.
An inspection of our proof of Theorem \ref{main-theorem-1} shows that for any $\sigma\in (0,1/2)$,
there exists a neighborhood $\Omega_\e$ of $\partial\Omega$ in $\Omega$ 
 such that
\begin{equation}\label{error-8.0}
|\Omega_\e|\le C\,  \e^{1-\sigma},
\end{equation}
and for $x\in \Omega \setminus \Omega_\e$,
\begin{equation}\label{error-8.1}
|u_\e (x) -u_0 (x)-E |\le
C \e^{\frac12 -4\sigma} 
\int_{\partial\Omega}
\frac{\big[ \mathcal{M}_{\partial\Omega} (\kappa^{-q} )\big]^{\frac{1-\rho}{q}} (y)}
{|x-y|^{d-1}} d\sigma (y),
\end{equation}
where $1<q<d-1$, $\rho=\rho(\sigma)>0$ is small, and $E$ is a constant.
The boundary layer $\Omega_\e$, which is given locally by the union of $B(\widetilde{x}_j, Cr_j)\cap\Omega$,
depends  only on the function $\kappa$ and $\Omega$.
Furthermore, if  $F(x)$ denotes  the integral in (\ref{error-8.1}),
then
$$
|F(x)|\le C(1+|\ln \delta(x)|)^{1/s^\prime} \left(\int_{\partial\Omega} 
\frac{\big[ \mathcal{M}_{\partial\Omega} (\kappa^{-q} )\big]^{\frac{s(1-\rho)}{q}} (y)}
{|x-y|^{d-1}} d\sigma (y)\right)^{1/s},
$$
where $q<s(1-\rho)<d-1$. Since $\kappa^{-1}\in L^s(\partial\Omega)$ for any $1<s<d-1$,
$\mathcal{M}_{\partial\Omega} (\kappa^{-q})\in L^{s/q}(\partial\Omega)$
for any $q<s<d-1$.
It follows that $F\in L^s(\Omega)$ for any $q<s\le d-1$.
This, together with (\ref{error-8.1}),
\begin{equation}\label{error-8.3}
\| u_\e -u_0-E\|_{L^s(\Omega\setminus \Omega_\e)}
\le C\, \e^{\frac12-4\sigma} \quad \text{ for any } 1<s\le d-1.
\end{equation}

Finally, assume that $\int_\omega u_\e =\int_\Omega u_0=0$.
Since $\| u_\e -u_0\|_{L^2(\Omega)}\le C_\sigma\, \e^{\frac12-\sigma}$ by Theorem \ref{main-theorem-1},
it follows from (\ref{error-8.3}) that
$|E|\le C\, \e^{\frac12-4\sigma}$.
As a result, estimates (\ref{error-8.1}) and (\ref{error-8.3}) hold with $E=0$.
}
\end{remark}



\section{Higher-order convergence}
\setcounter{equation}{0}

In this section we use Theorem \ref{main-theorem-1} to establish a higher-order rate of convergence 
in the two-scale expansion for the Neumann problem,
\begin{equation}\label{N-High}
\left\{
\aligned
\mathcal{L}_\e (u_\e) &=F &\quad &\text{ in } \Omega,\\
\frac{\partial u_\e}{\partial \nu_\e} &= g &\quad &\text{ on } \partial\Omega,
\endaligned
\right.
\end{equation}
where $F$ and $g$ are smooth functions.
Our goal is to prove the following.

\begin{theorem}\label{theorem-9.1}
Suppose that $A$ and $\Omega$ satisfy the same conditions as in Theorem \ref{main-theorem-1}.
Let $u_\e$ be the solution of (\ref{N-High}) with $\int_{\Omega} u_\e=0$, and
$u_0$ the solution of the homogenized problem. Then there exists a function $v^{bl}$, independent of $\e$,
such that
\begin{equation}\label{9.1}
\| u_\e - u_0 -\e \chi_k (x/\e)\frac{\partial u_0}{\partial x_k}
-\e v^{bl} \|_{L^2(\Omega)}
\le C_\sigma \e^{\frac32-\sigma} \| u_0\|_{W^{3, \infty}(\Omega)},
\end{equation}
for any $\sigma\in (0,1/2)$,
where $C_\sigma$ depends only on $d$, $m$, $\sigma$, $A$ and $\Omega$.
Moreover, the function $v^{bl}$ is a solution to the Neumann problem
\begin{equation}\label{N-bl}
\mathcal{L}_0(v^{bl}) = F_* \quad \text{ in } \Omega \quad \text{ and } \quad
\frac{\partial v^{bl}}{\partial \nu_0} =g_*\quad \text{ on }\partial\Omega,
\end{equation}
where $F_*= -\overline{c}_{ki\ell}\frac{\partial^3 u_0}{\partial x_k \partial x_i \partial x_\ell}$ 
for some constants $\overline{c}_{ki\ell}$, and
$g_*$ satisfies
\begin{equation}\label{9.2}
\| g_*\|_{L^q(\partial\Omega)} \le C_q \| u_0\|_{W^{2, \infty}(\Omega)},
\end{equation}
for any $1<q<d-1$.
\end{theorem}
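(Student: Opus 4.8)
The plan is to perform a first‑order two‑scale expansion of $u_\e$, to show that the next‑order term is $\e$ times the solution of a homogenization problem for a Neumann problem of the type $(\ref{NP-0})$, and then to read off both the rate $\e^{3/2-\sigma}$ and the structure of $v^{bl}$ from Theorem \ref{main-theorem-1}.

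First I would set $\mathcal{U}_\e(x)=u_0(x)+\e\,\chi_k(x/\e)\,\partial_k u_0(x)$ and put $\zeta_\e:=\e^{-1}(u_\e-\mathcal{U}_\e)$. Using $b_{ik}=\partial_{y_\ell}\phi_{\ell ik}$ with $\phi_{\ell ik}$ being $1$-periodic, of mean zero and antisymmetric in $(i,\ell)$, as in $(\ref{p102})$--$(\ref{p103})$, a direct computation (the one that produces the flux corrector) gives an identity in which no derivative of $u_0$ beyond order three appears:
\begin{equation*}
\mathcal{L}_\e\zeta_\e=\mathrm{div}\big(P(x,x/\e)\big)+Q(x,x/\e)\quad\text{in }\Omega,
\end{equation*}
where $P$ is a bounded field, smooth and $1$-periodic in the second variable, depending linearly on $\nabla^2 u_0$, and $Q$ is of the same type and depends linearly on $\nabla^3 u_0$. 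On $\partial\Omega$, the conormal derivative of $\mathcal{U}_\e$ equals $g+n_i b_{ik}(x/\e)\partial_k u_0+O\!\big(\e\,\|u_0\|_{W^{2,\infty}}\big)$, so, using $(\ref{p103})$ and the product rule to transfer the extra derivative,
\begin{equation*}
\frac{\partial\zeta_\e}{\partial\nu_\e}=T_{i\ell}\cdot\nabla_x\big\{G_{i\ell}(x,x/\e)\big\}+G_0(x,x/\e)\quad\text{on }\partial\Omega,
\end{equation*}
with $G_{i\ell}(x,y)=-\tfrac12\phi_{\ell ik}(y)\,\partial_k u_0(x)$ and $G_0$ a bounded function of $(x,x/\e)$ depending only on $\nabla^2 u_0$. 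Thus $\zeta_\e$ solves a Neumann problem of exactly the form $(\ref{NP-0})$, carrying in addition an oscillating interior source of the ``well understood'' type.

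Next I would homogenize $\zeta_\e$. Split $\zeta_\e=\zeta_\e^{\mathrm{int}}+\zeta_\e^{\mathrm{bdy}}$, where $\zeta_\e^{\mathrm{int}}$ carries the interior source together with the compatible conormal datum $-\,n\cdot P(x,x/\e)$ (adjusted by a constant of size $O(\e)$ to restore solvability), and $\zeta_\e^{\mathrm{bdy}}$ carries the remaining boundary datum, which is still of the form $T_{ij}\cdot\nabla\{g_{ij}(x,x/\e)\}+g_0(x,x/\e)$. For $\zeta_\e^{\mathrm{int}}$ I would invoke the classical periodic homogenization of a Neumann problem with oscillating source and zeroth‑order oscillating boundary data (rate $O(\e|\ln\e|)$ in $L^2$, via the Neumann‑function asymptotics behind $(\ref{N-function-0})$), whose limit solves $\mathcal{L}_0\zeta_0^{\mathrm{int}}=F_*^{\mathrm{int}}$ with $F_*^{\mathrm{int}}=-\overline{c}_{ki\ell}\,\partial_k\partial_i\partial_\ell u_0$ for suitable constants $\overline{c}_{ki\ell}$ — the fast average of $\mathrm{div}(P)+Q$ plus the usual corrector‑interaction terms; since $P$ is linear in $\nabla^2 u_0$, its corrector contributes one more cubic expression in $u_0$, so no fourth‑order derivative enters — and with conormal datum $-\,n\cdot\overline{P}(x)$ plus a constant. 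For $\zeta_\e^{\mathrm{bdy}}$ I would apply Theorem \ref{main-theorem-1} directly, obtaining $\|\zeta_\e^{\mathrm{bdy}}-\zeta_0^{\mathrm{bdy}}\|_{L^2(\Omega)}\le C_\sigma\,\e^{\frac12-\sigma}\,\|u_0\|_{W^{3,\infty}(\Omega)}$, where $\zeta_0^{\mathrm{bdy}}$ solves $(\ref{NP-h})$ with boundary datum $T_{ij}\cdot\nabla\overline{G}_{ij}+\langle G_0\rangle$. Setting $v^{bl}:=\zeta_0^{\mathrm{int}}+\zeta_0^{\mathrm{bdy}}$ gives $(\ref{N-bl})$ with $F_*=F_*^{\mathrm{int}}$ and $g_*=-\,n\cdot\overline{P}+T_{ij}\cdot\nabla\overline{G}_{ij}+\langle G_0\rangle+\text{const}$; the bound $(\ref{9.2})$ follows because $\|\overline{G}_{ij}\|_{W^{1,q}(\partial\Omega)}\le C_q\sup_y\|G_{ij}(\cdot,y)\|_{C^1(\partial\Omega)}\le C_q\,\|u_0\|_{W^{2,\infty}(\Omega)}$ by $(\ref{Sobolev})$, while $n\cdot\overline{P}$ and $\langle G_0\rangle$ lie in $L^\infty(\partial\Omega)$ with the same bound. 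Combining the two rates gives $\|\zeta_\e-v^{bl}\|_{L^2(\Omega)}\le C_\sigma\,\e^{\frac12-\sigma}\,\|u_0\|_{W^{3,\infty}(\Omega)}$.

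Finally, multiplying by $\e$ yields $u_\e-u_0-\e\chi_k(x/\e)\partial_k u_0-\e v^{bl}=\e(\zeta_\e-v^{bl})$, which is $(\ref{9.1})$ once the additive constants are matched — a matching that costs only $O(\e^2)$, since $\int_\Omega\chi_k(x/\e)\partial_k u_0=O(\e)$ by periodic averaging of a smooth function and $v^{bl}$ may be normalized to zero mean. The main obstacle is the homogenization step: identifying $F_*$ and $g_*$ correctly — accounting for every corrector‑interaction term produced when homogenizing the oscillating interior source near the boundary, and confirming that $F_*$ collapses to a constant‑coefficient cubic expression in $u_0$ — and, for the rate, making sure that the constant furnished by Theorem \ref{main-theorem-1} is governed by a fixed Sobolev norm of $u_0$; it is exactly the sharp regularity $(\ref{Sobolev})$ of the homogenized boundary data that makes both of these work.
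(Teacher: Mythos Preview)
Your overall strategy is sound and, once completed, is equivalent to the paper's, but the two proofs are organized differently and your writeup contains a real gap.

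The paper subtracts the \emph{second}-order two-scale term $\e^{2}\chi_{k\ell}(x/\e)\,\partial_k\partial_\ell u_0$ from the outset, setting
\[
w_\e=u_\e-u_0-\e\chi_k^\e\partial_k u_0-\e^2\chi_{k\ell}^\e\partial_k\partial_\ell u_0 .
\]
This makes every interior term in $\mathcal L_\e w_\e$ explicitly of order $\e$ with transparent structure, and the remainder splits into four pieces $w_\e^{(1)}+\cdots+w_\e^{(4)}$ handled, respectively, by Theorem~\ref{main-theorem-1}, classical $O(\e)$ homogenization with \emph{non}-oscillating data, an energy estimate exploiting the skew-symmetry of $f_{mki\ell}$, and a direct energy estimate. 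The constants $\overline{c}_{ki\ell}$ in $F_*$ are the averages of $c_{ki\ell}=\phi_{ki\ell}-a_{i\ell}\chi_k-a_{ij}\partial_{y_j}\chi_{k\ell}$, so they involve $\chi_{k\ell}$.

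You keep only the first-order corrector and push all the remaining work into homogenizing $\zeta_\e^{\mathrm{int}}$, which solves
\[
\mathcal L_\e\zeta_\e^{\mathrm{int}}=\mathrm{div}\big(P(x,x/\e)\big)+Q(x,x/\e),\qquad
\frac{\partial\zeta_\e^{\mathrm{int}}}{\partial\nu_\e}=-n\cdot P(x,x/\e).
\]
The gap is your claim that this converges to $\zeta_0^{\mathrm{int}}$ at rate $O(\e|\ln\e|)$ by ``classical periodic homogenization'' and the Neumann-function bound $(\ref{N-function-0})$. That bound treats non-oscillating data; here the source oscillates at the \emph{same} scale as $A^\e$, and $P$ is \emph{not} $y$-solenoidal: a direct computation gives
\[
\partial_{y_\ell}P_\ell(x,y)=\big(b_{k\ell}(y)+\partial_{y_i}(a_{i\ell}\chi_k)(y)\big)\partial_k\partial_\ell u_0(x),
\]
which is exactly the right-hand side of the cell problem defining $\chi_{k\ell}$. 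Thus to obtain even the $O(\e^{1/2})$ rate you need for $\zeta_\e^{\mathrm{int}}-\zeta_0^{\mathrm{int}}$, you must introduce a corrector for the oscillating source, and that corrector is $\chi_{k\ell}\,\partial_k\partial_\ell u_0$. In other words, your ``classical'' step, when carried out, reproduces the second-order corrector analysis the paper performs explicitly; as written, the proposal does not supply it.

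Once that gap is filled your $v^{bl}$ coincides with the paper's $v_0^{(1)}+v_0^{(2)}$, and the rest of your argument (application of Theorem~\ref{main-theorem-1} to the tangential-derivative boundary piece, the $W^{1,q}$ bound $(\ref{Sobolev})$ for $g_*$, and the $O(\e^2)$ matching of means via $\int_\Omega\chi_k(x/\e)\partial_k u_0=O(\e)$) is correct.
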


For simplicity of exposition we will drop the superscripts in this section.
Let 
\begin{equation}\label{b-ij}
b_{ij} =a_{ij} +a_{ik}\frac{\partial \chi_j}{\partial y_k} -\widehat{a}_{ij},
\end{equation}
where 
$(\chi_k$) are the (first-order) correctors for $\mathcal{L}_\e$ in $\rd$.
Since 
$$
\int_{\td} b_{ij}=0 \quad \text{ and } \quad \frac{\partial }{\partial y_i} \big( b_{ij}\big)=0,
$$
there exist 1-periodic functions $(\phi_{kij})$ such that
\begin{equation}\label{9.3}
b_{ij}=\frac{\partial }{\partial y_k} \big( \phi_{kij} \big) \quad 
\text{ and } \quad \phi_{kij}=-\phi_{ikj}.
\end{equation}
The  second-order correctors $(\chi_{k\ell})$ with $1\le k, \ell \le d$ are defined by
\begin{equation}\label{second}
\left\{
\aligned
 & -\frac{\partial}{\partial y_i}
\left\{ a_{ij}\frac{\partial \chi_{k\ell}}{\partial y_j} \right\}
=b_{k\ell} +\frac{\partial}{\partial y_i}
\big( a_{i\ell} \chi_k\big) \quad \text{ in } \rd,\\
& \chi_{k\ell} \text{ is  1-periodic and } \int_{\td} \chi_{k\ell}=0.
\endaligned
\right.
\end{equation}
Let
\begin{equation}\label{w-9}
w_\e =u_\e -u_0 -\e \chi^\e_k\frac{\partial u_0}{\partial x_k}
-\e^2 \chi^\e_{k\ell} \frac{\partial^2 u_0}{\partial x_k \partial x_\ell},
\end{equation}
where we have used the notation $f^\e(x)= f(x/\e)$.
A direct computation shows that
\begin{equation}\label{L-w}
\aligned
\mathcal{L}_\e (w_\e)
= &-\e \Big( \phi_{kij}^\e \delta_{j\ell}
-a_{ij}^\e \chi_k^\e \delta_{j\ell}
-a_{ij}^\e \big(\frac{\partial \chi_{k\ell}}{\partial x_j}\big)^\e \Big)
\frac{\partial^3 u_0}{\partial x_i\partial x_k \partial x_\ell}\\
&\qquad
+\e^2 \frac{\partial}{\partial x_i}
\left\{ a_{ij}^\e \chi_{k\ell}^\e \frac{\partial^3 u_0}{\partial x_j \partial x_k \partial x_\ell} \right\}.
\endaligned
\end{equation}

Let
\begin{equation}\label{c-k}
\left\{
\aligned
& c_{ki\ell}=
 \phi_{kij} \delta_{j\ell}
-a_{ij} \chi_k \delta_{j\ell}
-a_{ij} \frac{\partial \chi_{k\ell}}{\partial x_j}, \\
& \overline{c}_{ki\ell}
=\average_{\td} c_{ki\ell}.
\endaligned
\right.
\end{equation}
Note that by the definition of $\chi_{k\ell}$,
$$
\frac{\partial}{\partial x_i} \big( c_{ki\ell} \big)=0.
$$
It follows that there exist 1-periodic functions $f_{mki\ell}$ with
$1\le m, k, i, \ell \le d$ such that
\begin{equation}\label{f-m}
c_{ki\ell}-\overline{c}_{ki\ell} =\frac{\partial}{\partial y_m} \big( f_{m k i \ell}\big)
\quad \text{ and } \quad 
f_{mki\ell}=-f_{ikm\ell}.
\end{equation}
This allows us to rewrite (\ref{L-w}) as
\begin{equation}\label{L-w-1}
\aligned
\mathcal{L}_\e (w_\e) & =-\e \overline{c}_{ki\ell}\frac{\partial^3 u_0}
{\partial x_i\partial x_k \partial x_\ell}
-\e \left(\frac{\partial f_{mki\ell}}{\partial x_m}\right)^\e 
\frac{\partial^3 u_0}
{\partial x_i\partial x_k \partial x_\ell}\\
&\qquad
+\e^2 \frac{\partial}{\partial x_i}
\left\{ a_{ij}^\e \chi_{k\ell}^\e \frac{\partial^3 u_0}{\partial x_j \partial x_k \partial x_\ell} \right\}.
\endaligned
\end{equation}

Next we compute the conormal derivative of $w_\e$.
Again, a direct computation gives
\begin{equation}\label{c-normal}
\aligned
\frac{\partial w_\e}{\partial \nu_\e}
&=-n_i b_{ij}^\e \frac{\partial u_0}{\partial x_j}
-\e n_i a_{ij}^\e \chi_k^\e \frac{\partial^2 u_0}{\partial x_j\partial x_k}
-\e n_i a_{ij}^\e \left(\frac{\partial \chi_{k\ell}}{\partial x_j}\right)^\e \frac{\partial^2 u_0}{\partial x_k\partial x_\ell}\\
& \qquad\qquad
-\e^2 n_i a_{ij}^\e \chi_{k\ell}^\e \frac{\partial^3 u_0}{\partial x_j\partial x_k\partial x_\ell}.
\endaligned
\end{equation}
Using (\ref{9.3}) and (\ref{c-k}),
we further obtain
\begin{equation}\label{c-normal-1}
\frac{\partial w_\e}{\partial \nu_\e}
=-\e n_i \frac{\partial}{\partial x_k} \left( \phi_{kij}^\e \frac{\partial u_0}{\partial x_j} \right)
+\e n_i c^\e_{ki\ell} \frac{\partial^2 u_0}{\partial x_k\partial x_\ell}
-\e^2 n_i a_{ij}^\e \chi_{k\ell}^\e \frac{\partial^3 u_0}{\partial x_j\partial x_k\partial x_\ell}
\end{equation}

In view of (\ref{L-w-1}) and (\ref{c-normal-1}),
we split $w_\e-\average_{\Omega} w_\e$ as $w_\e^{(1)} +w_\e^{(2)} +w_\e^{(3)} +w_\e^{(4)}$,
where
\begin{equation}\label{w-9-1}
\left\{
\aligned
& \mathcal{L}_\e (w^{(1)}_\e) =0
&\quad & \text{ in } \Omega,\\
&\frac{\partial}{\partial \nu_\e}
\big(w_\e^{(1)}\big)
=-\e n_i \frac{\partial}{\partial x_k}
\left(\phi_{kij}^\e \frac{\partial u_0}{\partial x_j}\right)
 & \quad & \text{ on } \partial\Omega,
\endaligned
\right.
\end{equation}
\begin{equation}\label{w-9-2}
\left\{
\aligned
& \mathcal{L}_\e (w^{(2)}_\e) =-\e \overline{c}_{ki\ell} \frac{\partial^3 u_0}{\partial x_i \partial x_k \partial x_\ell }
&\quad & \text{ in } \Omega,\\
&\frac{\partial}{\partial \nu_\e}
\big(w_\e^{(2)}\big)
=\e n_i \overline{c}_{ki\ell} \frac{\partial^2 u_0}
{\partial x_k\partial x_\ell}  & \quad & \text{ on } \partial\Omega,
\endaligned
\right.
\end{equation}
\begin{equation}\label{w-9-3}
\left\{
\aligned
& \mathcal{L}_\e (w^{(3)}_\e) =
-\e \left(\frac{\partial f_{mki\ell}}{\partial x_m}\right)^\e 
\frac{\partial^3 u_0}
{\partial x_i\partial x_k \partial x_\ell} &\quad & \text{ in }\Omega,\\
& \frac{\partial}{\partial \nu_\e}
\big(w_\e^{(3)}\big)
=\e n_i \big( c^\e_{ki\ell} -\overline{c}_{ki\ell} \big)
\frac{\partial^2 u_0}{\partial x_k\partial x_\ell}& \quad & \text{ on } \partial\Omega,
\endaligned
\right.
\end{equation}
and
\begin{equation}\label{w-9-4}
\left\{
\aligned
& \mathcal{L}_\e (w^{(4)}_\e) =
\e^2 \frac{\partial}{\partial x_i}
\left\{ a_{ij}^\e \chi_{k\ell}^\e \frac{\partial^3 u_0}{\partial x_j \partial x_k \partial x_\ell} \right\}
 &\quad & \text{ in }\Omega,\\
& \frac{\partial}{\partial \nu_\e}
\big(w_\e^{(4)}\big)
=-\e^2 n_i a_{ij}^\e \chi_{k\ell}^\e \frac{\partial^3 u_0}{\partial x_j\partial x_k\partial x_\ell}
&\quad &\text{ on }\partial\Omega.
\endaligned
\right.
\end{equation}
We further require that
\begin{equation}\label{mean-w}
\int_{\Omega} w_\e^{(1)}=
\int_{\Omega} w_\e^{(2)} =\int_{\Omega} w_\e^{(3)}
=\int_{\Omega} w_\e^{(4)}=0.
\end{equation}

To proceed, we first note that by Poincar\'e inequality, (\ref{mean-w}) and energy estimates, 
\begin{equation}\label{w-4-estimate}
\|w_\e^{(4)}\|_{L^2(\Omega)}
\le C\, \|\nabla w^{(4)}_\e\|_{L^2(\Omega)}
\le C\, \e^2 \| \nabla^3 u_0\|_{L^2(\Omega)}.
\end{equation}
The solution $w_\e^{(3)}$ may be handled in a similar  manner.
To see this we use the skew-symmetry of $f_{mki\ell}$ in $m$ and $i$ to write
the RHS of the equation as
$$
-\e^2\frac{\partial}{\partial x_m}
\left( f_{m k i \ell}^\e \frac{\partial^3 u_0}{\partial x_i \partial x_k \partial x_\ell}\right),
$$
while the Neumann data for $w_\e^{(3)}$ may be written as
$$
\frac {\e^2}{2} \left(n_i \frac{\partial }{\partial x_m}
-n_m \frac{\partial}{\partial x_i} \right)
\left(f^\e_{m k i \ell} \frac{\partial^2 u_0}{\partial x_k \partial x_\ell}\right)
-\e^2 n_i f_{mki\ell}^\e \frac{\partial^3 u_0}{\partial x_k \partial x_\ell \partial x_m}.
$$
As a result, we obtain
\begin{equation}\label{w-3-estimate}
\aligned
\| w_\e^{(3)}\|_{L^2(\Omega)}
&\le C \|\nabla w_\e^{(3)} \|_{L^2(\Omega)}
\le C \, \e^2 \left\{ \|\nabla^3 u_0\|_{L^2(\Omega)}
+\| f^\e \nabla^2 u_0\|_{H^{\frac12}(\partial\Omega)}\right\}\\
&\le C \e^{\frac32} \| u_0\|_{W^{3, \infty}(\Omega)}.
\endaligned
\end{equation}

Next, we observe that $w_\e^{(2)}$ may be dealt with by the classical  homogenization results for
$\mathcal{L}_\e$. Indeed, let $v_0^{(2)}$ be the solution of
\begin{equation}\label{v-0-1}
\left\{
\aligned
& \mathcal{L}_0 (v^{(2)}_0)=-\overline{c}_{ki\ell} \frac{\partial^3 u_0}{\partial x_i \partial x_k \partial x_\ell}
&\quad & \text{ in } \Omega,\\
&
\frac{\partial v_0^{(2)}}{\partial \nu_0}
=n_i \overline{c}_{k i \ell} \frac{\partial^2 u_0}{\partial x_k \partial x_\ell} 
&\quad & \text{ on } \partial\Omega,
\endaligned
\right.
\end{equation}
with $\int_\Omega v_0^{(2)}=0$. It is well known that
\begin{equation}\label{v-1-estimate}
\| w_\e^{(2)} -\e v_0^{(2)}\|_{L^2(\Omega)} \le C\, \e^2 \| u_0\|_{W^{3, \infty}(\Omega)}.
\end{equation}

It remains to estimate the solution $w_\e^{(1)}$, which will be handled by using
Theorem \ref{main-theorem-1}.
Observe that  by the skew-symmetry of $\phi_{kij}$ in $k$ and $i$,
the Neumann data  of $ w_\e^{(1)}$ may be written as
\begin{equation}\label{data-9}
-\frac{\e}{2} \Big(T_{ik}\cdot \nabla\Big)
\left(\phi_{kij}^\e \frac{\partial u_0}{\partial x_j}\right),
\end{equation}
where $T_{ik}=n_ie_k -n_k e_i$.
This allows us to apply Theorem \ref{main-theorem-1} to deduce that
\begin{equation}\label{w-1-estimate}
\| w_\e -\e v_0^{(1)}\|_{L^2(\Omega)}
\le C_\sigma \, \e^{\frac32 -\sigma} \| u_0\|_{W^{2, \infty}(\Omega)},
\end{equation}
for any $\sigma\in (0, 1/2)$,
where $v_0^{(1)}$ is a solution of the Neumann problem
\begin{equation}\label{N-v-1}
\left\{
\aligned
&\mathcal{L}_0 (v_0^{(1)}) =0  &\quad  & \text{ in } \Omega,\\
&\frac{\partial}{\partial \nu_0} \big( v_0^{(1)}\big)
=(T_{ij}\cdot \nabla \big)(\overline{g}_{ij}) & \quad  &\text{ in } \partial\Omega,
\endaligned
\right.
\end{equation}
and $\overline{g}_{ij}\in W^{1, q}(\partial\Omega)$  for any $1<q<d-1$.
We remark that the explicit dependence on the $W^{2, \infty}(\Omega)$ norm of
$u_0$ in the RHS of (\ref{w-1-estimate})
follows from the proof of Theorem \ref{main-theorem-1}.
The key observation is that the fast variable $y$ in the Neumann data (\ref{data-9})
is separated from the slow variable $x$.

Let $v^{bl}= v_0^{(1)} + v_0^{(2)}$.
In view of (\ref{w-4-estimate}), (\ref{w-3-estimate}), (\ref{v-1-estimate}) and
(\ref{w-1-estimate}), we have proved that 
\begin{equation}\label{9-100}
\| w_\e -\average_{\Omega} w_\e -\e v^{bl}\|_{L^2(\Omega)} \le C _\sigma \, \e^{\frac32-\sigma} 
\| u_0\|_{W^{3, \infty}(\Omega)}.
\end{equation}
Finally, we note that since $\int_\Omega u_\e =\int_\Omega u_0=0$,
$$
\aligned
\Big|\average_{\Omega} w_\e\Big|
 &\le  C \, \e\, \Big|\int_{\Omega} \chi_k(x/\e) \frac{\partial u_0}{\partial x_k}\, dx  \Big|
+C  \e^2 \| \nabla^2 u_0\|_\infty\\
 &\le C\, \e^{2}  \|u_0\|_{W^{2, \infty}(\Omega)},
 \endaligned
 $$
 where the last step follows from the fact that $\chi_k$ is periodic with mean value zero. 
 This, together with (\ref{9-100}), yields  the estimate (\ref{9.1})
and thus completes the proof of Theorem \ref{theorem-9.1}.

\bibliography{Shen-Zhuge-2.bbl}

\medskip

\begin{flushleft}
Zhongwei Shen,
 Department of Mathematics,
University of Kentucky,
Lexington, Kentucky 40506,
USA. \ \ 
E-mail: zshen2@uky.edu
\end{flushleft}

\begin{flushleft}
Jinping Zhuge, 
Department of Mathematics,
University of Kentucky,
Lexington, Kentucky 40506,
USA. \ \ 
E-mail: jinping.zhuge@uky.edu

\end{flushleft}
\medskip

\noindent \today

\end{document}